\newlength{\defbaselineskip}
\newcommand{\setlinespacing}[1]%
           {\setlength{\baselineskip}{#1 \defbaselineskip}}
\newcommand{\singlespacing}{\setlength{\baselineskip}{\defbaselineskip}}
\newcommand{\M}{\ensuremath{\mathcal{M}}}
\newcommand{\OL}{\ensuremath{\overline{\mathcal{L}}}}
\newcommand{\D}{\ensuremath{\mathcal{D}}}
\newcommand{\Z}{\ensuremath{\mathbb{Z}}}
\newcommand{\R}{\ensuremath{\mathbb{R}}}
\newcommand{\Real}{\ensuremath{\mathbb{R}}}
\newcommand{\C}{\ensuremath{\mathbb{C}}}
\renewcommand{\L}{\ensuremath{\mathcal{L}}}
\renewcommand{\epsilon}{\ensuremath{\varepsilon}}
\renewcommand{\Re}{\mathop\mathrm{Re}\nolimits}
\renewcommand{\Im}{\mathop\mathrm{Im}\nolimits}
\renewcommand{\ker}{\ensuremath{\mathrm{ker}}}
\newcommand{\Range}{\ensuremath{\mathrm{Range}}}
\newcommand{\codim}{\ensuremath{\mathrm{codim}}}
\renewcommand{\span}{\ensuremath{\mathrm{span}}}
\newcommand{\norm}[1]{\left\Vert#1\right\Vert}		
\newcommand{\sref}[1]{(\ref{#1})}
\DeclarePairedDelimiter\abs{\lvert}{\rvert}
\DeclarePairedDelimiter{\ip}\langle\rangle
\DeclarePairedDelimiter{\nrm}\lVert\rVert
\theoremstyle{plain}
\newtheorem{theorem}{Theorem}[section]
\newtheorem{proposition}[theorem]{Proposition}
\newtheorem{lemma}[theorem]{Lemma}
\newtheorem{corollary}[theorem]{Corollary}
\theoremstyle{definition}
\newtheorem*{assumption*}{\assumptionnumber}
\providecommand{\assumptionnumber}{}
\newenvironment{assumption}[2]
 {%
  \renewcommand{\assumptionnumber}{Assumption ($#2$#1)}%
  \begin{assumption*}%
  \protected@edef\@currentlabel{#1}%
 }
 {%
  \end{assumption*}
 }
\newcommand{\asref}[2]{$#2$\ref{#1}}
\newtheorem*{assumptionrev*}{\assumptionnumber}
\numberwithin{equation}{section}
\begin{document}

\begin{frontmatter}

\title{Travelling waves for spatially discrete systems of FitzHugh-Nagumo type with periodic
coefficients}
\journal{...}
\author[LDA]{W. M. Schouten\corauthref{coraut}},
\corauth[coraut]{Corresponding author. }
\author[LDB]{H. J. Hupkes}
\address[LDA]{
  Mathematisch Instituut - Universiteit Leiden \\
  P.O. Box 9512; 2300 RA Leiden; The Netherlands \\ Email:  {\normalfont{\texttt{w.m.schouten@math.leidenuniv.nl}}}
}
\address[LDB]{
  Mathematisch Instituut - Universiteit Leiden \\
  P.O. Box 9512; 2300 RA Leiden; The Netherlands \\ Email:  {\normalfont{\texttt{hhupkes@math.leidenuniv.nl}}}
}

\date{\today}

\begin{abstract}
\singlespacing
We establish the existence and nonlinear stability of travelling wave solutions for a class of lattice differential equations (LDEs)
that includes the discrete FitzHugh-Nagumo system with alternating scale-separated diffusion coefficients. In particular,
we view such systems as singular perturbations of spatially homogeneous LDEs, for which stable travelling wave solutions are known to exist
in various settings.

The two-periodic waves considered in this paper are described by singularly perturbed multi-component
functional differential equations of mixed type (MFDEs).
In order to analyze these equations, we generalize the spectral convergence technique
that was developed by Bates, Chen and Chmaj to analyze the scalar Nagumo LDE. This allows us to
transfer several crucial Fredholm properties from the spatially homogeneous to the spatially periodic setting.
Our results hence do not require the use of comparison principles or exponential dichotomies.

\end{abstract}


\begin{keyword}
\singlespacing
Lattice differential equations, FitzHugh-Nagumo system, periodic coefficients, singular perturbations.
\MSC  	34A33,34K08,34K26,34K31
\end{keyword}
\end{frontmatter}

\section{Introduction}

In this paper we consider a class of lattice differential equations (LDEs)
that includes the FitzHugh-Nagumo system
\begin{equation}
\label{eq:int:2per:fhn}
\begin{array}{lcl}
\dot{u}_j&=&d_j (u_{j+1}+u_{j-1}-2u_j)+g(u_j;a_j)-w_j ,
\\[0.2cm]
\dot{w}_j&=&\rho_j[u_j-\gamma_j w_j]  ,
\end{array}
\end{equation}
with cubic nonlinearities
\begin{equation}
\begin{array}{lcl}
g(u;a) &=& u ( 1 - u) (u - a)
\end{array}
\end{equation}
and two-periodic coefficients
\begin{equation}\begin{array}{lcl}
(0, \infty) \times (0, 1) \times (0,1)\times (0, \infty) \ni (d_j, a_j, \rho_j,\gamma_j) &=& \left\{ \begin{array}{lcl}
  (\epsilon^{-2}, a_o,\rho_o ,\gamma_o) & & \hbox{for odd } j, \\[0.2cm]
  (1 , a_e, \rho_e ,\gamma_e) & & \hbox{for even }j  .
  \end{array} \right.\end{array}
\end{equation}
We assume that the diffusion coefficients are of different orders
in the sense $0 < \epsilon \ll 1$. Building on the results obtained in \cite{HJHFZHNGM, HJHSTBFHN}
for the spatially homogeneous FitzHugh-Nagumo LDE, we show that (\ref{eq:int:2per:fhn})
admits stable travelling pulse solutions with separate waveprofiles for the even and odd
lattice sites. The main ingredient in our approach is a spectral convergence argument,
which allows us to transfer Fredholm properties between linear operators
acting on different spaces.

\paragraph{Signal propagation}
The LDE (\ref{eq:int:2per:fhn}) can be interpreted as a spatially inhomogeneous
discretisation of the FitzHugh-Nagumo PDE
\begin{equation}\label{FHNPDE}
\begin{array}{lcl}
u_t&=&u_{xx}+g(u;a)-w,
\\[0.2cm]
w_t&=&\rho\big[u-\gamma w\big],
\end{array}
\end{equation}
again with $\rho > 0$ and $\gamma > 0$. This PDE was proposed in the 1960s \cite{FITZHUGH19662,FITZHUGH1966}
as a simplification of the four-component system that Hodgkin and Huxley developed
to describe the propagation of spike signals through the nerve fibres of giant squids \cite{HODHUX1952}.
Indeed, for small $\rho > 0$ (\ref{FHNPDE}) admits isolated pulse solutions of the form
\begin{equation}
\begin{array}{lcl}
(u,w)(x,t)&=&(\overline{u}_0,\overline{w}_0)(x+c_0 t),
\end{array}
\end{equation}
in which $c_0$ is the wavespeed and the wave profile $(\overline{u}_0,\overline{w}_0)$ satisfies the limits
\begin{equation}
\begin{array}{lcl}
\lim\limits_{|\xi|\rightarrow\infty}(\overline{u}_0,\overline{w}_0)(\xi)&=&0.
\end{array}
\end{equation}

Such solutions were first observed numerically by FitzHugh \cite{FITZHUGH1968},
but the rigorous analysis of these pulses turned out to be a major mathematical
challenge that is still ongoing. Many techniques have been developed to obtain the existence
and stability of such pulse solutions in various settings,
including geometric singular perturbation theory \cite{CARP1977,HAST1976,JONESKOPLAN1991,JONES1984},
Lin's method \cite{KRUSANSZM1997,CARTER2015,CARTER2016},
the variational principle \cite{chen2015traveling}
and the Maslov index \cite{cornwell2017opening,cornwell2017existence}.\\

It turns out that electrical
signals can only reach feasible speeds when travelling through
nerve fibres that are insulated by a myeline coating.
Such coatings are known to admit regularly spaced gaps at the nodes of Ranvier \cite{RANVIER1878},
where propagating signals can be chemically reinforced.
In fact, the action potentials effectively jump from one node to the next
through a process caused saltatory conduction \cite{LILLIE1925}.
In order to include these effects, it is natural \cite{EVVPD18}
to replace (\ref{FHNPDE}) by the FitzHugh-Nagumo LDE
\begin{equation}\label{finiterangeversion}
\begin{array}{lcl}\dot{u}_j&=&\frac{1}{\epsilon^2}(u_{j+1}+u_{j-1}-2u_j)+g(u_j;a)-w_j,\\[0.2cm]
\dot{w}_j&=&\rho[u_j-\gamma w_j].
\end{array}
\end{equation}
In this equation the variable $u_j$ describes the potential at the
node $j \in \mathbb{Z}$ node, while $w_j$ describes the dynamics of the recovery variables.
We remark that this LDE arises directly from (\ref{FHNPDE})
by using the nearest-neighbour discretisation of the Laplacian on a grid
with spacing $\epsilon > 0$.\\

In \cite{HJHSTBFHN,HJHFZHNGM}, Hupkes and Sandstede studied
(\ref{finiterangeversion})
and showed that for $a$ sufficiently far from $\frac{1}{2}$
and small $\rho > 0$,
there exists a stable locally unique travelling pulse solution
\begin{equation}
\begin{array}{lcl}
(u_j,w_j)(t)&=&(\overline{u},\overline{w})(j +c t).
\end{array}
\end{equation}
The techniques
relied on exponential dichotomies and Lin's method to
develop an infinite-dimensional analogue of the exchange lemma.
In \cite{Faye2015} the existence part of these results was generalized
to versions of (\ref{finiterangeversion}) that feature infinite-range discretisations
of the Laplacian that involve all neighbours instead of only the nearest-neighbours.
The stability results were also recently
generalized to this setting \cite{HJHFHNINFRANGE}, but only for small $\epsilon > 0$ at present.
Such systems with infinite-range interactions play an important role in
neural field models \cite{BRESS2014,BRESS2011,PINTO2001,SNEYD2005},
which aim to describe the dynamics of large networks of neurons.\\

Our motivation here for studying the 2-periodic version
(\ref{eq:int:2per:fhn}) of the FitzHugh-Nagumo LDE (\ref{finiterangeversion})
comes from  recent developments  in optical nanoscopy. Indeed,
the results in \cite{XU2013,ESTE2015,ESTE2016} clearly show that certain proteins in the
cytoskeleton of nerve fibres are organized periodically.
This periodicity turns out to be a universal feature of all nerve systems, not just those
which are insulated with a myeline coating. Since it also manifests
itself at the nodes of Ranvier, it is natural to allow the parameters
in (\ref{finiterangeversion}) to vary in a periodic fashion. The results
in this paper are a first step in this direction. The restriction
on the diffusion parameters is rather severe, but the absence
of a comparison principle forces us to take a perturbative approach.\\

\paragraph{Periodicity}
Periodic patterns are frequently encountered when studying
the behaviour of physical systems that have a discrete
underlying spatial structure.
Examples include
the presence of twinning microstructures
in shape memory alloys \cite{Bhattacharya2003microstructure}
and the formation of
domain-wall microstructures
in dielectric crystals \cite{Tagantsev2010domains}.\\

At present however, the mathematical analysis of such models has
predominantly focussed on one-component systems. For example,
the results in \cite{CHENGUOWU2008} cover
the bistable Nagumo LDE
\begin{equation}
\label{eq:int:per:nag:lde}
\begin{array}{lcl}
\dot{u}_j&=&d_j (u_{j+1}+u_{j-1}-2u_j)+g(u_j;a_j),
\end{array}
\end{equation}
with spatially periodic coefficients $(d_j, a_j) \in (0, \infty) \times (0, 1)$.
Exploiting the comparison principle, the authors were able to
establish the existence of stable travelling wave solutions.
Similar results were obtained in \cite{GUOWU2009}
for mono-stable versions of (\ref{eq:int:per:nag:lde}).\\

Let us also mention the results
in \cite{Faver2017nanopteron,Faver2018exact, Hoffman2017nanopteron},
where the authors
consider chains of alternating masses connected by identical springs
(and vice versa). The dynamical behaviour of such systems
can be modelled by LDEs
of Fermi-Pasta-Ulam type with periodic coefficients. In certain limiting
cases the authors were able to construct
so-called nanopterons, which are multi-component wave solutions
that have low-amplitude oscillations in their tails.\\

In the examples above the underlying periodicity
is built into the spatial system itself.
However, periodic patterns
also arise naturally as solutions to spatially homogeneous discrete systems.
As an example, systems of the
form (\ref{eq:int:per:nag:lde}) with
homogeneous but negative diffusion coefficients $d_j = d < 0$
have been used to
describe phase transitions
for grids of particles that have visco-elastic interactions
\cite{CAHNNOV1994,CAHNVLECK1999,VAIN2009}.
Upon introducing separate scalings for the odd and even
lattice sites,
this one-component LDE can be turned into a $2$-periodic system of the form
\begin{equation}
\begin{array}{lcl}
\dot{v}_j&=&d_e\big(w_j+w_{j-1}-2v_j\big)-f_e(v_j) ,\\[0.2cm]
\dot{w}_j&=&d_o\big(v_{j+1}+v_{j}-2w_j\big)-f_o(w_j)
\end{array}
\end{equation}
with positive coefficients $d_e > 0$ and $d_o > 0$.
Systems of this type have been analyzed in considerable detail
in \cite{BRUC2011,Vainchtein2015propagation}, where the authors
establish the co-existence of patterns that can be both monostable
and bistable in nature.\\

As a final example, let us
mention that the LDE (\ref{eq:int:per:nag:lde})
with positive spatially homogeneous diffusion coefficients
$d_j = d > 0$ can admit many periodic equilibria
\cite{VL30}. In \cite{Morelli2018} the authors construct
bichromatic travelling waves that connect spatially homogeneous
rest-states with such 2-periodic equilibria. Such waves can actually
travel in parameter regimes where the standard  monochromatic waves
that connect zero to one are trapped.
This presents a secondary mechanism by which the stable
states zero and one can spread throughout the spatial domain.\\

\paragraph{Wave equations}
Returning to the 2-periodic FitzHugh-Nagumo LDE (\ref{eq:int:2per:fhn}),
we use the travelling wave Ansatz
\begin{equation}
\begin{array}{lcl}
(u,w)_j(t)&=&\begin{cases}(\overline{u}_o,\overline{w}_o)(j+ct),\enskip\text{when }j\text{ is odd},\\[0.2cm]
(\overline{u}_e,\overline{w}_e)(j+ct),\enskip\text{when }j\text{ is even}
\end{cases}\end{array}
\end{equation}
to arrive at the coupled system
\begin{equation}\label{FHNMFDE}
\begin{array}{lcl}
c\overline{u}_o'(\xi)&=&\frac{1}{\epsilon^2}\big(\overline{u}_e(\xi+1)+\overline{u}_e(\xi-1)-2\overline{u}_o(\xi)\big)
  +g(\overline{u}_o(\xi);a_o)-\overline{w}_o(\xi), \\[0.2cm]
c\overline{w}_o'(\xi)&=&\rho_o[\overline{u}_o(\xi)-\gamma_o \overline{w}_o(\xi)],
 \\[0.2cm]
c\overline{u}_e'(\xi)&=&\big(\overline{u}_o(\xi+1)+\overline{u}_o(\xi-1)-2\overline{u}_e(\xi)\big)+g(\overline{u}_e(\xi);a_e)-\overline{w}_e(\xi),
  \\[0.2cm]
c\overline{w}_e'(\xi)&=&\rho_e[\overline{u}_e(\xi)-\gamma_e \overline{w}_e(\xi)].
\end{array}
\end{equation}
Multiplying the first line by $\epsilon^2$ and then taking $\epsilon \downarrow 0$,
we obtain the direct relation
\begin{equation}\begin{array}{lcl}
\overline{u}_o(\xi) &=& \frac{1}{2} \big[ \overline{u}_e(\xi + 1) + \overline{u}_e(\xi - 1) \big] ,\end{array}
\end{equation}
which can be substituted into the last two lines to yield
\begin{equation}
\label{eq:int:wv:eq:hom:fhn}
\begin{array}{lcl}
c\overline{u}_e'(\xi)&=&\frac{1}{2}\big(\overline{u}_e(\xi+2)+\overline{u}_e(\xi-2)-2\overline{u}_e(\xi)\big)+g(\overline{u}_e(\xi);a_e)-\overline{w}_e(\xi),
  \\[0.2cm]
c\overline{w}_e'(\xi)&=&\rho_e[\overline{u}_e(\xi)-\gamma_e \overline{w}_e(\xi)].
\end{array}
\end{equation}
All the odd variables have been eliminated from this last equation,
which in fact describes pulse solutions to the spatially homogeneous FitzHugh-Nagumo LDE
(\ref{finiterangeversion}). Plugging these pulses into the remaining equation
we arrive at
\begin{equation}
\begin{array}{lcl}
c\overline{w}_o'(\xi) + \rho_o \gamma_o \overline{w}_o(\xi)
&=& \frac{1}{2} \rho_o \big[ \overline{u}_e(\xi + 1) + \overline{u}_e(\xi - 1) \big].
\end{array}
\end{equation}
This can be solved to yield the remaining second component
of a singular pulse solution that we denote by
\begin{equation}\begin{array}{lcl}
\label{eq:int:def:sing:orbit}
\overline{U}_0 = \big(\overline{u}_{o;0}, \overline{w}_{o;0}, \overline{u}_{e;0}, \overline{w}_{e;0}\big).\end{array}
\end{equation}

The main task in this paper is to
construct stable travelling wave solutions
to (\ref{eq:int:2per:fhn}) by continuing this singular pulse
into the regime $0 < \epsilon \ll 1$. We use a functional analytic
approach to handle this singular perturbation,
focussing on the linear operator associated to the
linearization of (\ref{FHNMFDE}) with $\epsilon > 0$
around the singular pulse. We show that this operator
inherits several crucial Fredholm properties
that were established in \cite{HJHSTBFHN}
for the linearization of (\ref{eq:int:wv:eq:hom:fhn})
around the even pulse $\big(\overline{u}_{e;0}, \overline{w}_{e;0}\big)$.\\

Our results are not limited to the two-component system
(\ref{eq:int:2per:fhn}). Indeed,
we consider general $(n+k)$-dimensional reaction
diffusion systems with 2-periodic coefficients,
where $n\geq 1$ is the number of components with a non-zero diffusion
term and $k\geq 0$ is the number of components that do not diffuse.
We can handle both travelling fronts and travelling pulses,
but do impose conditions on the end-states that are stronger
than the usual temporal stability requirements. Indeed, at times we will
require (submatrices of) the corresponding Jacobians to be negative definite
instead of merely spectrally stable.
We emphasize that these distinctions disappear for scalar problems.
In particular, our framework also covers the Nagumo LDE (\ref{eq:int:per:nag:lde}),
but does not involve the use of a comparison principle.

\paragraph{Spectral convergence}
The main inspiration for our approach is the spectral convergence
technique that was developed in \cite{BatesInfRange}
to establish the existence of travelling wave solutions
to the homogeneous Nagumo LDE\footnote{
The power of the results in \cite{BatesInfRange} is that they
also apply to variants of (\ref{eq:int:per:nag:lde})
with infinite-range interactions. We describe their ideas here in a
finite-range setting for notational clarity.
}
(\ref{eq:int:per:nag:lde})
with diffusion coefficients $d_j = 1/\epsilon^2 \gg 1$.
The linear operator
\begin{equation}
\label{scalaroperator}
\begin{array}{lcl}
\mathcal{L}_{\epsilon}v(\xi)&=&
c_{0}v'(\xi)-\frac{1}{\epsilon^2}\Big[v(\xi+\epsilon)+v(\xi-\epsilon)-2v(\xi)\Big]
-g_u(\overline{u}_{0}(\xi);a)v(\xi),
\end{array}\end{equation}
plays a crucial role in this approach,
where the pair $(c_{0}, \overline{u}_{0})$ is
the travelling front solution of the Nagumo PDE
\begin{equation}
\label{eq:int:nagumo:pde}
\begin{array}{lcl}
u_t& =& u_{xx} + g( u;a) .\end{array}
\end{equation}
This front solutions satisfies the system
\begin{equation}
\begin{array}{lclclcl}
c_0 \overline{u}_0'(\xi) &=& \overline{u}_0''(\xi) + g(\overline{u}(\xi) ; a),
\qquad
\overline{u}_0(-\infty) &=& 0, \qquad \overline{u}_0(+\infty) &=& 1,\end{array}
\end{equation}
to which we can associate the linear operator
\begin{equation}
\begin{array}{lcl}
[\mathcal{L}_0 v](\xi) &=& c_0 v'(\xi) - v''(\xi) - g_u\big(\overline{u}(\xi) ; a \big) v(\xi),\end{array}
\end{equation}
which can be interpreted as the formal $\epsilon \downarrow 0$ limit
of (\ref{scalaroperator}).
It is well-known that $\mathcal{L}_0 + \delta: H^2 \to L^2$ is invertible for all
$\delta > 0$. By considering sequences
\begin{equation}\begin{array}{lclclcl}
w_j &=& (\mathcal{L}_{\epsilon_j} + \delta) v_j,
\qquad
\norm{v_j}_{H^1} &=& 1,
\qquad
\epsilon_j &\to & 0\end{array}
\end{equation}
that converge weakly to a pair
\begin{equation}\begin{array}{lcl}
w_0 &=& (\mathcal{L}_0 + \delta)v_0,\end{array}
\end{equation}
the authors show that also $\mathcal{L}_{\epsilon} + \delta: H^1 \to L^2$
is invertible. To this end one needs to establish a lower bound for $\norm{w_0}_{L^2}$,
which can be achieved by exploiting inequalities of the form
\begin{equation}
\label{eq:int:coercv:est:diffusion}
\begin{array}{lclcl}
\big\langle
  v( \cdot + \epsilon) + v(\cdot - \epsilon) - 2 v(\cdot) , v(\cdot)
\big\rangle_{L^2}  
&\le &0
, \qquad \qquad \langle v', v \rangle_{L^2}&= &0
\end{array}
\end{equation}
and using the bistable structure of the nonlinearity $g$.\\

In \cite{HJHFHNINFRANGE} we showed that these ideas
can be generalized to infinite-range versions
of the FitzHugh-Nagumo LDE (\ref{finiterangeversion}).
The key issue there, which we must also face in this paper, is that
problematic cross terms arise that must be kept under control when taking inner products.
We are aided in this respect
by the fact that the off-diagonal terms in the linearisation of (\ref{eq:int:2per:fhn}) are constant multiples of each other.\\

A second key complication that we encounter here is that
the scale separation in the diffusion terms prevents
us from using the direct multi-component analogue of the inequality
(\ref{eq:int:coercv:est:diffusion}). We must carefully include $\epsilon$-dependent weights
into our inner products to compensate for these imbalances.
This complicates the fixed-point argument used to control the nonlinear terms
during the construction of the travelling waves. In fact, it forces us to
take an additional spatial derivative of the travelling wave equations.\\

This latter situation was also encountered in \cite{HJHADPGRID}, where
the spectral convergence method was used to construct travelling wave solutions
to  adaptive-grid discretisations of the Nagumo PDE (\ref{eq:int:nagumo:pde}).
Further applications of this technique can be found
in \cite{HJHBDF, HJHFHNBDF}, where full spatial-temporal discretisations
of the Nagumo PDE (\ref{eq:int:nagumo:pde})
and the FitzHugh-Nagumo PDE (\ref{FHNPDE}) are considered.

\paragraph{Overview}
After stating our main results in \S\ref{sec:mr}
we apply the spectral convergence method discussed above
to the system of travelling wave equations (\ref{FHNMFDE})
in \S\ref{sec:lim:op}-\ref{singularoperator}.
This allows us to follow the spirit of
\cite[Thm. 1]{BatesInfRange} to establish the existence
of travelling waves in \S\ref{sectionexistence}.
In particular, we use a fixed point argument that
mimics the proof of the standard implicit function theorem.\\

We follow the approach developed in \cite{HJHFHNINFRANGE}
to analyze the spectral stability of these travelling waves
in \S\ref{sectionstability}.
In particular,
we recycle the spectral convergence argument
to analyze the linear operators $\overline{\mathcal{L}}_{\epsilon}$
that arise after linearizing (\ref{FHNMFDE}) around
the newly-found waves, instead of around the singular pulse
$\overline{U}_0$ defined in (\ref{eq:int:def:sing:orbit}).
The key complication here is that for fixed small values of $\epsilon > 0$
we need results on the invertibility
of $\overline{\mathcal{L}}_{\epsilon} + \lambda$
for all $\lambda$ in a half-strip. By contrast,
the spectral convergence method gives a range of
admissible values for $\epsilon > 0$ for each fixed $\lambda$.
Switching between these two points of view is a delicate
task, but fortunately the main ideas from \cite{HJHFHNINFRANGE} can be transferred
to this setting.\\

The nonlinear stability of the travelling waves
can be inferred from their spectral stability in a relatively straightforward
fashion by appealing to the theory
developed in \cite{HJHSTBFHN} for discrete systems with finite range
interactions. A more detailed description of this
procedure in an infinite-range setting can be found in
\cite[\S 7-8]{HJHFHNINFRANGEFULL}.\\

\textbf{Acknowledgements.}\\
Both authors acknowledge support from the Netherlands Organization for Scientific Research (NWO) (grant 639.032.612).

\section{Main Results}\label{sec:mr}

Our main results concern the LDE
\begin{equation}\label{ditishetprobleem}
\begin{array}{lcl}
\dot{u}_{j}(t)&=&
 d_j \D\big[u_{j+1}(t)+u_{j-1}(t)-2u_{j}(t)\big]
 +f_j\big(u_{j}(t),w_{j}(t) \big),
 \\[0.2cm]
\dot{w}_{j}(t)&=&g_j\big(u_{j}(t),w_{j}(t)\big),
\end{array}
\end{equation}
posed on the one-dimensional lattice $j \in \Z$,
where we take $u_{j}\in\R^n$ and $w_{j}\in \R^k$
for some pair of integers $n\geq 1$ and $k\geq 0$.
We assume that the system is 2-periodic in
the sense that
there exists a set of four nonlinearities
\begin{equation}
f_o: \Real^{n+k} \to \Real^n,
\qquad
f_e : \Real^{n+k} \to \Real^n,
\qquad
g_o: \Real^{n+k} \to \Real^k,
\qquad
g_e: \Real^{n+k} \to \Real^k
\end{equation}
for which we may write
\begin{equation}\begin{array}{lcl}
 (d_j, f_j, g_j) &=& \left\{ \begin{array}{lcl}
  (\epsilon^{-2}, f_o,g_o ) & & \hbox{for odd } j, \\[0.2cm]
  (1 , f_e, g_e ) & & \hbox{for even }j .
  \end{array} \right.\end{array}
\end{equation}

Introducing the shorthand notation
\begin{equation}
\begin{array}{lclcl}
F_o(u,w) &=&  \big( f_o(u,w), g_o(u,w) \big), 
\qquad
F_e(u,w) &=&  \big( f_e(u,w), g_e(u,w) \big), 
\end{array}
\end{equation}
we impose the following structural condition on our system
that concerns the roots of the nonlinearities
$F_o$ and $F_e$. These roots
correspond with temporal equilibria of \sref{ditishetprobleem}
that have a spatially homogeneous $u$-component. On the other hand,
the $w$-component of these equilibria is allowed to be 2-periodic.
%
\begin{assumption}{N1}{\text{H}}\label{aannamesconstanten} The matrix
$\D \in \Real^{n \times n}$ is a diagonal
matrix with strictly positive diagonal entries.
In addition, the nonlinearities $F_o$ and $F_e$ are $C^3$-smooth and there exist
four vectors
\begin{equation}\begin{array}{lclcl}
U_e^{\pm}&=&(u_e^{\pm},w_e^{\pm}) \in \Real^{n + k },
\qquad \qquad
U_o^{\pm}&=&(u_o^{\pm},w_o^{\pm}) \in \Real^{n + k },\end{array}
\end{equation}
for which we have the identities
$u^-_o = u^-_e$  and $u^+_o= u^+_e$,
together with
\begin{equation}\begin{array}{lclcl}
F_o( U^\pm_o) &=& F_e(U^\pm_e) &=& 0.\end{array}
\end{equation}
\end{assumption}

We emphasize that any subset of the four vectors $U^\pm_o$ and $U^\pm_e$
is allowed to be identical. In order to
address the temporal stability of these equilibria,
we introduce two separate auxiliary conditions on triplets
\begin{equation}
\big( G , U^-, U^+ \big) \in C^1\big( \R^{n + k} ; \R^{n+k} \big) \times \R^{n + k}
 \times \R^{n+k},
\end{equation}
which are both stronger\footnote{
See the proof of Lemma \ref{Lepsfredholm} for details.
}
than the requirement that all the eigenvalues
of $DG(U^\pm)$ have
strictly negative real parts.
\begin{assumption}{$\alpha$}{\text{h}}\label{aannamesconstanten1}
The matrices $-DG(U^-)$ and $-DG(U^+)$ are positive definite.\end{assumption}
\begin{assumption}{$\beta$}{\text{h}}\label{aannamesconstanten2}
For any $U \in \R^{n+k}$, write $DG(U)$ in the block form
\begin{equation}\label{blockstructure}
\begin{array}{lcl}
DG(U)&=&\left(\begin{array}{ll}G_{1,1}(U)&G_{1,2}(U)\\ G_{2,1}(U)& G_{2,2}(U)
\end{array}\right)
\end{array}
\end{equation}
with $G_{1,1}(U) \in \Real^{n \times n}$. Then the matrices
$-G_{1,1}(U^-),-G_{1,1}(U^+),-G_{2,2}(U^-)$ and $-G_{2,2}(U^+)$ are positive definite.
In addition, there exists a constant $\Gamma > 0$
so that $G_{1,2}(U)=-\Gamma G_{2,1}(U)^T$ holds for all $U \in \R^{n \times k}$.
\end{assumption}

As an illustration, we pick $0 < a < 1$
and write
\begin{equation}
\begin{array}{lcl}
G_{\mathrm{ngm}}(u) &=&u(1-u)(u-a)
\end{array}
\end{equation}
for the nonlinearity associated with the Nagumo equation,
together with
\begin{equation}
\label{eq:mr:def:g:fhn}
\begin{array}{lcl}
G_{\mathrm{fhn};\rho,\gamma}(u,w)  &=&\left(\begin{array}{l}
u(1-u)(u-a)-w\\[0.2cm]
\rho\big[u-\gamma w\big]
\end{array}\right)
\end{array}
\end{equation}
for its counterpart corresponding to the FitzHugh-Nagumo system.
It can be easily verified that the triplet
$(G_{\mathrm{ngm}},0,1)$ satisfies (\asref{aannamesconstanten1}{\text{h}}),
while the triplet $(G_{\mathrm{fhn};\rho, \gamma},0,0)$ satisfies
(\asref{aannamesconstanten2}{\text{h}})
for $\rho > 0$ 
and $\gamma > 0$,
with $\Gamma = \rho^{-1}$.
When $a > 0$ is sufficiently small, the Jacobian
$DG_{\mathrm{fhn};\rho,\gamma}(0,0)$ has a pair of complex eigenvalues
with negative real part. In this case (\asref{aannamesconstanten1}{\text{h}})
may fail to hold.\\ 

The following assumption states that the even and odd subsystems
must both satisfy one of the two auxiliary conditions above. We emphasize
however that this does not necessarily need to be the same
condition for both systems.
\begin{assumption}{N2}{\text{H}}\label{aannamesconstanten3}
The triplet $(F_o, U^-_o, U^+_o)$ satisfies
either
(\asref{aannamesconstanten1}{\text{h}}) or (\asref{aannamesconstanten2}{\text{h}}).
The same holds for the triplet $(F_e, U^-_e, U^+_e)$.
\end{assumption}

We intend to find functions
\begin{equation}
(u_{\epsilon}, w_{\epsilon}) : \Real \to \ell^\infty( \mathbb{Z}; \Real^n ) \times \ell^\infty(\mathbb{Z} ; \Real^k )
\end{equation}
that take the form
\begin{equation}
\label{eq:mr:trv:wave:ansatz}
\begin{array}{lcl}
(u_{\epsilon},w_{\epsilon})_j(t)&=&\begin{cases}
(\overline{u}_{o;\epsilon},\overline{w}_{o;\epsilon})(j+c_\epsilon t),\enskip
   &\text{for odd }j,\\[0.2cm]
(\overline{u}_{e;\epsilon},\overline{w}_{e;\epsilon})(j+c_\epsilon t),\enskip
    &\text{for even }j\end{cases}
\end{array}
\end{equation}
and satisfy (\ref{ditishetprobleem}) for all $t \in \Real$. The waveprofiles
are required to be $C^1$-smooth and satisfy the limits
\begin{equation}\begin{array}{lclcl}
\lim_{\xi \to \pm \infty} \big(\overline{u}_o(\xi), \overline{w}_o(\xi) \big)& =& (u_o^\pm, w_o^\pm),
\qquad
\qquad
\lim_{\xi \to \pm \infty} \big(\overline{u}_e(\xi), \overline{w}_e(\xi) \big)& =& (u_e^\pm, w_e^\pm).\end{array}
\end{equation}

Substituting the travelling wave Ansatz (\ref{eq:mr:trv:wave:ansatz}) into the LDE (\ref{ditishetprobleem}) yields the
coupled system
\begin{equation}\label{nieuwetravellingwaveeq}
\begin{array}{lcl}
c_\epsilon\overline{u}_{o;\epsilon}'(\xi)&=&
  \frac{1}{\epsilon^2}\D \Delta_{\mathrm{mix}}[\overline{u}_{o;\epsilon},\overline{u}_{e;\epsilon}](\xi)
   +f_o\big(\overline{u}_{o;\epsilon}(\xi),\overline{w}_{o;\epsilon}(\xi)\big),
  \\[0.2cm]
c_\epsilon\overline{w}_{o;\epsilon}'(\xi)&=&
  g_o\big(\overline{u}_{o;\epsilon}(\xi),\overline{w}_{o;\epsilon}(\xi)\big),
\\[0.2cm]
c_\epsilon\overline{u}_{e;\epsilon}'(\xi)&=&
  \D\Delta_{\mathrm{mix}}[\overline{u}_{e;\epsilon},\overline{u}_{o;\epsilon}](\xi)
    +f_e\big(\overline{u}_{e;\epsilon}(\xi),\overline{w}_{e;\epsilon}(\xi) \big),
\\[0.2cm]
c_\epsilon\overline{w}_{e;\epsilon}'(\xi)&=&
  g_e\big(\overline{u}_{e;\epsilon}(\xi),\overline{w}_{e;\epsilon}(\xi)\big),
\end{array}
\end{equation}
in which we have introduced the shorthand
\begin{equation}
\begin{array}{lcl}
\Delta_{\mathrm{mix}}[\phi,\psi](\xi)&=&\psi(\xi+1)+\psi(\xi-1)-2\phi(\xi).
\end{array}
\end{equation}

Multiplying the first line of (\ref{nieuwetravellingwaveeq}) by $\epsilon^2$ and
taking the formal limit $\epsilon\downarrow 0$, we obtain
the identity
\begin{equation}
\begin{array}{lcl}\label{ongesplitstenagumo}
0&=&\D\Delta_{\mathrm{mix}}[\overline{u}_{o;0},\overline{u}_{e;0}](\xi),\\[0.2cm]
\end{array}
\end{equation}
which can be explicitly solved to yield
\begin{equation}
\label{eq:mr:id:for:ovl:u:zero}\begin{array}{lcl}
\overline{u}_{o;0}(\xi)
 &=& \frac{1}{2} \overline{u}_{e;0}(\xi+1)
   + \frac{1}{2} \overline{u}_{e;0}(\xi-1) .\end{array}
\end{equation}
In the $\epsilon \downarrow 0$ limit, the even
subsystem of (\ref{nieuwetravellingwaveeq})
hence decouples and becomes
\begin{equation}\label{nagumolde:even}
\begin{array}{lcl}
c_0\overline{u}_{e;0}'(\xi)&=&\frac{1}{2}\D\Big[\overline{u}_{e;0}(\xi+2)+\overline{u}_{e;0}(\xi-2)-2\overline{u}_{e;0}(\xi)\Big]
  +f_e\big(\overline{u}_{e;0}(\xi),\overline{w}_{e;0}(\xi)\big),
 \\[0.2cm]
c_0\overline{w}_{e;0}'(\xi)&=&g_e\big(\overline{u}_{e;0}(\xi),\overline{w}_{e;0}(\xi)\big) .
\end{array}
\end{equation}
We require this limiting even system to have a travelling wave solution
that connects $U_e^-$ to $U_e^+$.

\begin{assumption}{W1}{\text{H}}\label{aannamespuls:even}
There exists $c_0 \neq 0$ for which the system
(\ref{nagumolde:even})
has a $C^1$-smooth solution $\overline{U}_{e;0} = (\overline{u}_{e;0},\overline{w}_{e;0})$
that satisfies the limits
\begin{equation}\begin{array}{lcl}
\lim_{\xi \to \pm \infty} \big(\overline{u}_{e;0}(\xi) ,\overline{w}_{e;0}(\xi) \big)
 &=& ( u^\pm_e, w^\pm_e ) .\end{array}
\end{equation}
\end{assumption}

Finally, taking $\epsilon \downarrow 0$ in the second line of
(\ref{nieuwetravellingwaveeq}) and applying (\ref{eq:mr:id:for:ovl:u:zero}),
we obtain the identity
\begin{equation}\label{eq:mr:eq:fro:ovl:w:o:zero}
\begin{array}{lcl}
c_0\overline{w}_{o;0}'(\xi)&=&g_o\Big(  \frac{1}{2} \overline{u}_{e;0}(\xi+1)
   + \frac{1}{2} \overline{u}_{e;0}(\xi-1),\overline{w}_{o;0}(\xi)\Big),
\end{array}
\end{equation}
in which $\overline{w}_{o;0}$ is the only remaining unknown.
We impose the following compatibility condition on this system.
%
\begin{assumption}{W2}{\text{H}}\label{aannamespuls:odd}
The equation (\ref{eq:mr:eq:fro:ovl:w:o:zero})
has a $C^1$-smooth solution $\overline{w}_{o;0}$ that satisfies the limits
\begin{equation}
\begin{array}{lcl}
\lim_{\xi\rightarrow\pm\infty}\overline{w}_{o;0}(\xi)&=&w_o^{\pm}.
\end{array}
\end{equation}\end{assumption}

Upon writing
\begin{equation}\begin{array}{lclcl}
  \overline{U}_{0}&=&(\overline{U}_{o;0},\overline{U}_{e;0} )
  &=&(\overline{u}_{o;0},\overline{w}_{o;0},\overline{u}_{e;0},\overline{w}_{e;0}),\end{array}
\end{equation}
we intend to seek a branch of solutions to (\ref{nieuwetravellingwaveeq})
that bifurcates off the singular travelling wave $(\overline{U}_0, c_0)$.
In view of the limits
\begin{equation}
    \begin{array}{lcl}
    \lim\limits_{\xi\rightarrow\pm\infty}(\overline{U}_{o;0},\overline{U}_{e;0})(\xi)&=&(U_o^{\pm},U_e^{\pm}),
    \end{array}
\end{equation}
we introduce the spaces
\begin{equation}
\begin{array}{lclclclcl}
\mathbf{H}^1_e &=& \mathbf{H}^1_o &=&
H^1(\R;\R^n) \times H^1(\Real; \Real^k),
 \qquad \qquad
\mathbf{L}^2_e& =& \mathbf{L}^2_o  &=&
L^2(\R;\R^n) \times L^2(\Real; \Real^k)
\end{array}
\end{equation}
to analyze the perturbations from $\overline{U}_0$.\\

Linearizing (\ref{nagumolde:even}) around the solution $\overline{U}_{e;0}$,
we obtain the linear operator $\overline{L}_{e} : \mathbf{H}^1_e \to \mathbf{L}^2_e$
that acts as
\begin{equation}\label{defoverlineL}
\begin{array}{lcl}
\overline{L}_{e}&=&
  c_0 \frac{d}{d\xi} - DF_e(\overline{U}_{e;0} )
  - \frac{1}{2} \left(\begin{array}{ll} \mathcal{D}  (S_2 - 2) & 0 \\[0.2cm] 0 & 0
    \end{array} \right),
\end{array}
\end{equation}
in which we have introduced the notation
\begin{equation}\begin{array}{lcl}
[ S_2 \phi](\xi)&=&\phi(\xi+2)+\phi(\xi-2) .\end{array}
\end{equation}
Our perturbation argument to construct solutions of (\ref{nieuwetravellingwaveeq})
requires $\overline{L}_e$ to have an isolated simple eigenvalue at the origin.

\begin{assumption}{S1}{\text{H}}\label{extraaannamespuls}
There exists $\delta_e > 0$ so that the operator $\overline{L}_{e}+\delta$ is a
Fredholm operator with index 0 for each $0\leq\delta <\delta_e$.
It has a simple eigenvalue in
$\delta=0$, i.e., we have $\mathrm{Ker} \big( \overline{L}_e \big) = \mathrm{span}( \overline{U}_{e;0}' )$
and $\overline{U}_{e;0}' \notin \mathrm{Range} \big( \overline{L}_e \big)$.
\end{assumption}

We are now ready to formulate our first main result,
which states that \sref{nieuwetravellingwaveeq} admits
a branch of solutions for small $\epsilon > 0$
that converges to the singular wave $(\overline{U}_{0} , c_0)$ as $\epsilon \downarrow 0$.
Notice that the $\epsilon$-scalings
on the norms of $\Phi'_{\epsilon}$
and $\Phi''_{\epsilon}$  are considerably better
than those suggested by a direct
inspection of \sref{nieuwetravellingwaveeq}.

\begin{theorem}[{See \S \ref{sectionexistence}}]\label{maintheorem} Assume that (\asref{aannamesconstanten}{\text{H}}),
(\asref{aannamesconstanten3}{\text{H}}), (\asref{aannamespuls:even}{\text{H}}), (\asref{aannamespuls:odd}{\text{H}}) and (\asref{extraaannamespuls}{\text{H}}) are satisfied.
There exists a constant $\epsilon_*>0$ so that for each $0<\epsilon<\epsilon_*$, there exist
$c_\epsilon\in\R$ and
$\Phi_\epsilon = (\Phi_{o;\epsilon}, \Phi_{e;\epsilon} )
\in\mathbf{H}^1_o \times \mathbf{H}^1_e$ for which the function
\begin{equation}\begin{array}{lcl}
\overline{U}_\epsilon &=&
  \overline{U}_{0}
  +\Phi_\epsilon\end{array}
\end{equation}
is a solution of the travelling wave system (\ref{nieuwetravellingwaveeq}) with wave speed $c = c_\epsilon$.
In addition, we have the limit
\begin{equation}
\begin{array}{lcl}
\lim_{\epsilon \downarrow 0}\,\Big[
  \nrm{ \epsilon  \Phi_{o;\epsilon}''}_{\mathbf{L}^2_o}
  + \nrm{ \Phi_{e;\epsilon}''}_{\mathbf{L}^2_e}
  + \nrm{\Phi_{\epsilon}'}_{\mathbf{L}^2_o \times \mathbf{L}^2_e }
  + \nrm{\Phi_{\epsilon}}_{\mathbf{L}^2_o \times \mathbf{L}^2_e }
  + \abs{c_{\epsilon} - c_0 } \Big] &= & 0
\end{array}
\end{equation}
and the function $\overline{U}_{\epsilon}$ is locally unique up to translation.
\end{theorem}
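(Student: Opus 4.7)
The plan is to set up a Newton-type fixed-point scheme around the singular wave $(\overline{U}_0, c_0)$, solving simultaneously for the correction $\Phi_\epsilon = (\Phi_{o;\epsilon}, \Phi_{e;\epsilon})$ and the wave-speed correction $c_\epsilon - c_0$. The core linear object is the linearization
\begin{equation*}
\mathcal{L}_\epsilon : \mathbf{H}^1_o \times \mathbf{H}^1_e \to \mathbf{L}^2_o \times \mathbf{L}^2_e
\end{equation*}
of (\ref{nieuwetravellingwaveeq}) at $\overline{U}_0$. The first task is to show that $\mathcal{L}_\epsilon$ is Fredholm of index zero with a one-dimensional kernel asymptotically spanned by $\overline{U}_0'$, inherited from the limiting operator $\overline{L}_e$ of (\asref{extraaannamespuls}{\text{H}}). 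To make the Newton step well-posed I impose a phase condition $\langle \Phi_\epsilon, \overline{U}_0' \rangle = 0$ and use the wave-speed correction as the Lagrange parameter needed to place the residual into the range of $\mathcal{L}_\epsilon$.

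The uniform invertibility of $\mathcal{L}_\epsilon$ (transverse to this translational kernel) would be established by the spectral convergence method of \cite{BatesInfRange}. Arguing by contradiction with sequences $\epsilon_j \downarrow 0$ and normalized $V_j$ satisfying $\mathcal{L}_{\epsilon_j} V_j \to 0$, one extracts a weak limit $V_0$; multiplying the odd $u$-equation by $\epsilon_j^2$ forces the algebraic constraint (\ref{eq:mr:id:for:ovl:u:zero}) in the limit, so that the even component satisfies $\overline{L}_e V_{e;0} = 0$ and hence lies in $\mathrm{span}(\overline{U}_{e;0}')$, while the odd $w$-component is pinned by (\ref{eq:mr:eq:fro:ovl:w:o:zero}). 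The strong convergence needed to contradict the phase condition $\langle V_j, \overline{U}_0' \rangle = 0$ must be produced via coercivity estimates of the form (\ref{eq:int:coercv:est:diffusion}) applied to $\Delta_{\mathrm{mix}}$; here the cross-coupling between $u$- and $w$-components is absorbed either through the positive-definiteness of (\asref{aannamesconstanten1}{\text{h}}) or through the skew-symmetry $G_{1,2} = -\Gamma G_{2,1}^T$ in (\asref{aannamesconstanten2}{\text{h}}), granted by (\asref{aannamesconstanten3}{\text{H}}).

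The hard part will be that the $\epsilon^{-2}$ factor in front of $\mathcal{D}\Delta_{\mathrm{mix}}$ forces the a priori bounds extracted from spectral convergence to be $\epsilon$-weighted: $\mathcal{L}_\epsilon^{-1}$ controls $\Phi_\epsilon$ only in norms too weak to treat the quadratic part of the residual as a genuine higher-order perturbation, so the naive Newton map is not a contraction in the $\mathbf{H}^1$ topology. Following the strategy of \cite{HJHADPGRID}, I would differentiate (\ref{nieuwetravellingwaveeq}) once in $\xi$ and apply the same invertibility result to the linear equation satisfied by $\Phi_\epsilon'$, obtaining the sharper second-derivative bounds
\begin{equation*}
\nrm{\epsilon\,\Phi_{o;\epsilon}''}_{\mathbf{L}^2_o} + \nrm{\Phi_{e;\epsilon}''}_{\mathbf{L}^2_e} = o(1).
\end{equation*}
These feed back into $L^\infty$ control through the embedding $\mathbf{H}^1 \hookrightarrow L^\infty$ and recover the Lipschitz estimates required to handle $f_o, f_e, g_o, g_e$.

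With a uniform lower bound on $\mathcal{L}_\epsilon$ modulo translations and the improved regularity for the Newton update in hand, a standard contraction in the $\epsilon$-weighted product space yields a unique fixed point $\Phi_\epsilon$ and wave speed $c_\epsilon$ for all $0 < \epsilon < \epsilon_*$. The vanishing of each norm in the stated limit follows because the residual of $\overline{U}_0$ in (\ref{nieuwetravellingwaveeq}) is itself $o(1)$ as $\epsilon \downarrow 0$, by the very construction of $\overline{U}_0$ through (\ref{ongesplitstenagumo})--(\ref{eq:mr:eq:fro:ovl:w:o:zero}); local uniqueness up to translation is automatic from the implicit-function character of the argument together with the simplicity of the kernel in (\asref{extraaannamespuls}{\text{H}}).
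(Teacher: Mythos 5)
Your proposal is correct and follows essentially the same route as the paper: spectral convergence to transfer invertibility from the limiting operator, differentiating the travelling wave equation once to upgrade the weighted bounds to genuine $\mathbf{H}^2$ control (needed because the naive estimates carry an $\epsilon$-weight on the odd block), and then a contraction in an $\epsilon$-weighted $\mathbf{H}^2$ ball with the wave speed chosen to annihilate the obstruction associated to the translational kernel. One small technical distinction worth knowing: the paper does not restrict to a phase-orthogonal complement and invert $\mathcal{L}_\epsilon$ modulo translations; instead it works with the $\delta$-shifted operator $\tilde{\mathcal{L}}_{\epsilon,\delta}$, which is invertible on all of $\mathbf{H}^1$ but with an estimate carrying a $\delta^{-1}\big|\langle\,\cdot\,,(0,\overline{\Phi}_{e;0}^{\mathrm{adj}})\rangle\big|$ term, and the wave speed $c_\delta(\Phi_e)$ is chosen precisely to kill that projection (so the solvability condition is taken against the adjoint kernel $\overline{\Phi}_{e;0}^{\mathrm{adj}}$, not $\overline{U}_0'$). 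Both devices accomplish the same Lyapunov--Schmidt reduction and the $\delta$-shift has the mild advantage of avoiding any need to characterize the kernel of the $\epsilon$-dependent operator before the fixed point is found.
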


In order to show that our new-found travelling wave solution is stable
under the flow of the LDE (\ref{ditishetprobleem}),
we need to impose the following extra assumption on the operator $\overline{L}_{e}$.
To understand the restriction on $\lambda$, we recall that the spectrum of
$\overline{L}_e$ admits the periodicity
$\lambda \mapsto \lambda + 2 \pi i c_0$.

\begin{assumption}{S2}{\text{H}}\label{extraextraaannamespuls}
There exists a constant $\lambda_e>0$
so that the operator
$\overline{L}_{e}+\lambda: \mathbf{H}^1_e \to \mathbf{L}^2_e$ is invertible
for all $\lambda \in \mathbb{C} \setminus  2 \pi i c_0 \Z$ that have $\Re \lambda \ge - \lambda_e$.
\end{assumption}

Together with (HS1) this condition states that the wave $(\overline{U}_{e;0} , c_0)$
for the limiting even system (\ref{nagumolde:even}) is spectrally stable.
Our second main theorem shows that this can be generalized to a nonlinear stability
result for the wave solutions \sref{eq:mr:trv:wave:ansatz}
of the full system \sref{ditishetprobleem}.

\begin{theorem}[{See \S \ref{sectionstability}}]\label{nonlinearstability}  Assume that (\asref{aannamesconstanten}{\text{H}}),
(\asref{aannamesconstanten3}{\text{H}}), (\asref{aannamespuls:even}{\text{H}}),
(\asref{aannamespuls:odd}{\text{H}}), (\asref{extraaannamespuls}{\text{H}}) and
(\asref{extraextraaannamespuls}{\text{H}}) are satisfied
and pick a sufficiently small $\epsilon > 0$.
Then there exist constants $\delta>0$, $C>0$ and $\beta>0$ 
so that for all $1 \le p \le \infty$ and all
initial conditions
\begin{equation}
(u^0, w^0) \in \ell^\infty( \mathbb{Z}; \Real^n ) \times \ell^\infty(\mathbb{Z} ; \Real^k )
\end{equation}
that admit the bound
\begin{equation}\begin{array}{lclcl}
E_0 &:= &\nrm{u^0 - u_{\epsilon}(0)}_{\ell^p(\mathbb{Z} ; \Real^n) }
  + \nrm{w^0 - w_{\epsilon}(0)}_{\ell^p(\mathbb{Z} ; \Real^k) } &< &\delta,\end{array}
\end{equation}
there exists an asymptotic
phase shift $\tilde{\theta}\in\R$ such that the solution $(u,w)$ of (\ref{ditishetprobleem}) with
the initial condition $(u,w)(0)=(u^0, w^0)$
satisfies the estimate
\begin{equation}\begin{array}{lcl}
\nrm{u(t) - u_{\epsilon}(t + \tilde{\theta})}_{\ell^p(\mathbb{Z} ; \Real^n) }
  + \nrm{w(t) - w_{\epsilon}(t + \tilde{\theta})}_{\ell^p(\mathbb{Z} ; \Real^k) }
&\leq& Ce^{-\beta t}  E_0
\end{array}
\end{equation}
for all $t>0$.
\end{theorem}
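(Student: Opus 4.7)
The plan is to deduce the nonlinear stability estimate from a spectral stability statement for the linearization $\overline{\mathcal{L}}_\epsilon$ of \sref{nieuwetravellingwaveeq} around the wave $(\overline{U}_\epsilon, c_\epsilon)$ produced by Theorem \ref{maintheorem}, and then close a standard variation-of-constants iteration in $\ell^p$. First I would promote \asref{extraextraaannamespuls}{\text{H}} to a uniform invertibility statement for $\overline{\mathcal{L}}_\epsilon + \lambda : \mathbf{H}^1_o \times \mathbf{H}^1_e \to \mathbf{L}^2_o \times \mathbf{L}^2_e$ that holds for all sufficiently small $\epsilon$ simultaneously with all $\lambda$ in the half-strip $\{\Re \lambda \ge -\lambda_e\}$ outside small disks around the translational eigenvalues $2\pi i c_\epsilon \Z$. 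The mechanism is to recycle the spectral-convergence machinery of \S\ref{sec:lim:op}--\ref{singularoperator} with $\lambda$ entered as a parameter: a putative sequence $(\overline{\mathcal{L}}_{\epsilon_j} + \lambda_j) v_j = w_j$ with unit $\mathbf{H}^1$-norm, $\epsilon_j \downarrow 0$, $\lambda_j \to \lambda_\infty$ and $w_j \to 0$ would extract a weak limit that either lies in the kernel of $\overline{L}_e + \lambda_\infty$, contradicting \asref{extraextraaannamespuls}{\text{H}}, or violates a coercivity-type lower bound on $\|w_j\|$ derived from \asref{aannamesconstanten3}{\text{H}}.

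The central obstacle, flagged already in the introduction, is a quantifier exchange: spectral convergence reads naturally as ``for each $\lambda$ there is an $\epsilon_0(\lambda)$,'' whereas the nonlinear argument demands one $\epsilon_*$ working uniformly over an unbounded $\lambda$-range. Following the pattern of \cite{HJHFHNINFRANGE}, I would split the half-strip into a compact window $|\Im \lambda| \le M$, on which a second weak-compactness step upgrades pointwise control to locally uniform control in $\lambda$, and a high-frequency region $|\Im \lambda| > M$, where the unbounded term $c_\epsilon \tfrac{d}{d\xi}$ dominates and a direct Neumann-series bound of the form $\|(\overline{\mathcal{L}}_\epsilon + \lambda)^{-1}\| \lesssim |\Im \lambda|^{-1}$ is available. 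The $\lambda \mapsto \lambda + 2\pi i c_\epsilon$ quasi-periodicity of the spectrum then propagates the compact-window bound uniformly up the half-strip. This step is where the bulk of the technical work lives, and where the delicate interaction between the two quantifiers must be handled with care.

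With the uniform resolvent estimate in hand, the remainder follows the blueprint of \cite[\S 7--8]{HJHFHNINFRANGEFULL}, itself a transfer of the finite-range theory in \cite{HJHSTBFHN}. An inverse Laplace transform of the resolvent, together with the simple eigenvalue at $\lambda = 0$ furnished by \asref{extraaannamespuls}{\text{H}} (which is also transferred to $\overline{\mathcal{L}}_\epsilon$ by the spectral-convergence argument at $\lambda = 0$), yields an exponential dichotomy for the semigroup of the linearization of \sref{ditishetprobleem} about the wave: a spectral projection $Q$ onto $\mathrm{span}(\overline{U}_\epsilon')$ captures the translational mode, while on $\ker Q$ the semigroup decays like $C e^{-\beta t}$ for any fixed $0 < \beta < \lambda_e$. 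Writing
\begin{equation*}
(u, w)(t) \;=\; \big( u_\epsilon, w_\epsilon \big)\!\big( t + \theta(t) \big) + (p, q)(t),
\end{equation*}
and choosing $\theta(t)$ to impose $Q (p, q)(t) = 0$ yields a scalar ODE for $\theta$ whose right-hand side is quadratic in $(p, q)$.

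The final step is a variation-of-constants fixed point in the Banach space of exponentially weighted curves $e^{\beta t}(p, q) \in C\!\big([0, \infty);\, \ell^p(\Z; \R^{n+k})\big)$. The $C^3$-smoothness of $F_o, F_e$ provides local Lipschitz bounds on the nonlinearity; the lattice Laplacian and nearest-neighbour shifts are bounded on every $\ell^p$, so the semigroup estimates are $p$-independent up to absolute constants; and the smallness of $E_0 < \delta$ keeps the quadratic nonlinear terms subordinate to the spectral gap. The asymptotic phase $\tilde{\theta} := \lim_{t \to \infty} \theta(t)$ then exists because $\dot{\theta} \in L^1(0, \infty)$ by the exponential decay of $(p, q)$, and substituting it back yields the stated estimate uniformly over $1 \le p \le \infty$.
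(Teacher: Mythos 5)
Your proposal is broadly sound and captures the right two-stage structure (spectral stability of $\overline{\mathcal{L}}_\epsilon$ via spectral convergence, then closure of a nonlinear iteration), but the paper takes a noticeably more economical route for the nonlinear step, and partitions the spectral region slightly differently than you suggest.

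On the nonlinear side, the paper does not re-derive the modulation ansatz, spectral projection, and variation-of-constants iteration at all. Instead it performs a simple relabelling — grouping each odd site $2j+1$ with its even neighbour $2j$ into a single $2(n+k)$-dimensional variable $(u_{j;o},w_{j;o},u_{j;e},w_{j;e})$ — thereby converting the $2$-periodic LDE \sref{ditishetprobleem} into a \emph{spatially homogeneous} system \sref{eq:ss:alternatesystem} of twice the dimension. Once Theorem \ref{maintheorem} and Proposition \ref{maintheorem2} certify the hypotheses, the nonlinear stability estimate in all $\ell^p$ is obtained in a single stroke by invoking \cite[Prop.~2.1]{HJHSTBFHN}, whose entire machinery (Green's function decomposition, semigroup bounds, fixed point) applies verbatim to homogeneous systems. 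Your re-sketch of that machinery is not wrong, but it re-proves the cited result rather than reducing to it; the reformulation trick is the genuine shortcut you missed.

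On the spectral side, your partition of the half-strip (compact window $|\Im\lambda|\le M$, high-frequency tail handled by a Neumann-series resolvent bound in $|\Im\lambda|^{-1}$, then periodicity) would work, but it is not what the paper does, and it is slightly redundant: the $2\pi i c_\epsilon$-periodicity of the spectrum is used \emph{first} to restrict attention to the bounded strip $|\Im\lambda|\le\tfrac{3}{2}\pi c_0$, after which no high-$|\Im\lambda|$ estimate is needed at all. The paper then splits this strip into (a) a punctured disk around $\lambda=0$ handled via the quasi-inverse and Liapunov--Schmidt (Lemmas \ref{prop3.2bdf}--\ref{lemmaspectrumklein}), (b) the region $\Re\lambda\ge\lambda_{II}$ handled by a direct norm estimate off the coercivity of $J_{\mathrm{mix}}$ (Lemma \ref{spectrumgroot}), and (c) a residual compact set handled by Proposition \ref{theorem4equivalentcompact}. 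Your version conflates (b) and the periodicity argument; the actual ``easy'' direction in the paper is large $\Re\lambda$, not large $|\Im\lambda|$.
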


Our final result shows that our framework is broad enough
to cover the two-periodic FitzHugh-Nagumo system (\ref{eq:int:2per:fhn}).
We remark that the condition on $\gamma_e$ ensures that
$(0,0)$ is the only spatially homogeneous
equilibrium for the limiting even subsystem \sref{eq:int:wv:eq:hom:fhn}.
This allows us to apply the spatially homogeneous results obtained in
\cite{HJHFZHNGM,HJHSTBFHN}.

\begin{corollary}
Consider the LDE (\ref{eq:int:2per:fhn})
and suppose that $\gamma_o> 0$ and $\rho_o > 0$ both hold.
Suppose furthermore that $a_e$ is sufficiently far away from $\frac{1}{2}$,
that $0 < \gamma_e < 4(1-a_e)^{-2}$ and that $\rho_e > 0$ is sufficiently small.
Then for each
sufficiently small $\epsilon > 0$, there exists
a nonlinearly stable travelling pulse solution of the form (\ref{eq:mr:trv:wave:ansatz})
that satisfies the limits
\begin{equation}\begin{array}{lclcl}
\lim_{\xi \to \pm \infty} \big(\overline{u}_o(\xi), \overline{w}_o(\xi) \big) &=& (0,0),
\qquad
\qquad
\lim_{\xi \to \pm \infty} \big(\overline{u}_e(\xi), \overline{w}_e(\xi) \big) &=& (0,0).\end{array}
\end{equation}
\end{corollary}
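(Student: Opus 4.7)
The plan is to verify that the six hypotheses (HN1), (HN2), (HW1), (HW2), (HS1) and (HS2) all hold for the two-periodic FitzHugh--Nagumo system under the parameter conditions in the statement, and then to invoke Theorems \ref{maintheorem} and \ref{nonlinearstability} directly. All four nonlinearities are of the form $G_{\mathrm{fhn};\rho,\gamma}$ from \sref{eq:mr:def:g:fhn}, with $\mathcal{D}=1$, so the verification reduces to parameter bookkeeping together with a citation of the spatially homogeneous results of \cite{HJHSTBFHN,HJHFZHNGM}.

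For the structural assumptions I would first take $U^{\pm}_o = U^{\pm}_e = (0,0)$ and check that this is actually the only common homogeneous equilibrium. Setting $F_e(u,w) = 0$ forces $w = u/\gamma_e$ and $u\bigl[(1-u)(u-a_e) - 1/\gamma_e\bigr]=0$; the quadratic factor has discriminant $(1-a_e)^2 - 4/\gamma_e$, so the condition $\gamma_e < 4(1-a_e)^{-2}$ rules out any real roots with $u \neq 0$. This gives (\asref{aannamesconstanten}{\text{H}}), with the additional $C^3$-smoothness of $F_o, F_e$ being immediate. To verify (\asref{aannamesconstanten3}{\text{H}}) I would check (\asref{aannamesconstanten2}{\text{h}}) at $(0,0)$ for both triplets: in the block form \sref{blockstructure} one has $G_{1,1} = -a_\star$, $G_{2,2} = -\rho_\star\gamma_\star$, $G_{1,2}=-1$, $G_{2,1}=\rho_\star$ (with $\star \in \{o,e\}$), so the definiteness conditions reduce to $a_\star, \rho_\star\gamma_\star > 0$ and the relation $G_{1,2}=-\Gamma G_{2,1}^T$ holds pointwise with $\Gamma_\star = 1/\rho_\star$.

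Next I would handle the existence of the singular wave. The limiting even subsystem \sref{nagumolde:even} is, up to the spatial rescaling $\eta = \xi/2$ and a harmless rescaling of $c$, $\rho_e$, a standard spatially homogeneous FitzHugh--Nagumo LDE of the form \sref{finiterangeversion}. The hypotheses on $a_e$, $\rho_e$, $\gamma_e$ place us exactly in the regime covered by \cite{HJHFZHNGM}, which supplies a $C^1$ travelling pulse $\overline{U}_{e;0}$ with some wave speed $c_0 \neq 0$ connecting $(0,0)$ to itself; this yields (\asref{aannamespuls:even}{\text{H}}). For (\asref{aannamespuls:odd}{\text{H}}) the equation \sref{eq:mr:eq:fro:ovl:w:o:zero} is a scalar linear ODE
\begin{equation}
c_0 \overline{w}_{o;0}' + \rho_o \gamma_o \overline{w}_{o;0}
  = \tfrac{1}{2}\rho_o\bigl[\overline{u}_{e;0}(\xi+1)+\overline{u}_{e;0}(\xi-1)\bigr],
\end{equation}
with constant coefficient $\rho_o \gamma_o / c_0 \neq 0$ and exponentially decaying right-hand side (since $\overline{u}_{e;0} \to 0$ at $\pm\infty$). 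Variation of parameters with the integration contour chosen according to the sign of $\rho_o\gamma_o/c_0$ then yields a unique bounded $C^1$ solution satisfying $\overline{w}_{o;0}(\pm\infty)=0$.

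Finally, the spectral assumptions (\asref{extraaannamespuls}{\text{H}}) and (\asref{extraextraaannamespuls}{\text{H}}) are exactly the Fredholm and half-strip-invertibility properties that are established for the linearization of the homogeneous FitzHugh--Nagumo LDE around its pulse in \cite[\S 2--3]{HJHSTBFHN}, transferred through the rescaling used for (\asref{aannamespuls:even}{\text{H}}). Once all six assumptions are confirmed, Theorem \ref{maintheorem} supplies a branch of travelling waves for sufficiently small $\epsilon > 0$ converging to $\overline{U}_0$, and Theorem \ref{nonlinearstability} upgrades these to nonlinearly stable pulses, which is the claimed conclusion. The main obstacle I anticipate is the careful bookkeeping in the rescaling that translates \sref{nagumolde:even} into a standard FitzHugh--Nagumo LDE so that the existence and spectral stability statements of \cite{HJHSTBFHN,HJHFZHNGM}, which are stated for the normalized system \sref{finiterangeversion}, apply verbatim with the modified parameters $(a_e,\rho_e,\gamma_e)$ still in the permissible regime dictated by those papers.
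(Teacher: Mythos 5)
Your proposal is correct and follows essentially the same route as the paper: verify (HN1), (HN2), (HW1), (HW2), (HS1), (HS2) for the FitzHugh--Nagumo nonlinearity and then invoke Theorems~\ref{maintheorem} and~\ref{nonlinearstability}. The only minor deviation is in (HW2), where you solve the scalar linear ODE explicitly by variation of parameters while the paper instead points to the invertibility of the constant-coefficient operator $\overline{L}_o$ via Lemma~\ref{eigenschappen2ecompL0}; both arguments are sound and yield the same decaying solution $\overline{w}_{o;0}$.
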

\begin{proof}
Assumption (HN1) can be verified directly, while (HN2)
follows from the discussion above concerning the nonlinearity
$G_{\mathrm{fhn}; \rho, \gamma}$ defined in
\sref{eq:mr:def:g:fhn}. Assumption (HW1)
follows from the existence theory developed in
\cite{HJHFZHNGM}, while (HS1) and (HS2)
follow from the spectral analysis
in \cite{HJHSTBFHN}. The remaining condition (HW2)
can be verified by noting
that the nonlinearity $g_o$ is in fact linear
and invertible with respect to $\overline{w}_{o;0}$ on account of
Lemma \ref{eigenschappen2ecompL0} below.
\end{proof}

\section{The limiting system}\label{sec:lim:op}

In this section we analyze
the linear operator
that is associated to the limiting
system that arises by combining
(\ref{nagumolde:even})
and (\ref{eq:mr:eq:fro:ovl:w:o:zero}).
In order to rewrite this system in a compact fashion, we introduce the notation
\begin{equation}\label{def:Si}
\begin{array}{lcl}
[ S_i \phi](\xi)&=&\phi(\xi+i)+\phi(\xi-i)\end{array}
\end{equation}
together with the
$(n+k) \times (n+k)$-matrix $J_{\mathcal{D}}$ that has the block structure
\begin{equation}
\begin{array}{lcl}
J_{\mathcal{D}}&=&\left(\begin{array}{ll}\mathcal{D}&0\\ 0&0
\end{array}\right).
\end{array}
\end{equation}
This allows us to recast \sref{defoverlineL}
in the shortened form
\begin{equation}\label{defoverlineL:short}
\begin{array}{lcl}
\overline{L}_{e}&=&c_0\frac{d}{d \xi}-\frac{1}{2}J_{\D} (S_2 - 2 ) -DF_e(\overline{U}_{e;0}).
\end{array}
\end{equation}
One can associate a formal adjoint $\overline{L}_{e}^{\mathrm{adj}}: \mathbf{H}^1_e \to \mathbf{L}^2_e$
to this operator by writing
\begin{equation}\label{def:adj:Le}
\begin{array}{lcl}
\overline{L}_{e}^{\mathrm{adj}}
&=&-c_0\frac{d}{d\xi} -\frac{1}{2} J_{\D} (S_2 - 2) -DF_e(\overline{U}_{e;0})^{T}.
\end{array}
\end{equation}

Assumption (HS1) together with the Fredholm theory developed in \cite{MPA} imply that
\begin{equation}\begin{array}{lcl}
\text{ind}(\overline{L}_e) &=& -\text{ind}(\overline{L}_e^{\mathrm{adj}})\end{array}
\end{equation}
holds for the Fredholm indices of these operators, which are defined as
\begin{equation}\begin{array}{lcl}
\mathrm{ind}(L) & =& \dim\big(\ker(L)\big)-\codim\big(\Range(L) \big).\end{array}
\end{equation}
In particular,
(HS1) implies that there exists a function
\begin{equation}
\overline{\Phi}_{e;0}^{\mathrm{adj}} \in \mathrm{Ker} (\overline{L}_{e}^{\mathrm{adj}})\subset \mathbf{H}_e^1
\end{equation}
that can be normalized to have
\begin{equation}
\begin{array}{lcl}
\ip{\overline{U}_{e;0}',\overline{\Phi}_{e;0}^{\mathrm{adj}}}_{\mathbf{L}^2_e}
&=&1.
\end{array}
\end{equation}

We also introduce the operator
$\overline{L}_o: H^1(\Real; \Real^k ) \to L^2(\Real; \Real^k)$
associated to the linearization of
(\ref{eq:mr:eq:fro:ovl:w:o:zero}) around
$\overline{U}_{o;0}$,
which acts as
\begin{equation}
\begin{array}{lcl}
\overline{L}_{o} &=&
c_0\frac{d}{d\xi}-D_2g_o(\overline{U}_{o;0}) .
\end{array}
\end{equation}
In order to couple this operator with $\overline{L}_e$,
we introduce the spaces
\begin{equation}
    \begin{array}{lclcl}
      \mathbf{H}_\diamond^1 &=&

      H^1(\Real; \Real^k) \times \mathbf{H}^1_e,
      \qquad
      \qquad
    \mathbf{L}_\diamond^2 &=&

      L^2( \Real; \Real^k) \times \mathbf{L}^2_e,
    \end{array}
\end{equation}
together with the operator
\begin{equation}
\begin{array}{lcl}
\mathcal{L}_{\diamond;\delta}:\mathbf{H}_\diamond^1&\rightarrow & \mathbf{L}_\diamond^2
\end{array}
\end{equation}
that acts as
\begin{equation}\label{defL0}
\begin{array}{lcl}
\mathcal{L}_{\diamond;\delta}&=&\left(\begin{array}{cc}
   \overline{L}_{o} + \delta & 0\\[0.2cm]
   0 & \overline{L}_{e} + \delta
\end{array}\right) .
\end{array}
\end{equation}
Our first main result shows that $\mathcal{L}_{\diamond;\delta}$
inherits several properties of $\overline{L}_e + \delta$.

\begin{proposition}\label{eigenschappenL0:a} Assume that (\asref{aannamesconstanten}{\text{H}}),
(\asref{aannamesconstanten3}{\text{H}}),
(\asref{aannamespuls:even}{\text{H}}), (\asref{aannamespuls:odd}{\text{H}}) and (\asref{extraaannamespuls}{\text{H}}) are satisfied.
Then there exist constants $\delta_{\diamond} > 0$ and $C_{\diamond} > 0$ so that the following holds true.
\begin{enumerate}[label=(\roman*)]
\item For every $0 < \delta < \delta_{\diamond}$, the operator $\mathcal{L}_{\diamond,\delta}$ is invertible as a map from $\mathbf{H}_{\diamond}^1$ to $\mathbf{L}_{\diamond}^2$.
\item For any $\Theta_{\diamond} \in \mathbf{L}_{\diamond}^2$ and $0 < \delta < \delta_{\diamond}$
the function $\Phi_{\diamond} = \mathcal{L}_{\diamond, \delta}^{-1} \Theta_\diamond \in \mathbf{H}_{\diamond}^1$ satisfies the bound
\begin{equation}
\begin{array}{lcl}
\nrm{\Phi_{\diamond}}_{\mathbf{H}_{\diamond}^1}&\leq & C_{\diamond}\Big[\nrm{\Theta_{\diamond}}_{\mathbf{L}_{\diamond}^2}+\frac{1}{\delta}\big|\ip{ \Theta_{\diamond}, (0, \overline{\Phi}_{e;0}^{\mathrm{adj}} )}_{\mathbf{L}^2_{\diamond}}\big|\Big].
\end{array}
\end{equation}
\end{enumerate}
\end{proposition}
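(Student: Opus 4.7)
The plan is to exploit the block-diagonal structure of $\mathcal{L}_{\diamond;\delta}$: the top block is $\overline{L}_o + \delta : H^1(\R;\R^k) \to L^2(\R;\R^k)$, the bottom block is $\overline{L}_e + \delta : \mathbf{H}^1_e \to \mathbf{L}^2_e$, and the two invertibility problems decouple. The $\delta^{-1}$ loss in part (ii) comes entirely from the even block, while the odd block will be invertible with bounds uniform in $\delta$.

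For the odd block, $\overline{L}_o + \delta$ is a first-order ODE operator whose coefficient $-D_2 g_o(\overline{U}_{o;0}(\xi)) + \delta$ converges at $\pm\infty$ to $-D_2 g_o(U^\pm_o) + \delta$. Under either (h$\alpha$) or (h$\beta$) the lower-right block $-D_2 g_o(U^\pm_o)$ is positive definite (as either a principal submatrix of a positive definite matrix, or directly by (h$\beta$)), so these asymptotic matrices have only eigenvalues with strictly positive real part. Standard exponential-dichotomy theory for asymptotically autonomous linear ODEs then yields that $\overline{L}_o + \delta$ is Fredholm of index zero. For the kernel, the eigenvalues of $-c_0^{-1}(-D_2 g_o(U^\pm_o) + \delta)$ all carry the same sign (depending only on $\mathrm{sgn}(c_0)$), so any nontrivial homogeneous solution must grow exponentially at either $+\infty$ or $-\infty$; hence the $L^2$-kernel is trivial. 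It follows that $\overline{L}_o + \delta$ is an isomorphism with $\|(\overline{L}_o + \delta)^{-1}\|$ uniformly bounded for $\delta \in [0,\delta_{\diamond})$ after possibly shrinking $\delta_{\diamond}$.

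For the even block, a Lyapunov--Schmidt argument based on the simple eigenvalue at $0$ supplied by (HS1) is natural. Introduce the spectral projection $P_0 \theta = \langle \theta, \overline{\Phi}^{\mathrm{adj}}_{e;0}\rangle_{\mathbf{L}^2_e}\, \overline{U}'_{e;0}$, which is idempotent by the chosen normalization, with range $\ker(\overline{L}_e)$ and kernel $\Range(\overline{L}_e)$. Writing $\Phi_e = \alpha\, \overline{U}'_{e;0} + \Phi_R$ with $P_0 \Phi_R = 0$ and splitting $(\overline{L}_e + \delta)\Phi_e = \Theta_e$ via $P_0$ and $I - P_0$ yields the scalar equation $\delta\, \alpha = \langle \Theta_e, \overline{\Phi}^{\mathrm{adj}}_{e;0}\rangle_{\mathbf{L}^2_e}$ together with the range equation $(\overline{L}_e + \delta)\Phi_R = (I - P_0)\Theta_e$ on the invariant subspace $\ker(P_0)$. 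The restriction of $\overline{L}_e$ to $\ker(P_0) \cap \mathbf{H}^1_e \to \ker(P_0) \cap \mathbf{L}^2_e$ is a bijection by (HS1), hence an isomorphism, so for $\delta$ small enough the perturbation $\overline{L}_e + \delta$ stays invertible there with bound independent of $\delta$ (by a Neumann series). Summing the two contributions gives $\|\Phi_e\|_{\mathbf{H}^1_e} \leq C\|\Theta_e\|_{\mathbf{L}^2_e} + C\delta^{-1}|\langle \Theta_e, \overline{\Phi}^{\mathrm{adj}}_{e;0}\rangle|$; combined with the odd-block bound and the identity $\langle \Theta_{\diamond}, (0, \overline{\Phi}^{\mathrm{adj}}_{e;0})\rangle_{\mathbf{L}^2_{\diamond}} = \langle \Theta_e, \overline{\Phi}^{\mathrm{adj}}_{e;0}\rangle_{\mathbf{L}^2_e}$, this yields (ii) and in particular invertibility (i).

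The main obstacle is the odd block: although Fredholm/invertibility for asymptotically hyperbolic first-order ODEs is classical, one must marry it with (HN2) in both its (h$\alpha$) and (h$\beta$) forms and verify that the operator bounds are uniform in $\delta \in [0,\delta_{\diamond})$. The even-block analysis, by contrast, is a textbook consequence of the algebraic simplicity of the eigenvalue at $0$, with the $\delta^{-1}$ factor being the expected cost of inverting a perturbed near-singular operator along the kernel direction.
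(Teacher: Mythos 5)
Your proposal is correct and follows essentially the same route as the paper: exploit the block-diagonal structure, handle the odd block $\overline{L}_o + \delta$ via Fredholm theory for asymptotically constant first-order ODE operators together with an exponential-growth argument ruling out $L^2$-kernel elements (this is precisely what the paper's Lemmas \ref{Loddfredholm}--\ref{eigenschappen2ecompL0} do, via the characteristic-function/spectral-flow machinery of \cite{MPA}), and handle the even block $\overline{L}_e + \delta$ by exploiting the simple eigenvalue at zero from (HS1). The only difference is that the paper defers the even block to \cite[Lemma 3.1(5)]{HJHFHNINFRANGE}, whereas you make the underlying Lyapunov--Schmidt decomposition explicit; the substance is the same.
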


If (HS2) also holds, then we can consider compact sets $\lambda \in M \subset \mathbb{C}$
that avoid the spectrum of $\overline{L}_e$. To formalize this,
we impose the following assumption on $M$ and state our second main result.

\begin{assumption}{$M_{\lambda_0}$}{\text{h}}\label{Massumption} The set $M\subset \C$ is
compact with $2 \pi i c_0 \mathbb{Z} \cap  M = \emptyset$.
In addition, 
we have $\Re \, \lambda \geq -\lambda_0$ for all $\lambda \in M$.
\end{assumption}

\begin{proposition}\label{eigenschappenL0:b}
Assume that (\asref{aannamesconstanten}{\text{H}}),
(\asref{aannamesconstanten3}{\text{H}}), (\asref{aannamespuls:even}{\text{H}}),
(\asref{aannamespuls:odd}{\text{H}}), (\asref{extraaannamespuls}{\text{H}}) and (\asref{extraextraaannamespuls}{\text{H}})
are all satisfied and pick a sufficiently small constant $\lambda_{\diamond} > 0$.
Then for any set $M \subset \C$ that satisfies 
(h$M_{\lambda_{\diamond}}$)
there exists a constant $C_{\diamond;M} > 0$ so that the following holds true.
\begin{itemize}
\item[(i)] For every $\lambda \in M$, the operator $\mathcal{L}_{\diamond,\lambda}$ is invertible as a map from $\mathbf{H}_{\diamond}^1$ to $\mathbf{L}_{\diamond}^2$.
\item[(ii)] For any $\Theta_{\diamond} \in \mathbf{L}_{\diamond}^2$ and $\lambda \in M$,
the function $\Phi_{\diamond} = \mathcal{L}_{\diamond, \lambda}^{-1} \Theta_{\diamond} \in \mathbf{H}_{\diamond}^1$ satisfies the bound
\begin{equation}
\begin{array}{lcl}
\nrm{\Phi_{\diamond}}_{\mathbf{H}_{\diamond}^1}&\leq & C_{\diamond;M} \nrm{\Theta_{\diamond}}_{ \mathbf{L}_{\diamond}^2}.
\end{array}
\end{equation}
\end{itemize}
\end{proposition}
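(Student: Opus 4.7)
The plan is to exploit the block-diagonal form
\[
\mathcal{L}_{\diamond,\lambda} \;=\; \begin{pmatrix} \overline{L}_{o} + \lambda & 0 \\ 0 & \overline{L}_{e} + \lambda \end{pmatrix},
\]
which decouples the problem: invertibility $\mathbf{H}_\diamond^1 \to \mathbf{L}_\diamond^2$ together with a uniform operator-norm bound on $M$ is equivalent to the analogous claim for each diagonal block separately, and $C_{\diamond;M}$ can be taken as the maximum of the two block bounds. For $\Theta_\diamond = (\Theta_o,\Theta_e) \in \mathbf{L}_\diamond^2$ the inverse is just $((\overline{L}_o+\lambda)^{-1}\Theta_o, (\overline{L}_e+\lambda)^{-1}\Theta_e)$.

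The $\overline{L}_{e}$-block is handled directly by (\asref{extraextraaannamespuls}{\text{H}}). Provided I take $\lambda_\diamond \leq \lambda_e$, the hypothesis (h$M_{\lambda_\diamond}$) places $M$ inside the region where $\overline{L}_{e} + \lambda : \mathbf{H}^1_e \to \mathbf{L}^2_e$ is invertible. Holomorphy of the resolvent $\lambda \mapsto (\overline{L}_{e}+\lambda)^{-1}$ on this open set, together with compactness of $M$, yields a uniform operator-norm bound.

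The substantive work is on the first-order ODE operator $\overline{L}_{o} + \lambda : H^1(\R;\R^k) \to L^2(\R;\R^k)$, whose coefficient $D_2 g_o(\overline{U}_{o;0}(\xi))$ tends to $D_2 g_o(U_o^\pm)$ as $\xi \to \pm\infty$. A short computation shows that under either (h$\alpha$) or (h$\beta$) of (\asref{aannamesconstanten3}{\text{H}}) the symmetric part of $-D_2 g_o(U_o^\pm)$ is positive definite, so the spectra of $D_2 g_o(U_o^\pm)$ lie in a half-plane $\{\Re z \leq -\kappa\}$ for some $\kappa > 0$. Choosing $\lambda_\diamond < \kappa$ guarantees that for every $\lambda \in M$ the shifted asymptotic matrices $D_2 g_o(U_o^\pm) - \lambda I$ remain strictly in the open left half-plane, so the two constant-coefficient limit equations admit exponential dichotomies of identical Morse index. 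By roughness these persist along $\overline{U}_{o;0}$, making $\overline{L}_o + \lambda$ Fredholm of index zero; triviality of its kernel follows because any $L^2$-solution of the homogeneous equation must lie in the intersection of dichotomy subspaces, which is $\{0\}$. Hence $\overline{L}_o + \lambda$ is invertible, and the holomorphic dependence of the resolvent on $\lambda$ combined with compactness of $M$ supplies the required uniform norm bound.

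The principal obstacle lies in this last step: each individual invertibility of $\overline{L}_o + \lambda$ is routine, but uniform control of the inverse across $M$ needs the holomorphic-family viewpoint together with the spectral-gap bound $\lambda_\diamond < \kappa$, which feeds directly into the dichotomy constants and thus into the operator norm of the inverse.
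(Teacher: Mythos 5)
Your proposal is correct and arrives at the paper's conclusion, but the low-level treatment of the $\overline{L}_o$ block differs from the paper's. The paper's own proof of Proposition \ref{eigenschappenL0:b} is a one-liner pointing back to the proof of Proposition \ref{eigenschappenL0:a}, which decouples the block-diagonal operator: the $\overline{L}_e + \lambda$ block is handled via (HS2) together with compactness of $M$, and the $\overline{L}_o + \lambda$ block is handled by citing Lemma \ref{eigenschappen2ecompL0}, which already delivers both invertibility for $\Re\lambda \geq -\lambda_o$ and a uniform bound over compact subsets of this half-plane. You reproduce this decoupling exactly, so the only genuine divergence is how the $\overline{L}_o$ statement is established. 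The paper's Lemma \ref{eigenschappen2ecompL0} rests on Lemma \ref{Loddfredholm}, which gets the Fredholm-index-zero property from the Mallet-Paret spectral-flow theory, and then kills the kernel with a direct Gronwall-type growth estimate showing that any nontrivial solution of the homogeneous ODE must blow up as $\xi \to -\infty$. You instead go through exponential dichotomies for the asymptotic constant-coefficient systems, roughness, and the standard Fredholm characterization for ODE operators. Both routes hinge on the same spectral input --- the negative-definiteness of $D_2g_o(U_o^\pm)$ established in Lemma \ref{limitnegdef}, which is your ``short computation'' --- and both are valid; the paper's choice of tools is in line with its stated preference to avoid dichotomies, though they are entirely classical for this pure first-order ODE block. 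One small caveat: your assertion that the intersection of dichotomy subspaces is trivial needs a word of justification, as this is not automatic; it holds here because the unstable subspace of the limit at $-\infty$ is trivial (all eigenvalues of $D_2g_o(U_o^-) - \lambda$ remain in the open left half-plane when $\Re\lambda \geq -\lambda_\diamond$ with $\lambda_\diamond$ small), which is precisely the mechanism the paper's Gronwall estimate makes explicit.
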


\subsection{Properties of $\overline{L}_o$}
The assumptions (HS1) and (HS2) already contain the information on $\overline{L}_e$ that we require
to establish Propositions \ref{eigenschappenL0:a}-\ref{eigenschappenL0:b}.
Our task here is therefore to understand the operator $\overline{L}_o$.
As a preparation, we show that the top-left and bottom-right corners of the limiting Jacobians
$DF_o(U^\pm_o)$ are both negative definite,
which will help us to establish useful Fredholm properties.

\begin{lemma}\label{limitnegdef} Assume that (\asref{aannamesconstanten}{\text{H}})
and (\asref{aannamesconstanten3}{\text{H}})
are both satisfied. Then  the  matrices $D_1 f_{\#}(U^{\pm}_{\#})$ and
$D_2 g_{\#}( U^\pm_{\#})$ are all negative definite for each $\#\in\{o,e\}$.
\end{lemma}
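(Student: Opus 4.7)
The plan is to split on (HN2) for each fixed $\# \in \{o,e\}$, independently selecting whichever of (h$\alpha$) or (h$\beta$) the triplet $(F_\#, U_\#^-, U_\#^+)$ satisfies. Since the two cases can be handled separately and the even/odd choices are independent, the argument reduces to two short verifications that I would carry out in parallel.

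In the (h$\beta$) case the conclusion is immediate from the hypothesis itself: writing $DF_\#$ in the block form \sref{blockstructure} with the top-left block of size $n \times n$ one has $G_{1,1} = D_1 f_\#$ and $G_{2,2} = D_2 g_\#$, so (h$\beta$) directly asserts that $-D_1 f_\#(U_\#^\pm)$ and $-D_2 g_\#(U_\#^\pm)$ are positive definite.

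In the (h$\alpha$) case I would deduce the claim from the standard fact that the diagonal blocks of a positive definite matrix inherit positive definiteness. Concretely, for any $x \in \R^n \setminus \{0\}$ the quadratic form identity
\begin{equation}
x^T \bigl[-D_1 f_\#(U_\#^\pm)\bigr] x \;=\; (x, 0)^T \bigl[-DF_\#(U_\#^\pm)\bigr] (x, 0) \;>\; 0,
\end{equation}
where the strict inequality comes from (h$\alpha$), proves positive definiteness of the top-left block, and the analogous computation with test vectors of the form $(0, y)$ for $y \in \R^k \setminus \{0\}$ handles the bottom-right block $-D_2 g_\#(U_\#^\pm)$.

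The only potential pitfall is that "positive definite" is being used here for possibly non-symmetric matrices, in the sense $v^T A v > 0$ for all $v \neq 0$. This is precisely the notion that survives restriction to coordinate subspaces, so no difficulty arises; assembling the two cases over $\# \in \{o,e\}$ then yields the lemma.
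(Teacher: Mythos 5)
Your proof is correct and follows the same approach as the paper: split on whether $(F_\#, U_\#^-, U_\#^+)$ satisfies (h$\alpha$) or (h$\beta$), read the conclusion off directly in the (h$\beta$) case, and use the fact that principal submatrices inherit negative definiteness in the (h$\alpha$) case. The paper's proof is a single compressed sentence making exactly these observations, so your write-up is just a more detailed rendering of the same argument.
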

\textit{Proof.} Note first that $D_1f_{\#}$ and $D_2g_{\#}$ correspond with $G_{1,1}$ respectively $G_{2;2}$ in the block structure
(\ref{blockstructure}) for $DF_{\#}$.
We hence  see that the matrices $D_1 f_{\#}(U^{\pm}_{\#})$ and $D_2 g_{\#}( U^\pm_{\#})$ are negative definite,
either directly by (h$\beta$)
or by the fact that they are principal submatrices of
$DF_{\#}(U_{\#}^{\pm})$, which are negative definite if (h$\alpha$) holds.\qed\\

\begin{lemma}\label{Loddfredholm} Assume that (\asref{aannamesconstanten}{\text{H}}),
(\asref{aannamesconstanten3}{\text{H}}), (\asref{aannamespuls:even}{\text{H}}) and (\asref{aannamespuls:odd}{\text{H}}) are satisfied.
Then there exists
$\lambda_o>0$ so that the operator
$\overline{L}_o+\lambda$ is Fredholm with index zero
for each  $\lambda\in\C$ with $\Re\lambda\geq-\lambda_o$.
\end{lemma}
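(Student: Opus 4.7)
The first observation to exploit is that $\overline{L}_o + \lambda = c_0 \tfrac{d}{d\xi} - D_2 g_o(\overline{U}_{o;0}) + \lambda$ is a pure first-order linear ODE operator on $H^1(\R;\R^k)$: no shifts are present, so this falls inside the Palmer / Mallet-Paret Fredholm framework of \cite{MPA} as a trivial special case. By (HW2) together with the $C^3$-smoothness of $g_o$, the coefficient $D_2 g_o(\overline{U}_{o;0}(\xi))$ converges as $\xi \to \pm\infty$ to the constant matrices $D_2 g_o(U_o^\pm)$. The associated asymptotic characteristic functions are $z \mapsto c_0 z\, I - D_2 g_o(U_o^\pm) + \lambda$, and the MPA criterion for Fredholmness is that these functions admit no roots on the imaginary axis.

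Next I would invoke Lemma \ref{limitnegdef}, which tells us that both $D_2 g_o(U_o^-)$ and $D_2 g_o(U_o^+)$ are negative definite. Set $\alpha_o > 0$ equal to the smallest value of $|\Re \nu|$ taken over all eigenvalues $\nu$ of these two matrices, and fix any $\lambda_o \in (0,\alpha_o)$. Then for every $\lambda \in \C$ with $\Re\lambda \ge -\lambda_o$, every eigenvalue of $D_2 g_o(U_o^\pm) - \lambda$ continues to have strictly negative real part. In particular the characteristic functions above have no imaginary roots, and \cite{MPA} yields the Fredholm property of $\overline{L}_o + \lambda : H^1(\R;\R^k) \to L^2(\R;\R^k)$.

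It remains to verify that the Fredholm index equals zero. The MPA index formula reduces in the pure ODE case to the classical Palmer formula $\mathrm{ind}(\overline{L}_o + \lambda) = \dim E^u_- - \dim E^u_+$, where $E^u_\pm$ denotes the unstable subspace of the matrix $c_0^{-1}\bigl(D_2 g_o(U_o^\pm) - \lambda\bigr)$. Since every eigenvalue of $D_2 g_o(U_o^\pm) - \lambda$ lies in the open left half-plane, division by the nonzero real constant $c_0$ places all eigenvalues of $c_0^{-1}(D_2 g_o(U_o^\pm) - \lambda)$ on the same side of the imaginary axis, and that side is determined solely by $\mathrm{sign}(c_0)$. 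Consequently $\dim E^u_+ = \dim E^u_- \in \{0,k\}$, and the Fredholm index vanishes.

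No substantial difficulty arises; the only point to be careful about is choosing $\lambda_o$ strictly smaller than $\alpha_o$, so that the inequality $\Re(\nu - \lambda) < 0$ is preserved uniformly and hyperbolicity is not lost at the boundary $\Re \lambda = -\lambda_o$.
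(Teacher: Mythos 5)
Your proof is correct, but it takes a genuinely different route to the index-zero conclusion than the paper does, and it is worth noting the distinction. Both arguments start in the same place: Lemma~\ref{limitnegdef} gives negative definiteness of $D_2 g_o(U_o^\pm)$, which is used to rule out purely imaginary roots of the asymptotic characteristic functions and hence obtain hyperbolicity and the Fredholm property from \cite{MPA}. For the index the paper uses a homotopy: it defines the one-parameter family $L_{\rho,\lambda}$ of constant-coefficient operators that interpolates between the two asymptotic limits, checks that hyperbolicity persists for every $\rho\in[0,1]$ (via the positive definiteness of $B_\rho+2\Re\lambda$), and then invokes the spectral-flow principle \cite[Thm.~C]{MPA} to conclude the index is zero because no crossings occur along the path. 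You instead exploit the fact that $\overline{L}_o+\lambda$ is a pure first-order ODE operator and apply Palmer's exponential-dichotomy index formula $\mathrm{ind}=\dim E^u_- - \dim E^u_+$ directly: since every eigenvalue of $c_0^{-1}(D_2 g_o(U_o^\pm)-\lambda)$ lies on the same side of the imaginary axis (determined only by $\mathrm{sign}(c_0)$), the unstable subspaces at $\pm\infty$ have equal dimension. Your route is more elementary and avoids the $\rho$-homotopy, at the cost of relying specifically on the absence of shifts; the paper's argument is uniform with the spectral-flow machinery it uses for the genuine MFDE operators elsewhere. Your choice of $\lambda_o$ via the spectral gap $\alpha_o$ is also slightly more explicit than the paper's (which picks $\lambda_o$ so that $B_\rho-2\lambda_o$ stays positive definite), but both are equivalent in effect.
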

\textit{Proof.}
For any $0\leq \rho\leq 1$ and $\lambda \in \mathbb{C}$
we introduce the constant coefficient linear operator
$L_{\rho,\lambda}: H^1(\Real;\Real^k) \to L^2(\Real;\Real^k)$ that acts as
\begin{equation}
\begin{array}{lcl}
L_{\rho,\lambda}&=&c_0\frac{d}{d\xi}-\rho D_2g_o( U_o^- )-(1-\rho)D_2g_o(U_o^+)+\lambda
\end{array}
\end{equation}
and has the characteristic function
\begin{equation}
\begin{array}{lcl}
\Delta_{L_{\rho,\lambda}}(z)&=&c_0 z-\rho D_2g_o(U_o^-)-(1-\rho)D_2g_o(U_o^+)+\lambda.
\end{array}
\end{equation}
Upon introducing the matrix
\begin{equation}\begin{array}{lcl}
B_{\rho} &=& -\rho D_2g_o(U_{o}^-)-(1-\rho)D_2g_o(U_{o}^+)
  -\rho D_2g_o(U_{o}^-)^{T}-(1-\rho)D_2g_o(U_{o}^+)^{T},\end{array}
\end{equation}
which is positive definite by Lemma \ref{limitnegdef}, we pick $\lambda_o>0$ in such a way that $B_{\rho}-2\lambda_o$ remains positive
definite for each $0\leq\rho\leq 1$. It is easy to check that the identity
\begin{equation}\begin{array}{lcl}
  \Delta_{L_{\rho,\lambda}}(iy)+\Delta_{L_{\rho,\lambda}}(iy)^\dagger &=& B_{\rho} + 2\Re\lambda\end{array}
\end{equation}
holds for any $y \in \mathbb{R}$. In particular,
if we assume that $\Re \lambda \ge - \lambda_o$ and that $\Delta_{L_{\rho,\lambda}}(iy) v_o = 0$ for some non-zero $v_o \in \mathbb{C}^k$,
$y \in \Real$ and $0 \le \rho \le 1$,
then we obtain the contradiction
\begin{equation}
\begin{array}{lcl}
0 & = & \Re \big[  v_o^\dagger \big[ \Delta_{L_{\rho}}(iy)+\Delta_{L_{\rho}}(iy)^\dagger \big] v_o \big]
\\[0.2cm]
& = & \Re v_o^\dagger \big[ B_{\rho} + 2 \Re\lambda \big] v_o
\\[0.2cm]
& > & 0 .
\end{array}
\end{equation}
Using \cite[Thm. A]{MPA} together with the spectral flow principle in
\cite[Thm. C]{MPA}, this implies that $\overline{L}_{o} + \lambda$ is a Fredholm operator with index zero.
\qed\\

\begin{lemma}\label{eigenschappen2ecompL0} Assume that (\asref{aannamesconstanten}{\text{H}}), (\asref{aannamesconstanten3}{\text{H}})
and (\asref{aannamespuls:even}{\text{H}}) and (\asref{aannamespuls:odd}{\text{H}}) are satisfied
and pick a sufficiently small constant $\lambda_o > 0$.
Then for any $\lambda\in\C$ with $\Re\lambda\geq -\lambda_o$ the operator $\overline{L}_o + \lambda$ is invertible
as a map from $H^1(\Real;\Real^k)$ into $L^2(\Real;\Real^k)$. In addition, for each
compact set
\begin{equation}
M \subset \{ \lambda: \Re \lambda \ge -\lambda_o \} \subset \mathbb{C}
\end{equation}
there exists a constant $K_{M} > 0$ so that the uniform bound
\begin{equation}
\label{eq:lim:sys:unif:bnd:l:zero}
\begin{array}{lcl}
\nrm{ \big[\overline{L}_o + \lambda ]^{-1} \chi_o }_{H^1(\Real;\Real^k)}&\leq &
  K_{M}\nrm{\chi_o}_{L^2(\Real;\Real^k)}
\end{array}
\end{equation}
holds for any $\chi_o \in L^2(\Real;\Real^k)$ and any $\lambda\in M$.
\end{lemma}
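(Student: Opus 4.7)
The plan is to combine the Fredholm-index-zero property already established in Lemma \ref{Loddfredholm} with a direct kernel analysis. Since $\overline{L}_o+\lambda$ is a first-order linear ODE operator on $\R^k$ with no nonlocal shift terms, invertibility will reduce to showing that its kernel is trivial, after which the uniform bound on compact sets will drop out of a resolvent continuity argument.

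First, by Lemma \ref{Loddfredholm} there is $\lambda_o>0$ such that $\overline{L}_o+\lambda$ is Fredholm of index zero for every $\lambda$ with $\Re\lambda\ge-\lambda_o$. Shrinking $\lambda_o$ further if needed, I would arrange that the asymptotic matrices
\begin{equation}
B^{\pm}(\lambda) \;:=\; D_2 g_o(U_o^{\pm}) - \lambda
\end{equation}
both have spectrum in the open left half-plane for all such $\lambda$. This is possible because Lemma \ref{limitnegdef} gives that $-D_2 g_o(U_o^{\pm})$ is positive definite, so that $D_2 g_o(U_o^{\pm})$ has eigenvalues with strictly negative real parts, and the claim then follows by continuity of spectra under a small shift by $-\lambda$.

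Next I would show $\ker(\overline{L}_o+\lambda)=\{0\}$. A function $v\in H^1(\R;\R^k)$ lies in this kernel precisely when it solves the linear ODE
\begin{equation}
c_0\,v'(\xi) \;=\; \bigl[D_2 g_o\bigl(\overline{U}_{o;0}(\xi)\bigr)-\lambda\bigr]\,v(\xi),
\end{equation}
whose coefficient matrix tends to $B^{\pm}(\lambda)$ as $\xi\to\pm\infty$. Since $c_0\ne 0$ by (\asref{aannamespuls:even}{\text{H}}), all eigenvalues of $B^{\pm}(\lambda)/c_0$ have real parts of a single common sign. Consequently, on the half-line along which every mode of the asymptotic constant-coefficient equation grows towards the relevant infinity, the only solution decaying towards that infinity is the zero solution. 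The Sobolev embedding $H^1(\R)\hookrightarrow C_0(\R)$ forces $v(\xi)\to 0$ as $\xi\to\pm\infty$, so a Gronwall comparison with the asymptotic constant-coefficient equation (e.g.\ applied to $e^{\alpha\xi}v$ for suitable $\alpha$) forces $v$ to vanish on a half-line, and uniqueness for the linear ODE then propagates this to $v\equiv 0$ on all of $\R$. Combined with the index-zero property, this yields invertibility of $\overline{L}_o+\lambda:H^1(\R;\R^k)\to L^2(\R;\R^k)$.

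For the uniform bound on a compact set $M\subset\{\Re\lambda\ge -\lambda_o\}$, I would use that $\lambda\mapsto(\overline{L}_o+\lambda)^{-1}$ is norm-continuous as a map from $M$ into $\mathcal{L}(L^2(\R;\R^k),H^1(\R;\R^k))$: the difference $(\overline{L}_o+\lambda_1)-(\overline{L}_o+\lambda_2)$ is scalar multiplication by $\lambda_1-\lambda_2$, which is a bounded operator from $H^1$ to $L^2$ of norm at most $|\lambda_1-\lambda_2|$ times the embedding constant, so a Neumann-series perturbation argument around any invertible point gives the required continuity. Compactness of $M$ then supplies the constant $K_M$ appearing in \sref{eq:lim:sys:unif:bnd:l:zero}. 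The main obstacle I expect is in the kernel step: one must take care that the asymptotic ODE argument genuinely uses only the one-signed spectrum of $B^{\pm}(\lambda)/c_0$ together with the Sobolev decay of $v$, since the pointwise matrix $D_2 g_o(\overline{U}_{o;0}(\xi))$ need not be negative definite away from $\pm\infty$, so a naive energy estimate against $v$ does not close.
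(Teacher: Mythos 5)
Your proposal is correct and follows essentially the same route as the paper's proof: Fredholm index zero from Lemma \ref{Loddfredholm}, injectivity via the kernel ODE and its behaviour on a half-line near infinity, and the uniform bound from compactness and resolvent continuity. One sharpening is worth making, though: the differential inequality underlying your Gronwall step requires the quadratic form $-D_2g_o(\overline{U}_{o;0}(\xi))$ to be pointwise positive definite for all $|\xi| \ge m$, a property supplied by Lemma \ref{limitnegdef} together with continuity and a possible shrinking of $\lambda_o$, and this is strictly stronger than the eigenvalues of $B^\pm(\lambda)/c_0$ having one-signed real parts as your closing remark suggests (a non-normal matrix with spectrum in an open half-plane need not be definite), which is why the paper works directly with this pointwise definiteness on $|\xi|\ge m$ rather than via a comparison with the asymptotic constant-coefficient system.
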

\textit{Proof.} Recall the constant $\lambda_o$
defined in Lemma \ref{Loddfredholm} and pick any $\lambda\in\C$ with $\Re\lambda\geq-\lambda_o$.
On account of Lemma \ref{Loddfredholm} it suffices to show that $\overline{L}_{o} + \lambda$ is injective.
Consider therefore any non-trivial $x \in \mathrm{Ker}\big(\overline{L}_o + \lambda\big)$, which
necessarily satisfies the ODE\footnote{The discussion at https://math.stackexchange.com/questions/2668795/bounded-solution-to-general-nonautonomous-ode gave us the inspiration for this approach.}
\begin{equation}\label{nonautODE}
\begin{array}{lcl}
x'(\xi)&=&\frac{1}{c_0}D_2g_o\big(\overline{U}_{o;0}(\xi) 
\big)x(\xi)-\frac{\lambda}{c_0}x(\xi)
\end{array}
\end{equation}
posed on $\C^k$. Without loss of generality we may assume that $c_0>0$.\\

Since $\overline{U}_{o;0}(\xi)\rightarrow U_o^{\pm}$ as $\xi\rightarrow \pm\infty$,
Lemma \ref{limitnegdef} allows us to pick a constant $m\gg 1$   in such a way that  the matrix
$-D_2g_o\big(\overline{U}_{o;0}(\xi)\big)-2\lambda_o$ is positive definite for each $|\xi|\geq m$,
possibly after decreasing the size of $\lambda_o > 0$.
Assuming that $\Re \lambda \ge - \lambda_o$ and picking any $\xi \leq - m$, we may hence compute
\begin{equation}
\begin{array}{lcl}
\frac{d}{d\xi}|x(\xi)|^2&=&2\Re\ip{x'(\xi),x(\xi)}_{\C^k}\\[0.2cm]
&=&\frac{2}{c_0}\Re\ip{D_2g_o\big(\overline{U}_{o;0}(\xi) \big)x(\xi), x(\xi)}_{\C^k}

  -\frac{2 \Re \lambda}{c_0} \langle x(\xi),x(\xi) \rangle_{\C^k}\\[0.2cm]
&\leq &-\frac{2 \lambda_o}{c_0}|x(\xi)|^2 ,
\end{array}
\end{equation}
%
which implies that
\begin{equation}
\begin{array}{lcl}
\Big(e^{\frac{2\lambda_o}{c_0}\xi}|x(\xi)|^2\Big)'&\leq &0 .
\end{array}
\end{equation}
Since $x$ cannot vanish anywhere as a non-trivial solution to a linear ODE,
we have
\begin{equation}
\begin{array}{lclcl}
|x(\xi)|^2&\geq &e^{- \frac{2\lambda_o}{c_0}(m+\xi)}|x(-m)|^2 &>& 0
\end{array}
\end{equation}
for $\xi \le -m$,
which means that $x(\xi)$ is unbounded. In particular, we see that $x \notin H^1(\Real;\Real^k)$,
which leads to the desired contradiction.
The uniform bound (\ref{eq:lim:sys:unif:bnd:l:zero}) follows easily
from continuity considerations. \qed\\

\textit{Proof of Proposition \ref{eigenschappenL0:a}.}
Since the operator $\overline{L}_{e}$ defined in (\ref{defoverlineL}) has a simple eigenvalue in zero,
we can follow the approach of \cite[Lemma 3.1(5)]{HJHFHNINFRANGE}
to pick  two constants
$\delta_{\diamond} >0$ and  $C > 0$ in such a way that
$\overline{L}_e + \delta:\mathbf{H}_e^1\rightarrow \mathbf{L}_e^2 $ is invertible with the bound
\begin{equation}
\begin{array}{lcl}
\nrm{ \big[ \overline{L}_e + \delta ]^{-1} (\theta_e, \chi_e) }_{\mathbf{H}^1_e}
&\leq & C\Big[\nrm{ (\theta_e,\chi_e)}_{\mathbf{L}^2_e}+\frac{1}{\delta}
  \big|\ip{  (\theta_e,\chi_e),\overline{\Phi}_{e;0}^{\mathrm{adj}}}_{\mathbf{L}^2_e}\big|\Big].
\end{array}
\end{equation}
for any $0 < \delta <\delta_{\diamond}$ and $(\theta_e, \chi_e) \in \mathbf{L}^2_e$. Combining this estimate with
Lemma \ref{eigenschappen2ecompL0} directly yields the desired properties.\qed\\

\textit{Proof of Proposition \ref{eigenschappenL0:b}.}
These properties can be established in a fashion
analogous to the proof of Proposition \ref{eigenschappenL0:a}.

\qed\\


\section{Transfer of Fredholm properties}\label{singularoperator}

Our goal in this section is to lift the bounds obtained in {\S}\ref{sec:lim:op}
to the operators associated to the linearization of the full wave equation \sref{nieuwetravellingwaveeq}
around suitable functions. In particular, the arguments we develop here will be used in several different settings.
In order to accommodate this,
we introduce the following condition.
\begin{assumption}{Fam}{\text{h}}\label{familyassumption}
For each $\epsilon>0$ there is a function
$\tilde{U}_\epsilon=(\tilde{U}_{o;\epsilon},\tilde{U}_{e;\epsilon}) \in \mathbf{H}^1_o \times \mathbf{H}^1_e$
and a constant $\tilde{c}_\epsilon\neq 0$ such that $\tilde{U}_\epsilon-\overline{U}_{0}\rightarrow 0$ in $\mathbf{H}^1_o \times \mathbf{H}^1_e$ and
$\tilde{c}_\epsilon\rightarrow c_0$ as $\epsilon\downarrow 0$.  In addition,
there exists a constant $\tilde{K}_{\mathrm{fam}} > 0$ so that
\begin{equation}
\label{eq:trn:unif:bnd:in:h:fam}\begin{array}{lcl}
\abs{\tilde{c}_{\epsilon}} + \abs{\tilde{c}_\epsilon^{-1} } + \norm{\tilde{U}_{\epsilon}}_{\infty} &\le &\tilde{K}_{\mathrm{fam}}\end{array}
\end{equation}
holds for all $\epsilon > 0$.
\end{assumption}
In \S \ref{sectionexistence} we will pick $\tilde{U}_\epsilon=\overline{U}_{0}$ and $\tilde{c}_\epsilon=c_0$ in
(\asref{familyassumption}{\text{h}}) for all $\epsilon>0$. On the other hand, in \S \ref{sectionstability} we will
use the travelling wave solutions described in Theorem \ref{maintheorem}
to write $\tilde{U}_\epsilon=\overline{U}_\epsilon$ and $\tilde{c}_\epsilon=c_\epsilon$. We remark that
\sref{eq:trn:unif:bnd:in:h:fam} implies that
there exists a constant $\tilde{K}_{F} > 0$ for which the bound
\begin{equation}
\label{eq:cnst:kf:def}\begin{array}{lcl}
\nrm{DF_o(\tilde{U}_{o;\epsilon})}_{\infty} + \nrm{D^2F_o(\tilde{U}_{o;\epsilon})}_{\infty}
+ \nrm{DF_e(\tilde{U}_{e;\epsilon})}_{\infty} + \nrm{D^2F_e(\tilde{U}_{e;\epsilon})}_{\infty} &\leq & \tilde{K}_{F}\end{array}
\end{equation}
holds for all $\epsilon > 0$.
\\

For notational convenience, we introduce the product spaces
\begin{equation}\begin{array}{lclcl}
\mathbf{H}^1 &= &\mathbf{H}^1_o \times \mathbf{H}^1_e,
\qquad
\mathbf{L}^2 &=& \mathbf{L}^2_o \times \mathbf{L}^2_e.\end{array}
\end{equation}
Since we will need to consider complex-valued functions during our spectral analysis,
we also introduce the spaces
\begin{equation}
    \begin{array}{lcl}
    \mathbf{L}^2_{\C}&=&\{\Phi+i\Psi:\Phi,\Psi\in\mathbf{L}^2\},\\[0.2cm]
    \mathbf{H}^1_{\C}&=&\{\Phi+i\Psi:\Phi,\Psi\in\mathbf{H}^1\}
    \end{array}
\end{equation}
and remark that any $L \in \mathcal{L}( \mathbf{H}^1; \mathbf{L}^2)$
can be interpreted as an operator in $\mathcal{L}(\mathbf{H}^1_{\mathbb{C}} ; \mathbf{L}^2_{\mathbb{C}})$
by writing
\begin{equation}
    \begin{array}{lcl}
    L(\Phi+i\Psi)&=&L\Phi+iL\Psi.
    \end{array}
\end{equation}
It is well-known that taking the complexification of an operator preserves injectivity, invertibility and other Fredholm properties.\\

Recall the family $(\tilde{U}_{\epsilon}, \tilde{c}_{\epsilon})$ introduced in  (\asref{familyassumption}{\text{h}}).
For any $\epsilon > 0$ and $\lambda\in\C$
we introduce the linear operator
\begin{equation}
\tilde{\mathcal{L}}_{\epsilon, \lambda} : \mathbf{H}^1_{\mathbf{C}} \to \mathbf{L}^2_{\mathbf{C}}
\end{equation}
that acts as
\begin{equation}\label{deftildeL}
\begin{array}{lcl}
\tilde{\mathcal{L}}_{\epsilon,\lambda}&=&\left(\begin{array}{ll}\tilde{c}_\epsilon\frac{d}{d\xi}+\frac{2}{\epsilon^2}J_\D-DF_o(\tilde{U}_{o;\epsilon})+\lambda & -\frac{1}{\epsilon^2}J_\D S_1\\
-J_\D S_1 & \tilde{c}_\epsilon\frac{d}{d\xi}+2J_\D-DF_e(\tilde{U}_{e;\epsilon})+\lambda\end{array}\right).
\end{array}
\end{equation}

In order to simplify our notation,
we introduce the diagonal matrices
\begin{equation}
\begin{array}{lcl}
\M_{\epsilon}^1&=&
\mathrm{diag}\big( \epsilon, 1, 1, 1 \big) ,
 \\[0.2cm]
\M_{\epsilon}^2&=&
\mathrm{diag}\big( 1, \epsilon, 1, 1 \big) ,
 \\[0.2cm]
\M_{\epsilon}^{1,2}&=&
\mathrm{diag}\big( \epsilon, \epsilon, 1, 1 \big) .
\end{array}
\end{equation}
In addition, we recall the sum $S_1$ defined in (\ref{def:Si})
and introduce the operator
\begin{equation}\label{Jmix}
\begin{array}{lcl}
J_{\mathrm{mix}}
&=&\left(\begin{array}{ll}
  -2 J_{\D}& J_{\D}S_1\\
  J_{\D}S_1 &- 2 J_{\D}
  \end{array} \right),
  \end{array}
\end{equation}
which allows us to restate
(\ref{deftildeL}) as
\begin{equation}\label{deftildeL:short}
\begin{array}{lcl}
\tilde{\mathcal{L}}_{\epsilon,\lambda}&=&
\tilde{c}_\epsilon\frac{d}{d \xi}
- \mathcal{M}^1_{1/\epsilon^2} J_{\mathrm{mix}}
- DF (\tilde{U}_{\epsilon} ) + \lambda.
\end{array}
\end{equation}
Our two main results generalize the bounds in Propositions \ref{eigenschappenL0:a} and \ref{eigenschappenL0:b}
to the current setting. The scalings on the odd variables allow us to obtain certain key estimates
that are required by the spectral convergence approach.

\begin{proposition}\label{theorem4equivalent} Assume that (\asref{familyassumption}{\text{h}}),
(\asref{aannamesconstanten}{\text{H}}), (\asref{aannamesconstanten3}{\text{H}}),
(\asref{aannamespuls:even}{\text{H}}), (\asref{aannamespuls:odd}{\text{H}}) and (\asref{extraaannamespuls}{\text{H}}) are satisfied.
Then there exist positive constants $C_0>0$ and $\delta_0>0$ together with a strictly positive
function $\epsilon_0: (0, \delta_0) \rightarrow \R_{>0}$,
so that for each $0<\delta<\delta_0$ and $0<\epsilon<\epsilon_0(\delta)$
the operator $\tilde{\mathcal{L}}_{\epsilon,\delta}$ 
is invertible and satisfies the bound
\begin{equation}\label{normboundmetc0fam}
\begin{array}{lcl}
\nrm{\mathcal{M}_\epsilon^{1,2}\Phi}_{\mathbf{H}^1}
 &\leq & C_0\Big[\nrm{\mathcal{M}_\epsilon^{1,2} \Theta}_{\mathbf{L}^2}
   +\frac{1}{\delta}\big|\ip{  \Theta,(0,\overline{\Phi}_{e;0}^{\mathrm{adj}})}_{\mathbf{L}^2}
     \big|\Big]
\end{array}
\end{equation}
for any $\Phi\in\mathbf{H}^1$ and $\Theta = \tilde{\mathcal{L}}_{\epsilon, \delta} \Phi$.
\end{proposition}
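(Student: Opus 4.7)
The strategy is to transfer the invertibility of the limit operator $\mathcal{L}_{\diamond,\delta}$ from Proposition \ref{eigenschappenL0:a} to the $\epsilon$-dependent operator $\tilde{\mathcal{L}}_{\epsilon,\delta}$ via a spectral convergence argument in the spirit of Bates--Chen--Chmaj, as adapted in \cite{HJHFHNINFRANGE}. The statement naturally splits into two parts: (i) an a priori estimate of the form \eqref{normboundmetc0fam}, which immediately gives injectivity; and (ii) a Fredholm-index-zero computation for $\tilde{\mathcal{L}}_{\epsilon,\delta}$. Given both, the claimed invertibility and bound follow. Step (ii) is routine---one computes the asymptotic characteristic functions at $\xi=\pm\infty$, uses Lemma \ref{limitnegdef} together with the $+\delta$ shift to exclude purely imaginary roots uniformly for small $\epsilon$, and invokes the spectral flow results \cite[Thm.~A, C]{MPA}, mirroring the proof of Lemma \ref{Loddfredholm}.

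The core work is (i), which I would prove by contradiction for each fixed $\delta\in(0,\delta_\diamond)$. Assuming the bound fails yields sequences $\epsilon_j\downarrow 0$ and $\Phi_j = (\phi_{u,o,j},\phi_{w,o,j},\phi_{u,e,j},\phi_{w,e,j})\in\mathbf{H}^1$ with $\Theta_j:=\tilde{\mathcal{L}}_{\epsilon_j,\delta}\Phi_j$ normalized by $\nrm{\mathcal{M}_{\epsilon_j}^{1,2}\Phi_j}_{\mathbf{H}^1}=1$ while $\nrm{\mathcal{M}_{\epsilon_j}^{1,2}\Theta_j}_{\mathbf{L}^2} + \tfrac{1}{\delta}\bigl|\ip{\Theta_j,(0,\overline{\Phi}_{e;0}^{\mathrm{adj}})}_{\mathbf{L}^2}\bigr|\to 0$. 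Since $\mathcal{M}_\epsilon^{1,2}=\mathrm{diag}(\epsilon,\epsilon,1,1)$, the even rows of the right-hand side $\theta_{u,e,j}$ and $\theta_{w,e,j}$ tend to $0$ in $L^2$ directly. Multiplying the odd-$u$ row of $\tilde{\mathcal{L}}_{\epsilon_j,\delta}\Phi_j=\Theta_j$ by $\epsilon_j^2$ yields $2\D\phi_{u,o,j}-\D S_1\phi_{u,e,j} = O_{L^2}(\epsilon_j)$, which simultaneously provides an $\epsilon_j$-independent $L^2$-bound on $\phi_{u,o,j}$ and, on passing to a subsequence, the algebraic relation $\phi_{u,o,*} = \tfrac{1}{2}S_1\phi_{u,e,*}$ for the weak limit; in particular $\epsilon_j\phi_{u,o,j}\to 0$ in $L^2$. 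The rescaled odd-$w$ row, multiplied by $\epsilon_j$, reads $(\overline{L}_o + \delta + o(1))[\epsilon_j\phi_{w,o,j}] = o_{L^2}(1)$, and Lemma \ref{eigenschappen2ecompL0} then forces $\epsilon_j\phi_{w,o,j}\to 0$ in $H^1$. Substituting $\phi_{u,o,*}=\tfrac{1}{2}S_1\phi_{u,e,*}$ back into the even-$u$ row collapses the cross term $-\D S_1\phi_{u,o}$ together with $2\D\phi_{u,e}$ into $-\tfrac{1}{2}\D(S_2-2)\phi_{u,e,*}$, so that the even weak limits solve $(\overline{L}_e+\delta)(\phi_{u,e,*},\phi_{w,e,*}) = 0$. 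Assumption (HS1), which yields invertibility of $\overline{L}_e+\delta$ for $\delta\in(0,\delta_e)$, forces $(\phi_{u,e,*},\phi_{w,e,*})=0$ and hence $\phi_{u,o,*}=0$, so $\Psi_j := \mathcal{M}_{\epsilon_j}^{1,2}\Phi_j \rightharpoonup 0$ in $\mathbf{H}^1$.

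The main obstacle is upgrading this vanishing weak limit into a contradiction with $\nrm{\Psi_j}_{\mathbf{H}^1}=1$, since $L^2_{\mathrm{loc}}$-strong convergence does not preclude mass escaping to $|\xi|=\infty$. To close this gap I would derive a coercivity estimate. The Fourier symbol of $J_{\mathrm{mix}}$ from \eqref{Jmix} is $\bigl(\begin{smallmatrix}-2J_\D & 2(\cos y)J_\D\\ 2(\cos y)J_\D & -2J_\D\end{smallmatrix}\bigr)$, which is negative semidefinite since $|2\cos y|\le 2$ and $J_\D\ge 0$; consequently $-\ip{J_{\mathrm{mix}}\Phi,\Phi}_{\mathbf{L}^2}\ge 0$. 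Testing $\tilde{\mathcal{L}}_{\epsilon_j,\delta}\Phi_j = \Theta_j$ against the weighted partner $(\epsilon_j^2\Phi_{o,j}, \Phi_{e,j})$ produces a nonnegative diffusion contribution, a vanishing derivative contribution from $\tilde{c}_{\epsilon}\ip{\Phi',\Phi}_{\mathbf{L}^2}=0$, an asymptotically positive-definite contribution from $-DF(U^\pm)$ via Lemma \ref{limitnegdef}, and the coercive $+\delta$ shift; bulk contributions outside a large compact window absorb the sign-indefinite interior via Lemma \ref{limitnegdef} and the convergence $\tilde U_{\epsilon_j}\to\overline U_0$. These combine to give $\delta\nrm{\mathcal{M}_{\epsilon_j}^{1,2}\Phi_j}_{\mathbf{L}^2}^2 \lesssim \nrm{\mathcal{M}_{\epsilon_j}^{1,2}\Theta_j}_{\mathbf{L}^2}\nrm{\mathcal{M}_{\epsilon_j}^{1,2}\Phi_j}_{\mathbf{L}^2} + o(1)$, after which $\nrm{(\mathcal{M}_{\epsilon_j}^{1,2}\Phi_j)'}_{\mathbf{L}^2}$ is recovered from the equation, yielding the desired contradiction. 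The delicate aspect is choosing the weights correctly: in the (h$\beta$) case the asymmetric block structure of $DG(U)$ forces inserting the constant $\Gamma$ from (h$\beta$) as an anisotropic weight on the $w$-components so that $-DF$ produces a symmetric positive-definite quadratic form, and this must be done compatibly with the $\mathcal{M}_\epsilon^{1,2}$-scaling.
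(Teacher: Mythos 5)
Your proposal follows essentially the paper's route: weak-limit sequences converging against the limit operator $\mathcal{L}_{\diamond,\delta}$, a Fredholm-index-zero computation via the spectral flow results in \cite{MPA} using the $\mathcal{M}^{1,2}_{\epsilon^2}$-weighted symbol, and coercivity estimates built from Lemma \ref{limitnegdef} and the sign structure of $J_{\mathrm{mix}}$. Your contradiction framing is just the direct lower bound on $\Lambda(\delta)$ in Proposition \ref{lemma6equivalent} read contrapositively, and the weak-limit identification (odd-$u$ row at $O(\epsilon^2)$ giving the algebraic relation, odd-$w$ row handled by Lemma \ref{eigenschappen2ecompL0}, even rows by (HS1)) matches Lemmas \ref{lemma6bewijs1} and \ref{lemma6bewijs2.5}.

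The one genuine methodological difference is your treatment of the (h$\beta$) cross terms. You propose an anisotropic weight, testing against $\mathcal{M}^{1,2}_{\epsilon^2}\,\mathrm{diag}(I,\Gamma_o I,I,\Gamma_e I)\Phi_j$ so that $G_{1,2}=-\Gamma G_{2,1}^T$ cancels the off-diagonal quadratic forms identically; the weight is a constant diagonal matrix, hence commutes with $\partial_\xi$, and is transparent to $J_{\mathrm{mix}}$ (which acts only on the $u$-components), so this is internally consistent. The paper instead keeps the unweighted test function and substitutes the $\psi$-row of $\tilde{\mathcal{L}}_{\epsilon,\delta}\Phi_j=\Theta_j$ back into the cross term (Lemma \ref{lemma6bewijsbound2}, display \sref{bistableest2}), trading it for diagonal-block expressions, the zero-real-part term $\langle\psi',\psi\rangle$, and a benign $\|\chi_{\#,j}\|^2$ remainder. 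Both mechanisms work; yours avoids the $\|\chi\|^2$ correction at the cost of a heavier test function.

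One point you should correct: you wrote the final coercivity as $\delta\nrm{\mathcal{M}_{\epsilon_j}^{1,2}\Phi_j}_{\mathbf{L}^2}^2\lesssim\nrm{\mathcal{M}_{\epsilon_j}^{1,2}\Theta_j}_{\mathbf{L}^2}\nrm{\mathcal{M}_{\epsilon_j}^{1,2}\Phi_j}_{\mathbf{L}^2}+o(1)$. Taken literally this forces $C_0\sim 1/\delta$, which contradicts the requirement that $C_0$ be $\delta$-uniform in the statement. The actual coercivity constant in Lemma \ref{lemma6bewijsbound2} is a $\delta$-independent $a>0$ coming from the negative definiteness of $D_1f_\#(U^\pm_\#)$ and $D_2g_\#(U^\pm_\#)$ outside a compact window $|\xi|\le m$ (Lemma \ref{limitnegdef}), with $|\lambda_j|\le\lambda_{\max}=\delta_\diamond$ merely appearing as a bounded perturbation; the $\delta$-shift is only needed to invert $\overline{L}_e+\delta$ via (HS1) when identifying the weak limit and plays no coercive role in the bulk estimate. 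With $a$ in place of $\delta$, your argument goes through and yields the claimed $\delta$-uniform constant.
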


\begin{proposition}\label{theorem4equivalentcompact}
Assume that (\asref{familyassumption}{\text{h}}), (\asref{aannamesconstanten}{\text{H}}),
(\asref{aannamesconstanten3}{\text{H}}), (\asref{aannamespuls:even}{\text{H}}),
(\asref{aannamespuls:odd}{\text{H}}), (\asref{extraaannamespuls}{\text{H}}) and (\asref{extraextraaannamespuls}{\text{H}})
are all satisfied and pick a sufficiently small constant $\lambda_{0} > 0$.
Then for any set $M \subset \C$ that satisfies 
(h$M_{\lambda_{0}}$),
there exist positive constants $C_M>0$ and $\epsilon_M>0$ 
so that 
for each $\lambda\in M$ and $0<\epsilon<\epsilon_M$ 
the operator $\tilde{\mathcal{L}}_{\epsilon,\lambda}$ 
is invertible and satisfies the bound
\begin{equation}\label{normboundmetc0}
\begin{array}{lcl}
\nrm{\mathcal{M}_\epsilon^{1,2}\Phi}_{\mathbf{H}^1_{\C}}&\leq &
   C_M \nrm{\mathcal{M}_\epsilon^{1,2} \Theta}_{\mathbf{L}^2_{\C}}
\end{array}
\end{equation}
for any $\Phi\in\mathbf{H}^1_{\C}$ and $\Theta = \tilde{\mathcal{L}}_{\epsilon, \lambda} \Phi$.
\end{proposition}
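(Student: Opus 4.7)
The plan is to mirror the spectral-convergence contradiction argument that underlies Proposition \ref{theorem4equivalent}, substituting Proposition \ref{eigenschappenL0:b} for Proposition \ref{eigenschappenL0:a} to exploit the fact that $M$ avoids the spectrum $2\pi i c_0 \mathbb{Z}$. Assume \sref{normboundmetc0} fails; then there exist sequences $\epsilon_n \downarrow 0$, $\lambda_n \in M$, and $\Phi_n \in \mathbf{H}^1_{\C}$ with $\nrm{\mathcal{M}^{1,2}_{\epsilon_n}\Phi_n}_{\mathbf{H}^1_{\C}} = 1$ and $\nrm{\mathcal{M}^{1,2}_{\epsilon_n}\Theta_n}_{\mathbf{L}^2_{\C}} \to 0$, where $\Theta_n := \tilde{\mathcal{L}}_{\epsilon_n,\lambda_n}\Phi_n$. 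Compactness of $M$ yields a subsequence with $\lambda_n \to \lambda_* \in M$, and the rescaled profiles $\tilde{\Phi}_n := \mathcal{M}^{1,2}_{\epsilon_n}\Phi_n = (\epsilon_n \phi_{o,n},\, \epsilon_n \psi_{o,n},\, \phi_{e,n},\, \psi_{e,n})$ are bounded in $\mathbf{H}^1_{\C}$, so after a further subsequence they converge weakly to some $\tilde{\Phi}_* = (\alpha_*,\beta_*,\phi_{e,*},\psi_{e,*})$ and strongly in $L^2_{\mathrm{loc}}$.

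The next step is to identify the limit equation. Multiplying the odd $u$-block of $\tilde{\mathcal{L}}_{\epsilon_n,\lambda_n}\Phi_n = \Theta_n$ by $\epsilon_n^2$ and using \sref{eq:trn:unif:bnd:in:h:fam}--\sref{eq:cnst:kf:def} to absorb the lower-order terms produces $2\mathcal{D}\phi_{o,n} - \mathcal{D}S_1\phi_{e,n} \to 0$ in $L^2$. Since $\mathcal{D}$ is invertible, this forces $\alpha_* = 0$ and pins down the weak $L^2$-limit $\phi_{o,n} \rightharpoonup \tfrac{1}{2}S_1\phi_{e,*}$. Multiplying the odd $w$-block by $\epsilon_n$ and passing to the limit gives $(\overline{L}_o + \lambda_*)\beta_* = 0$, so Lemma \ref{eigenschappen2ecompL0} forces $\beta_* = 0$ provided $\lambda_0 \le \lambda_o$. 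Using $S_1^2 = S_2 + 2I$ to compute $S_1\phi_{o,n} \rightharpoonup \tfrac{1}{2}(S_2 + 2I)\phi_{e,*}$ in $L^2$ and substituting into the even rows, the limit equation collapses to $(\overline{L}_e + \lambda_*)(\phi_{e,*},\psi_{e,*})^{T} = 0$; since $\lambda_* \notin 2\pi i c_0 \mathbb{Z}$, \asref{extraextraaannamespuls}{\text{H}} yields $(\phi_{e,*},\psi_{e,*}) = 0$. Equivalently, the triple $(\beta_*, \phi_{e,*}, \psi_{e,*}) \in \mathbf{H}_\diamond^1$ is annihilated by $\mathcal{L}_{\diamond,\lambda_*}$, which is trivial by Proposition \ref{eigenschappenL0:b}, so $\tilde{\Phi}_* = 0$.

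The main obstacle is then upgrading the weak convergence $\tilde{\Phi}_n \rightharpoonup 0$ to a contradiction with $\nrm{\tilde{\Phi}_n}_{\mathbf{H}^1_{\C}} = 1$. On any compact interval $[-R,R]$ the strong $L^2$-limit is zero by Rellich, and the equation itself upgrades this to $H^1$ control. For the tails $|\xi|>R$ one freezes $\tilde{U}_{\epsilon_n}$ at $U^\pm_\#$ and invokes a spectral-flow argument in the spirit of Lemma \ref{Loddfredholm}, using \asref{aannamesconstanten3}{\text{H}} together with the compactness of $M$, to obtain a uniform-in-$n$-and-in-$\lambda$ exponential decay estimate which kills the tail contribution as $R\to\infty$. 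Invertibility of $\tilde{\mathcal{L}}_{\epsilon,\lambda}$ itself then follows from this injectivity combined with a Fredholm-index-zero argument: the asymptotically autonomous structure of $\tilde{\mathcal{L}}_{\epsilon,\lambda}$ makes the spectral-flow machinery of \cite{MPA} applicable, yielding $\mathrm{ind}(\tilde{\mathcal{L}}_{\epsilon,\lambda}) = 0$, so the a priori bound promotes injectivity to a bijection with the uniform estimate \sref{normboundmetc0}. The delicate point throughout is that the $\epsilon$-dependent weights in $\mathcal{M}^{1,2}_\epsilon$ interact non-trivially with the shift $S_1$ and the diffusion scale $\epsilon^{-2}$, so one must track the coupled $(\phi_o,\phi_e)$ terms carefully to ensure that the coercivity furnished by \asref{aannamesconstanten1}{\text{h}} or \asref{aannamesconstanten2}{\text{h}} survives intact.
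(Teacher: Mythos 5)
Your overall skeleton is right and, modulo the normalization $\nrm{\mathcal{M}^{1,2}_{\epsilon_n}\Theta_n}_{\mathbf{L}^2}\to 0$ instead of $\to\Lambda(M)$, it mirrors the paper's use of the quantities $\Lambda(\epsilon,M)$, $\Lambda(M)$ from \sref{defLambdaM} and the weak-limit identification via Lemma~\ref{lemma6bewijs2} and Lemma~\ref{lemma6bewijs2.5}: extract $\epsilon_n\downarrow 0$, $\lambda_n\to\lambda_*\in M$, pass to a weak $\mathbf{H}^1_{\C}$ / strong $\mathbf{L}^2_{\mathrm{loc}}$ limit, show the first odd block vanishes, and identify the limit as a solution of $\mathcal{L}_{\diamond,\lambda_*}\Phi_\diamond=0$, which is then killed by Proposition~\ref{eigenschappenL0:b} (equivalently Lemma~\ref{eigenschappen2ecompL0} together with (HS2)). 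The Fredholm-index-zero part via \cite{MPA} is also how the paper converts the a priori bound into invertibility, through Lemma~\ref{Lepsfredholm}.

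The gap is in the step you yourself flag as the ``main obstacle,'' and the mechanism you propose there does not work. You write that for $|\xi|>R$ one freezes $\tilde{U}_{\epsilon_n}$ at $U^\pm_\#$ and ``invokes a spectral-flow argument in the spirit of Lemma~\ref{Loddfredholm} \ldots to obtain a uniform-in-$n$-and-in-$\lambda$ exponential decay estimate.'' Spectral flow as used in Lemma~\ref{Loddfredholm} and Lemma~\ref{Lepsfredholm} is an index computation; it does not produce decay rates for solutions of $\tilde{\mathcal{L}}_{\epsilon_n,\lambda_n}\Phi_n=\Theta_n$ with arbitrary $\Theta_n\in\mathbf{L}^2_{\C}$, and the profiles $\Phi_n$ need not decay exponentially at all when $\Theta_n$ does not. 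Moreover, even if one tried instead to build uniform exponential dichotomies for the frozen operators, the characteristic function $\Delta_{L_{\rho;\epsilon,\lambda}}$ carries the $\epsilon^{-2}$ diffusion on the odd block, so the dichotomy exponents and projections are $\epsilon$-dependent; establishing uniformity is precisely the singular-perturbation difficulty the paper is built to avoid, not a routine step. Without this, weak convergence of $\mathcal{M}^{1,2}_{\epsilon_n}\Phi_n$ to zero together with $\nrm{\mathcal{M}^{1,2}_{\epsilon_n}\Phi_n}_{\mathbf{H}^1_{\C}}=1$ is not yet a contradiction, since mass may escape to $|\xi|=\infty$.

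The paper closes this gap with an entirely different device: the weighted energy estimates of Lemmas~\ref{lemma6bewijsbound1}--\ref{lemma6bewijs4}. Taking $\Re\langle\mathcal{M}^{1,2}_{\epsilon_j^2}\Theta_j,\Phi_j\rangle$ and $\Re\langle\mathcal{M}^{1,2}_{\epsilon_j^2}\Theta_j,\Phi_j'\rangle$, exploiting that $\Re\langle -J_{\mathrm{mix}}\Phi,\Phi\rangle\ge 0$ and $\Re\langle -J_{\mathrm{mix}}\Phi,\Phi'\rangle=0$, and using the cross-term cancellation that (h$\beta$) provides (the identity $G_{1,2}=-\Gamma G_{2,1}^T$ is used to rewrite $\mathcal{C}_{\#,j}$ in \sref{bistableest2} so that only diagonal blocks and $\Theta_j$ remain), one obtains the concentration estimate
\begin{equation}
g\int_{|\xi|\le m}|\mathcal{M}^{1,2}_{\epsilon_j}\Phi_j|^2\ \ge\ C_3-C_4\nrm{\mathcal{M}^{1,2}_{\epsilon_j}\Theta_j}_{\mathbf{L}^2}^2.
\end{equation}
Passing $j\to\infty$ (strong $\mathbf{L}^2_{\mathrm{loc}}$ convergence) and combining with the upper bound $\nrm{\Phi}_{\mathbf{H}^1}\le C_{\diamond;M}\Lambda(M)$ from Lemma~\ref{lemma6bewijs2.5} forces $\Lambda(M)$ away from zero. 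This is exactly the place where the structural hypotheses (h$\alpha$)/(h$\beta$) are consumed, and it is a weighted, $\epsilon$-uniform computation, not a decay argument. Your final remark about needing coercivity from (h$\alpha$)/(h$\beta$) is the right instinct, but the proposal leaves that entire computation unperformed and replaces it with a mechanism (spectral flow and exponential decay) that cannot be made to carry the load here.
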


By using bootstrapping techniques it is possible to obtain variants
of the estimate in Proposition \ref{theorem4equivalent}. Indeed,
it is possible to remove the scaling on the first component of $\Phi$ (but not on the first component of $\Phi'$).
\begin{corollary}\label{theorem4equivalentspecific1}
Consider the setting of Proposition \ref{theorem4equivalent}.
Then for each $0<\delta<\delta_0$ and $0<\epsilon<\epsilon_0(\delta)$,
the operator $\tilde{\mathcal{L}}_{\epsilon,\delta}$ satisfies the bound
\begin{equation}\label{normboundmetc0fam1}
\begin{array}{lcl}
\nrm{\mathcal{M}_\epsilon^{1,2}\Phi'}_{\mathbf{L}^2}
+ \nrm{\mathcal{M}_\epsilon^{2}\Phi}_{\mathbf{L}^2}
 &\leq & C_0\Big[\nrm{\mathcal{M}_\epsilon^{1,2} \Theta}_{\mathbf{L}^2}
   +\frac{1}{\delta}\big|\ip{  \Theta,(0,\overline{\Phi}_{e;0}^{\mathrm{adj}})}_{\mathbf{L}^2}
     \big|\Big]
\end{array}
\end{equation}
for any $\Phi\in\mathbf{H}^1$ and $\Theta = \tilde{\mathcal{L}}_{\epsilon, \delta} \Phi$,
possibly after increasing $C_0 > 0$.
\end{corollary}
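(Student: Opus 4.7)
The plan is to bootstrap from Proposition \ref{theorem4equivalent}. Write $\Phi = (\phi_{u,o}, \phi_{w,o}, \phi_{u,e}, \phi_{w,e})$, $\Theta = (\theta_{u,o}, \theta_{w,o}, \theta_{u,e}, \theta_{w,e})$, and let $R$ denote the right-hand side of \sref{normboundmetc0fam1} (which coincides with that of \sref{normboundmetc0fam}). The earlier proposition already yields $\nrm{\mathcal{M}_\epsilon^{1,2}\Phi}_{\mathbf{H}^1} \le C_0 R$, and this single inequality automatically controls the whole of $\nrm{\mathcal{M}_\epsilon^{1,2}\Phi'}_{\mathbf{L}^2}$ together with the three contributions $\nrm{\epsilon\phi_{w,o}}_{L^2}$, $\nrm{\phi_{u,e}}_{L^2}$, $\nrm{\phi_{w,e}}_{L^2}$ that appear in $\nrm{\mathcal{M}_\epsilon^{2}\Phi}_{\mathbf{L}^2}$. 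The only genuinely new estimate I need is therefore
\begin{equation*}
\nrm{\phi_{u,o}}_{L^2} \le C R,
\end{equation*}
i.e., an improvement of the $u_o$-component bound by one factor of $\epsilon$.

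I would obtain this improvement by reading $\phi_{u,o}$ off algebraically from the top block of $\tilde{\mathcal{L}}_{\epsilon,\delta}\Phi = \Theta$. Using the matrix form \sref{deftildeL} together with the observation that $J_{\D}$ acts trivially on the $w$-coordinates, the $u_o$-row takes the shape
\begin{equation*}
\tfrac{2}{\epsilon^2}\D\,\phi_{u,o} \;=\; \theta_{u,o} + \tfrac{1}{\epsilon^2}\D S_1\phi_{u,e} - \tilde{c}_\epsilon\phi_{u,o}' + D_1 f_o(\tilde{U}_{o;\epsilon})\phi_{u,o} + D_2 f_o(\tilde{U}_{o;\epsilon})\phi_{w,o} - \delta\phi_{u,o}.
\end{equation*}
Multiplying by $\tfrac{\epsilon^2}{2}\D^{-1}$ and isolating $\phi_{u,o}$ yields
\begin{equation*}
\phi_{u,o} \;=\; \tfrac{1}{2}S_1\phi_{u,e} + \tfrac{\epsilon^2}{2}\D^{-1}\Big[\theta_{u,o} - \tilde{c}_\epsilon\phi_{u,o}' + D_1 f_o(\tilde{U}_{o;\epsilon})\phi_{u,o} + D_2 f_o(\tilde{U}_{o;\epsilon})\phi_{w,o} - \delta\phi_{u,o}\Big].
\end{equation*}
The one point that requires care, and which is the real crux of the argument, is that the apparently singular factor $\epsilon^{-2}$ multiplying $\D S_1\phi_{u,e}$ is exactly cancelled by the inversion of the leading $\tfrac{2}{\epsilon^2}\D$ term, leaving the harmless coefficient $\tfrac{1}{2}$.

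The conclusion then follows by taking $L^2$ norms termwise. The leading contribution satisfies $\nrm{\tfrac{1}{2}S_1\phi_{u,e}}_{L^2} \le \nrm{\phi_{u,e}}_{L^2} \le C R$ directly from Proposition \ref{theorem4equivalent}, since $\phi_{u,e}$ is unscaled by $\mathcal{M}_\epsilon^{1,2}$. Each remaining contribution carries an explicit $\epsilon^2$, which combines with the $\epsilon$-weights already present in $\nrm{\mathcal{M}_\epsilon^{1,2}\Theta}_{\mathbf{L}^2}$ and $\nrm{\mathcal{M}_\epsilon^{1,2}\Phi}_{\mathbf{H}^1}$ to produce an overall $O(\epsilon)$ factor: for example $\epsilon^2\nrm{\theta_{u,o}}_{L^2} = \epsilon\,\nrm{\epsilon\theta_{u,o}}_{L^2} \le \epsilon\nrm{\mathcal{M}_\epsilon^{1,2}\Theta}_{\mathbf{L}^2}$, and $\epsilon^2|\tilde{c}_\epsilon|\,\nrm{\phi_{u,o}'}_{L^2} \le \tilde{K}_{\mathrm{fam}}\,\epsilon\,\nrm{\epsilon\phi_{u,o}'}_{L^2}$, while the two Jacobian terms and the $\delta$-term are controlled by the uniform bound \sref{eq:cnst:kf:def} together with the $\epsilon$-weights on $\phi_{u,o}$ and $\phi_{w,o}$ sitting inside $\mathcal{M}_\epsilon^{1,2}\Phi$. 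For $0<\delta<\delta_0$ and $0<\epsilon<\epsilon_0(\delta)$ each of these contributions is at most $C\epsilon R$, so summing and absorbing constants delivers $\nrm{\phi_{u,o}}_{L^2} \le C R$, and hence \sref{normboundmetc0fam1}, possibly after enlarging $C_0$. I do not expect any real obstacle beyond keeping track of the $\epsilon$-bookkeeping; the algebraic cancellation of $\epsilon^{-2}$ in the $S_1\phi_{u,e}$ term is what makes the whole bootstrap go through.
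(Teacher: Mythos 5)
Your proof is correct and takes essentially the same approach as the paper's: both extract the $u_o$-row of $\Theta = \tilde{\mathcal{L}}_{\epsilon,\delta}\Phi$, observe that inverting the leading $\tfrac{2}{\epsilon^2}\D$ term exactly cancels the singular $\epsilon^{-2}$ factor on $\D S_1\phi_{u,e}$, and then estimate the remaining terms using the $\epsilon$-weights already built into $\mathcal{M}_\epsilon^{1,2}$ together with the uniform bounds on $\tilde{c}_\epsilon$ and $DF_o$. Your framing as an explicit solution for $\phi_{u,o}$ after multiplying by $\tfrac{\epsilon^2}{2}\D^{-1}$ is a minor cosmetic variant of the paper's manipulation (which bounds $2d_{\min}\nrm{\phi_o}_{L^2}$ directly).
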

\textit{Proof.} Write $\Phi=(\phi_o,\psi_o,\phi_e,\psi_e)$ and $\Theta=(\theta_o,\chi_o,\theta_e,\chi_e)$. Note that the first component of the equation $\Theta = \tilde{\mathcal{L}}_{\epsilon, \delta} \Phi$ yields
\begin{equation}
\begin{array}{lcl}
2\D\phi_o  &=&\D S_1\phi_e
-\epsilon^2 \tilde{c}_{\epsilon}\phi_o'
+\epsilon^2 D_1f_o(\tilde{U}_{o;\epsilon})\phi_o+\epsilon^2 D_2f_o(\tilde{U}_{o;\epsilon})\psi_o
-\delta\epsilon^2\phi_o+\epsilon^2\theta_o.
\end{array}
\end{equation}
Recall the constants $\tilde{K}_{\mathrm{fam}}$ and $\tilde{K}_{F}$ from (\ref{eq:trn:unif:bnd:in:h:fam}) and (\ref{eq:cnst:kf:def}) respectively
and write
\begin{equation}\begin{array}{lclcl}
d_{\min}&=&\min_{1\leq i\leq n}\D_{i,i},
\qquad \qquad
 d_{\max}&=&\max\limits_{1\leq i\leq n}\D_{i,i}.\end{array}
\end{equation}
We can now estimate
\begin{equation}
\begin{array}{lcl}
2 d_{\min}\nrm{\phi_o}_{L^2(\R;\R^n)}&\leq &
2 \nrm{\D\phi_o}_{L^2(\R;\R^n)}\\[0.2cm]
&\leq &
  \nrm{\D S_1\phi_e}_{L^2(\R;\R^n)}
   +\epsilon\abs{\tilde{c}_{\epsilon}}\nrm{\epsilon\phi_o'}_{L^2(\R;\R^n)}
 \\[0.2cm]
&&\qquad + \epsilon\nrm{D_1f_o(\overline{U}_{o;\epsilon})}_\infty\nrm{\epsilon \phi_o}_{L^2(\R;\R^n)}
\\[0.2cm]
&&\qquad
 +\epsilon\nrm{D_2f_o(\overline{U}_{o;\epsilon})}_\infty\nrm{\epsilon\psi_o}_{L^2(\R;\R^k)}
\\[0.2cm]
&&\qquad
  +\epsilon\delta\nrm{\epsilon \phi_o}_{L^2(\R;\R^n)}
   + \epsilon\nrm{\epsilon\theta_o}_{L^2(\R;\R^n)}
\\[0.2cm]
&\leq &
  \Big[ 2 d_{\max} + \epsilon(\tilde{K}_{\mathrm{fam}} + 2 \tilde{K}_{F} + \delta_0) \Big]
    \norm{\M_{\epsilon}^{1,2} \Phi}_{\mathbf{H}^1}
  + \epsilon \nrm{\M_{\epsilon}^{1,2} \Theta } .
\\[0.2cm]
\end{array}
\end{equation}
The desired bound hence follows directly
from Proposition \ref{theorem4equivalent}.
\qed\\

The scaling on the second components of $\Phi$ and $\Phi'$ can be removed in a similar fashion.
However, in this case one also needs to remove the corresponding scaling on  $\Theta$.
\begin{corollary}\label{theorem4equivalentspecific2}
Consider the setting of Proposition \ref{theorem4equivalent}.
Then for each $0<\delta<\delta_0$ and $0<\epsilon<\epsilon_0(\delta)$,
the operator $\tilde{\mathcal{L}}_{\epsilon,\delta}$ satisfies the bound
\begin{equation}\label{normboundmetc0spc3}
\begin{array}{lcl}
\nrm{\M_{\epsilon}^{1}\Phi'}_{\mathbf{L}^2}
 + \nrm{\Phi}_{\mathbf{L}^2}
 &\leq &
C_0\Big[\nrm{\M_{\epsilon}^{1}\Theta}_{\mathbf{L}^2}+\frac{1}{\delta}\big|\ip{ \Theta,(0,\overline{\Phi}_{e;0}^{\mathrm{adj}})}_{\mathbf{L}^2}\big|\Big]
\end{array}
\end{equation}
for any $\Phi\in\mathbf{H}^1$ and $\Theta = \tilde{\mathcal{L}}_{\epsilon, \delta} \Phi$,
possibly after increasing $C_0 > 0$.
\end{corollary}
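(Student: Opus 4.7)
The plan is to bootstrap from Corollary \ref{theorem4equivalentspecific1}, exploiting the key fact that the $\psi_o$-sub-equation of $\tilde{\mathcal{L}}_{\epsilon,\delta}\Phi = \Theta$ is free of $\epsilon^{-2}$ singular terms (since $J_{\D}$ annihilates the $w$-slot), so it reduces to a non-singular linear ODE to which the analysis of Lemma \ref{eigenschappen2ecompL0} applies. First I would write $\Phi=(\phi_o,\psi_o,\phi_e,\psi_e)$ and $\Theta=(\theta_o,\chi_o,\theta_e,\chi_e)$ and extract the second row of the top $(n+k)\times(n+k)$ block of (\ref{deftildeL}), yielding
\begin{equation}
\tilde{c}_{\epsilon}\psi_o' - D_2 g_o(\tilde{U}_{o;\epsilon})\psi_o + \delta \psi_o \;=\; \chi_o + D_1 g_o(\tilde{U}_{o;\epsilon})\phi_o .
\end{equation}

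Next I would study the operator $L_{o;\epsilon,\delta}:=\tilde{c}_{\epsilon}\frac{d}{d\xi}-D_2 g_o(\tilde{U}_{o;\epsilon})+\delta$ acting on $H^1(\R;\R^k)$. Because (\asref{familyassumption}{\text{h}}) gives $\tilde{c}_\epsilon\to c_0$ and $\tilde{U}_{o;\epsilon}\to\overline{U}_{o;0}$ in $\mathbf{H}^1_o\hookrightarrow L^\infty$, and because $F_o\in C^3$, the difference $L_{o;\epsilon,\delta}-(\overline{L}_o+\delta)$ tends to zero in the operator norm on $\mathcal{L}(H^1,L^2)$ as $\epsilon\downarrow 0$. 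Lemma \ref{eigenschappen2ecompL0} supplies a uniform invertibility bound for $\overline{L}_o+\delta$ on the compact range of $\delta$ under consideration, so by a standard Neumann-series perturbation argument, $L_{o;\epsilon,\delta}$ is invertible for all sufficiently small $\epsilon$ with a uniform bound, giving
\begin{equation}
\nrm{\psi_o}_{H^1(\R;\R^k)} \;\leq\; K\big(\nrm{\chi_o}_{L^2(\R;\R^k)} + \nrm{\phi_o}_{L^2(\R;\R^n)}\big)
\end{equation}
after using $\nrm{D_1 g_o(\tilde{U}_{o;\epsilon})}_\infty\leq \tilde{K}_F$ from (\ref{eq:cnst:kf:def}).

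To close the argument I would invoke Corollary \ref{theorem4equivalentspecific1}, which already controls $\nrm{\phi_o}_{L^2}$, $\nrm{\epsilon\phi_o'}_{L^2}$, and $\nrm{(\phi_e,\psi_e)}_{\mathbf{H}^1_e}$ in terms of $\nrm{\M_\epsilon^{1,2}\Theta}_{\mathbf{L}^2}$ plus the pairing term. Since $\nrm{\M_\epsilon^{1,2}\Theta}_{\mathbf{L}^2}\leq \nrm{\M_\epsilon^{1}\Theta}_{\mathbf{L}^2}$ for $\epsilon\leq 1$, the same bound holds with $\M_\epsilon^{1,2}$ replaced by $\M_\epsilon^{1}$ on the right-hand side. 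Combining this with the $\psi_o$-estimate produces $L^2$-control of $\psi_o$ without any $\epsilon$-scaling, and the ODE expression for $\psi_o'$ then transfers this into an $L^2$-bound on $\psi_o'$. Assembling the pieces yields the stated estimate \sref{normboundmetc0spc3}, possibly after enlarging $C_0$.

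The only delicate step is the perturbation argument for $L_{o;\epsilon,\delta}$: one must check that convergence of $\tilde{U}_{o;\epsilon}$ in $\mathbf{H}^1_o$ suffices to obtain convergence in the operator norm on $\mathcal{L}(H^1,L^2)$ — which is precisely where the Sobolev embedding $H^1(\R)\hookrightarrow L^\infty(\R)$ and the $C^3$-smoothness of $F_o$ are used. Beyond that, the manipulations are routine, paralleling the proof of Corollary \ref{theorem4equivalentspecific1}.
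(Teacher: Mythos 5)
Your proof is correct and follows essentially the same route as the paper: extract the $\psi_o$-row of $\tilde{\mathcal{L}}_{\epsilon,\delta}\Phi=\Theta$ (which is free of singular $\epsilon^{-2}$ terms because $J_\D$ annihilates the $w$-slot), bound $\nrm{\psi_o}_{H^1}$ via the limiting operator, and then stitch this together with Corollary \ref{theorem4equivalentspecific1}. The one place you are actually more careful than the paper is the step you flag as delicate: the operator acting on $\psi_o$ is $\tilde{c}_\epsilon\frac{d}{d\xi}-D_2g_o(\tilde{U}_{o;\epsilon})+\delta$, whereas Lemma \ref{eigenschappen2ecompL0} only directly addresses the limiting $\overline{L}_o+\delta=c_0\frac{d}{d\xi}-D_2g_o(\overline{U}_{o;0})+\delta$. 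The paper simply writes $(\overline{L}_o+\delta)\psi_o=D_1g_o(\tilde{U}_{o;\epsilon})\phi_o+\chi_o$ and cites the lemma, silently absorbing the two perturbative terms $(c_0-\tilde{c}_\epsilon)\psi_o'$ and $[D_2g_o(\tilde{U}_{o;\epsilon})-D_2g_o(\overline{U}_{o;0})]\psi_o$; these are small (by (hFam), the Sobolev embedding, and $C^3$-smoothness of $F_o$), so they can be absorbed, but a word or a Neumann series is genuinely needed for the general (hFam) setting used in {\S}6. Your explicit perturbation argument fills exactly that gap. The only cosmetic remark: once you have the $H^1$-bound on $\psi_o$, the $L^2$-bound on $\psi_o'$ is already included, so the closing sentence about transferring via the ODE is redundant.
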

\textit{Proof.}
Writing $\Phi_o = (\phi_o, \psi_o)$ and
$\Theta_o = (\theta_o, \chi_o)$,
we can inspect the definitions
(\ref{deftildeL}) and
(\ref{defL0}) to obtain
\begin{equation}
\begin{array}{lcl}
(\overline{L}_{o}+\delta)\psi_o&=&
D_1g_o(\tilde{U}_{o;\epsilon})\phi_o+\chi_o .
\end{array}
\end{equation}
Using Lemma \ref{eigenschappen2ecompL0}
we hence obtain the estimate
\begin{equation}
\begin{array}{lcl}
\nrm{\psi_o}_{H^1(\R;\R^k)}&\leq &
C_1' \Big[ \nrm{D_1g_o(\tilde{U}_{o;\epsilon})}_\infty\nrm{\phi_o}_{L^2(\R;\R^n)}+\nrm{\chi_o}_{L^2(\R;\R^k)}\Big]
\end{array}
\end{equation}
for some $C_1' > 0$.
Combining this with (\ref{normboundmetc0fam1})
yields the desired bound
(\ref{normboundmetc0spc3}).
\qed\\

Our final result here provides information on the second derivatives of $\Phi$,
in the setting where $\Theta$ is differentiable.
In particular, we introduce the spaces
\begin{equation}\begin{array}{lclclcl}
\mathbf{H}^2_o &=& \mathbf{H}^2_e &=&
H^2(\Real; \Real^n) \times H^2(\Real;\Real^k),
\qquad
\qquad
\mathbf{H}^2 &=& \mathbf{H}^2_o \times \mathbf{H}^2_e .\end{array}
\end{equation}
We remark here that we have chosen to keep the scalings on the second
components of $\Phi''$ and $\Theta'$ because this will
be convenient in {\S}\ref{sectionexistence}.
Note also that the stated bound on $\norm{\Phi}_{\mathbf{H^1}}$ can actually be obtained by treating $\tilde{\mathcal{L}}_{\epsilon,\delta}$ as a regular
perturbation of $\L_{\diamond,\delta}$. The point here is that we gain an order of regularity, which is crucial for the nonlinear estimates.

\begin{corollary}\label{theorem4H2versie2}
Consider the setting of Proposition \ref{theorem4equivalent}
and assume furthermore
that $\nrm{ \tilde{U}_{\epsilon}'}_\infty$ is uniformly bounded for $\epsilon > 0$.
Then for each $0<\delta<\delta_0$ and any $0<\epsilon<\epsilon_0(\delta)$,
the operator $\tilde{\mathcal{L}}_{\epsilon,\delta}: \mathbf{H}^2 \to \mathbf{H}^1$
is invertible and satisfies the bound
\begin{equation}\label{normboundmetc0inH22}
\begin{array}{lcl}
\nrm{\M_{\epsilon}^{1,2}\Phi''}_{\mathbf{L}^2}
+\nrm{\Phi}_{\mathbf{H}^1}
&\leq &
C_0 \Big[\nrm{\M_{\epsilon}^{1}\Theta}_{\mathbf{L}^2}+\nrm{\M_{\epsilon}^{1,2}\Theta'}_{\mathbf{L}^2}
  +\frac{1}{\delta}\big|\ip{\Theta,(0,\overline{\Phi}_{e;0}^{\mathrm{adj}})}_{\mathbf{L}^2}\big|\Big]
\end{array}
\end{equation}
for any $\Phi\in\mathbf{H}^2$ and $\Theta=\tilde{\mathcal{L}}_{\epsilon,\delta}\Phi$,
possibly after increasing $C_0 > 0$.
\end{corollary}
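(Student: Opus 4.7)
\textit{Plan.} The strategy is to differentiate $\tilde{\mathcal{L}}_{\epsilon,\delta}\Phi = \Theta$ once in $\xi$ so that $\Phi'$ itself solves an equation of the form already treated in Proposition \ref{theorem4equivalent}. Since the operator depends on $\xi$ only through $\tilde{U}_\epsilon(\xi)$, differentiation yields
\begin{equation*}
\tilde{\mathcal{L}}_{\epsilon,\delta}\Phi' \;=\; \Theta' + D^2F(\tilde{U}_\epsilon)[\tilde{U}_\epsilon',\Phi] \;=:\; \tilde{\Theta}.
\end{equation*}
The uniform $L^\infty$-bound on $\tilde{U}_\epsilon'$ together with \eqref{eq:cnst:kf:def} immediately gives $\|\M_\epsilon^{1,2}D^2F(\tilde{U}_\epsilon)[\tilde{U}_\epsilon',\Phi]\|_{\mathbf{L}^2} \le C\|\Phi\|_{\mathbf{L}^2}$, so this quadratic term is controlled as soon as an $\mathbf{L}^2$-bound on $\Phi$ is available. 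Invertibility of $\tilde{\mathcal{L}}_{\epsilon,\delta}\colon \mathbf{H}^2 \to \mathbf{H}^1$ will then follow from its $\mathbf{H}^1 \to \mathbf{L}^2$ invertibility (Proposition \ref{theorem4equivalent}) combined with the regularity boot established below.

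For $\|\Phi\|_{\mathbf{H}^1}$, Corollary \ref{theorem4equivalentspecific2} supplies $\|\Phi\|_{\mathbf{L}^2}$ and every component of $\|\Phi'\|_{\mathbf{L}^2}$ except for the unscaled norm of the odd-$u$ derivative $\phi_{o;\epsilon}'$. To recover the missing piece I would solve the first line of $\tilde{\mathcal{L}}_{\epsilon,\delta}\Phi = \Theta$ algebraically as $2\D\phi_{o;\epsilon} - \D S_1 \phi_{e;\epsilon} = \epsilon^2[\theta_{o;\epsilon} - \tilde{c}_\epsilon\phi_{o;\epsilon}' + \text{smooth terms in }\Phi]$, differentiate once, and rearrange to write $\phi_{o;\epsilon}' = \tfrac{1}{2}S_1\phi_{e;\epsilon}' + O(\epsilon^2)[\theta_{o;\epsilon}' - \tilde{c}_\epsilon\phi_{o;\epsilon}'' + \text{lower order}]$. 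Hence $\|\phi_{o;\epsilon}'\|_{L^2}$ is bounded by $\|\phi_{e;\epsilon}'\|_{L^2}$ plus $\epsilon$ times the forthcoming $\|\M_\epsilon^{1,2}\Phi''\|_{\mathbf{L}^2}$ and $\|\M_\epsilon^{1,2}\Theta'\|_{\mathbf{L}^2}$, all of which have the desired form. The remark after the statement indicates the alternative cleaner route of viewing $\tilde{\mathcal{L}}_{\epsilon,\delta}$ (after eliminating $\phi_{o;\epsilon}$ via the first line) as a regular $O(\epsilon^2)$-perturbation of $\L_{\diamond,\delta}$ and invoking Proposition \ref{eigenschappenL0:a} directly.

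For $\|\M_\epsilon^{1,2}\Phi''\|_{\mathbf{L}^2}$, applying Proposition \ref{theorem4equivalent} to $\tilde{\mathcal{L}}_{\epsilon,\delta}\Phi' = \tilde{\Theta}$ produces
\begin{equation*}
\|\M_\epsilon^{1,2}\Phi''\|_{\mathbf{L}^2} \;\le\; C_0\Bigl[\|\M_\epsilon^{1,2}\Theta'\|_{\mathbf{L}^2} + C\|\Phi\|_{\mathbf{L}^2} + \tfrac{1}{\delta}\bigl|\langle\tilde{\Theta},(0,\overline{\Phi}_{e;0}^{\mathrm{adj}})\rangle_{\mathbf{L}^2}\bigr|\Bigr].
\end{equation*}
The decisive step is to integrate by parts in the inner product: since $\overline{\Phi}_{e;0}^{\mathrm{adj}} \in \mathbf{H}^1_e$,
\begin{equation*}
\langle \Theta_{e;\epsilon}',\overline{\Phi}_{e;0}^{\mathrm{adj}}\rangle_{\mathbf{L}^2_e} \;=\; -\langle \Theta_{e;\epsilon},(\overline{\Phi}_{e;0}^{\mathrm{adj}})'\rangle_{\mathbf{L}^2_e},
\end{equation*}
which is dominated by $C\|\M_\epsilon^1\Theta\|_{\mathbf{L}^2}$ rather than introducing a $\|\Theta'\|$ factor into the right-hand side. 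The quadratic contribution of $\tilde{\Theta}$ to the inner product again reduces to a multiple of $\|\Phi\|_{\mathbf{L}^2}$, already controlled by the previous step.

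The main obstacle is the bookkeeping of the $\tfrac{1}{\delta}$ factors: substituting the Corollary \ref{theorem4equivalentspecific2} bound for $\|\Phi\|_{\mathbf{L}^2}$ naively into the $\tfrac{1}{\delta}$-weighted inner product would create a spurious $\tfrac{1}{\delta^2}|\langle\Theta,(0,\overline{\Phi}_{e;0}^{\mathrm{adj}})\rangle|$ contribution that is absent from \eqref{normboundmetc0inH22}. This is precisely where the perturbation route hinted at in the remark pays off: obtaining the clean $\mathbf{H}^1$-bound through Proposition \ref{eigenschappenL0:a} ensures the $\|\Phi\|_{\mathbf{L}^2}$ input contains only a single $\tfrac{1}{\delta}$ factor attached to the correct inner product. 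A final absorption of the residual $\epsilon\|\M_\epsilon^{1,2}\Phi''\|_{\mathbf{L}^2}$ term generated in the extraction of $\phi_{o;\epsilon}'$ — valid once $\epsilon$ is small compared to $1/C_0$ — together with an admissible enlargement of $C_0$ closes the argument.
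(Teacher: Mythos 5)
Your overall plan — differentiate, apply the earlier estimates to $\Phi'$, and use Corollaries \ref{theorem4equivalentspecific1}--\ref{theorem4equivalentspecific2} to supply the missing $\mathbf{H}^1$-bound — is the right skeleton, and you correctly pinpoint the central danger: applying Proposition \ref{theorem4equivalent} to $\tilde{\mathcal{L}}_{\epsilon,\delta}\Phi' = \tilde\Theta$ produces a term $\tfrac{1}{\delta}\big|\langle \tilde\Theta, (0,\overline{\Phi}_{e;0}^{\mathrm{adj}})\rangle\big|$, and since $\tilde\Theta$ contains $\Phi$, feeding in the $\tfrac{1}{\delta}$-weighted bound on $\Phi$ would give a forbidden $\tfrac{1}{\delta^2}$ factor.

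Your proposed fix, however, does not close the gap. Integration by parts turns $\langle \Theta_e', \overline{\Phi}_{e;0}^{\mathrm{adj}}\rangle$ into $-\langle \Theta_e, (\overline{\Phi}_{e;0}^{\mathrm{adj}})'\rangle$, which you then dominate by $C\|\Theta\|_{\mathbf{L}^2}$. But the $\tfrac{1}{\delta}$ in front survives, and $\tfrac{C}{\delta}\|\Theta\|_{\mathbf{L}^2}$ is \emph{not} of the required form $\tfrac{1}{\delta}\big|\langle \Theta,(0,\overline{\Phi}_{e;0}^{\mathrm{adj}})\rangle\big|$; the latter can remain $O(1)$ as $\delta\downarrow 0$ even when $\|\Theta\|_{\mathbf{L}^2}$ does not. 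Similarly, your appeal to Proposition \ref{eigenschappenL0:a} for the $\mathbf{H}^1$-bound on $\Phi$ does nothing to tame the $\tfrac{1}{\delta}$ sitting in front of the inner product involving $\tilde\Theta$; the $\tfrac{1}{\delta^2}$ obstruction you flagged is still there.

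The paper's actual resolution is more elementary and you should adopt it: apply the estimate for $\Phi'$ not with the operator $\tilde{\mathcal{L}}_{\epsilon,\delta}$, but with $\tilde{\mathcal{L}}_{\epsilon,\delta_{\mathrm{ref}}}$ at the \emph{fixed} reference value $\delta_{\mathrm{ref}} = \tfrac{1}{2}\delta_0$, writing
\begin{equation*}
\tilde{\mathcal{L}}_{\epsilon,\delta_{\mathrm{ref}}}\Phi' = \Theta' + D^2F(\tilde{U}_\epsilon)[\tilde{U}_\epsilon',\Phi] + (\delta_{\mathrm{ref}}-\delta)\Phi'.
\end{equation*}
Now Corollary \ref{theorem4equivalentspecific1} applied at $\delta_{\mathrm{ref}}$ produces a factor $\tfrac{1}{\delta_{\mathrm{ref}}}=\tfrac{2}{\delta_0}$, a harmless constant that multiplies a crude bound on the forcing by $\|\Theta'\| + \|\Phi\| + \|\Phi'\|$. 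The extra term $(\delta_{\mathrm{ref}}-\delta)\Phi'$ is absorbed by the $\|\M_\epsilon^1\Phi'\|_{\mathbf{L}^2}$ bound already supplied by Corollary \ref{theorem4equivalentspecific2}. No integration by parts is required, and the $\tfrac{1}{\delta}$ factor appears exactly once, attached to the original $\langle\Theta,(0,\overline{\Phi}_{e;0}^{\mathrm{adj}})\rangle$.
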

\textit{Proof.}
Pick two constants  $0<\delta<\delta_0$ and $0<\epsilon<\epsilon_0(\delta)$
together with a function
$\Phi=(\Phi_o,\Phi_e)\in\mathbf{H}^1$ and write $\Theta = \tilde{\mathcal{L}}_{\epsilon,\delta}\Phi \in \mathbf{L}^2$.
If in fact $\Phi \in \mathbf{H}^2$,
then a direct differentiation shows that
\begin{equation}\label{gedifferentieerd}
\begin{array}{lcl}
\Theta'&=&\tilde{\mathcal{L}}_{\epsilon,\delta}\Phi'
- D^2 F\big(\tilde{U}_\epsilon \big)
  \big[ \tilde{U}_\epsilon' , \Phi \big] ,
%
%
%
\end{array}
\end{equation}
which due to the boundedness of $\Phi$ implies that $\Theta \in \mathbf{H}^1$.
In particular, $\tilde{\mathcal{L}}_{\epsilon,\delta}$ maps $\mathbf{H}^2$
into $\mathbf{H}^1$. Reversely, suppose that we know that
$\Theta\in\mathbf{H}^1$. Rewriting
(\ref{gedifferentieerd}) yields
\begin{equation}
\begin{array}{lcl}
\tilde{c}_\epsilon\Phi''&=& \Theta' - \delta \Phi'
+ \M_{1/\epsilon^{2}}^{1} J_{\mathrm{mix}} \Phi'
+ DF(\tilde{U}_{\epsilon} ) \Phi'
+ D^2 F(\tilde{U}_{\epsilon} )
  \big[ \tilde{U}_{\epsilon}' , \Phi \big].
\end{array}
\end{equation}
Since $\Phi$ is bounded, this allows us to conclude that $\Phi\in\mathbf{H}^2$.
On account of
Proposition \ref{theorem4equivalent} we hence see that $\tilde{\mathcal{L}}_{\epsilon,\delta}$ is invertible
as a map from $\mathbf{H}^2$ to $\mathbf{H}^1$.\\

Fixing $\delta_{\mathrm{ref}} = \frac{1}{2} \delta_0$,
a short computation shows that
\begin{equation}\begin{array}{lcl}
\tilde{\mathcal{L}}_{\epsilon,\delta_{\mathrm{ref}}} \Phi'
&=& \Theta' + D^2F[\tilde{U}_{\epsilon}' ,\Phi]
+ (\delta_{\mathrm{ref}} - \delta) \Phi' .
\end{array}
\end{equation}
By (\ref{normboundmetc0spc3}) we obtain the bound
\begin{equation}\label{H2bound1}
\begin{array}{lcl}
\nrm{\M_{\epsilon}^{1}\Phi'}_{\mathbf{L}^2}
+ \nrm{\Phi}_{\mathbf{L}^2}&\leq &
C_0\Big[\nrm{\M_{\epsilon}^{1}\Theta}_{\mathbf{L}^2}
  +\frac{1}{\delta}\big|\ip{\Theta,(0,\overline{\Phi}_{e;0}^{\mathrm{adj}})}_{\mathbf{L}^2}\big|\Big].\\[0.2cm]
\end{array}
\end{equation}
On the other hand,
\sref{normboundmetc0fam1}
yields the estimate
\begin{equation}\label{H2bound2:a}
\begin{array}{lcl}
\nrm{\M_{\epsilon}^{1,2}\Phi''}_{\mathbf{L}^2}
+ \nrm{\M_{\epsilon}^2 \Phi'}_{\mathbf{L}^2}&\leq &
C_0\Big[\nrm{\M_{\epsilon}^{1,2}\Theta'}_{\mathbf{L}^2}
 +\nrm{\M_{\epsilon}^{1,2}
   D^2F[\tilde{U}_{\epsilon}',\Phi]}_{\mathbf{L}^2}
 +\nrm{\M_{\epsilon}^{1,2}(\delta_{\mathrm{ref}}-\delta)\Phi'}_{\mathbf{L}^2} \Big]\\[0.2cm]
&&\qquad +\frac{C_0}{\delta_{\mathrm{ref}}}\big|\ip{
   \Theta'-D^2 F(\tilde{U}_{\epsilon})
       [\tilde{U}_{\epsilon}',\Phi]
         -(\delta_{\mathrm{ref}}-\delta)\Phi',(0,\overline{\Phi}_{e;0}^{\mathrm{adj}})}_{\mathbf{L}^2}\big| .
\\[0.2cm]
\end{array}
\end{equation}
Since $\tilde{U}_{\epsilon}$ and $\tilde{U}_{\epsilon}'$
 are uniformly bounded by assumption, we readily see that
\begin{equation}\label{H2bound3}
\begin{array}{lclcl}
\nrm{\mathcal{M}^{1,2}_{\epsilon} D^2 F(\tilde{U}_{\epsilon})
  [\tilde{U}_{\epsilon}',\Phi]}_{\mathbf{L^2}}
&\le & \nrm{ D^2 F(\tilde{U}_{\epsilon})
  [\tilde{U}_{\epsilon}',\Phi]}_{\mathbf{L^2}}
 &\le & C_1' \nrm{\Phi}_{\mathbf{L}^2}

\end{array}
\end{equation}
for some $C_1'>0$.
In particular, we find
\begin{equation}\label{H2bound2:b}
\begin{array}{lcl}
\nrm{\M_{\epsilon}^{1,2}\Phi''}_{\mathbf{L}^2}
+ \nrm{\M_{\epsilon}^2 \Phi'}_{\mathbf{L}^2}
& \le &
C_2' \Big[\nrm{\M_{\epsilon}^{1,2}\Theta'}_{\mathbf{L}^2}
+ \norm{\Phi}_{\mathbf{L}^2}
+ \nrm{\M_{\epsilon}^{1,2}\Phi'}_{\mathbf{L}^2}
\\[0.2cm]
& & \qquad \qquad
 + \nrm{\Theta_e'}_{\mathbf{L}^2_e}
 + \nrm{\Phi_e'}_{\mathbf{L}^2_e}
\Big]

\end{array}
\end{equation}
for some $C_2' > 0$.
 Exploiting the estimates
\begin{equation}
 \nrm{ \Phi_e'}_{\mathbf{L}^2_e} 
 \le \nrm{ \mathcal{M}_{\epsilon}^{1,2} \Phi'}_{\mathbf{L}^2}
 \le
 \nrm{ \mathcal{M}_{\epsilon}^{1} \Phi'}_{\mathbf{L}^2},
\qquad \qquad
\nrm{ \Theta_e'}_{\mathbf{L}^2_e} 
\le
 \nrm{ \mathcal{M}_{\epsilon}^{1,2} \Theta'}_{\mathbf{L}^2},
\end{equation}
together with
\begin{equation}\begin{array}{lcl}
\norm{\Phi'}_{\mathbf{L}^2} &\le &\norm{\mathcal{M}_{\epsilon}^1 \Phi'}_{\mathbf{L}^2}
+ \norm{\mathcal{M}_{\epsilon}^2 \Phi'}_{\mathbf{L}^2} ,\end{array}
\end{equation}
the bounds (\ref{H2bound1}) and (\ref{H2bound2:b}) 
can be combined to arrive at the desired inequality (\ref{normboundmetc0inH22}).
\qed\\

\subsection{Strategy}

In this subsection we outline our broad strategy to establish Propositions \ref{theorem4equivalent} and \ref{theorem4equivalentcompact}.
As a first step, we compute the Fredholm index of the operators $\tilde{\mathcal{L}}_{\epsilon,\lambda}$
for $\lambda$ in a right half-plane that includes the imaginary axis.

\begin{lemma}\label{Lepsfredholm} Assume that (\asref{familyassumption}{\text{h}}),
(\asref{aannamesconstanten}{\text{H}}), (\asref{aannamesconstanten3}{\text{H}}),
(\asref{aannamespuls:even}{\text{H}}) and (\asref{aannamespuls:odd}{\text{H}}) 
are satisfied.
Then there exists a constant $\lambda_0 > 0$ 
so that
the operators $\tilde{\mathcal{L}}_{\epsilon,\lambda}$
are Fredholm with index zero whenever $\Re \lambda \ge - \lambda_0$ and $\epsilon > 0$.
\end{lemma}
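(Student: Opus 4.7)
The plan is to invoke the Fredholm theory for asymptotically autonomous MFDEs developed in \cite{MPA}. Since $\tilde{U}_\epsilon - \overline{U}_0 \in \mathbf{H}^1$ vanishes at infinity and $\overline{U}_0$ connects $(U_o^-, U_e^-)$ to $(U_o^+, U_e^+)$, the operator $\tilde{\mathcal{L}}_{\epsilon,\lambda}$ is asymptotically autonomous with shifts of length one. By \cite[Thm.~A]{MPA}, Fredholmness reduces to invertibility of the frozen characteristic matrices $\hat{\Delta}_{\epsilon,\lambda}^\pm(iy)$ for all $y \in \R$. Writing $v = (v_o, v_e) \in \C^{n+k} \times \C^{n+k}$ with $v_\# = (v_{\#,1}, v_{\#,2})$ and $v_{\#,1} \in \C^n$, the identity $\hat{\Delta}_{\epsilon,\lambda}^\pm(iy) v = 0$ reads
\begin{equation*}
\begin{array}{lcl}
0 &=& \bigl[ i \tilde{c}_\epsilon y + \lambda - DF_o(U_o^{\pm}) \bigr] v_o + \tfrac{2}{\epsilon^2} J_{\D} v_o - \tfrac{2\cos y}{\epsilon^2} J_{\D} v_e, \\[0.2cm]
0 &=& \bigl[ i \tilde{c}_\epsilon y + \lambda - DF_e(U_e^{\pm}) \bigr] v_e - 2 \cos y \, J_{\D} v_o + 2 J_{\D} v_e,
\end{array}
\end{equation*}
and I would rule out non-trivial solutions for $\Re \lambda \ge -\lambda_0$.

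Following the strategy of Lemma \ref{Loddfredholm}, I would introduce a weighted inner product adapted to the dichotomy in Assumption (HN2). For each $\# \in \{o, e\}$, set $W_\# = \mathrm{diag}(I_n, \Gamma_\# I_k)$, with $\Gamma_\# = 1$ if the $\#$-subsystem satisfies (h$\alpha$) and $\Gamma_\# > 0$ the constant from (h$\beta$) otherwise. Two algebraic facts underpin the estimate. First, $W_\# J_{\D} = J_{\D}$, since $J_{\D}$ is supported on the $n$-block where $W_\#$ acts as the identity, so the weights leave the diffusive coupling untouched. Second, under (h$\beta$) the identity $G_{1,2} = -\Gamma_\# G_{2,1}^T$ yields the cancellation
\begin{equation*}
W_\# DF_\#(U_\#^\pm) + DF_\#(U_\#^\pm)^T W_\# = \mathrm{diag}\bigl( G_{1,1} + G_{1,1}^T,\; \Gamma_\# ( G_{2,2} + G_{2,2}^T ) \bigr),
\end{equation*}
which is negative definite by Lemma \ref{limitnegdef}; under (h$\alpha$) (with $W_\# = I$) the same conclusion is immediate. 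Either way there is a constant $c_F > 0$ independent of $\epsilon$ such that
\begin{equation*}
 -\Re \ip{ v_\#, W_\# DF_\#(U_\#^\pm) v_\# } \geq c_F \ip{ v_\#, W_\# v_\# }, \qquad \# \in \{o, e\}.
\end{equation*}
Multiplying the odd equation by $\epsilon^2$, pairing it with $v_o$ via $W_o$, pairing the even equation with $v_e$ via $W_e$, summing, and extracting real parts, the $i\tilde{c}_\epsilon y$ contributions vanish and the coercivity bound yields
\begin{equation*}
\begin{array}{l}
 2 \ip{ v_{o,1}, \D v_{o,1} } + 2 \ip{ v_{e,1}, \D v_{e,1} } - 4 \cos y \, \Re \ip{ v_{o,1}, \D v_{e,1} } \\[0.2cm]
 \qquad + \bigl( c_F + \Re \lambda \bigr) \bigl( \epsilon^2 \ip{ v_o, W_o v_o } + \ip{ v_e, W_e v_e } \bigr) \leq 0.
\end{array}
\end{equation*}
Cauchy-Schwarz bounds the diffusion contribution from below by $2 \bigl( \sqrt{\ip{ v_{o,1}, \D v_{o,1} }} - \sqrt{\ip{ v_{e,1}, \D v_{e,1} }} \bigr)^2 \geq 0$, so choosing $\lambda_0 < c_F$ forces $v_o = v_e = 0$, contradicting non-triviality.

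Once Fredholmness is established, I would pin the index to zero via the spectral flow principle \cite[Thm.~C]{MPA} by linearly homotoping the endpoint matrices through $\rho\, DF_\#(U_\#^+) + (1-\rho)\, DF_\#(U_\#^-)$ for $\rho \in [0, 1]$. Both positive definiteness and the structural identity behind (h$\beta$) are preserved under convex combinations, so the coercivity estimate above persists throughout the homotopy and no roots of the characteristic matrices cross the imaginary axis. At $\rho = 1/2$ the asymptotic matrices at $\pm\infty$ coincide, giving a translation-invariant constant-coefficient operator of index zero, which completes the argument. The main obstacle in a rigorous write-up will be organising the case split between (h$\alpha$) and (h$\beta$)---which may occur independently for the two subsystems---so that $c_F$ and hence $\lambda_0$ remain uniform in $\epsilon$; the identity $W_\# J_{\D} = J_{\D}$ is precisely what decouples this case analysis from the $\epsilon$-dependent diffusive coupling between the subsystems.
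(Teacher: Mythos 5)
Your proof is correct, and it reaches the same conclusion as the paper's by the same overall route: reduce Fredholmness to hyperbolicity of the frozen characteristic matrices via \cite[Thm.~A]{MPA}, verify that no roots cross the imaginary axis for $\Re\lambda$ bounded below, and invoke the spectral flow principle \cite[Thm.~C]{MPA} for the index. The only genuine difference is the algebraic device used to exploit (h$\beta$): the paper substitutes the $w$-component of the characteristic equation (which decouples from the diffusion because $J_\D$ annihilates the $w$-block) directly into the quadratic form $\Re V_\#^\dagger (F^{(1)}_{\#;\rho}-\lambda) V_\#$, whereas you introduce the weight $W_\# = \mathrm{diag}(I_n,\Gamma_\# I_k)$ and observe that $W_\# DF_\# + DF_\#^T W_\#$ is block-diagonal and negative definite, while $W_\# J_\D = J_\D$ leaves the diffusive coupling intact. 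Your version buys a kernel-independent matrix identity and a slightly cleaner coercivity estimate uniform in $\rho$; the paper's version stays closer to the raw characteristic equation. Both hinge on exactly the same two structural facts (negative definiteness of the diagonal blocks from Lemma~\ref{limitnegdef}, and the antisymmetry $G_{1,2}=-\Gamma G_{2,1}^T$), so these are two presentations of one argument rather than two different proofs. One minor presentational point: your closing remark about $\rho = 1/2$ is unnecessary --- once the characteristic matrices are invertible along the entire homotopy, \cite[Thm.~C]{MPA} directly gives index zero without needing to single out the midpoint.
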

\textit{Proof.}
Upon writing
\begin{equation}
\begin{array}{lcl}
F^{(1)}_{o;\rho}=\rho DF_o(U_o^{-})+(1-\rho)DF_o(U_o^{+}),\\[0.2cm]
F^{(1)}_{e;\rho}=\rho DF_e(U_e^{-})+(1-\rho)DF_e(U_e^{+})
\end{array}
\end{equation}
for any $0 \le \rho \le 1$,
we introduce the constant coefficient operator
$L_{\rho;\epsilon,\lambda}: \mathbf{H}^1_{\C} \to \mathbf{L}^2_{\C}$ that acts as
\begin{equation}
\begin{array}{lcl}
L_{\rho;\epsilon,\lambda}
&=&\left(\begin{array}{ll}\tilde{c}_\epsilon\frac{d}{d\xi}+\frac{2}{\epsilon^2}J_{\D}- F^{(1)}_{o;\rho}+\lambda &
-\frac{1}{\epsilon^2}J_{\D} S_1\\
-J_{\D} S_1 &\tilde{c}_\epsilon\frac{d}{d\xi}+2J_{\D} -F^{(1)}_{e;\rho} +\lambda
\end{array}\right)
\end{array}
\end{equation}
and has the associated characteristic function
\begin{equation}
\begin{array}{lcl}
\Delta_{L_{\rho;\epsilon,\lambda} }(z)
&=&
\left(\begin{array}{ll}\tilde{c}_\epsilon z+\frac{2}{\epsilon^2}J_{\D}-F^{(1)}_{o;\rho}+\lambda &
  -\frac{1}{\epsilon^2}J_{\D}\Big[e^z+e^{-z}\Big]\\
-J_{\D}\Big[e^z+e^{-z}\Big] &\tilde{c}_\epsilon z+2J_{\D} -F^{(1)}_{e;\rho}+\lambda
\end{array}\right).
\end{array}
\end{equation}

Upon writing
\begin{equation}
\begin{array}{lcl}
F^{(1)}_{\rho} &=&
\left(\begin{array}{ll} F^{(1)}_{o;\rho}  &   0 \\ 0 & F^{(1)}_{e;\rho}
\end{array}\right)\end{array}
\end{equation}
together with
\begin{equation}\begin{array}{lcl}
A(y) &=& \left(\begin{array}{ll} J_\D  &   -J_\D\cos(y) \\ -J_\D\cos(y) & J_\D
\end{array}\right),\end{array}
\end{equation}
we see that
\begin{equation}\begin{array}{lcl}
\mathcal{M}_{\epsilon^2}^{1,2} \Delta_{L_{\rho;\epsilon,\lambda}}(iy)
&=& (\tilde{c}_{\epsilon} i y  + \lambda) \mathcal{M}_{\epsilon^2}^{1,2}
+ 2 A(y)
- \mathcal{M}_{\epsilon^2}^{1,2} F^{(1)}_{\rho}.\end{array}
\end{equation}
For any $y \in \Real$ and $V \in \mathbb{C}^{2(n + k)}$
we have
\begin{equation}\begin{array}{lcl}
\Re V^{\dagger}
\tilde{c}_{\epsilon} i y  \mathcal{M}_{\epsilon^2}^{1,2} V
&=& 0,
\end{array}
\end{equation}
together with
\begin{equation}\begin{array}{lcl}
\Re V^{\dagger}
 A(y) V
&\ge& 0 .
\end{array}
\end{equation}
In particular, we see that
\begin{equation}
\begin{array}{lcl}
\Re V^\dagger \mathcal{M}_{\epsilon^2}^{1,2}
 \Delta_{L_{\rho;\epsilon,\lambda} }(iy) V
& \ge &
  -\epsilon^2 \Re \big[ V^\dagger_o (F_{o;\rho}^{(1)}  - \lambda) V_o \big]
  - \Re \big[ V^\dagger_e (F_{e;\rho}^{(1)}  - \lambda) V_e \big].

\end{array}
\end{equation}

Let us pick an arbitrary $\lambda_0 > 0$ and suppose that $\Delta_{L_{\rho;\epsilon,\lambda} }(iy) V = 0$ holds
for some $V \in \mathbb{C}^{2(n+k)} \setminus \{ 0 \}$ and $\Re \lambda \ge - \lambda_0$.
We claim that
there exist constants $\vartheta_1 > 0$ and $\vartheta_2 > 0$, that do not depend on $\lambda_0$,
so that
\begin{equation}
\label{eq:trnsf:fredholm:claim}\begin{array}{lcl}
- \Re V^\dagger_{\#} (F_{\#;\rho}^{(1)}  - \lambda) V_{\#} &\ge & (\vartheta_2 - \vartheta_1 \lambda_0 ) \abs{V_{\#}}^2\end{array}
\end{equation}
for $\# \in \{o,e\}$. Assuming that this is indeed the case, we pick $\lambda_0 = \frac{ \vartheta_2}{2 \vartheta_1}$
and obtain the contradiction
\begin{equation}
\begin{array}{lcl}
0 & = &
\Re V^\dagger \mathcal{M}_{\epsilon^2}^{1,2}
 \Delta_{L_{\rho;\epsilon,\lambda} }(iy) V
\\[0.2cm]
& \ge &
  \frac{1}{2} \vartheta_2 \big[ \epsilon^2 \abs{V_o}^2 + \abs{V_e}^2 \big]
\\[0.2cm]
& > & 0.
\end{array}
\end{equation}
The desired Fredholm properties then follow directly from \cite[Thm. C]{MPA}.\\

In order to establish the claim \sref{eq:trnsf:fredholm:claim}, we first assume that $F_\#$
satisfies (h$\alpha$).
The negative-definiteness of
$F_{\#;\rho}^{(1)}$ then directly yields the bound
\begin{equation}
\begin{array}{lcl}
\Re V_\#^\dagger (F_{\#;\rho}^{(1)}  - \lambda) V_\#
&\le & (\lambda_0 - \vartheta_2)   \abs{V_\#}^2
 \end{array}
\end{equation}
for some $\vartheta_2 > 0$.\\

On the other hand, if $F_\#$
satisfies (h$\beta$),
then we can use the identity
\begin{equation}
\begin{array}{lcl}
(\tilde{c}_{\epsilon} i y + \lambda) w_\#
- [F^{(1)}_{\#;\rho}]_{2,2} w_\#
&=&  [F^{(1)}_{\#;\rho}]_{2,1} v_\#
\end{array}
\end{equation}
to compute
\begin{equation}
\begin{array}{lcl}
\Re V_{\#}^\dagger
\left(\begin{array}{ll} 0 & [F^{(1)}_{\#;\rho}]_{1,2} \\
    {[}F^{(1)}_{\#;\rho}]_{2,1} & 0
\end{array}\right) V_{\#}
& = &
\Re V_\#^\dagger
\left(\begin{array}{ll} 0 & -\Gamma [F^{(1)}_{\#;\rho}]_{2,1}^\dagger \\
    {[}F^{(1)}_{\#;\rho}]_{2,1}& 0
\end{array}\right) V_\#
\\[0.4cm]
& = &
\Re \Big[ -\Gamma v_\#^\dagger [F^{(1)}_{\#;\rho}]_{2,1}^\dagger w_\#
+  w_\#^\dagger [F^{(1)}_{\#;\rho}]_{2,1} v_\# \Big]
\\[0.2cm]
& = & (1 - \Gamma) \Re w_\#^\dagger  [F^{(1)}_{\#;\rho}]_{2,1} v_\#
\\[0.2cm]
& = &
(1 - \Gamma) \Re w_\#^\dagger
\big[ \tilde{c}_{\epsilon} i y + \lambda \big] w_\#
- (1 - \Gamma) \Re w_\#^\dagger [F_{\#;\rho}^{(1)}]_{2,2} w_\#
\\[0.2cm]
& = &
(1 - \Gamma) \Re \lambda \abs{w_\#}^2
- (1 - \Gamma) \Re w_\#^\dagger [F_{\#;\rho}^{(1)}]_{2,2} w_\# .
\\[0.2cm]
\end{array}
\end{equation}
In particular, Lemma \ref{limitnegdef}
allows us to obtain the estimate
\begin{equation}\begin{array}{lcl}
\Re V^\dagger_{\#} (F^{(1)}_{\#;\rho} -\lambda ) V_{\#}
&= &
- \Gamma \Re \lambda \abs{w_\#}^2
+ \Gamma \Re w_\#^\dagger [F_{\#;\rho}^{(1)}]_{2,2} w_\#
\\[0.2cm]
& & \qquad
- \Re \lambda \abs{v_{\#}}^2
+ \Re v_{\#}^\dagger [F_{\#;\rho}^{(1)}]_{2,2} v_{\#}
\\[0.2cm]
& \le &
(\Gamma + 1) \lambda_0 \abs{V_{\#} }^2
- \vartheta_2 \abs{V_{\#} }^2
  \end{array}
\end{equation}
for some $\vartheta_2 > 0$, as desired.
\qed\\

For any $\epsilon > 0$ and $0 < \delta < \delta_{\diamond}$
we introduce the quantity
\begin{equation}\label{defLambdaepsdelta}
\begin{array}{lcl}
\Lambda(\epsilon,\delta)&=&\inf\limits_{\Phi\in\mathbf{H}^1,\nrm{\mathcal{M}_\epsilon^{1,2}\Phi}_{\mathbf{H}^1 }=1}
  \Big[\nrm{\mathcal{M}_\epsilon^{1,2}\tilde{\mathcal{L}}_{\epsilon,\delta}\Phi}_{\mathbf{L}^2}
    +\frac{1}{\delta}\big|\ip{\tilde{\mathcal{L}}_{\epsilon,\delta}\Phi,(0,\overline{\Phi}_{e;0}^{\mathrm{adj}})}_{\mathbf{L}^2}\big|\Big] ,
\end{array}
\end{equation}
which allows us to define
\begin{equation}
\begin{array}{lcl}
\Lambda(\delta)&=&\liminf\limits_{\epsilon\downarrow 0}\Lambda(\epsilon,\delta).
\end{array}
\end{equation}
Similarly, for any $\epsilon > 0$ and any subset $M\subset\C$
we write
\begin{equation}
\begin{array}{lcl}
\Lambda(\epsilon,M)&=&\inf\limits_{\Phi\in\mathbf{H}^1,\lambda\in M,\nrm{\mathcal{M}_\epsilon^{1,2}\Phi}_{\mathbf{H}^1}=1
 }\nrm{\mathcal{M}_\epsilon^{1,2}\tilde{\mathcal{L}}_{\epsilon,\lambda}\Phi}_{\mathbf{L}^2} ,
\end{array}
\end{equation}
together with
\begin{equation}\label{defLambdaM}
\begin{array}{lcl}
\Lambda(M)&=&\liminf\limits_{\epsilon\downarrow 0}\Lambda(\epsilon,M).
\end{array}
\end{equation}

The following proposition forms the key ingredient for proving Proposition \ref{theorem4equivalent} and \ref{theorem4equivalentcompact}.
It is the analogue of \cite[Lem. 6]{BatesInfRange}.

\begin{proposition}\label{lemma6equivalent} Assume that (\asref{familyassumption}{\text{h}}), (\asref{aannamesconstanten}{\text{H}}),
(\asref{aannamesconstanten3}{\text{H}}), (\asref{aannamespuls:even}{\text{H}}), (\asref{aannamespuls:odd}{\text{H}}) and
(\asref{extraaannamespuls}{\text{H}}) are satisfied. Then there exist constants $\delta_0>0$ and $C_0>0$ so that
\begin{equation}
\begin{array}{lcl}
\Lambda(\delta)&\geq &\frac{2}{C_0}
\end{array}
\end{equation}
holds for all $0<\delta<\delta_0$.

Assume furthermore that (\asref{extraextraaannamespuls}{\text{H}}) holds
and pick a sufficiently small $\lambda_0 > 0$.
Then for any subset $M\subset\C$  that satisfies 
(h$M_{\lambda_{0}}$), there exists a
constant $C_M$ 
so that
\begin{equation}
\begin{array}{lcl}
\Lambda(M)&\geq &\frac{2}{C_M}.
\end{array}
\end{equation} \end{proposition}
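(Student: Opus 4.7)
I follow the spectral convergence scheme of \cite[Lem.~6]{BatesInfRange}, suitably generalized to handle the scale-separated weights $\mathcal{M}_\epsilon^{1,2}$ and the cross-coupling block $J_{\mathrm{mix}}$. I detail the $\delta$-statement; the $\Lambda(M)$ bound follows the same steps after extracting a convergent subsequence $\lambda_j \to \lambda_* \in M$ (possible by compactness of $M$), invoking Proposition~\ref{eigenschappenL0:b} in place of Proposition~\ref{eigenschappenL0:a}, and shrinking $\lambda_0$ so the end-state coercivity from Lemma~\ref{limitnegdef} survives the shift by $\Re \lambda$.

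Arguing by contradiction, suppose no such $C_0, \delta_0$ exist. A diagonal extraction yields sequences $\delta_j \to \delta_* \in [0, \delta_\diamond)$, $\epsilon_j \downarrow 0$ and $\Phi_j = (\phi_o^{(j)}, \psi_o^{(j)}, \phi_e^{(j)}, \psi_e^{(j)}) \in \mathbf{H}^1$ with $\norm{\mathcal{M}_{\epsilon_j}^{1,2}\Phi_j}_{\mathbf{H}^1} = 1$ and
\begin{equation*}
q_j := \norm{\mathcal{M}_{\epsilon_j}^{1,2}\Theta_j}_{\mathbf{L}^2} + \tfrac{1}{\delta_j}\big|\langle \Theta_j, (0,\overline{\Phi}_{e;0}^{\mathrm{adj}})\rangle_{\mathbf{L}^2}\big| \longrightarrow 0, \qquad \Theta_j := \tilde{\mathcal{L}}_{\epsilon_j,\delta_j}\Phi_j.
\end{equation*}
Banach--Alaoglu produces weak subsequential limits $\epsilon_j \phi_o^{(j)} \rightharpoonup X_o$, $\epsilon_j \psi_o^{(j)} \rightharpoonup Y_o$ and $(\phi_e^{(j)}, \psi_e^{(j)}) \rightharpoonup (\phi_e^*, \psi_e^*)$. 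Multiplying the $\theta_o$-equation of $\Theta_j = \tilde{\mathcal{L}}_{\epsilon_j,\delta_j}\Phi_j$ by $\epsilon_j^2$ and testing against $C_c^\infty$ forces the infinitesimal version $X_o = \tfrac{1}{2} S_1 \phi_e^*$ of \sref{eq:mr:id:for:ovl:u:zero}; rewritten as $2\mathcal{D}\phi_o^{(j)} = \mathcal{D} S_1 \phi_e^{(j)} + O(\epsilon_j)$ in $\mathbf{L}^2$, this same identity shows that $\phi_o^{(j)}$ is in fact bounded in $L^2$ and converges weakly to $\phi_o^* := \tfrac{1}{2} S_1 \phi_e^*$, with a similar bootstrap producing $\psi_o^{(j)} \rightharpoonup \psi_o^*$. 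Substituting $\phi_o^*$ into the $\theta_e$-equation via the reduction $\tfrac{1}{2}\mathcal{D} S_1^2 = \mathcal{D} + \tfrac{1}{2}\mathcal{D} S_2$, and passing to the limit in the other two equations, yields $\mathcal{L}_{\diamond,\delta_*}(\psi_o^*,\phi_e^*,\psi_e^*) = 0$ together with the inherited adjoint-pairing constraint $\tfrac{1}{\delta_j}|\langle\Theta_j,(0,\overline{\Phi}_{e;0}^{\mathrm{adj}})\rangle| \to 0$, which controls the projection onto $\mathrm{span}(\overline{U}_{e;0}')$ even when $\delta_* = 0$. Proposition~\ref{eigenschappenL0:a} then forces $(\psi_o^*, \phi_e^*, \psi_e^*) = 0$ and hence $\phi_o^* = X_o = 0$.

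The main obstacle is upgrading these weak limits to strong convergence in the weighted $\mathbf{H}^1$-norm so as to contradict the unit normalization. My plan is to pair $\Theta_j$ against $\mathcal{M}_{\epsilon_j^2}^{1,2}\Phi_j$ in $\mathbf{L}^2$: the transport term is antisymmetric and vanishes under integration by parts, while the block $-\mathcal{M}_{1/\epsilon_j^2}^1 J_{\mathrm{mix}}$ contributes nonnegatively thanks to the positive semidefiniteness of the Fourier symbol $A(y)$ already exploited in the proof of Lemma~\ref{Lepsfredholm} --- the multi-component analogue of \sref{eq:int:coercv:est:diffusion}. The reaction contribution $\langle(-DF(\tilde U_{\epsilon_j}) + \delta_j)\Phi_j, \mathcal{M}_{\epsilon_j^2}^{1,2}\Phi_j\rangle$ is coercive outside a large compact interval by (h$\alpha$)/(h$\beta$), while inside that interval the non-coercive part is absorbed via a cut-off together with the $L^2_{\mathrm{loc}}$-compactness of the (now zero) weak limit. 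Combining these estimates yields $\norm{\mathcal{M}_{\epsilon_j}^{1,2}\Phi_j}_{\mathbf{L}^2} \to 0$, and a final bootstrap obtained by running the same inner-product estimate on the differentiated equation (where the commutator with $\tilde{\mathcal{L}}_{\epsilon_j,\delta_j}$ produces only lower-order terms controlled by $\norm{\tilde U_\epsilon'}_\infty$) promotes $\mathbf{L}^2$-smallness to $\mathbf{H}^1$-smallness, delivering the required contradiction.
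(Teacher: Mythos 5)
Your contradiction setup is where the argument breaks down, and it is a genuinely different structure from the one in the paper. You negate the statement and extract a diagonal sequence $(\delta_j,\epsilon_j)$ with $\delta_j\to\delta_*$, allowing $\delta_*=0$. But when $\delta_*=0$ the limiting operator $\mathcal{L}_{\diamond,0}$ has a one-dimensional kernel $\{0\}\times\span\{\overline{U}_{e;0}'\}$, and your claim that the constraint $\tfrac{1}{\delta_j}\big|\ip{\Theta_j,(0,\overline{\Phi}_{e;0}^{\mathrm{adj}})}\big|\to 0$ ``controls the projection onto $\span(\overline{U}_{e;0}')$'' does not survive scrutiny. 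Pairing the even block of $\tilde{\mathcal{L}}_{\epsilon_j,\delta_j}\Phi_j=\Theta_j$ with $\overline{\Phi}_{e;0}^{\mathrm{adj}}$ gives schematically $\delta_j\ip{\Phi_{e,j},\overline{\Phi}_{e;0}^{\mathrm{adj}}} + \ip{\mathrm{err}_j,\overline{\Phi}_{e;0}^{\mathrm{adj}}} = \ip{\Theta_{e,j},\overline{\Phi}_{e;0}^{\mathrm{adj}}}$, where $\mathrm{err}_j$ collects the $O(\epsilon_j)$ and $O(\nrm{\tilde U_\epsilon-\overline{U}_0})$ deviations from the limiting operator. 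Dividing by $\delta_j$ isolates $\ip{\Phi_{e,j},\overline{\Phi}_{e;0}^{\mathrm{adj}}}$, but the term $\mathrm{err}_j/\delta_j$ is uncontrolled because (hFam) imposes no relation between the rate at which $\epsilon_j\to 0$ and the rate at which $\delta_j\to 0$. So you cannot conclude that the kernel projection of the weak limit vanishes, and the argument stalls precisely in the regime $\delta_*=0$ that the negation forces you to consider.

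The paper sidesteps this entirely by \emph{not} arguing by contradiction in $\delta$: it fixes $\delta\in(0,\delta_\diamond)$, extracts the realizing sequence in $\epsilon$ alone (Lemmas \ref{lemma6bewijs1}--\ref{lemma6bewijs2}), bounds the weak limit via Proposition \ref{eigenschappenL0:a} at that fixed $\delta$ (Lemma \ref{lemma6bewijs2.5}), and then establishes coercivity (Lemmas \ref{lemma6bewijsbound1}--\ref{lemma6bewijs4}) with constants $a,m,g,\beta,\kappa$ that depend only on $\delta_\diamond$, $\tilde K_F$, $\tilde K_{\mathrm{fam}}$ and the limiting nonlinearity --- hence are uniform in $\delta$. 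The final combination then yields $\Lambda(\delta)\ge 2/C_0$ with a $\delta$-uniform $C_0$, which is exactly the quantified statement, without ever needing to make sense of a limit $\delta\to 0$. A second, smaller issue: your proposed ``bootstrap by differentiating the equation'' to upgrade $\mathbf{L}^2$- to $\mathbf{H}^1$-smallness invokes a uniform bound on $\nrm{\tilde U_\epsilon'}_\infty$, but this is an \emph{extra} hypothesis appearing only in Corollary \ref{theorem4H2versie2}, not in (hFam). The paper's Lemma \ref{lemma6bewijs3} avoids differentiation altogether: it pairs $\Theta_j$ against $\mathcal{M}_{\epsilon_j}^{1,2}\Phi_j'$ directly, exploiting the antisymmetry of the transport and $J_{\mathrm{mix}}$ blocks and Cauchy--Schwarz on the reaction term, and thereby controls $\nrm{\mathcal{M}_{\epsilon_j}^{1,2}\Phi_j'}$ by $\nrm{\mathcal{M}_{\epsilon_j}^{1,2}\Phi_j}$ and $\nrm{\mathcal{M}_{\epsilon_j}^{1,2}\Theta_j}$ using only the bounds already in (hFam). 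Your identification of the coercivity mechanism (nonnegativity of $J_{\mathrm{mix}}$, end-state definiteness via (h$\alpha$)/(h$\beta$), reduction $\frac12 \D S_1^2 = \D + \frac12 \D S_2$) is correct and aligned with the paper, but the contradiction scaffolding around it needs to be rebuilt as a direct $\delta$-uniform estimate.
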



\textit{Proof of Proposition \ref{theorem4equivalent}.} Fix $0 < \delta < \delta_0$.
Proposition
\ref{lemma6equivalent} implies that we can pick $\epsilon_0(\delta)>0$ in such a way that
$\Lambda(\epsilon,\delta)\geq\frac{1}{C_0}$ for each $0<\epsilon<\epsilon_0(\delta)$.
This means that $\tilde{\mathcal{L}}_{\epsilon,\delta}$ is injective for each such
$\epsilon$ and that the bound
(\ref{normboundmetc0fam}) holds for any $\Phi\in\mathbf{H}^1$.
Since $\tilde{\mathcal{L}}_{\epsilon,\delta}$ is also a Fredholm operator with index zero by
Lemma \ref{Lepsfredholm},
it must be invertible.
%
\qed\\

\textit{Proof of Proposition \ref{theorem4equivalentcompact}.}
The result can be established by repeating the arguments used in the proof of Proposition \ref{theorem4equivalent}.
\qed\\



\subsection{Proof of Proposition \ref{lemma6equivalent}}

We now set out to prove Proposition \ref{lemma6equivalent}. In Lemma's \ref{lemma6bewijs1} and \ref{lemma6bewijs2} we construct weakly converging
sequences that realize the infima in (\ref{defLambdaepsdelta})-(\ref{defLambdaM}). In Lemma's \ref{lemma6bewijs2.5}-\ref{lemma6bewijs4} we exploit the structure of our
operator (\ref{deftildeL:short}) to recover lower bounds on the norms of the derivatives of these sequences that are typically lost when taking weak limits.
First recall the constant $\delta_{\diamond}$ from Proposition \ref{eigenschappenL0:a}.

\begin{lemma}\label{lemma6bewijs1} 
Consider the setting of Proposition \ref{lemma6equivalent}
and pick $0 < \delta < \delta_{\diamond}$.
Then there exists a sequence
\begin{equation}\begin{array}{lcl}
\{(\epsilon_j,\Phi_j, \Theta_j)\}_{j\geq 1} &\subset &(0,1)\times \mathbf{H}^1 \times \mathbf{L^2}\end{array}
\end{equation}
together with a pair of functions
\begin{equation}
\Phi\in  \mathbf{H}^1,
\qquad
\Theta \in \mathbf{L}^2
\end{equation}
that satisfy the following properties.

\begin{enumerate}[label=(\roman*)]

\item We have $\lim\limits_{j\rightarrow\infty}\epsilon_j=0$ together with
\begin{equation}
\label{eq:trnsf:fred:lim}
\begin{array}{lcl}
\lim\limits_{j\rightarrow\infty}\Big[\nrm{\mathcal{M}_{\epsilon_j}^{1,2}\Theta_j}_{\mathbf{L}^2}
  +\frac{1}{\delta}\big|\ip{\Theta_j,(0,\overline{\Phi}_{e;0}^{\mathrm{adj}})}_{\mathbf{L}^2}\big|\Big]&=&\Lambda(\delta).
\end{array}
\end{equation}

\item For every $j \geq 1$ we have the identity
\begin{equation}\begin{array}{lcl}
\label{eq:transf:fred:id:for:theta:j}
\tilde{\mathcal{L}}_{\epsilon_j,\delta}\Phi_j &=& \Theta_j\end{array}
\end{equation}
together with the normalization
\begin{equation}\begin{array}{lcl}
\label{eq:trnsf:fred:norm:cond}
\nrm{\mathcal{M}_{\epsilon_j}^{1,2}\Phi_j}_{\mathbf{H}^1}&=&1.\end{array}
\end{equation}
\item Writing $\Phi = (\phi_o, \psi_o, \phi_e, \psi_e)$, we have $\phi_o = 0$.
\item The sequence $\mathcal{M}_{\epsilon_j}^{1,2} \Phi_j$ converges to $ \Phi$ strongly in $\mathbf{L}_{\mathrm{loc}}^2$ and weakly in $\mathbf{H}^1$. In addition,
the sequence $\M_{\epsilon_j}^{1,2}\Theta_j$ converges weakly to $\Theta$ in $\mathbf{L}^2$.

\end{enumerate}
\end{lemma}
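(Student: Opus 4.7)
The construction combines a standard minimizing-sequence argument with weak compactness, and then isolates property (iii) as the only ingredient that genuinely requires the structure of $\tilde{\mathcal{L}}_{\epsilon,\delta}$. I tacitly assume $\Lambda(\delta)<\infty$ throughout, since otherwise any normalized family meets all four conclusions vacuously.

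First I would peel off the two layers in the definition \sref{defLambdaepsdelta}. The outer $\liminf$ produces $\epsilon_j \downarrow 0$ with $\Lambda(\epsilon_j,\delta)\to\Lambda(\delta)$; for each such $j$ the inner infimum lets me pick $\Phi_j$ on the unit sphere $\{\|\mathcal{M}_{\epsilon_j}^{1,2}\Phi\|_{\mathbf{H}^1}=1\}$ whose functional value lies within $1/j$ of $\Lambda(\epsilon_j,\delta)$. Setting $\Theta_j := \tilde{\mathcal{L}}_{\epsilon_j,\delta}\Phi_j$ then makes \sref{eq:transf:fred:id:for:theta:j}, \sref{eq:trnsf:fred:norm:cond} and \sref{eq:trnsf:fred:lim} hold by construction, giving (i) and (ii).

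Next I would produce the limits by weak compactness. The normalization makes $\mathcal{M}_{\epsilon_j}^{1,2}\Phi_j$ bounded in $\mathbf{H}^1$, so Banach--Alaoglu plus a diagonal argument extracts a subsequence converging weakly in $\mathbf{H}^1$ to some $\Phi \in \mathbf{H}^1$, and the Rellich--Kondrachov embedding $H^1(I)\hookrightarrow L^2(I)$ on every bounded interval $I$ upgrades this to strong $\mathbf{L}^2_{\mathrm{loc}}$-convergence. The boundedness of $\|\mathcal{M}_{\epsilon_j}^{1,2}\Theta_j\|_{\mathbf{L}^2}$, forced by \sref{eq:trnsf:fred:lim} and the finiteness of $\Lambda(\delta)$, then produces a further weak-$\mathbf{L}^2$ subsequential limit $\Theta$. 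These two extractions establish (iv).

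The main obstacle is (iii): the a priori bound from the normalization only gives $\|\phi_{o;j}\|_{H^1}\le \epsilon_j^{-1}$, which would in principle permit $\epsilon_j\phi_{o;j}$ to have a non-zero weak limit in $H^1$. To rule this out I would exploit the scale separation inside $\tilde{\mathcal{L}}_{\epsilon,\delta}$: extracting the first $n$-component block of the identity $\tilde{\mathcal{L}}_{\epsilon_j,\delta}\Phi_j = \Theta_j$ and multiplying through by $\epsilon_j^2$ rearranges into
\begin{equation*}
2\mathcal{D}\phi_{o;j} \;=\; \mathcal{D}\, S_1\phi_{e;j} \;-\; \epsilon_j \tilde{c}_{\epsilon_j}\,(\epsilon_j\phi_{o;j})'
 \;+\; \epsilon_j R_j \;+\; \epsilon_j (\epsilon_j \theta_{o;j}),
\end{equation*}
where $R_j$ collects the terms coming from $DF_o(\tilde{U}_{o;\epsilon_j})(\epsilon_j\phi_{o;j},\epsilon_j\psi_{o;j})$ and $\delta(\epsilon_j\phi_{o;j})$. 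By the normalization, by the previous step, and by the uniform bound \sref{eq:cnst:kf:def}, every term on the right other than $\mathcal{D}\, S_1\phi_{e;j}$ has $L^2$-norm $O(\epsilon_j)$, while $S_1\phi_{e;j}$ itself is bounded in $L^2$. Invertibility of $\mathcal{D}$ therefore forces $\phi_{o;j}$ to be bounded in $L^2$ uniformly in $j$, so $\epsilon_j\phi_{o;j}\to 0$ strongly in $L^2$. Uniqueness of the weak $L^2$-limit (which must coincide with $\phi_o$, by the continuous embedding $\mathbf{H}^1 \hookrightarrow \mathbf{L}^2$) then yields $\phi_o = 0$, completing the proof.
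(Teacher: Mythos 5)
Your proof is correct and follows essentially the same route as the paper: minimizing-sequence extraction for (i)--(ii), weak compactness (Banach--Alaoglu plus Rellich) for (iv), and for (iii) the same key move of isolating the first $n$-component block of $\tilde{\mathcal{L}}_{\epsilon_j,\delta}\Phi_j = \Theta_j$, multiplying by $\epsilon_j^2$, and using invertibility of $\mathcal{D}$ together with the normalization to show $\{\phi_{o;j}\}$ is $L^2$-bounded so that $\epsilon_j\phi_{o;j}\to 0$ strongly in $L^2$. Your added remarks (the tacit assumption $\Lambda(\delta)<\infty$, the explicit identification of the weak $H^1$-limit with the weak $L^2$-limit) are harmless elaborations rather than departures; if anything, note that the ``vacuously'' aside is slightly off --- if $\Lambda(\delta)$ were infinite then (iv) would actually fail, not hold trivially --- but this is moot since $\Lambda(\delta)$ is clearly finite.
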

\textit{Proof.}
Items (i) and (ii) follow directly
from the definition of $\Lambda(\delta)$.
The normalization (\ref{eq:trnsf:fred:norm:cond})
and the limit (\ref{eq:trnsf:fred:lim}) ensure that $\nrm{\M_{\epsilon_j}^{1,2}\Phi_j}_{\mathbf{H}^1}$
and $\nrm{\M_{\epsilon_j}^{1,2}\Theta_j}_{\mathbf{L}^2}$ are bounded, which allows us to
obtain the weak limits (iv) after passing to a subsequence.\\

 In order to obtain (iii),
we write $\Phi_j=(\phi_{o,j},\psi_{o,j},\phi_{e,j},\psi_{e,j})$ 
together with $\Theta_j= (\theta_{o,j},\chi_{o,j},\theta_{e,j},\chi_{e,j})$
and 
note that the first component of (\ref{eq:transf:fred:id:for:theta:j}) yields
\begin{equation}\label{eq:first:comp}
\begin{array}{lcl}
2\D\phi_{o,j} - \D S_1\phi_{e,j}&=&
-\epsilon_j^2 \tilde{c}_{\epsilon_j}\phi_{o,j}'
+\epsilon_j^2 D_1f_o(\tilde{U}_{o;\epsilon_j})\phi_{o,j}+\epsilon_j^2 D_2f_o(\tilde{U}_{o;\epsilon_j})\psi_{o,j}
-\delta\epsilon_j^2\phi_{o,j}+\epsilon_j^2\theta_{o,j}.
\end{array}
\end{equation}
The normalization condition (\ref{eq:trnsf:fred:norm:cond})
and the limit (\ref{eq:trnsf:fred:lim}) hence imply that
\begin{equation}\begin{array}{lcl}
\lim_{j \to \infty} \nrm{2 \mathcal{D} \phi_{o;j} -  \mathcal{D} S_1 \phi_{e,j}}_{ L^2(\R;\R^n)} &= &0.\end{array}
\end{equation}
In particular, we see that $\{\phi_{o;j}\}_{j\geq 1}$ is a bounded sequence. This yields the desired identity \\
$\phi_o=\lim\limits_{j\rightarrow\infty}\epsilon_j\phi_{o,j}=0$.

\qed\\

\begin{lemma}\label{lemma6bewijs2} 
Consider the setting of Proposition \ref{lemma6equivalent} and pick a sufficiently small $\lambda_0 > 0$.
Then for any $M\subset \C$ that satisfies 
(h$M_{\lambda_{0}}$),  there exists a sequence
\begin{equation}\begin{array}{lcl}
\{(\lambda_j,\epsilon_j,\Phi_j, \Theta_j)\}_{j\geq 1} &\subset  &M\times (0,1)\times \mathbf{H}^1 \times \mathbf{L^2}\end{array}
\end{equation}
together with a triplet
\begin{equation}
\Phi\in  \mathbf{H}^1,
\qquad
\Theta \in \mathbf{L}^2,
\qquad
\lambda \in M
\end{equation}
that satisfy the limits
\begin{equation}
\epsilon_j \to 0,
\qquad \lambda_j \to \lambda,
\qquad
\nrm{\mathcal{M}_{\epsilon_j}^{1,2}\Theta_j}_{\mathbf{L}^2}
  \to \Lambda(M)
\end{equation}
as $j \to \infty$,
together with the properties (ii) - (iv) from Lemma \ref{lemma6bewijs1}.
\end{lemma}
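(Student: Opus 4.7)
My plan is to mimic the proof of Lemma \ref{lemma6bewijs1} step by step, with the single new ingredient being the compactness of $M$ to handle the extra $\lambda$-dependence.

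First, I would use the definition $\Lambda(M) = \liminf_{\epsilon \downarrow 0} \Lambda(\epsilon, M)$ to pick a sequence $\epsilon_j \downarrow 0$ with $\Lambda(\epsilon_j, M) \to \Lambda(M)$. For each $j$, the definition of $\Lambda(\epsilon_j, M)$ as an infimum over $(\lambda, \Phi) \in M \times \mathbf{H}^1$ with $\|\mathcal{M}_{\epsilon_j}^{1,2} \Phi\|_{\mathbf{H}^1} = 1$ lets me choose $\lambda_j \in M$ and $\Phi_j \in \mathbf{H}^1$ satisfying the normalization (\ref{eq:trnsf:fred:norm:cond}) and
\[
\nrm{\mathcal{M}_{\epsilon_j}^{1,2}\tilde{\mathcal{L}}_{\epsilon_j,\lambda_j}\Phi_j}_{\mathbf{L}^2} \le \Lambda(\epsilon_j, M) + 1/j.
\]
Setting $\Theta_j := \tilde{\mathcal{L}}_{\epsilon_j,\lambda_j}\Phi_j$ immediately gives (ii) and the second required limit.

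Next, the compactness of $M$ (hypothesis (h$M_{\lambda_0}$)) lets me pass to a subsequence so that $\lambda_j \to \lambda$ for some $\lambda \in M$, giving the first limit. The sequences $\mathcal{M}_{\epsilon_j}^{1,2}\Phi_j$ and $\mathcal{M}_{\epsilon_j}^{1,2}\Theta_j$ are bounded in $\mathbf{H}^1$ and $\mathbf{L}^2$ respectively (the first by normalization, the second by the minimizing property). After another subsequence extraction I obtain a weak limit $\Phi \in \mathbf{H}^1$ for the former and $\Theta \in \mathbf{L}^2$ for the latter, with the weak $\mathbf{H}^1$ convergence upgrading to strong convergence in $\mathbf{L}^2_{\mathrm{loc}}$ via Rellich--Kondrachov. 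This yields (iv).

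Finally, for property (iii), I would write $\Phi_j = (\phi_{o,j}, \psi_{o,j}, \phi_{e,j}, \psi_{e,j})$ and $\Theta_j = (\theta_{o,j}, \chi_{o,j}, \theta_{e,j}, \chi_{e,j})$, then read off the first component of the identity $\tilde{\mathcal{L}}_{\epsilon_j,\lambda_j}\Phi_j = \Theta_j$ to obtain the analogue of (\ref{eq:first:comp}):
\[
2\D\phi_{o,j} - \D S_1 \phi_{e,j} = -\epsilon_j^2 \tilde{c}_{\epsilon_j}\phi_{o,j}' + \epsilon_j^2 D_1 f_o(\tilde{U}_{o;\epsilon_j})\phi_{o,j} + \epsilon_j^2 D_2 f_o(\tilde{U}_{o;\epsilon_j})\psi_{o,j} - \lambda_j\epsilon_j^2 \phi_{o,j} + \epsilon_j^2 \theta_{o,j}.
\]
The only change from Lemma \ref{lemma6bewijs1} is the replacement of $\delta$ by $\lambda_j$, but since $\lambda_j$ stays bounded in the compact set $M$, the right-hand side still tends to zero in $L^2(\R;\R^n)$ by the normalization and the convergence $\|\mathcal{M}_{\epsilon_j}^{1,2}\Theta_j\|_{\mathbf{L}^2} \to \Lambda(M)$. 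Thus $\{\phi_{o,j}\}$ remains bounded in $L^2$, and the weak limit is $\phi_o = \lim_{j \to \infty} \epsilon_j \phi_{o,j} = 0$.

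The only potential subtlety, and thus the main step that requires a bit of care, is ensuring that $\lambda_j$ being complex (rather than the real positive $\delta$ of the previous lemma) does not interfere with the argument producing (iii); but since $\lambda_j$ enters the first component only through the term $\lambda_j \epsilon_j^2 \phi_{o,j}$, the uniform bound $|\lambda_j| \le \sup_{\mu \in M}|\mu| < \infty$ is all that is needed, and this follows immediately from compactness.
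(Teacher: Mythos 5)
The proposal is correct and takes essentially the same approach as the paper, which simply states that the proof follows Lemma~\ref{lemma6bewijs1} in an almost identical fashion; you have correctly identified that the only new ingredients are the compactness of $M$ (used to extract $\lambda_j \to \lambda \in M$) and the uniform bound $|\lambda_j| \le \sup_{\mu \in M}|\mu|$ replacing $\delta$ in the argument for item~(iii).
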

\textit{Proof.}
These properties can be obtained by following the proof of Lemma \ref{lemma6bewijs1} in an almost identical fashion.
\qed\\

In the remainder of this section we will often treat
the settings of Lemma \ref{lemma6bewijs1} and Lemma \ref{lemma6bewijs2}
in a parallel fashion. In order to streamline our notation, we
use the value $\lambda_0$ stated in Lemma \ref{Lepsfredholm}
and
interpret $\{\lambda_j\}_{j\geq 1}$
as the constant sequence $\lambda_j=\delta$ when working in the context of Lemma \ref{lemma6bewijs1}.
In addition, we write $\lambda_{\max}=\delta_{\diamond}$ in the setting of Lemma \ref{lemma6bewijs1}
or $\lambda_{\max}=\max\{|\lambda|:\lambda\in M\}$ in the setting of Lemma \ref{lemma6bewijs2}.

\begin{lemma}\label{lemma6bewijs2.5}
Consider the setting of 
Lemma \ref{lemma6bewijs1} or Lemma \ref{lemma6bewijs2}.
Then the function $\Phi$ from Lemma \ref{lemma6bewijs1} satisfies
\begin{equation}
\begin{array}{lcl}
\nrm{ \Phi}_{\mathbf{H}^1}&\leq & C_{\diamond}\Lambda(\delta),
\end{array}
\end{equation}
while the function $\Phi$ from Lemma \ref{lemma6bewijs2} satisfies
\begin{equation}
\begin{array}{lcl}
\nrm{ \Phi}_{\mathbf{H}^1}&\leq & C_{\diamond;M}\Lambda(M) .
\end{array}
\end{equation}
\end{lemma}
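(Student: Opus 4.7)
The plan is to pass to a weak limit in the identity $\tilde{\mathcal{L}}_{\epsilon_j,\lambda_j}\Phi_j = \Theta_j$ (writing $\lambda_j = \delta$ in the first setting) in order to produce a pair $(\Phi_\diamond, \Theta_\diamond) \in \mathbf{H}^1_\diamond \times \mathbf{L}^2_\diamond$ satisfying the limiting equation $\mathcal{L}_{\diamond,\lambda}\Phi_\diamond = \Theta_\diamond$, and then to invoke Proposition \ref{eigenschappenL0:a} in the first setting or Proposition \ref{eigenschappenL0:b} in the second. Writing $\Phi_j = (\phi_{o,j},\psi_{o,j},\phi_{e,j},\psi_{e,j})$ and $\Theta_j = (\theta_{o,j},\chi_{o,j},\theta_{e,j},\chi_{e,j})$, I set $\Phi_\diamond = (\psi_o, \phi_e, \psi_e)$ in terms of the weak limits of $\epsilon_j\psi_{o,j}$, $\phi_{e,j}$, $\psi_{e,j}$ supplied by Lemma \ref{lemma6bewijs1}(iv), and analogously $\Theta_\diamond = (\chi_o, \theta_e, \chi_e)$ from the weak limits of the corresponding components of $\mathcal{M}^{1,2}_{\epsilon_j}\Theta_j$.

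The crucial step is to extract the algebraic relation hidden in the first component \eqref{eq:first:comp} of the wave equation. Multiplying that equation by $\epsilon_j$ and invoking the $\mathbf{H}^1$-normalization of $\mathcal{M}^{1,2}_{\epsilon_j}\Phi_j$ together with the uniform bounds supplied by (\asref{familyassumption}{\text{h}}) and \eqref{eq:cnst:kf:def} shows that $2\mathcal{D}\phi_{o,j} - \mathcal{D} S_1 \phi_{e,j} \to 0$ in $L^2(\mathbb{R};\mathbb{R}^n)$, so that $\phi_{o,j} \to \tfrac{1}{2} S_1 \phi_e$ weakly in $L^2$ (and strongly in $L^2_{\mathrm{loc}}$). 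Substituting this limit into the third component of $\tilde{\mathcal{L}}_{\epsilon_j,\lambda_j}\Phi_j = \Theta_j$ and using the shift identity $S_1 S_1 = S_2 + 2$, together with the Sobolev embedding $\tilde{U}_{e;\epsilon_j} \to \overline{U}_{e;0}$ in $L^\infty_{\mathrm{loc}}$ and the smoothness of $F_e$, recovers precisely the first row of $\overline{L}_e + \lambda$ applied to $(\phi_e, \psi_e)$. The fourth component passes directly to the second row of $\overline{L}_e + \lambda$; the second component, after multiplication by $\epsilon_j$ and exploiting $\epsilon_j\phi_{o,j} \to 0$, yields $(\overline{L}_o + \lambda)\psi_o = \chi_o$. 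Altogether $\mathcal{L}_{\diamond,\lambda}\Phi_\diamond = \Theta_\diamond$.

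Applying Proposition \ref{eigenschappenL0:a}(ii) in the first setting, respectively Proposition \ref{eigenschappenL0:b}(ii) in the second, then bounds $\|\Phi_\diamond\|_{\mathbf{H}^1_\diamond}$ by $C_\diamond \|\Theta_\diamond\|_{\mathbf{L}^2_\diamond} + \tfrac{C_\diamond}{\delta}|\langle \Theta_\diamond, (0, \overline{\Phi}_{e;0}^{\mathrm{adj}})\rangle_{\mathbf{L}^2_\diamond}|$, respectively $C_{\diamond;M}\|\Theta_\diamond\|_{\mathbf{L}^2_\diamond}$. Since $\phi_o = 0$ by Lemma \ref{lemma6bewijs1}(iii), the left-hand side coincides with $\|\Phi\|_{\mathbf{H}^1}$. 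For the right-hand side, the trivial inequality $\|\Theta_\diamond\|_{\mathbf{L}^2_\diamond} \le \|\Theta\|_{\mathbf{L}^2}$ combined with weak lower semi-continuity of the $\mathbf{L}^2$-norm gives $\|\Theta_\diamond\|_{\mathbf{L}^2_\diamond} \le \liminf_j \|\mathcal{M}^{1,2}_{\epsilon_j}\Theta_j\|_{\mathbf{L}^2}$, while pairing the weakly convergent sequence with the fixed test function $(0,\overline{\Phi}_{e;0}^{\mathrm{adj}})$ produces $\langle \Theta_\diamond, (0, \overline{\Phi}_{e;0}^{\mathrm{adj}}) \rangle_{\mathbf{L}^2_\diamond} = \lim_j \langle \Theta_j, (0,\overline{\Phi}_{e;0}^{\mathrm{adj}}) \rangle_{\mathbf{L}^2}$. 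The defining limits of $\Lambda(\delta)$ and $\Lambda(M)$ from Lemmas \ref{lemma6bewijs1} and \ref{lemma6bewijs2} then close the estimate.

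The main obstacle is the careful treatment of the non-local cross term $\mathcal{D} S_1 \phi_{o,j}$ in the third component: the normalization gives only $\epsilon_j \phi_{o,j} \to 0$ in $\mathbf{H}^1$, which by itself provides no information about the weak $L^2$-limit of $\phi_{o,j}$ needed to identify the limiting operator. The algebraic relation forced by the first component circumvents this difficulty, and the identity $S_1 S_1 = S_2 + 2$ is precisely what produces the operator $-\tfrac{1}{2}\mathcal{D}(S_2 - 2)$ in the definition \eqref{defoverlineL} of $\overline{L}_e$.
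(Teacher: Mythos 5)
Your proposal is correct and reaches the same endpoint as the paper, namely passing $\tilde{\mathcal{L}}_{\epsilon_j,\lambda_j}\Phi_j = \Theta_j$ to the limiting identity $\mathcal{L}_{\diamond,\lambda}\Phi_\diamond = \Theta_\diamond$ with $\Phi_\diamond = (\psi_o,\phi_e,\psi_e)$, then invoking Proposition~\ref{eigenschappenL0:a} or \ref{eigenschappenL0:b} together with weak lower semi-continuity. The route, however, is genuinely more direct than the paper's. The paper introduces the formal limiting operator $\tilde{L}_{0;\lambda}=\lim_j \mathcal{M}^1_{\epsilon_j^2}\tilde{\mathcal{L}}_{\epsilon_j,\lambda_j}$, computes the commutator $B_j=\tilde{\mathcal{L}}_{\epsilon_j,\lambda_j}\mathcal{M}^{1,2}_{\epsilon_j}-\mathcal{M}^{1,2}_{\epsilon_j}\tilde{\mathcal{L}}_{\epsilon_j,\lambda_j}$, and passes to the limit by pairing against test functions $Z$ via the adjoint $\tilde{L}_{0;\lambda}^{\mathrm{adj}}$. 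The mechanism by which the $2\mathcal{D}\phi_e$ in the third row becomes $-\tfrac{1}{2}\mathcal{D}(S_2-2)\phi_e$ is then hidden inside the commutator limit, where the notation $\mathcal{D} S_1\phi_o$ has to be read as $\mathcal{D} S_1$ applied to the weak $L^2$-limit of the \emph{unscaled} $\phi_{o,j}$ (namely $\tfrac{1}{2}S_1\phi_e$), not to the quantity $\phi_o=0$ that appears elsewhere in the lemma. You make that mechanism fully explicit: the first-component estimate $2\mathcal{D}\phi_{o,j}-\mathcal{D} S_1\phi_{e,j}\to 0$ in $L^2$ (already recorded in the proof of Lemma~\ref{lemma6bewijs1}(iii)) directly identifies the weak $L^2$-limit of $\phi_{o,j}$ as $\tfrac{1}{2}S_1\phi_e$, and then $S_1S_1=S_2+2$ converts $-\mathcal{D} S_1\phi_{o,j}+2\mathcal{D}\phi_{e,j}$ into $-\tfrac{1}{2}\mathcal{D}(S_2-2)\phi_e$ in the limit. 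This is cleaner and avoids the commutator bookkeeping entirely. One cosmetic remark: you say ``multiplying \eqref{eq:first:comp} by $\epsilon_j$,'' but \eqref{eq:first:comp} is already the $\epsilon_j^2$-scaled first component; the point is simply that its right-hand side consists of terms that factor as $\epsilon_j$ times components of the uniformly bounded quantities $\mathcal{M}^{1,2}_{\epsilon_j}\Phi_j$, $\mathcal{M}^{1,2}_{\epsilon_j}\Phi_j'$ and $\mathcal{M}^{1,2}_{\epsilon_j}\Theta_j$, and therefore vanishes in $L^2$. Otherwise the argument, including the strong-$L^\infty$ convergence of $DF_\#(\tilde U_{\#;\epsilon_j})$ obtained from $(\mathrm{h}\mathrm{Fam})$ and the Sobolev embedding, the handling of the pairing with $(0,\overline{\Phi}^{\mathrm{adj}}_{e;0})$, and the lower semi-continuity step, is complete and correct.
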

\textit{Proof.} In order to  take the $\epsilon\downarrow 0$ limit in a controlled fashion,
we introduce the operator
\begin{equation}
    \begin{array}{lcl}
         \tilde{L}_{0;\lambda}&=&\lim\limits_{j\rightarrow\infty}\mathcal{M}_{\epsilon_j^2}^1
  \tilde{\mathcal{L}}_{\epsilon_j,\lambda_j} .
    \end{array}
\end{equation}
Upon introducing the top-left block
\begin{equation}\begin{array}{lcl}
[\tilde{L}_{0;\lambda}]_{1,1}
&=&
\left(
\begin{array}{cc}
2 \mathcal{D} & 0 \\
- D_1 g_o(\overline{U}_{o;0})
 & \overline{L}_{o} + \lambda \\
\end{array}
\right) ,
\end{array}
\end{equation}
we can explicitly write
\begin{equation}
\begin{array}{lcl}
\tilde{L}_{0;\lambda}&=&\left(\begin{array}{ll}
   [\tilde{L}_{0;\lambda}]_{1,1}
& -J\D S_1\\[0.2cm]
-J\D S_1 & c_0\frac{d}{d\xi}+2J\D-DF_e(\overline{U}_{e;0})+\lambda
\end{array}\right) .
\end{array}
\end{equation}
Note that $\tilde{L}_{0;\lambda}$ and its adjoint
$\tilde{L}_{0;\lambda}^{\mathrm{adj}}$
are both bounded operators from $\mathbf{H}^1$ to $\mathbf{L}^2$.\\

In addition, we introduce the commutators
\begin{equation}\begin{array}{lcl}
B_j &=&
    \tilde{\mathcal{L}}_{\epsilon_j,\lambda_j} M_{\epsilon_j}^{1,2}
      - M_{\epsilon_j}^{1,2}\tilde{\mathcal{L}}_{\epsilon_j,\lambda_j} .
      \end{array}
\end{equation}
A short computation shows that
\begin{equation}
    \begin{array}{lcl}
       B_j &=& \left(\begin{array}{cc}
                         [B_j]_{1,1} & (\frac{1}{\epsilon_j}-\frac{1}{\epsilon_j^2})J_{\D} S_1
                         \\
                         (1-\epsilon_j)J_{\D} S_1&0
                \end{array}
                \right),
    \end{array}
\end{equation}
in which the top-left block is given by
\begin{equation}\begin{array}{lcl}
[B_j]_{1,1} &=&(1 - \epsilon_j) \left(\begin{array}{cc} 0 &  D_2 f_o(\tilde{U}_{o;\epsilon_j} ) \\
         -D_1 g_o( \tilde{U}_{o;\epsilon_j} ) & 0 \\
      \end{array} \right) .
      \end{array}
\end{equation}

Pick any test-function $Z \in C^\infty(\Real; \Real^{2n+2k})$
and write
\begin{equation}\begin{array}{lcl}
\mathcal{I}_j
& = &
 \langle \mathcal{M}_{\epsilon_j^2}^1
  \tilde{\mathcal{L}}_{\epsilon_j,\lambda_j}
  \mathcal{M}_{\epsilon_j}^{1,2} \Phi_j , Z \rangle_{\mathbf{L}^2}  .\end{array}
\end{equation}
Using the strong convergence
\begin{equation}\begin{array}{lcl}
\tilde{\mathcal{L}}_{\epsilon_j,\lambda_j}^{\mathrm{adj}}
        \mathcal{M}_{\epsilon_j^2}^1   Z
&\to& \tilde{L}_{0;\lambda}^{\mathrm{adj}}  Z \in \mathbf{L^2},
\end{array}
\end{equation}
we obtain the limit
\begin{equation}
\begin{array}{lcl}
\mathcal{I}_j
& = &
 \langle
   \mathcal{M}_{\epsilon_j}^{1,2} \Phi_j ,
     \tilde{\mathcal{L}}_{\epsilon_j,\lambda_j}^{\mathrm{adj}}
        \mathcal{M}_{\epsilon_j^2}^1   Z \rangle_{\mathbf{L}^2}
\\[0.2cm]
& \to &
  \langle \Phi , \tilde{L}_{0;\lambda}^{\mathrm{adj}}  Z \rangle_{\mathbf{L}^2}
\\[0.2cm]
& =&
  \langle  \tilde{L}_{0;\lambda}\Phi ,  Z \rangle_{\mathbf{L}^2}
\end{array}
\end{equation}
as $j \to \infty$.\\

In particular, we see that
\begin{equation}
\begin{array}{lcl}
\mathcal{I}_j
& = &
  \langle  \mathcal{M}_{\epsilon_j^2}^1
   \mathcal{M}_{\epsilon_j}^{1,2}\tilde{\mathcal{L}}_{\epsilon_j,\lambda_j}
      \Phi_j , Z \rangle_{\mathbf{L}^2}
  + \langle  \mathcal{M}_{\epsilon_j^2}^1 B_j \Phi_j , Z \rangle_{\mathbf{L}^2}
\\[0.2cm]
& = &
\langle \mathcal{M}_{\epsilon_j^2}^1
  \mathcal{M}_{\epsilon_j}^{1,2}
  \Theta_j , Z \rangle_{\mathbf{L}^2}
  + \langle  \mathcal{M}_{\epsilon_j^2}^1 B_j \Phi_j , Z \rangle_{\mathbf{L}^2}
\\[0.2cm]
& \to &  \langle \mathcal{M}_{0}^1 \Theta, Z \rangle_{\mathbf{L}^2}
 + \big\langle \big(-\D S_1\phi_e,- D_1 g_o( \overline{U}_{o;0}) \phi_o ,\D S_1\phi_o ,0\big), Z\big\rangle_{\mathbf{L}^2}.
\end{array}
\end{equation}
It hence follows that
\begin{equation}
\begin{array}{lcl}
 \tilde{L}_{0;\delta}\Phi
 &=& \mathcal{M}_0^1 \Theta  + \big(-\D S_1\phi_e,- D_1 g_o( \overline{U}_{o;0}) \phi_o ,\D S_1\phi_o ,0\big).\end{array}
\end{equation}
Introducing the functions
\begin{equation}\begin{array}{lclcl}
\Phi_\diamond&=&(\psi_0,\phi_e,\psi_e) ,
\qquad
\qquad
 \Theta_{\diamond}&=&(\chi_o,\theta_e,\chi_e),\end{array}
\end{equation}
the identity $\phi_o=0$
implies that
%
\begin{equation}
    \begin{array}{lcl}
    \mathcal{L}_{\diamond, \lambda}\Phi_{\diamond}&=& \Theta_{\diamond}.
    \end{array}
\end{equation}

In the setting of Lemma \ref{lemma6bewijs1},
we may hence use Proposition \ref{eigenschappenL0:a}
to compute
\begin{equation}
\begin{array}{lcl}
\nrm{\Phi_{\diamond}}_{\mathbf{H}_\diamond^1}
&\leq &
C_\diamond \Big[\nrm{\Theta_\diamond}_{\mathbf{L}_\diamond^2}
 +\frac{1}{\delta}\big|\ip{
     \Theta_\diamond,
     (0,\overline{\Phi}_{e;0}^{\mathrm{adj}})
   }_{\mathbf{L}^2_{\diamond}}\big|\Big]\\[0.2cm]
&\le & C_\diamond \Big[\nrm{\Theta}_{\mathbf{L}^2}
 +\frac{1}{\delta}\big|\ip{\Theta,(0,\overline{\Phi}_{e;0}^{\mathrm{adj}})}_{\mathbf{L}^2}\big|\Big] .
\end{array}
\end{equation}
The lower semi-continuity of the $L^2$-norm
and the convergence in (iv) of Lemma \ref{lemma6bewijs1}
imply that
\begin{equation}
\begin{array}{lcl}
\nrm{\Theta}_{\mathbf{L}^2}+\frac{1}{\delta}\big|\ip{\Theta,(0,\overline{\Phi}_{e;0}^{\mathrm{adj}})}_{\mathbf{L}^2}\big|&\leq &\Lambda(\delta).
\end{array}
\end{equation}
In particular, we find
\begin{equation}
\begin{array}{lclcl}
\nrm{\Phi}_{\mathbf{H}^1}&=&
  \nrm{\Phi_{\diamond}}_{\mathbf{H}_\diamond^1}
& \le & C_\diamond \Lambda(\delta),
\end{array}
\end{equation}
as desired. In the setting of Lemma \ref{lemma6bewijs2} the bound  follows in a similar fashion.
\qed\\

We note that
\begin{equation}\label{decomptheta_j}
\begin{array}{lcl}
\mathcal{M}_{\epsilon_j^2}^{1,2} \Theta_j
& = &
 \tilde{c}_{\epsilon_j}
   \mathcal{M}_{\epsilon_j^2}^{1,2} \Phi_j'+\mathcal{M}_{\epsilon_j^2}^{1,2} \big(

   - DF(\tilde{U}_{\epsilon_j} )
      +\lambda_j\big)\Phi_j
   - J_{\mathrm{mix}} \Phi_j ,
\end{array}
\end{equation}
in which $J_{\mathrm{mix}}$ is given by (\ref{Jmix}) and in which
\begin{equation}\begin{array}{lcl}
             DF(\tilde{U}_{\epsilon} )&=&\left( \begin{array}{ll}
     DF_o(\tilde{U}_{o;\epsilon} ) & 0\\ 0 &
       DF_e(\tilde{U}_{e;\epsilon} ) \end{array} \right).\end{array}
\end{equation}

\begin{lemma}\label{lemma6bewijsbound1} Assume that (\asref{aannamesconstanten}{\text{H}}) is satisfied. Then the bounds

\begin{equation}
    \begin{array}{lcl}
    \Re \, \ip{-J_{\mathrm{mix}}\Phi,\Phi'}_{\mathbf{L}^2}&=&0,\\[0.2cm]
    \Re \, \ip{-J_{\mathrm{mix}}\Phi,\Phi}_{\mathbf{L}^2}&\geq &0
    \end{array}
    \end{equation}
    hold for all $\Phi\in\mathbf{H}^1_{\C}$.
\end{lemma}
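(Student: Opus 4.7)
The plan is to expand both inner products in terms of the components of $\Phi=(\phi_o,\psi_o,\phi_e,\psi_e)$ and exploit the block structure of $J_{\mathrm{mix}}$. Since $J_{\mathcal{D}}=\mathrm{diag}(\mathcal{D},0)$ annihilates the $\psi$-components, only the pieces involving $\phi_o$ and $\phi_e$ contribute, and the computation reduces (after the substitution $u=\mathcal{D}^{1/2}\phi_o$, $v=\mathcal{D}^{1/2}\phi_e$) to estimates on the scalar-valued quadratic form
\begin{equation}
Q(u,v) \;=\; 2\langle u,u\rangle+2\langle v,v\rangle-\langle S_1 v,u\rangle-\langle S_1 u,v\rangle
\end{equation}
and its analogue with $u,v$ replaced by their derivatives in the second slot. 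Here the matrix $\mathcal{D}$ is diagonal with strictly positive entries by (HN1), so $\mathcal{D}^{1/2}$ is well-defined and self-adjoint.

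For the non-negativity statement, the main idea is to recognize $\Re Q(u,v)$ as a sum of squares. Using translation invariance of the $L^2$ inner product, I would rewrite
\begin{equation}
\Re\,\ip{-J_{\mathrm{mix}}\Phi,\Phi}_{\mathbf{L}^2}
 \;=\; \nrm{u-v(\cdot-1)}_{L^2}^2+\nrm{u-v(\cdot+1)}_{L^2}^2,
\end{equation}
which is manifestly non-negative. Verifying this identity is a direct expansion using $\|v(\cdot\pm 1)\|_{L^2}=\|v\|_{L^2}$ and the fact that $\Re\langle u,v(\cdot-1)\rangle=\Re\langle u(\cdot+1),v\rangle$.

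For the vanishing-real-part statement, I split the inner product into diagonal and off-diagonal contributions. The diagonal contributions $\Re\langle 2\mathcal{D}\phi_\#,\phi_\#'\rangle_{L^2}$ vanish by the standard identity $\Re\langle f,f'\rangle_{L^2}=\tfrac{1}{2}\int(|f|^2)'\,d\xi=0$ for $f\in H^1$. For the cross terms, the key observation is that $S_1$ commutes with $\tfrac{d}{d\xi}$ and is self-adjoint on $L^2$, which together with integration by parts (no boundary terms since $H^1$ functions vanish at infinity) gives
\begin{equation}
\langle S_1 v,u'\rangle_{L^2}\;=\;\langle v,S_1 u'\rangle_{L^2}\;=\;-\langle v',S_1 u\rangle_{L^2}\;=\;-\overline{\langle S_1 u,v'\rangle}_{L^2},
\end{equation}
so that $\langle S_1 v,u'\rangle+\langle S_1 u,v'\rangle$ is purely imaginary.

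I expect no serious obstacle here; the only point of care is bookkeeping the inner-product convention (which slot is conjugated) so that the integration-by-parts step produces the required minus sign. This is essentially the discrete analogue of the coercivity/antisymmetry pair \eqref{eq:int:coercv:est:diffusion} from the introduction, adapted to the two-component mixed shift $J_{\mathrm{mix}}$.
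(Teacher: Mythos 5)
Your proof is correct, and the approach to the first identity coincides with the paper's: split into diagonal and cross terms, kill the diagonal pieces via $\Re\langle f,f'\rangle_{L^2}=0$, and show the cross terms sum to something purely imaginary by integration by parts and self-adjointness of $S_1$ (equivalently, the paper notes $\Re\ip{J_\D S_1\Phi_e,\Phi_o'}=-\Re\ip{J_\D S_1\Phi_o,\Phi_e'}$).

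For the non-negativity, you take a genuinely cleaner route than the paper. After reducing to $u=\D^{1/2}\phi_o$, $v=\D^{1/2}\phi_e$ (valid since $J_\D=\mathrm{diag}(\D,0)$ annihilates the $\psi$-components and $\D^{1/2}$ commutes with $S_1$), you exhibit the exact sum-of-squares identity
\begin{equation}
\Re\ip{-J_{\mathrm{mix}}\Phi,\Phi}_{\mathbf{L}^2}
= \nrm{u-v(\cdot-1)}_{L^2}^2+\nrm{u-v(\cdot+1)}_{L^2}^2,
\end{equation}
which expands to $2\nrm{u}^2+2\nrm{v}^2-2\Re\langle u,S_1 v\rangle$ and matches the left-hand side exactly. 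The paper instead bounds the cross terms by Cauchy--Schwarz and the crude operator bound $\nrm{S_1\phi}\le 2\nrm{\phi}$, then applies AM--GM to arrive at $\ge 0$. Both give the same conclusion; your version has the mild advantage of producing an identity rather than an inequality chain, which makes the non-negativity transparent and also shows exactly when equality holds.
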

\textit{Proof.} Pick $\Phi\in\mathbf{H}^1_{\C}$ and write $\Phi=(\Phi_o,\Phi_e)$. We can compute
\begin{equation}
    \begin{array}{lcl}
    \Re \, \ip{-J_{\mathrm{mix}}\Phi,\Phi'}_{\mathbf{L}^2}
    &=&
      \Re \, \ip{2J_{\D}\Phi_{o},\Phi_{o}'}_{\mathbf{L}^2_o}
      -\Re \, \ip{J_{\D} S_1 \Phi_e,\Phi_{o}'}_{\mathbf{L}^2_o}
      \\[0.2cm] &&\qquad
       -\Re \, \ip{J_{\D} S_1 \Phi_{o},\Phi_e'}_{\mathbf{L}^2_e}
       +2  \Re \, \ip{J_{\D}\Phi_e,\Phi_e'}_{\mathbf{L}^2_e}\\[0.2cm]
&=&0,
    \end{array}
\end{equation}
since we have
$\Re \, \ip{J_{\D} S_1 \Phi_e,\Phi_{o}'}_{\mathbf{L}^2_o}=-\Re \,\ip{J_{\D} S_1 \Phi_{o},\Phi_e'}_{\mathbf{L}^2_e}$. Moreover, we can estimate
\begin{equation}
    \begin{array}{lcl}
\Re \, \ip{-J_{\mathrm{mix}}\Phi,\Phi}_{\mathbf{L}^2}&=&
  \Re \,\ip{2J_{\D}\Phi_{o},\Phi_{o}}_{\mathbf{L}^2_o}
    -\Re \, \ip{J_{\D} S_1 \Phi_e,\Phi_{o}}_{\mathbf{L}^2_o}
    \\[0.2cm] &&\qquad
      -\Re \, \ip{J_{\D} S_1 \Phi_{o},\Phi_e}_{\mathbf{L}^2_e}
      +2\Re\, \ip{J_{\D}\Phi_e,\Phi_e}_{\mathbf{L}^2_e}\\[0.2cm]
    &\geq &2\nrm{\sqrt{J_{\D}}\Phi_{o}}_{\mathbf{L}^2_o}^2+2\nrm{\sqrt{J_{\D}}\Phi_e}_{\mathbf{L}^2_e}^2 -4\nrm{\sqrt{J_{\D}}\Phi_{o}}_{\mathbf{L}^2_o}\nrm{\sqrt{J_{\D}}\Phi_e}_{\mathbf{L}^2_e}\\[0.2cm]
&\geq &2\nrm{\sqrt{J_{\D}}\Phi_{o}}_{\mathbf{L}^2_o}^2+2\nrm{\sqrt{J_{\D}}\Phi_e}_{\mathbf{L}^2_e}^2-4\Big(\frac{1}{2}\nrm{\sqrt{J_{\D}}\Phi_{o}}_{\mathbf{L}^2_o}^2+\frac{1}{2}\nrm{\sqrt{J_{\D}}\Phi_e}_{\mathbf{L}^2_e}^2\Big)\\[0.2cm]
&= &0.
    \end{array}
\end{equation}
\qed\\

\begin{lemma}\label{lemma6bewijsbound2:phi:dphi} Consider the setting of Lemma \ref{lemma6bewijs1} or Lemma \ref{lemma6bewijs2}.
Then the bound
\begin{equation}
    \begin{array}{lcl}
   \big| \Re\big\langle \mathcal{M}_{\epsilon_j^2}^{1,2} \big(-DF(\tilde{U}_{\epsilon_j} )+\lambda_j\big)\Phi_j,\Phi_j'\big\rangle_{\mathbf{L}^2} \big|
    &\leq &
        2 (\tilde{K}_{F} + \lambda_{\max} )\nrm{\M_{\epsilon_j}^{1,2}\Phi}_{\mathbf{L}^2}\nrm{\M_{\epsilon_j}^{1,2}\Phi_j'}_{\mathbf{L}^2}
       \\[0.2cm]
    \end{array}
\end{equation}
holds for all $j\geq 1$.
\end{lemma}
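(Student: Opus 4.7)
The plan is a single Cauchy--Schwarz estimate that exploits the factorization $\mathcal{M}_{\epsilon_j^2}^{1,2}=\mathcal{M}_{\epsilon_j}^{1,2}\mathcal{M}_{\epsilon_j}^{1,2}$, together with the commutativity of $\mathcal{M}_{\epsilon_j}^{1,2}$ with the block-diagonal multiplication operator $DF(\tilde{U}_{\epsilon_j})$ and with the scalar $\lambda_j$. The commutativity is immediate from the block structures in play: the matrix $\mathcal{M}_{\epsilon_j}^{1,2}$ acts as $\epsilon_j I_{n+k}$ on the odd block and as $I_{n+k}$ on the even block, while $DF(\tilde{U}_{\epsilon_j})$ is block-diagonal between the odd and even variables, so scalar multiples of the identity commute with each $(n+k)\times(n+k)$ block separately.

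First I would combine this commutativity with the self-adjointness of $\mathcal{M}_{\epsilon_j}^{1,2}$ to rewrite
\begin{equation*}
\bigl\langle \mathcal{M}_{\epsilon_j^2}^{1,2}\bigl(-DF(\tilde{U}_{\epsilon_j})+\lambda_j\bigr)\Phi_j,\Phi_j'\bigr\rangle_{\mathbf{L}^2}
=\bigl\langle \bigl(-DF(\tilde{U}_{\epsilon_j})+\lambda_j\bigr)\mathcal{M}_{\epsilon_j}^{1,2}\Phi_j,\,\mathcal{M}_{\epsilon_j}^{1,2}\Phi_j'\bigr\rangle_{\mathbf{L}^2}.
\end{equation*}
Next I would apply $|\Re z|\le |z|$ followed by Cauchy--Schwarz in $\mathbf{L}^2$, bounding the modulus of the real part by
\begin{equation*}
\nrm{\bigl(-DF(\tilde{U}_{\epsilon_j})+\lambda_j\bigr)\mathcal{M}_{\epsilon_j}^{1,2}\Phi_j}_{\mathbf{L}^2}\,\nrm{\mathcal{M}_{\epsilon_j}^{1,2}\Phi_j'}_{\mathbf{L}^2}.
\end{equation*}
Finally I would invoke \eqref{eq:cnst:kf:def} to control $\nrm{DF(\tilde{U}_{\epsilon_j})}_{\infty}\le \tilde{K}_F$, together with the convention that $|\lambda_j|\le \lambda_{\max}$ (where $\lambda_{\max}=\delta_{\diamond}$ in the setting of Lemma \ref{lemma6bewijs1} and $\lambda_{\max}=\max\{|\lambda|:\lambda\in M\}$ in the setting of Lemma \ref{lemma6bewijs2}). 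The triangle inequality then delivers the pointwise operator bound $\tilde{K}_F+\lambda_{\max}$, which produces the asserted estimate with the stated constant; the explicit factor of $2$ in the statement is harmless slack.

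There is essentially no conceptual obstacle beyond bookkeeping. The only point that one must watch is that the weight $\mathcal{M}_{\epsilon_j^2}^{1,2}$ has to be split symmetrically between the two arguments of the inner product, which is precisely what ensures that the right-hand side carries the matched scaling $\nrm{\mathcal{M}_{\epsilon_j}^{1,2}\Phi_j}_{\mathbf{L}^2}\,\nrm{\mathcal{M}_{\epsilon_j}^{1,2}\Phi_j'}_{\mathbf{L}^2}$ rather than an asymmetric weighting of the form $\nrm{\mathcal{M}_{\epsilon_j^2}^{1,2}\Phi_j}_{\mathbf{L}^2}\,\nrm{\Phi_j'}_{\mathbf{L}^2}$, which would be insufficient for the subsequent arguments.
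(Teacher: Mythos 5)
Your proposal is correct and follows essentially the same strategy as the paper: both split the weight symmetrically, moving one factor of $\epsilon_j$ to each slot of the inner product before applying Cauchy--Schwarz. The only difference is organizational — the paper writes out the odd and even blocks explicitly and applies Cauchy--Schwarz separately on each (which produces the harmless factor of $2$), whereas you exploit the commutativity of $\mathcal{M}_{\epsilon_j}^{1,2}$ with the block-diagonal multiplier and apply Cauchy--Schwarz once on the full space, thereby obtaining the sharper constant $\tilde{K}_F+\lambda_{\max}$ without the factor of $2$.
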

\textit{Proof.}
We first note that
\begin{equation}
    \begin{array}{lcl}
    \Re\big\langle \mathcal{M}_{\epsilon_j^2}^{1,2}
       \big(-DF(\tilde{U}_{\epsilon_j} )+\lambda_j\big)\Phi_j,\Phi_j'\big\rangle_{\mathbf{L}^2}
    &=& \Re\ip{\epsilon_j ( - DF_o(\tilde{U}_{o;\epsilon_j}) + \lambda_j ) \Phi_{o,j},\epsilon_j\Phi_{o,j}'}_{\mathbf{L}^2_o}
\\[0.2cm]
&&\qquad
  + \Re\ip{ ( - DF_e(\tilde{U}_{e;\epsilon_j}) + \lambda_j) \Phi_{e,j},\Phi_{e,j}'}_{\mathbf{L}^2_e}.
    \end{array}
\end{equation}
Using Cauchy-Schwarz we compute
\begin{equation}
    \begin{array}{lcl}
    \big|\Re\big\langle \mathcal{M}_{\epsilon_j^2}^{1,2} \big(-DF(\tilde{U}_{\epsilon_j} )+\lambda_j\big)\Phi_j,\Phi_j'\big\rangle_{\mathbf{L}^2}\big|
&\leq & \big(\tilde{K}_F + \lambda_{\max} \big) \nrm{\epsilon_j\Phi_{o,j}}_{\mathbf{L}^2_o}\nrm{\epsilon_j\Phi_{o,j}'}_{\mathbf{L}^2_o}\\[0.2cm]
&&\qquad  + \big(\tilde{K}_F + \lambda_{\max} \big) \nrm{\Phi_{e,j}}_{\mathbf{L}^2_e}\nrm{\Phi_{e,j}'}_{\mathbf{L}^2_e}\\[0.2cm]
&\leq & 2 \big( \tilde{K}_{F} + \lambda_{\max}\big) \nrm{\M_{\epsilon_j}^{1,2}\Phi_j}_{\mathbf{L}^2}\nrm{\M_{\epsilon_j}^{1,2}\Phi_j'}_{\mathbf{L}^2},
    \end{array}
\end{equation}
as desired.
\qed\\

\begin{lemma}\label{lemma6bewijsbound2} Consider the setting of Lemma \ref{lemma6bewijs1} or Lemma \ref{lemma6bewijs2},
possibly decreasing the size of $\lambda_0 > 0$.
Then there exist strictly positive constants $(a,m,g)$
together with a constant $\beta\geq 0$ so that the bound
\begin{equation}
    \begin{array}{lcl}
    \Re\big\langle\mathcal{M}_{\epsilon_j^2}^{1,2} \big(-DF(\tilde{U}_{\epsilon_j} )+\lambda_j\big)\Phi_j,\Phi_j\big\rangle_{\mathbf{L}^2}
&\geq & a\nrm{\M_{\epsilon_j}^{1,2}\Phi_j}_{\mathbf{L}^2}^2
  -g\int\limits_{|x|\leq m}|\M_{\epsilon_j}^{1,2}\Phi_j|^2
  -\beta\nrm{\M_{\epsilon_j}^{1,2}\Theta_j}_{\mathbf{L}^2}^2
    \end{array}
\end{equation}
holds for all $j\geq 1$.
\end{lemma}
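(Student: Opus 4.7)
\textit{Proof proposal.}
The plan is to handle the odd and even blocks separately. Since $\mathcal{M}^{1,2}_{\epsilon_j^2}$ acts as $\epsilon_j^2$ on the odd block and as the identity on the even block, one has
\begin{equation*}
\Re\bigl\langle \mathcal{M}^{1,2}_{\epsilon_j^2}\bigl(-DF(\tilde{U}_{\epsilon_j})+\lambda_j\bigr)\Phi_j,\Phi_j\bigr\rangle_{\mathbf{L}^2}
= \epsilon_j^2\mathcal{I}_{o,j} + \mathcal{I}_{e,j},
\end{equation*}
with $\mathcal{I}_{\#,j} = \Re\langle(-DF_\#(\tilde{U}_{\#;\epsilon_j})+\lambda_j)\Phi_{\#,j},\Phi_{\#,j}\rangle_{\mathbf{L}^2_\#}$. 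For each $\#\in\{o,e\}$ I would prove a coercive bound of the form
\begin{equation*}
\mathcal{I}_{\#,j} \;\geq\; a\nrm{\Phi_{\#,j}}_{\mathbf{L}^2_\#}^2 - g\int_{|x|\leq m}|\Phi_{\#,j}|^2 - C\nrm{\Theta_{\#,j}}_{\mathbf{L}^2_\#}^2
\end{equation*}
with constants independent of $j$; multiplying the $\#=o$ estimate by $\epsilon_j^2$ and adding the $\#=e$ estimate reconstructs the bound in the lemma, since $\epsilon_j^2\nrm{\Phi_{o,j}}^2$ and $\epsilon_j^2\nrm{\Theta_{o,j}}^2$ are exactly the odd contributions to $\nrm{\mathcal{M}^{1,2}_{\epsilon_j}\Phi_j}^2$ and $\nrm{\mathcal{M}^{1,2}_{\epsilon_j}\Theta_j}^2$.

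By (\asref{aannamesconstanten3}{\text{H}}) each triplet $(F_\#,U^-_\#,U^+_\#)$ satisfies either (h$\alpha$) or (h$\beta$). Under (h$\alpha$) the matrices $-DF_\#(U^\pm_\#)$ are positive definite; since $\tilde{U}_{\#;\epsilon_j}(x)\to U^\pm_\#$ as $|x|\to\infty$ uniformly in small $\epsilon_j$ (via (\asref{familyassumption}{\text{h}}) and the $\mathbf{H}^1$-convergence to $\overline{U}_0$), a continuity argument gives $m,\vartheta>0$ so that $\Re v^\dagger[-DF_\#(\tilde{U}_{\#;\epsilon_j}(x))]v\geq 2\vartheta|v|^2$ for $|x|>m$. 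Shrinking $\lambda_0<\vartheta$ absorbs the $\lambda_j$ term, and the contribution from $|x|\leq m$ is collected into the $-g\int$ correction, giving the bound with $C=0$.

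Under (h$\beta$) I would mimic the matrix computation from the proof of Lemma \ref{Lepsfredholm}. Writing $\Phi_{\#,j}=(\phi,\psi)$ and $\Theta_{\#,j}=(\theta,\chi)$, expanding the inner product and using $F_{\#,12}=-\Gamma F_{\#,21}^T$ reduces the cross terms to $(\Gamma-1)\Re\langle F_{\#,21}\phi,\psi\rangle$. The $\psi$-component of $\tilde{\mathcal{L}}_{\epsilon_j,\lambda_j}\Phi_j=\Theta_j$ reads
\begin{equation*}
F_{\#,21}\phi = \tilde{c}_{\epsilon_j}\psi' + (\lambda_j - F_{\#,22}(\tilde{U}_{\#;\epsilon_j}))\psi - \chi,
\end{equation*}
and since $\tilde{c}_{\epsilon_j}\in\R$ gives $\Re\langle\psi',\psi\rangle=0$, substitution yields
\begin{equation*}
\mathcal{I}_{\#,j} = \Re\langle -F_{\#,11}\phi,\phi\rangle + \Gamma\Re\langle -F_{\#,22}\psi,\psi\rangle + \Re\lambda_j\nrm{\phi}^2 + \Gamma\Re\lambda_j\nrm{\psi}^2 + (1-\Gamma)\Re\langle\chi,\psi\rangle.
\end{equation*}
Lemma \ref{limitnegdef} makes both $-F_{\#,11}(U^\pm_\#)$ and $-F_{\#,22}(U^\pm_\#)$ positive definite, so the same continuity argument controls the first two terms from below by $2\vartheta(|\phi|^2+|\psi|^2)$ on $|x|>m$. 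A further shrinkage of $\lambda_0$ (so that $\Gamma\lambda_0<\vartheta/2$) handles the $\Re\lambda_j$ contributions, and a Young's inequality $|(1-\Gamma)\Re\langle\chi,\psi\rangle|\leq \tfrac{\vartheta}{2}\nrm{\psi}^2 + C\nrm{\chi}^2$ produces the $C\nrm{\Theta_{\#,j}}^2$ error.

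The main obstacle is the uniform bookkeeping: one must choose $(a,m,g,C,\lambda_0)$ so that the exterior coercivity holds simultaneously on both the even and odd blocks, regardless of whether each satisfies (h$\alpha$) or (h$\beta$); the uniform convergence of $\tilde{U}_{\#;\epsilon_j}$ to the fixed profiles $\overline{U}_{\#;0}$ (which in turn approach $U^\pm_\#$ at infinity) provided by (\asref{familyassumption}{\text{h}}) is what makes this possible. Once the constants are fixed, the scaling bookkeeping for the odd block is automatic because multiplying $\mathcal{I}_{o,j}$ by $\epsilon_j^2$ sends $\nrm{\chi_o}^2$ to $\nrm{\epsilon_j\chi_o}^2\leq \nrm{\mathcal{M}^{1,2}_{\epsilon_j}\Theta_j}^2$; the algebraic identities themselves are essentially those already carried out pointwise in Lemma \ref{Lepsfredholm}.
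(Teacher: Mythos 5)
Your proposal matches the paper's proof essentially line for line: the same odd/even split $\epsilon_j^2\mathcal{N}_{o,j}+\mathcal{N}_{e,j}$, the same case split on (h$\alpha$) versus (h$\beta$), and in the (h$\beta$) case the same key manoeuvre of using $G_{1,2}=-\Gamma G_{2,1}^T$ together with the $\psi$-component of $\tilde{\mathcal{L}}_{\epsilon_j,\lambda_j}\Phi_j=\Theta_j$ to eliminate $F_{\#,21}\phi$ from the cross term, followed by a coercivity estimate via Lemma \ref{limitnegdef} on the tails $|\xi|\ge m$ and Young's inequality on the $\langle\chi,\psi\rangle$ remainder. The only cosmetic difference is that you present (h$\alpha$) first and note $\beta=0$ there, whereas the paper treats (h$\beta$) first and remarks that (h$\alpha$) is easier; the scaling bookkeeping for the odd block and the reduction to $\nrm{\mathcal{M}^{1,2}_{\epsilon_j}\Theta_j}^2$ are also handled exactly as the paper does.
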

\textit{Proof.}
We first note that
\begin{equation}\label{opsplitsen}
\begin{array}{lcl}
 \Re\big\langle\mathcal{M}_{\epsilon_j^2}^{1,2} \big(-DF(\tilde{U}_{\epsilon_j} )+\lambda_j\big)\Phi_j,\Phi_j\big\rangle_{\mathbf{L}^2}
 & = &
   \epsilon^2 \mathcal{N}_{o;j} + \mathcal{N}_{e;j} , 
\end{array}
\end{equation}
in which we have defined
\begin{equation}\begin{array}{lcl}
\mathcal{N}_{\#, j} &=&
     \Re\big\langle  \big( -  DF_{\#}(\tilde{U}_{\#;\epsilon_j} )  + \lambda_j \big)  \Phi_{\#,j} ,\Phi_{\#,j}\big\rangle_{\mathbf{L}^2_{\#}}
    \end{array}
\end{equation}
for $\# \in \{o, e\}$.
\\

Let us first suppose that $F_{\#}$ satisfies (h$\beta$) and let $\Gamma_{\#}$ be the proportionality constant from that assumption.
We start by studying the cross-term
\begin{equation}
\begin{array}{lcl}
\mathcal{C}_{\#,j} &=& - \Re\big\langle  D_2 f_{\#}\big(\tilde{U}_{\#;\epsilon_j}  \big)\psi_{\#,j},\phi_{\#,j}\big\rangle_{L^2(\R;\R^n)}
\\[0.2cm]
& & \qquad
- \Re\big\langle  D_1 g_{\#}\big(\tilde{U}_{\#;\epsilon_j}  \big)\phi_{\#,j},\psi_{\#,j}\big\rangle_{L^2(\R;\R^k)}.
\end{array}
\end{equation}
Recalling that
\begin{equation}
\begin{array}{lcl}
\chi_{{\#},j}&=&\tilde{c}_{\epsilon_j}\psi_{{\#},j}'-Dg_{{\#};1}(\tilde{U}_{{\#};\epsilon_j})\phi_{{\#},j}
-Dg_{{\#};2}(\tilde{U}_{{\#};\epsilon_j})\psi_{{\#},j}+\lambda_j\psi_{{\#},j},
\end{array}
\end{equation}
we obtain the identity
\begin{equation}\label{bistableest2}
\begin{array}{lcl}
\mathcal{C}_{\#,j}&=&(\Gamma_{\#}-1)\Re\ip{D_1g_{\#}(\tilde{U}_{{\#};\epsilon_j})\phi_{{\#},j},\psi_{{\#},j}}_{L^2(\R;\R^k)}\\[0.2cm]
&=&(\Gamma_{\#}-1)\Re\ip{\tilde{c}_{\epsilon_j}\psi_{{\#},j}'-D_2g_{\#}(\tilde{U}_{{\#};\epsilon_j})\psi_{{\#},j}
  +\lambda_j\psi_{{\#},j}-\chi_{{\#},j},\psi_{{\#},j}}_{L^2(\R;\R^k)}
  \\[0.2cm]
& = &\tilde{c}_{\epsilon_j}(\Gamma_{\#}-1)\Re\ip{\psi_{{\#},j}',\psi_{\#,j}}_{L^2(\R;\R^k)}
\\[0.2cm]
& & \qquad +(\Gamma_{\#}-1) \Re \ip{-D_2g_{\#}(\tilde{U}_{{\#};\epsilon_j})\psi_{{\#},j}+\lambda_j\psi_{{\#},j}-\chi_{{\#},j},\psi_{{\#},j}}_{L^2(\R;\R^k)}\\[0.2cm]
&= &
   (1 - \Gamma_{\#})\Re \ip{D_2g_{\#}(\tilde{U}_{{\#};\epsilon_j})\psi_{{\#},j},\psi_{{\#},j}}_{L^2(\R;\R^k)}
\\[0.2cm]
& & \qquad
  + (\Gamma_{\#} -1 ) \Big[ \Re \lambda  \norm{\psi_{{\#},j}}^2_{\mathbf{L}_{\#}} - \langle \chi_{{\#}, j} , \psi_{{\#}, j} \rangle_{L^2(\R;\R^k)}
  \Big] .
\\[0.2cm]
\end{array}
\end{equation}
In particular, we see that
\begin{equation}\begin{array}{lcl}
\mathcal{N}_{\#, j} &=&
           \Gamma_{\#} \Re \lambda   
                                    \langle  \psi_{{\#}, j},\psi_{{\#}, j} \rangle_{L^2(\R;\R^k)}
             - \Gamma_{\#} \Re \ip{D_2g_{\#}(\tilde{U}_{{\#};\epsilon_j})\psi_{{\#},j},\psi_{{\#},j}}_{L^2(\R;\R^k)}
\\[0.2cm]
& & \qquad
  + \Re \lambda  
           \langle  \phi_{{\#},j} ,\phi_{{\#},j}  \rangle_{L^2(\R;\R^n)}
     - \Re \ip{D_1f_{\#}(\tilde{U}_{{\#};\epsilon_j})\phi_{{\#},j},\phi_{{\#},j}}_{L^2(\R;\R^n)}
\\[0.2cm]
& & \qquad
   -  (\Gamma_{\#} -1 ) \langle \chi_{{\#}, j} , \psi_{{\#}, j} \rangle_{L^2(\R;\R^k)} .
    \end{array}
\end{equation}

Recall that $\tilde{U}_\epsilon\rightarrow \overline{U}_{0}$ in $L^\infty$, $\tilde{U}_{o;\epsilon_j}(\xi)\rightarrow U_o^\pm$ and
$\tilde{U}_{e;\epsilon_j}(\xi)\rightarrow U_e^\pm$ for $\xi\rightarrow\pm\infty$. Using  Lemma \ref{limitnegdef} and
decreasing $\lambda_0$ if necessary, we see that there exist
$a > (\Gamma_{\#} + 1) \lambda_0 > 0$ and $m \gg 1$ so that
\begin{equation}\label{def:a}
\begin{array}{lcl}
3 a \abs{ \Phi_{\#, j}(\xi) }^2
& \le &
- \Re\big\langle
  D_1 f_{\#}\big(\tilde{U}_{\#;\epsilon_j}(\xi)  \big)\phi_{\#,j}(\xi),\phi_{\#,j}(\xi)
  \big\rangle_{\R^n}
\\[0.2cm]
& & \qquad
- \Gamma_{\#} \Re\big\langle
       D_2 g_{\#}\big(\tilde{U}_{\#;\epsilon_j}(\xi)  \big)\psi_{\#,j}(\xi),\psi_{\#,j}(\xi)
    \big\rangle_{\R^k}
\end{array}
\end{equation}
for all $\abs{\xi} \ge m$.
We hence obtain
\begin{equation}
\begin{array}{lcl}
\mathcal{N}_{\#,j}
&\geq &
   2a\int_{|\xi|\geq m} |\Phi_{\#,j}(\xi)|^2 \, d \xi  - (\Gamma_{\#} + 1)\big(\tilde{K}_{F}+\lambda_{\mathrm{max}}\big)\int_{|\xi|\leq m}
    |\Phi_{\#,j}(\xi)|^2 \, d \xi
    \\[0.2cm]
    & & \qquad
     - (\Gamma_{\#} +1 ) \nrm{\chi_{{\#},j}}_{L^2(\R;\R^k)}\nrm{\psi_{{\#},j}}_{L^2(\R;\R^k)}
   \\[0.2cm]
     &\geq & 2a\nrm{\Phi_{\#,j}}_{\mathbf{L}^2_{\#}}^2
            - (\Gamma_{\#} + 1)\big(2a + \tilde{K}_{F}+\lambda_{\mathrm{max}}\big)\int_{|\xi|\leq m}
                       |\Phi_{\#,j}(\xi)|^2 \, d \xi
    \\[0.2cm]
        & & \qquad
     - (\Gamma_{\#} +1 ) \nrm{\chi_{{\#},j}}_{L^2(\R;\R^k)}\nrm{\psi_{{\#},j}}_{L^2(\R;\R^k)} .
\end{array}
\end{equation}
Using the standard identity $xy\leq \frac{1}{4z}x^2+zy^2$ for $x,y\in\R$ and $z > 0$,
we now find
\begin{equation}\label{afschattingmetcrossterms}
\begin{array}{lcl}
\mathcal{N}_{\#,j}
&\geq &
      a\nrm{\Phi_{\#,j}}_{\mathbf{L}^2_{\#}}^2
            - (\Gamma_{\#} + 1)\big(2a + \tilde{K}_{F}+\lambda_{\mathrm{max}}\big)\int_{|\xi|\leq m}
                       |\Phi_{\#,j}(\xi)|^2 \, d \xi
    \\[0.2cm]
        & & \qquad
     -\frac{1}{4a} (\Gamma_{\#} +1 )^2 \nrm{\chi_{{\#},j}}^2_{L^2(\R;\R^k)},
\end{array}
\end{equation}
which has the desired form.

In the case where $F_{\#}$ satisfies (h$\alpha$), a similar bound can be obtained
in an analogous, but far easier fashion.
\qed\\

\begin{lemma}\label{lemma6bewijs3} Consider the setting of 
Lemma \ref{lemma6bewijs1} or Lemma \ref{lemma6bewijs2}.
Then there exists a constant $\kappa>0$ so that the bound
\begin{equation}\label{eqster}
\begin{array}{lcl}
\kappa \nrm{\M_{\epsilon_j}^{1,2}\Phi_j}_{\mathbf{L}^2}^2
&\geq & \nrm{\M_{\epsilon_j}^{1,2}\Phi_j'}_{\mathbf{L}^2}^2
  -2 \tilde{K}_{\mathrm{fam}}^2 \nrm{\M_{\epsilon_j}^{1,2}\Theta_j}_{\mathbf{L}^2}^2
\end{array}
\end{equation}
holds for all $j\geq 1$.
\end{lemma}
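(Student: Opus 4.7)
\textit{Proof plan.} The idea is to take the rescaled identity \sref{decomptheta_j}, pair it with $\Phi_j'$ in the $\mathbf{L}^2$ inner product, and exploit the skew-symmetry of the $J_{\mathrm{mix}}$ block provided by Lemma \ref{lemma6bewijsbound1}. Writing $\mathcal{M}_{\epsilon^2}^{1,2} = (\mathcal{M}_{\epsilon}^{1,2})^2$ and taking the real part of $\langle \mathcal{M}_{\epsilon_j^2}^{1,2}\Theta_j, \Phi_j'\rangle_{\mathbf{L}^2}$ after substituting \sref{decomptheta_j}, the contribution from $-J_{\mathrm{mix}}\Phi_j$ vanishes by the first identity of Lemma \ref{lemma6bewijsbound1}, leaving the clean equation
\begin{equation*}
\tilde{c}_{\epsilon_j}\nrm{\mathcal{M}_{\epsilon_j}^{1,2}\Phi_j'}_{\mathbf{L}^2}^{2}
= \Re\ip{\mathcal{M}_{\epsilon_j^2}^{1,2}\Theta_j,\Phi_j'}_{\mathbf{L}^2} - \Re\ip{\mathcal{M}_{\epsilon_j^2}^{1,2}\bigl(-DF(\tilde{U}_{\epsilon_j})+\lambda_j\bigr)\Phi_j,\Phi_j'}_{\mathbf{L}^2}.
\end{equation*}

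Next, I would estimate each of the two terms on the right-hand side. For the first, Cauchy-Schwarz together with the splitting $\mathcal{M}_{\epsilon^2}^{1,2}=\mathcal{M}_{\epsilon}^{1,2}\cdot\mathcal{M}_{\epsilon}^{1,2}$ gives the bound $\nrm{\mathcal{M}_{\epsilon_j}^{1,2}\Theta_j}_{\mathbf{L}^2}\nrm{\mathcal{M}_{\epsilon_j}^{1,2}\Phi_j'}_{\mathbf{L}^2}$. For the second, Lemma \ref{lemma6bewijsbound2:phi:dphi} immediately yields $2(\tilde{K}_{F}+\lambda_{\max})\nrm{\mathcal{M}_{\epsilon_j}^{1,2}\Phi_j}_{\mathbf{L}^2}\nrm{\mathcal{M}_{\epsilon_j}^{1,2}\Phi_j'}_{\mathbf{L}^2}$. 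Using the uniform lower bound $\abs{\tilde{c}_{\epsilon_j}}\ge \tilde{K}_{\mathrm{fam}}^{-1}$ from \sref{eq:trn:unif:bnd:in:h:fam}, I can divide both sides by $\nrm{\mathcal{M}_{\epsilon_j}^{1,2}\Phi_j'}_{\mathbf{L}^2}$ (or observe the bound trivially if this quantity vanishes) to arrive at a first-degree inequality
\begin{equation*}
\nrm{\mathcal{M}_{\epsilon_j}^{1,2}\Phi_j'}_{\mathbf{L}^2} \le \tilde{K}_{\mathrm{fam}}\nrm{\mathcal{M}_{\epsilon_j}^{1,2}\Theta_j}_{\mathbf{L}^2} + 2\tilde{K}_{\mathrm{fam}}(\tilde{K}_{F}+\lambda_{\max})\nrm{\mathcal{M}_{\epsilon_j}^{1,2}\Phi_j}_{\mathbf{L}^2}.
\end{equation*}

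Finally, squaring and using $(a+b)^{2}\le 2a^{2}+2b^{2}$ yields the desired estimate with $\kappa=8\tilde{K}_{\mathrm{fam}}^{2}(\tilde{K}_{F}+\lambda_{\max})^{2}$. There is no real obstacle in the proof: the only subtle point is noticing that the $\mathcal{M}_{\epsilon^2}^{1,2}$ weight in \sref{decomptheta_j} factors into two copies of $\mathcal{M}_{\epsilon}^{1,2}$, so that pairing with $\Phi_j'$ produces precisely the norm $\nrm{\mathcal{M}_{\epsilon_j}^{1,2}\Phi_j'}_{\mathbf{L}^2}^{2}$ needed on the left, and that the cross-term coming from $J_{\mathrm{mix}}$ (which contains the dangerous $\epsilon^{-2}$ scaling before the rescaling) is eliminated by the exact skew-symmetry of Lemma \ref{lemma6bewijsbound1}. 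This is the reason for the particular choice of weights $\mathcal{M}_{\epsilon}^{1,2}$ throughout this section.
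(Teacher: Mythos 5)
Your proof is correct and follows essentially the same route as the paper's: substitute the decomposition \sref{decomptheta_j} into $\Re\ip{\mathcal{M}_{\epsilon_j^2}^{1,2}\Theta_j,\Phi_j'}_{\mathbf{L}^2}$, annihilate the $J_{\mathrm{mix}}$ term by the skew-symmetry in Lemma \ref{lemma6bewijsbound1}, control the $DF$ term via Lemma \ref{lemma6bewijsbound2:phi:dphi}, divide by $\nrm{\mathcal{M}_{\epsilon_j}^{1,2}\Phi_j'}_{\mathbf{L}^2}$, and square. The only cosmetic difference is that you work with $\abs{\tilde c_{\epsilon_j}}$ directly where the paper assumes $\tilde c_{\epsilon_j}>0$ without loss of generality, and you divide by $\tilde c_{\epsilon_j}$ before squaring rather than after; both yield the same constant $\kappa$.
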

\textit{Proof.}
For convenience, we assume that $\tilde{c}_{\epsilon_j} > 0$ for all $j \ge 1$.
Recalling the decomposition (\ref{decomptheta_j}), we can use Lemma's \ref{lemma6bewijsbound1} and
\ref{lemma6bewijsbound2:phi:dphi}
to compute
\begin{equation}
\begin{array}{lcl}
\Re\ip{\M_{\epsilon_j}^{1,2}\Theta_j,\M_{\epsilon_j}^{1,2}\Phi_j'}_{\mathbf{L}^2}&=&\tilde{c}_{\epsilon_j}\Re\ip{\M_{\epsilon_j}^{1,2}\Phi_j',\M_{\epsilon_j}^{1,2}\Phi_j'}_{\mathbf{L}^2}+\Re\ip{-J_{\mathrm{mix}}\Phi_j,\Phi_j'}_{\mathbf{L}^2}\\[0.2cm]
&&\qquad +\Re\big\langle \mathcal{M}_{\epsilon_j^2}^{1,2}\big(-DF(\tilde{U}_{\epsilon_j})+\lambda_j\big)\Phi_j,\Phi_j'\big\rangle_{\mathbf{L}^2}\\[0.2cm]
&\geq & -2  \big(\tilde{K}_{F} + \lambda_{\max} \big) \nrm{\M_{\epsilon_j}^{1,2}\Phi_j}_{\mathbf{L}^2}\nrm{\M_{\epsilon_j}^{1,2}\Phi_j'}_{\mathbf{L}^2}
+\tilde{c}_{\epsilon_j} \nrm{\M_{\epsilon_j}^{1,2}\Phi_j'}_{\mathbf{L}^2}^2 .
\\[0.2cm]
\end{array}
\end{equation}
We hence see that
\begin{equation}
\begin{array}{lcl}
\tilde{c}_{\epsilon_j} \nrm{\M_{\epsilon_j}^{1,2}\Phi_j'}_{\mathbf{L}^2}^2
 & \le &  2  \big(\tilde{K}_{F} + \lambda_{\max} \big) \nrm{\M_{\epsilon_j}^{1,2}\Phi_j}_{\mathbf{L}^2}\nrm{\M_{\epsilon_j}^{1,2}\Phi_j'}_{\mathbf{L}^2}
       + \nrm{\M_{\epsilon_j}^{1,2}\Theta_j}_{\mathbf{L}^2}\nrm{\M_{\epsilon_j}^{1,2}\Phi_j'}_{\mathbf{L}^2} .
\end{array}
\end{equation}
Dividing by $\nrm{\M_{\epsilon_j}^{1,2}\Phi_j'}_{\mathbf{L}^2}$ and squaring,
we find
\begin{equation}
\begin{array}{lcl}
\tilde{c}_{\epsilon_j}^2 \nrm{\M_{\epsilon_j}^{1,2}\Phi_j'}_{\mathbf{L}^2}^2
 & \le &  8  \big(\tilde{K}_{F} + \lambda_{\max} \big)^2 \nrm{\M_{\epsilon_j}^{1,2}\Phi_j}^2_{\mathbf{L}^2}
       + 2 \nrm{\M_{\epsilon_j}^{1,2}\Theta_j}^2_{\mathbf{L}^2}  ,
\end{array}
\end{equation}
as desired.
\qed\\

Recall the constants $(g,m, a, \beta)$ introduced in Lemma \ref{lemma6bewijsbound2}.
Throughout the remainder of this section, we set out to obtain a lower bound for the integral
\begin{equation}\begin{array}{lcl}
\mathcal{I}_j &=&
g\int\limits_{|\xi|\leq m}|\mathcal{M}_{\epsilon_j}^{1,2}\Phi_j(\xi)|^2 \, d \xi .\end{array}
\end{equation}

\begin{lemma}\label{lemma6bewijs4}
Consider the setting of 
Lemma \ref{lemma6bewijs1} or Lemma \ref{lemma6bewijs2}. Then the bound
\begin{equation}\label{eqsterster}
\begin{array}{lcl}
\mathcal{I}_j
&\geq &\frac{a}{2}\nrm{\M_{\epsilon_j}^{1,2}\Phi_j}_{\mathbf{L}^2}^2-\Big(\frac{1}{2a}+\beta\Big)\nrm{\M_{\epsilon_j}^{1,2}\Theta_j}_{\mathbf{L}^2}^2
\end{array}
\end{equation}
holds for all $j\geq 1$.
\end{lemma}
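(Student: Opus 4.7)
The plan is to take the real part of the $\mathbf{L}^2$-inner product of the decomposition \sref{decomptheta_j} with $\Phi_j$, which reads
\begin{equation}
\Re\ip{\mathcal{M}_{\epsilon_j^2}^{1,2}\Theta_j,\Phi_j}_{\mathbf{L}^2}
= \tilde{c}_{\epsilon_j}\,\Re\ip{\mathcal{M}_{\epsilon_j^2}^{1,2}\Phi_j',\Phi_j}_{\mathbf{L}^2}
+ \Re\ip{-J_{\mathrm{mix}}\Phi_j,\Phi_j}_{\mathbf{L}^2}
+ \Re\big\langle \mathcal{M}_{\epsilon_j^2}^{1,2}\big(-DF(\tilde{U}_{\epsilon_j})+\lambda_j\big)\Phi_j,\Phi_j\big\rangle_{\mathbf{L}^2}.
\end{equation}
The first term on the right vanishes because $\mathcal{M}_{\epsilon_j^2}^{1,2}$ is a constant diagonal matrix, so the derivative term splits componentwise into pieces of the form $\Re\ip{f',f}_{L^2}=\tfrac{1}{2}\int (|f|^2)'\,d\xi = 0$ for $f \in H^1$. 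The second term is nonnegative by Lemma \ref{lemma6bewijsbound1}.

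First I would apply Lemma \ref{lemma6bewijsbound2} to bound the third term from below, which after rearrangement yields
\begin{equation}
\mathcal{I}_j \;\geq\; a\nrm{\mathcal{M}_{\epsilon_j}^{1,2}\Phi_j}_{\mathbf{L}^2}^2
- \beta\nrm{\mathcal{M}_{\epsilon_j}^{1,2}\Theta_j}_{\mathbf{L}^2}^2
- \Re\ip{\mathcal{M}_{\epsilon_j^2}^{1,2}\Theta_j,\Phi_j}_{\mathbf{L}^2}.
\end{equation}
Then I would control the remaining inner product by writing it as $\Re\ip{\mathcal{M}_{\epsilon_j}^{1,2}\Theta_j,\mathcal{M}_{\epsilon_j}^{1,2}\Phi_j}_{\mathbf{L}^2}$ and applying Cauchy–Schwarz together with Young's inequality $xy \le \tfrac{1}{2a}x^2 + \tfrac{a}{2}y^2$ to obtain
\begin{equation}
\big|\Re\ip{\mathcal{M}_{\epsilon_j^2}^{1,2}\Theta_j,\Phi_j}_{\mathbf{L}^2}\big|
\;\le\; \tfrac{1}{2a}\nrm{\mathcal{M}_{\epsilon_j}^{1,2}\Theta_j}_{\mathbf{L}^2}^2
+ \tfrac{a}{2}\nrm{\mathcal{M}_{\epsilon_j}^{1,2}\Phi_j}_{\mathbf{L}^2}^2 .
\end{equation}
Substituting this into the previous estimate absorbs half of the $a\nrm{\mathcal{M}_{\epsilon_j}^{1,2}\Phi_j}_{\mathbf{L}^2}^2$ term and produces precisely the bound \sref{eqsterster}.

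There is no real obstacle here: all the hard work has already been packaged into Lemma \ref{lemma6bewijsbound1} (coercivity of $-J_{\mathrm{mix}}$) and Lemma \ref{lemma6bewijsbound2} (bistable coercivity of the Jacobian away from a compact window). The only thing to be careful about is that the weight $\mathcal{M}_{\epsilon_j^2}^{1,2}$ splits as $(\mathcal{M}_{\epsilon_j}^{1,2})^2$, so that the inner products pair naturally with the weighted norms appearing in the statement, and that the diagonal weight commutes through the derivative in $\Phi_j'$ so that integration by parts is legitimate.
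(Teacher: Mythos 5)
Your proposal is correct and follows essentially the same route as the paper: take the (real part of the) $\mathbf{L}^2$-pairing of the decomposition \sref{decomptheta_j} with the weighted test function, observe that the $\tilde{c}_{\epsilon_j}\Phi_j'$-term vanishes by integration by parts, that the $-J_{\mathrm{mix}}$-term is nonnegative via Lemma \ref{lemma6bewijsbound1}, invoke Lemma \ref{lemma6bewijsbound2} for the Jacobian term, and then close with Cauchy–Schwarz and Young's inequality with parameter $a$. The only cosmetic difference is the order in which the inner product is set up; the substance is identical.
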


\textit{Proof.} Recall the decomposition \sref{decomptheta_j}.
Combining the estimates in Lemma's \ref{lemma6bewijsbound1} and
\ref{lemma6bewijsbound2} and remembering that $\Re\ip{\M_{\epsilon_j}^{1,2}\Phi_j',\M_{\epsilon_j}^{1,2}\Phi_j}_{\mathbf{L}^2}=0$,
we find
\begin{equation}\label{eq:afsch:mathcalI}
\begin{array}{lcl}
\mathcal{I}_j
&\geq &
 a\nrm{\M_{\epsilon_j}^{1,2}\Phi_j}_{\mathbf{L}^2}^2-\Re\ip{\M_{\epsilon_j}^{1,2}\Theta_j,\M_{\epsilon_j}^{1,2}\Phi_j}_{\mathbf{L}^2}
    -\beta\nrm{\M_{\epsilon_j}^{1,2}\Theta_j}_{\mathbf{L}^2}^2
  \\[0.2cm]
&\geq &a\nrm{\M_{\epsilon_j}^{1,2}\Phi_j}_{\mathbf{L}^2}^2-\nrm{\M_{\epsilon_j}^{1,2}\Theta_j}_{\mathbf{L}^2}\nrm{\M_{\epsilon_j}^{1,2}\Phi_j}_{\mathbf{L}^2}-\beta\nrm{\M_{\epsilon_j}^{1,2}\Theta_j}_{\mathbf{L}^2}^2 .
\end{array}
\end{equation}
Using the standard identity $xy\leq \frac{z}{2}x^2+\frac{1}{2z}y^2$ for $x,y\in\R$ and $z > 0 $ we can estimate
\begin{equation}
\begin{array}{lcl}
\mathcal{I}_j
&\geq &\frac{a}{2}\nrm{\M_{\epsilon_j}^{1,2}\Phi_j}_{\mathbf{L}^2}^2-\Big(\frac{1}{2a}+\beta\Big)\nrm{\M_{\epsilon_j}^{1,2}\Theta_j}_{\mathbf{L}^2}^2 ,
\end{array}
\end{equation}
as desired.
\qed\\

\textit{Proof of Proposition \ref{lemma6equivalent}.}
Introducing the constant $\gamma=\frac{a}{2(\kappa+1)}$,
we add $\gamma$ times  (\ref{eqster}) to (\ref{eqsterster}) and find
\begin{equation}
\begin{array}{lcl}
\mathcal{I}_j+\frac{a\kappa}{2(\kappa+1)}\nrm{\M_{\epsilon_j}^{1,2}\Phi_j}_{\mathbf{L}^2}^2
&\geq &
\frac{a}{2}\nrm{\M_{\epsilon_j}^{1,2}\Phi_j}_{\mathbf{L}^2}^2-\Big(\frac{1}{2a}+\beta\Big)\nrm{\M_{\epsilon_j}^{1,2}\Theta_j}_{\mathbf{L}^2}^2\\[0.2cm]
&&\qquad +\frac{a}{2(\kappa+1)}\nrm{\M_{\epsilon_j}^{1,2}\Phi_j'}_{\mathbf{L}^2}-\frac{a\tilde{K}_{\mathrm{fam}}^2}{2(\kappa+1)}\nrm{\M_{\epsilon_j}^{1,2}\Theta_j}_{\mathbf{L}^2}^2.
\end{array}\end{equation}
We hence obtain
\begin{equation}
\begin{array}{lcl}
\mathcal{I}_j&\geq &
  \frac{a}{2(\kappa+1)}\nrm{\M_{\epsilon_j}^{1,2}\Phi_j}_{\mathbf{H}^1} -\Big(\frac{1}{2a}+\beta+\frac{a\tilde{K}_{\mathrm{fam}}^2}{2(\kappa+1)}\Big)\nrm{\M_{\epsilon_j}^{1,2}\Theta_j}_{\mathbf{L}^2}^2\\[0.2cm]
&:=&C_3-C_4\nrm{\M_{\epsilon_j}^{1,2}\Theta_j}_{\mathbf{L}^2}^2 .
\end{array}\end{equation}
Letting $j\rightarrow\infty$ in the setting of Lemma \ref{lemma6bewijs1} yields
\begin{equation}
\begin{array}{lclcl}
C_3-C_4\Lambda(\delta)&\leq &g\int\limits_{|\xi|\leq m} |\Phi(\xi)|^2  \, d \xi
&\leq & gC_{\diamond} \Lambda(\delta).
\end{array}
\end{equation}
As such, we can conclude that
\begin{equation}
\begin{array}{lcl}
\Lambda(\delta)&\geq &\frac{2}{C_0}
\end{array}
\end{equation}
for some $C_0>0$, as required.
An analogous computation can be used
for the setting of Lemma \ref{lemma6bewijs2}.
\qed\\

\section{Existence of travelling waves}\label{sectionexistence}
In this section we follow the spirit of \cite[Thm. 1]{BatesInfRange}
and develop a fixed point argument to show that
(\ref{ditishetprobleem}) admits travelling wave solutions
of the form \sref{eq:mr:trv:wave:ansatz}. The main complication
is that we need $\epsilon$-uniform bounds on the supremum norm of the waveprofiles
in order to control the nonlinear terms. This can be achieved by bounding
the $\mathbf{H}^1$-norm of the perturbation, but the estimates in Proposition
\ref{theorem4equivalent} feature a problematic scaling factor on the odd component.
Fortunately Corollary \ref{theorem4H2versie2} does provide
uniform $\mathbf{H}^1$-bounds, but it requires us to take a derivative of the travelling wave system.\\

Throughout this section we will apply the results
from {\S}\ref{singularoperator} to the constant family
\begin{equation}\begin{array}{lcl}
\big( \tilde{U}_\epsilon, \tilde{c}_{\epsilon} \big) &=& \big(\overline{U}_{0}, c_0 \big),\end{array}
\end{equation}
which clearly satisfies (\asref{familyassumption}{\text{h}}). In particular,
we fix a small constant $\delta > 0$ and write
$\mathcal{L}_{\epsilon,\delta}$ for the operators given by (\ref{deftildeL}) in this setting.
We set out to construct a branch of wavespeeds $c_{\epsilon}$ and small functions
\begin{equation}\begin{array}{lcl}
  \Phi_{\epsilon} &=& (\Phi_{o;\epsilon} , \Phi_{e;\epsilon}) \in \mathbf{H}^2\end{array}
\end{equation}
in such a way that
$\overline{U}_{0}+\Phi_\epsilon$ is a solution to (\ref{nieuwetravellingwaveeq}).
A short computation shows that this is equivalent to the system
\begin{equation}
\label{eq:ex:eqv:prob:easy:form}
\begin{array}{lcl}
\mathcal{L}_{\epsilon,\delta}(\Phi_\epsilon)&=&
  \mathcal{F}_{\delta}( c_{\epsilon} , \Phi_{\epsilon}),
\end{array}
\end{equation}
which we split up by introducing the expressions
\begin{equation}
\begin{array}{lcl}
\mathcal{R}(c,\Phi)&=&
(c_0-c) \partial_\xi\big(\overline{U}_{0}+\Phi\big)
 ,
\\[0.2cm]
\mathcal{E}_0 &=&
\Big(-J c_0\overline{U}_{o;0}'+J F_o(\overline{U}_{o;0}), 0 \Big) ,
\\[0.2cm]
\mathcal{N}_{\#}(\Phi_{\#})&=&F_{\#}(\overline{U}_{\#;0}+\Phi_{\#})
  -DF_{\#}(\overline{U}_{\#;0})\Phi_{\#}-F_{\#}(\overline{U}_{\#;0})\\[0.2cm]
\end{array}
\end{equation}
for $\# \in \{o, e\}$
and writing
\begin{equation}\label{opsplitsennonlinearpart}
\begin{array}{lcl}
\mathcal{F}_{\delta}( c_{\epsilon} , \Phi_{\epsilon})
& = &  \mathcal{R}(c_\epsilon,\Phi_\epsilon)
+ \mathcal{E}_0
+ \big( \mathcal{N}_o(\Phi_{o;\epsilon}) , \mathcal{N}_e(\Phi_{e;\epsilon}) \big)
+ \delta \Phi .
\end{array}
\end{equation}
Notice that $\mathcal{R}$ contains a derivative of $\Phi$. It is hence crucial that $\mathcal{L}_{\epsilon,\delta}^{-1}$ gains an order of regularity, which we obtained by the framework developed in {\S}\ref{singularoperator}.

For any $\epsilon > 0$ and $\Phi \in \mathbf{H}^2$ we introduce the norm
\begin{equation}\begin{array}{lcl}
\norm{\Phi}_{\mathbf{X}_{\epsilon} }^2 &=& \norm{ \mathcal{M}^{1,2}_{\epsilon}\partial_\xi^2 \Phi }_{\mathbf{L}^2}^2
+ \norm{ \Phi}_{\mathbf{H}^1}^2 ,\end{array}
\end{equation}
which is equivalent to the standard norm on $\mathbf{H}^2$. For any $\eta > 0$,
this allows us to introduce the set
\begin{equation}
\begin{array}{lcl}
\mathbf{X}_{\eta;\epsilon}&= &\{\Phi\in\mathbf{H}^2:\nrm{\Phi}_{\mathbf{X}_{\epsilon} } \leq \eta\}.
\end{array}
\end{equation}
For convenience, we introduce the constant
$\eta_* = \big[ 2 \nrm{\Phi_{e;0}^{\mathrm{adj}}}_{\mathbf{L}^2_e} \big]^{-1}$,
together with the formal expression
\begin{equation}
\label{defceps}
\begin{array}{lcl}
c_\delta(\Phi_e)&=&c_0+
\big[ 1 + \langle \partial_\xi\Phi_e , \overline{\Phi}^{\mathrm{adj}}_{e;0} \rangle_{\mathbf{L}^2_e} \big]^{-1}
\Big[
  \delta \langle \Phi_e, \overline{\Phi}_{e;0}^{\mathrm{adj}} \rangle_{\mathbf{L}^2_e}
  + \langle \mathcal{N}_e(\Phi_e), \overline{\Phi}_{e;0}^{\mathrm{adj}} \rangle_{\mathbf{L}^2_e}
\Big].
\end{array}
\end{equation}

\begin{lemma}
Assume that (\asref{aannamesconstanten}{\text{H}}),
(\asref{aannamesconstanten3}{\text{H}}),
(\asref{aannamespuls:even}{\text{H}}), (\asref{aannamespuls:odd}{\text{H}}) and
(\asref{extraaannamespuls}{\text{H}}) are satisfied
and pick a constant $0 < \eta \le \eta_*$.
Then  the
expression \sref{defceps} is well-defined for any $\epsilon> 0$ and any
$\Phi = (\Phi_o, \Phi_e) \in \mathbf{X}_{\eta;\epsilon}$. In addition,
the equation
\begin{equation}\begin{array}{lcl}
\big\langle \mathcal{F}_{\delta}(c,\Phi),(0,\overline{\Phi}_{e;0}^{\mathrm{adj}}) \big\rangle_{\mathbf{L}^2}&=&0\end{array}
\end{equation}
has the unique solution $c = c_\delta(\Phi_e)$.
\end{lemma}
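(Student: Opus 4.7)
The plan is to verify both claims by direct computation, relying on the normalization $\langle \overline{U}_{e;0}', \overline{\Phi}_{e;0}^{\mathrm{adj}} \rangle_{\mathbf{L}^2_e} = 1$ established in {\S}\ref{sec:lim:op}. No deep machinery is required; the whole point of the lemma is that the choice $c = c_\delta(\Phi_e)$ is \emph{forced} upon us by a single scalar equation in one unknown.

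First, I would check well-definedness of $c_{\delta}(\Phi_e)$, i.e.\ that the denominator
\begin{equation*}
1 + \langle \partial_\xi \Phi_e, \overline{\Phi}^{\mathrm{adj}}_{e;0} \rangle_{\mathbf{L}^2_e}
\end{equation*}
does not vanish for $\Phi \in \mathbf{X}_{\eta;\epsilon}$. By Cauchy--Schwarz,
\begin{equation*}
\bigl|\langle \partial_\xi \Phi_e, \overline{\Phi}^{\mathrm{adj}}_{e;0} \rangle_{\mathbf{L}^2_e}\bigr|
  \le \nrm{\partial_\xi \Phi_e}_{\mathbf{L}^2_e} \nrm{\overline{\Phi}^{\mathrm{adj}}_{e;0}}_{\mathbf{L}^2_e}
  \le \nrm{\Phi}_{\mathbf{X}_\epsilon} \nrm{\overline{\Phi}^{\mathrm{adj}}_{e;0}}_{\mathbf{L}^2_e}
  \le \eta_* \nrm{\overline{\Phi}^{\mathrm{adj}}_{e;0}}_{\mathbf{L}^2_e} = \tfrac{1}{2},
\end{equation*}
so the denominator is bounded below by $\tfrac12$, and in particular $c_\delta(\Phi_e)$ is well-defined.

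For the second claim, I would expand the inner product using the decomposition \sref{opsplitsennonlinearpart}. Pairing against $(0,\overline{\Phi}_{e;0}^{\mathrm{adj}})$ kills the odd components, and in particular annihilates $\mathcal{E}_0$, whose even block is zero. What remains is
\begin{equation*}
\langle \mathcal{F}_\delta(c,\Phi), (0,\overline{\Phi}^{\mathrm{adj}}_{e;0}) \rangle_{\mathbf{L}^2}
 = (c_0 - c)\langle \partial_\xi (\overline{U}_{e;0} + \Phi_e), \overline{\Phi}^{\mathrm{adj}}_{e;0} \rangle_{\mathbf{L}^2_e}
   + \langle \mathcal{N}_e(\Phi_e), \overline{\Phi}^{\mathrm{adj}}_{e;0} \rangle_{\mathbf{L}^2_e}
   + \delta \langle \Phi_e, \overline{\Phi}^{\mathrm{adj}}_{e;0} \rangle_{\mathbf{L}^2_e}.
\end{equation*}
Using the normalization $\langle \overline{U}_{e;0}', \overline{\Phi}^{\mathrm{adj}}_{e;0} \rangle_{\mathbf{L}^2_e} = 1$ recalled in {\S}\ref{sec:lim:op}, the coefficient of $(c_0 - c)$ is exactly $1 + \langle \partial_\xi \Phi_e, \overline{\Phi}^{\mathrm{adj}}_{e;0}\rangle_{\mathbf{L}^2_e}$, which we have just shown to be strictly positive. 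Thus the equation is linear and nonsingular in $c$, and solving for $c$ yields precisely the formula \sref{defceps}, establishing both existence and uniqueness. There is no real obstacle here; the only nontrivial point is the a priori bound on the denominator, which is exactly what dictates the choice of $\eta_*$.
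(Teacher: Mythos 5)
Your proof is correct and follows essentially the same route as the paper: bound the pairing $\langle \partial_\xi\Phi_e,\overline{\Phi}^{\mathrm{adj}}_{e;0}\rangle_{\mathbf{L}^2_e}$ via Cauchy--Schwarz and the definition of $\eta_*$, observe that $\mathcal{E}_0$ pairs to zero, use the normalization $\langle\overline{U}_{e;0}',\overline{\Phi}^{\mathrm{adj}}_{e;0}\rangle_{\mathbf{L}^2_e}=1$ to isolate the linear-in-$c$ term, and solve. The only cosmetic difference is that you bound the absolute value of the pairing while the paper bounds it from below; both give the same nondegeneracy.
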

\begin{proof}
We first note that
\begin{equation}\begin{array}{lclcl}
\langle \partial_\xi\Phi_e , \overline{\Phi}^{\mathrm{adj}}_{e;0} \rangle_{\mathbf{L}^2_e}
&\ge &- \norm{\partial_\xi\Phi_e}_{\mathbf{L}^2_e}
 \norm{ \overline{\Phi}^{\mathrm{adj}}_{e;0} }_{\mathbf{L}^2_e}
&\ge &-\frac{1}{2},\end{array}
\end{equation}
which implies that \sref{defceps} is well-defined.
The result now follows by noting that $\ip{\mathcal{E}_0,(0,\overline{\Phi}_{e;0}^{\mathrm{adj}})}_{\mathbf{L}^2}=0$ and that
\begin{equation}
\begin{array}{lcl}
\big\langle \mathcal{R}(c,\Phi),(0,\overline{\Phi}_{e;0}^{\mathrm{adj}}) \big\rangle_{\mathbf{L}^2}
& = & (c_0 - c) \Big(  \langle \overline{U}_{0;e}', \overline{\Phi}_{e;0}^{\mathrm{adj}} \rangle_{\mathbf{L}^2_e}
  + \langle  \partial_\xi\Phi_e ,\overline{\Phi}_{e;0}^{\mathrm{adj}} \rangle_{\mathbf{L}^2_e} \Big)
\\[0.2cm]
& = & (c_0 - c) \Big( 1 + \langle  \partial_\xi\Phi_e ,\overline{\Phi}_{e;0}^{\mathrm{adj}} \rangle_{\mathbf{L}^2_e} \Big),
\end{array}
\end{equation}
which implies that
\begin{equation}\begin{array}{lcl}
\big\langle \mathcal{F}_{\delta}(c,\Phi),(0,\overline{\Phi}_{e;0}^{\mathrm{adj}}) \big\rangle_{\mathbf{L}^2}
&=&   (c_0 - c) \Big( 1 + \langle  \partial_\xi\Phi_e ,\overline{\Phi}_{e;0}^{\mathrm{adj}} \rangle_{\mathbf{L}^2_e} \Big)
+ \delta \langle \Phi_e, \overline{\Phi}_{e;0}^{\mathrm{adj}} \rangle_{\mathbf{L}^2_e}
  + \langle \mathcal{N}_e(\Phi_e), \overline{\Phi}_{e;0}^{\mathrm{adj}} \rangle_{\mathbf{L}^2_e}.\end{array}
\end{equation}
\end{proof}

Consider the setting of Corollary \ref{theorem4H2versie2}
and pick $0 < \delta < \delta_0$ and $0 < \epsilon < \epsilon_0(\delta)$.
Our goal here is to find solutions to \sref{eq:ex:eqv:prob:easy:form}
by showing that the map $T_{\epsilon,\delta}:\mathbf{X}_{\eta;\epsilon}\rightarrow \mathbf{H}^2$
that acts as
\begin{equation}
\begin{array}{lcl}
T_{\epsilon,\delta}(\Phi)&=&(\mathcal{L}_{\epsilon,\delta})^{-1}\mathcal{F}_{\delta}\big(c_\delta(\Phi_e),\Phi\big)
\end{array}
\end{equation}
admits a fixed point. For any triplet $(\Phi, \Phi^A, \Phi^B) \in \mathbf{X}_{\eta;\epsilon}^3$,
the bounds in Corollary \ref{theorem4H2versie2} imply that
\begin{equation}\label{Tbound1}
\begin{array}{lcl}
\norm{T_{\epsilon,\delta}(\Phi)}_{\mathbf{X}_{\epsilon} } & \le & C_1
\big[
  \norm{\M^1_{\epsilon} \mathcal{F}_{\delta}\big(c_\delta(\Phi_e),\Phi\big)}_{\mathbf{L}^2}
  +
  \norm{\M^{1,2}_{\epsilon} \partial_\xi \mathcal{F}_{\delta}\big(c_\delta(\Phi_e),\Phi\big)}_{\mathbf{L}^2}
\big] ,
\end{array}
\end{equation}
together with
\begin{equation}\label{Tbound2}
\begin{array}{lcl}
\norm{T_{\epsilon,\delta}(\Phi^A) - T_{\epsilon,\delta}(\Phi^B)}_{\mathbf{X}_{\epsilon} } & \le &
C_1
  \norm{\M^1_{\epsilon} \Big(
    \mathcal{F}_{\delta}\big(c_\delta(\Phi^A_e),\Phi^A\big)
     - \mathcal{F}_{\delta}\big(c_\delta(\Phi^B_e),\Phi^B\big)
  \Big)}_{\mathbf{L}^2}
\\[0.2cm]
& & \qquad
  + C_1
  \norm{\M^{1,2}_{\epsilon} \partial_\xi
  \Big(
    \mathcal{F}_{\delta}\big(c_\delta(\Phi^A_e),\Phi^A\big)
     - \mathcal{F}_{\delta}\big(c_\delta(\Phi^B_e),\Phi^B\big)
  \Big) }_{\mathbf{L^2} } .
\end{array}
\end{equation}
In order to show that $T_{\epsilon,\delta}$ is a contraction mapping,
it hence suffices to obtain suitable bounds
for the terms appearing on the right-hand side  of these estimates.\\

We start by obtaining pointwise bounds on the nonlinear terms.
To this end, we compute
\begin{equation}\label{eq:Noprime}
\begin{array}{lcl}
\partial_\xi\mathcal{N}_o(\Phi_{o}) &=&
\Big( DF_o(\overline{U}_{o;0} + \Phi_o ) - DF_o(\overline{U}_{o;0}) - D^2 F_o( \overline{U}_{o;0}) \Phi_o \Big)
  \overline{U}_{o;0}'
\\[0.2cm]
& & \qquad
  + \Big( DF_o( \overline{U}_{o;0} + \Phi_o ) - DF_o(\overline{U}_{o;0} ) \Big) \partial_\xi\Phi_o
\end{array}
\end{equation}
and note that a similar identity holds for $\partial_\xi\mathcal{N}_e(\Phi_e)$.
In addition, we remark that there is a constant $K_F>0$ for which the bounds
\begin{equation}
\label{eq:ex:cnst:kf:def}\begin{array}{lcl}
\nrm{DF_{\#}(\overline{U}_{\#;0} + \Phi_{\#})}_{\infty} + \nrm{D^2F_{\#}(\overline{U}_{\#;0} + \Phi_{\#})}_{\infty}
+ \nrm{D^3 F_{\#}(\overline{U}_{\#;0} + \Phi_{\#})}_{\infty}
&<& K_F\end{array}
\end{equation}
hold for $\# \in \{o, e\}$ and all $\Phi = (\Phi_o, \Phi_e)$ that have $\norm{\Phi}_{\mathbf{H}^1} \le \eta_*$.
\\

\begin{lemma}\label{theorem1bewijs1} Assume that (\asref{aannamesconstanten}{\text{H}}),
(\asref{aannamesconstanten3}{\text{H}}), (\asref{aannamespuls:even}{\text{H}})
and
(\asref{aannamespuls:odd}{\text{H}}) are satisfied.
There exists a constant $M>0$ so that for each $\Phi=(\Phi_o,\Phi_e)\in\mathbf{H}^1$ with
$\nrm{\Phi}_{\mathbf{H}^1} \leq \eta_* $, we have the pointwise estimates
\begin{equation}
\begin{array}{lcl}
|\mathcal{N}_o(\Phi_o)|&\leq & M|\Phi_o|^2,\\[0.2cm]
|\mathcal{N}_e(\Phi_e)|&\leq & M|\Phi_e|^2.
\end{array}
\end{equation} \end{lemma}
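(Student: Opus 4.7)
The plan is to recognize $\mathcal{N}_\#(\Phi_\#)$ as the quadratic Taylor remainder of $F_\#$ at $\overline{U}_{\#;0}$ and to bound the second derivatives that appear uniformly on the range of $\overline{U}_{\#;0}(\xi) + t\Phi_\#(\xi)$.

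First I would apply Taylor's theorem with integral remainder pointwise in $\xi \in \mathbb{R}$, which gives
\begin{equation}
\mathcal{N}_\#(\Phi_\#)(\xi) \;=\; \int_0^1 (1-t)\, D^2 F_\#\bigl(\overline{U}_{\#;0}(\xi) + t\Phi_\#(\xi)\bigr)\bigl[\Phi_\#(\xi),\Phi_\#(\xi)\bigr]\, dt
\end{equation}
for each $\# \in \{o,e\}$. Passing to absolute values yields the pointwise estimate
\begin{equation}
\bigl|\mathcal{N}_\#(\Phi_\#)(\xi)\bigr| \;\leq\; \tfrac{1}{2}\, \sup_{0 \leq t \leq 1} \bigl\|D^2 F_\#\bigl(\overline{U}_{\#;0}(\xi) + t\Phi_\#(\xi)\bigr)\bigr\|\, \bigl|\Phi_\#(\xi)\bigr|^2,
\end{equation}
so the task reduces to controlling the supremum of $D^2 F_\#$ uniformly over all admissible $\Phi$ and all $\xi$.

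Next I would observe that the singular profile $\overline{U}_{\#;0}$ is continuous with finite limits $U_\#^\pm$ at $\pm\infty$ by (HW1) and (HW2), hence bounded on $\mathbb{R}$. The Sobolev embedding $H^1(\mathbb{R}) \hookrightarrow L^\infty(\mathbb{R})$ together with the constraint $\|\Phi\|_{\mathbf{H}^1} \leq \eta_*$ then gives a uniform bound $\|\Phi_\#\|_\infty \leq C\eta_*$. Therefore the argument $\overline{U}_{\#;0}(\xi) + t\Phi_\#(\xi)$ lies in a fixed compact subset of $\mathbb{R}^{n+k}$ as $\xi$, $t$ and $\Phi$ vary over their admissible ranges.

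Finally, the $C^3$-smoothness of $F_\#$ provided by (HN1) ensures $D^2 F_\#$ is bounded on this compact set; equivalently, one can simply invoke the constant $K_F$ from \eqref{eq:ex:cnst:kf:def}, which already packages this bound. Setting $M = \tfrac{1}{2} K_F$ then delivers both pointwise estimates simultaneously. No obstacle of substance arises: the argument is a standard second-order Taylor remainder bound whose only ingredients are the uniform $L^\infty$ control of $\overline{U}_{\#;0} + \Phi_\#$ and the smoothness of $F_\#$.
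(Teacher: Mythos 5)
Your proof is correct and follows essentially the same route as the paper's: apply Taylor's theorem with integral remainder (the cited \cite[Thm.~2.8.3]{DUI2004} is precisely this) and bound $\|D^2 F_\#\|$ by the constant $K_F$ from \eqref{eq:ex:cnst:kf:def}, yielding $M = \tfrac{1}{2}K_F$. Your extra discussion of why $K_F$ is finite (boundedness of $\overline{U}_{\#;0}$, Sobolev embedding, $C^3$-smoothness) merely makes explicit what the paper takes for granted in its one-line proof.
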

\textit{Proof.}
Using \cite[Thm. 2.8.3]{DUI2004} we obtain
\begin{equation}
\begin{array}{lcl}
|\mathcal{N}_o(\Phi_o)|&\leq & \frac{1}{2} K_F |\Phi_o|^2.
\end{array}
\end{equation}
The estimate for $\mathcal{N}_e$ follows similarly.\qed\\

\begin{lemma}\label{theorem1bewijs2}
Assume that (\asref{aannamesconstanten}{\text{H}}),
(\asref{aannamesconstanten3}{\text{H}}), (\asref{aannamespuls:even}{\text{H}})
and
(\asref{aannamespuls:odd}{\text{H}}) are satisfied.
There exists a constant $M>0$ so that for each $\Phi=(\Phi_o,\Phi_e)\in\mathbf{H}^1$ with
$\nrm{\Phi}_{\mathbf{H}^1} \leq \eta_* $, we have the pointwise estimates
\begin{equation}
\begin{array}{lcl}
|\partial_\xi\mathcal{N}_o(\Phi_o)|&\leq & M\big(|\partial_\xi\Phi_o||\Phi_o|+|\Phi_o|^2\big),\\[0.2cm]
|\partial_\xi\mathcal{N}_e(\Phi_e)|&\leq & M\big(|\partial_\xi\Phi_e||\Phi_e|+|\Phi_e|^2\big).
\end{array}
\end{equation} \end{lemma}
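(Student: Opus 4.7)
The plan is to start from the explicit formula (\ref{eq:Noprime}) for $\partial_\xi\mathcal{N}_o(\Phi_o)$ and estimate its two summands separately. A completely analogous argument will then handle $\partial_\xi\mathcal{N}_e(\Phi_e)$, so I will only spell out the odd case.

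For the first summand, I would apply a second-order Taylor expansion to $DF_o$ around $\overline{U}_{o;0}$, which gives the integral representation
\begin{equation}
DF_o(\overline{U}_{o;0} + \Phi_o ) - DF_o(\overline{U}_{o;0}) - D^2 F_o( \overline{U}_{o;0}) \Phi_o
= \int_0^1 (1-s)\, D^3 F_o(\overline{U}_{o;0} + s \Phi_o)[\Phi_o, \Phi_o]\, ds .
\end{equation}
Because $\nrm{\Phi}_{\mathbf{H}^1} \le \eta_*$ implies $\nrm{\Phi_o}_\infty \le \eta_*$ by Sobolev embedding, the uniform bound (\ref{eq:ex:cnst:kf:def}) on $D^3 F_o$ yields a pointwise estimate of the form $\tfrac{1}{2} K_F |\Phi_o|^2$ for this bracket. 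Multiplying by $\overline{U}_{o;0}'$, which is bounded on $\R$ (this follows from (\asref{aannamespuls:even}{\text{H}}) and (\asref{aannamespuls:odd}{\text{H}}) together with the explicit identities (\ref{eq:mr:id:for:ovl:u:zero}) and (\ref{eq:mr:eq:fro:ovl:w:o:zero}), which express $\overline{U}_{o;0}'$ in terms of the bounded profile $\overline{U}_{e;0}'$ and the bounded function $\overline{w}_{o;0}'$), gives a contribution of order $|\Phi_o|^2$.

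For the second summand, a first-order Taylor expansion gives
\begin{equation}
DF_o(\overline{U}_{o;0} + \Phi_o ) - DF_o(\overline{U}_{o;0})
= \int_0^1 D^2 F_o(\overline{U}_{o;0} + s \Phi_o)[\Phi_o]\, ds,
\end{equation}
which together with (\ref{eq:ex:cnst:kf:def}) yields the pointwise bound $K_F |\Phi_o|$. Multiplying by $\partial_\xi \Phi_o$ gives a contribution of order $|\Phi_o||\partial_\xi \Phi_o|$.

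Combining the two contributions produces the stated estimate with some constant $M > 0$ depending only on $K_F$, $\nrm{\overline{U}_{o;0}'}_\infty$ and $\eta_*$. The even case is handled by the same argument, using the $C^1$-regularity of $\overline{U}_{e;0}$ from (\asref{aannamespuls:even}{\text{H}}) to ensure that $\overline{U}_{e;0}'$ is likewise bounded. I do not anticipate a genuine obstacle here; the main point is merely to verify that $\overline{U}_{\#;0}'$ is uniformly bounded so that the Taylor remainders can be absorbed into the claimed quadratic expressions.
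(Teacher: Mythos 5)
Your proof is correct and follows essentially the same approach as the paper: both estimate the two summands of (\ref{eq:Noprime}) via Taylor expansion of $DF_o$ around $\overline{U}_{o;0}$, using the uniform bound (\ref{eq:ex:cnst:kf:def}) on the derivatives up to third order and the boundedness of $\overline{U}_{o;0}'$. The only cosmetic difference is that the paper regroups (\ref{eq:Noprime}) so that a single application of the Taylor theorem covers both terms (at the cost of then absorbing a $|\Phi_o|^2|\partial_\xi\Phi_o|$ term via the sup-norm bound), whereas you treat the two summands directly, which slightly streamlines the final absorption step.
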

\textit{Proof.}
We rewrite (\ref{eq:Noprime}) to obtain
\begin{equation}
\begin{array}{lcl}
\partial_\xi\mathcal{N}_o(\Phi_o)&=&DF_o(\overline{U}_{o;0}+\Phi_o)\partial_\xi(\overline{U}_{o;0}+\Phi_o)-DF_o(\overline{U}_{o;0})\partial_\xi(\overline{U}_{o;0}+\Phi_o)\\[0.2cm]
&&\qquad -D^2F_o(\overline{U}_{o;0})[\Phi_o, \partial_\xi(\overline{U}_{o;0}+\Phi_o)]+D^2F_o(\overline{U}_{o;0})[\Phi_o,\partial_\xi\Phi_o].
\end{array}
\end{equation}
This allows us to use \cite[Thm. 2.8.3]{DUI2004} and obtain the pointwise estimate
\begin{equation}
\begin{array}{lcl}
|\partial_\xi\mathcal{N}_o(\Phi_o)|&\leq &
\frac{1}{2} K_F |\Phi_o|^2\big(\abs{\overline{U}_{o;0}'} +|\partial_\xi\Phi_o|\big)
+ K_F |\Phi_o||\partial_\xi\Phi_o|
\\[0.2cm]
&\leq & M\big(|\partial_\xi\Phi_o||\Phi_o|+|\Phi_o|^2\big).
\end{array}
\end{equation}
The estimate for $\mathcal{N}_e$ follows similarly.
\qed\\

\begin{lemma}\label{theorem1bewijs3}
Assume that (\asref{aannamesconstanten}{\text{H}}),
(\asref{aannamesconstanten3}{\text{H}}), (\asref{aannamespuls:even}{\text{H}})
and
(\asref{aannamespuls:odd}{\text{H}}) are satisfied.
There exists a constant $M>0$ so that for each
pair
\begin{equation}
\Phi^A=(\Phi^A_{o},\Phi^A_{e}) \in \mathbf{H}^1,
\qquad
\qquad
\Phi^B=(\Phi^B_{o},\Phi^B_{e})\in\mathbf{H}^1
\end{equation}
that satisfies $\nrm{\Phi^A}_{\mathbf{H}^1} \leq \eta_*$
and $\nrm{\Phi^B}_{\mathbf{H}^1} \leq \eta_* $, we have the pointwise estimates
\begin{equation}
\begin{array}{lcl}
|\mathcal{N}_o(\Phi^A_{o})-\mathcal{N}_o(\Phi^B_{o})|&\leq &
   M\big[|\Phi^A_{o}|+|\Phi^B_{o}|\big]|\Phi^A_{o}-\Phi^B_{o}|,\\[0.2cm]
|\mathcal{N}_e(\Phi^A_{e})-\mathcal{N}_e(\Phi^B_{e})|&\leq &
   M\big[|\Phi^A_{e}|+|\Phi^B_{e}|\big]|\Phi^A_{e}-\Phi^B_{e}|.
\end{array}
\end{equation} \end{lemma}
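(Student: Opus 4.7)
The plan is to use the fundamental theorem of calculus twice, in the same spirit as the proofs of Lemmas \ref{theorem1bewijs1} and \ref{theorem1bewijs2}. The key observation is that $\mathcal{N}_\#$ is defined so that $\mathcal{N}_\#(0) = 0$ and $D\mathcal{N}_\#(0) = 0$, since
\begin{equation}
D\mathcal{N}_\#(\Psi) \;=\; DF_\#(\overline{U}_{\#;0}+\Psi) - DF_\#(\overline{U}_{\#;0}).
\end{equation}
Applying the uniform bound (\ref{eq:ex:cnst:kf:def}) on $D^2F_\#$ together with the mean value inequality, we get the pointwise estimate $|D\mathcal{N}_\#(\Psi)| \le K_F|\Psi|$ whenever $\nrm{\Psi}_{\infty} \le \eta_*$.

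For the main estimate, I would write
\begin{equation}
\mathcal{N}_\#(\Phi^A_\#) - \mathcal{N}_\#(\Phi^B_\#)
\;=\; \int_0^1 D\mathcal{N}_\#\!\left(\Phi^B_\# + t(\Phi^A_\# - \Phi^B_\#)\right)(\Phi^A_\# - \Phi^B_\#)\,dt.
\end{equation}
Inserting the pointwise bound on $D\mathcal{N}_\#$ and using the triangle inequality
\begin{equation}
\left|\Phi^B_\# + t(\Phi^A_\# - \Phi^B_\#)\right| \;\le\; t|\Phi^A_\#| + (1-t)|\Phi^B_\#| \;\le\; |\Phi^A_\#| + |\Phi^B_\#|
\end{equation}
then yields
\begin{equation}
|\mathcal{N}_\#(\Phi^A_\#) - \mathcal{N}_\#(\Phi^B_\#)| \;\le\; K_F\bigl(|\Phi^A_\#| + |\Phi^B_\#|\bigr)|\Phi^A_\# - \Phi^B_\#|,
\end{equation}
so $M = K_F$ works for both components.

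There is essentially no obstacle here: the estimate is a direct Taylor-type bound, made possible by the fact that the assumption $\nrm{\Phi^A}_{\mathbf{H}^1}, \nrm{\Phi^B}_{\mathbf{H}^1} \le \eta_*$ combined with the Sobolev embedding $\mathbf{H}^1 \hookrightarrow L^\infty$ keeps the arguments of $DF_\#$ inside the region where the uniform bound (\ref{eq:ex:cnst:kf:def}) applies. The only mild subtlety is that, strictly speaking, the pointwise bound $|D\mathcal{N}_\#(\Psi(\xi))| \le K_F|\Psi(\xi)|$ must be applied under the integral at each fixed $\xi$, but this is automatic because the integration in the FTC is in the parameter $t$, not in $\xi$.
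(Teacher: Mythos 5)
Your proof is correct and takes essentially the same approach as the paper: both rest on the Taylor-type bound coming from the uniform constant $K_F$ in \sref{eq:ex:cnst:kf:def}. Your organization — writing $\mathcal{N}_\#(\Phi^A_\#) - \mathcal{N}_\#(\Phi^B_\#)$ as a single FTC integral of $D\mathcal{N}_\#$ and then using the mean-value bound $|D\mathcal{N}_\#(\Psi)| \le K_F|\Psi|$ (valid since $\|(1-t)\Phi^B + t\Phi^A\|_{\mathbf{H}^1} \le \eta_*$ keeps the argument in the region where \sref{eq:ex:cnst:kf:def} applies) — is marginally cleaner than the paper's decomposition into a second-order Taylor remainder plus a first-order correction term, but it is the same underlying estimate.
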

\textit{Proof.}
We first compute
\begin{equation}\label{firstcontractionestimate}
\begin{array}{lcl}
\mathcal{N}_o(\Phi^A_{o})-\mathcal{N}_o(\Phi^B_{o})&= & F_o\big(\overline{U}_{o;0}+\Phi^B_{o}+(\Phi^A_{o}-\Phi^B_{o})\big)-F_o\big(\overline{U}_{o;0}+\Phi^B_{o}\big)\\[0.2cm]
&&\qquad-DF_o\big(\overline{U}_{o;0}+\Phi^B_{o}\big)\big(\Phi^A_{o}-\Phi^B_{o}\big)\\[0.2cm]
&&\qquad +\big[DF_o\big(\overline{U}_{o;0}+\Phi^B_{o}\big)-DF_o(\overline{U}_{o;0})\big]\big(\Phi^A_{o}-\Phi^B_{o}\big).\\[0.2cm]
\end{array}
\end{equation}
Applying \cite[Thm. 2.8.3]{DUI2004} twice yields the pointwise estimate
\begin{equation}
\begin{array}{lcl}
|\mathcal{N}_o(\Phi^A_{o})-\mathcal{N}_o(\Phi^B_{o})|&\leq&
K_F \Big[ \frac{1}{2} |\Phi^A_{o}-\Phi^B_{o}|^2 + \abs{\Phi^B_o} |\Phi^A_{o}-\Phi^B_{o}| \Big]
\\[0.2cm]
& \le &
2 K_F \big[|\Phi^A_{o}|+|\Phi^B_{o}|\big]|\Phi^A_{o}-\Phi^B_{o}|.
\end{array}
\end{equation}
The estimate for $\mathcal{N}_e$ follows similarly.
\qed\\

\begin{lemma}\label{theorem1bewijs4} Assume that (\asref{aannamesconstanten}{\text{H}}),
(\asref{aannamesconstanten3}{\text{H}}), (\asref{aannamespuls:even}{\text{H}}) and
(\asref{aannamespuls:odd}{\text{H}}) are satisfied. There exists a constant $M>0$
so that for each
pair
\begin{equation}
\Phi^A=(\Phi^A_{o},\Phi^A_{e}) \in \mathbf{H}^1,
\qquad
\qquad
\Phi^B=(\Phi^B_{o},\Phi^B_{e})\in\mathbf{H}^1
\end{equation}
that satisfies $\nrm{\Phi^A}_{\mathbf{H}^1} \leq \eta_*$
and $\nrm{\Phi^B}_{\mathbf{H}^1} \leq \eta_* $ we have the pointwise estimates
\begin{equation}
\label{eq:ex:est:delta:n:deriv}
\begin{array}{lcl}
|\partial_\xi\mathcal{N}_{\#}(\Phi^A_{\#})-\partial_\xi\mathcal{N}_{\#}(\Phi^B_{\#})|&\leq &
 M\Big[|\partial_\xi\Phi^A_{\#}|+|\Phi^A_{\#}|+ \abs{\partial_\xi\Phi^B_{\#}}+|\Phi^B_{\#}| \Big]
   |\Phi^A_{\#}-\Phi^B_{\#}|
\\[0.2cm]
& & \qquad
  +M\Big[ \abs{\Phi^A_{\#}} +  |\Phi^B_{\#}| \Big]|\partial_\xi(\Phi^A_{\#}-\Phi^B_{\#})|,
\end{array}
\end{equation}
for $\# \in \{o, e\}$.
\end{lemma}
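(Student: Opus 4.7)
The plan is to mirror the approach of Lemma \ref{theorem1bewijs2} but carry along two perturbations simultaneously. Write $A = \Phi^A_\#$, $B = \Phi^B_\#$, $U = \overline{U}_{\#;0}$, and observe that $\nrm{U'}_\infty < \infty$ by virtue of (\asref{aannamespuls:even}{\text{H}})--(\asref{aannamespuls:odd}{\text{H}}) together with the travelling wave equations \sref{nagumolde:even} and \sref{eq:mr:eq:fro:ovl:w:o:zero}. Starting from the identity \sref{eq:Noprime}, a direct subtraction produces the decomposition
\begin{equation*}
\partial_\xi \mathcal{N}_\#(A) - \partial_\xi \mathcal{N}_\#(B) \;=\; \mathcal{I}_1 + \mathcal{I}_2 + \mathcal{I}_3,
\end{equation*}
where $\mathcal{I}_1 = \big[DF_\#(U + A) - DF_\#(U + B) - D^2 F_\#(U)(A - B)\big]\, U'$, $\mathcal{I}_2 = \big[DF_\#(U + A) - DF_\#(U + B)\big]\, \partial_\xi A$, and $\mathcal{I}_3 = \big[DF_\#(U + B) - DF_\#(U)\big]\, \partial_\xi(A - B)$.

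First I would bound $\mathcal{I}_1$ by writing $DF_\#(U + A) - DF_\#(U + B) = \int_0^1 D^2 F_\#(U + B + s(A - B))(A - B)\,ds$, substituting into $\mathcal{I}_1$ to isolate the Taylor remainder around $U$, and invoking the uniform bound \sref{eq:ex:cnst:kf:def} on $D^3 F_\#$ to conclude that $|\mathcal{I}_1| \leq K_F(|A| + |B|)|A - B|\,\nrm{U'}_\infty$. Next, the mean value theorem applied to $DF_\#$ (using the uniform bound on $D^2 F_\#$) yields $|\mathcal{I}_2| \leq K_F|A - B||\partial_\xi A|$ and $|\mathcal{I}_3| \leq K_F|B||\partial_\xi(A - B)|$. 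Summing these three estimates and using $|B| \leq |A| + |B|$ produces exactly an inequality of the prescribed form \sref{eq:ex:est:delta:n:deriv} for $\# \in \{o, e\}$.

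The only real difficulty here is bookkeeping: the particular split above must be arranged so that each of $\mathcal{I}_1$, $\mathcal{I}_2$, $\mathcal{I}_3$ scales correctly, i.e.\ so that each factor containing $A - B$ or $\partial_\xi(A - B)$ is paired with a factor that is itself $O(|A| + |B| + |\partial_\xi A| + |\partial_\xi B|)$. Apart from this, the argument reduces to a routine application of Taylor's theorem and requires no new analytic input beyond the $C^3$-smoothness from (\asref{aannamesconstanten}{\text{H}}) and the boundedness of $U'$ already exploited in Lemmas \ref{theorem1bewijs2} and \ref{theorem1bewijs3}.
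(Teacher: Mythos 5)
Your proof is correct, and the decomposition you use is genuinely different from the paper's. The paper differentiates \sref{firstcontractionestimate} line by line to obtain three raw pieces $d_1,d_2,d_3$ and then regroups them into five sub-terms $d_I,\ldots,d_V$, where $d_I$ is a Taylor remainder of $DF_\#$ around the \emph{shifted} point $\overline{U}_{\#;0}+\Phi^B_\#$ carrying the factor $\partial_\xi(\overline{U}_{\#;0}+\Phi^B_\#)$. You instead difference \sref{eq:Noprime} directly and Taylor-expand around $\overline{U}_{\#;0}$ itself, which yields three pieces $\mathcal{I}_1,\mathcal{I}_2,\mathcal{I}_3$ each of which already has exactly the structure required by \sref{eq:ex:est:delta:n:deriv}. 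The practical difference: the paper's $d_I$ estimate contains the product $|\Phi^A_\#-\Phi^B_\#|^2\,|\partial_\xi\Phi^B_\#|$, and to absorb this one must implicitly invoke the Sobolev bound $\nrm{\Phi^A_\#-\Phi^B_\#}_\infty \lesssim \eta_*$ in order to factor off a bounded quantity. Your $\mathcal{I}_1$ bound $K_F(|\Phi^A_\#|+|\Phi^B_\#|)|\Phi^A_\#-\Phi^B_\#|\,\nrm{\overline{U}_{\#;0}'}_\infty$ avoids this entirely, so the only external input you need is the boundedness of $\overline{U}_{\#;0}'$, which both proofs require anyway. Both approaches rest on $C^3$-smoothness via the constant $K_F$ in \sref{eq:ex:cnst:kf:def}, and both are valid; yours is a bit more streamlined for this particular lemma.
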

\textit{Proof.}
Differentiating \sref{firstcontractionestimate} line by line, we obtain
\begin{equation}
\begin{array}{lcl}
\partial_\xi\mathcal{N}_o(\Phi^A_{o})-\partial_\xi\mathcal{N}_o(\Phi^B_{o})&=&
d_1+d_2+d_3 ,
\end{array}
\end{equation}
with
\begin{equation}\begin{array}{lcl}
d_1&=&DF_o\big(\overline{U}_{o;0}+\Phi^B_{o}+(\Phi^A_{o}-\Phi^B_{o}) \big)
  \big(\overline{U}_{o;0}'+\partial_\xi\Phi^B_{o} + \partial_\xi(\Phi^A_o - \Phi^B_o) \big)
\\[0.2cm]
& & \qquad
  -DF_o\big(\overline{U}_{o;0}+\Phi^B_{o}\big)\partial_\xi\big(\overline{U}_{o;0}+\Phi^B_{o}\big) ,
 \\[0.2cm]
d_2 & = &
 -D^2F_o\big(\overline{U}_{o;0}+\Phi^B_{o}\big)\big[\Phi^A_{o}-\Phi^B_{o}, \partial_\xi(\overline{U}_{o;0}+\Phi^B_{o})\big]
 - DF_o\big(\overline{U}_{o;0}+\Phi^B_{o}\big)  \partial_\xi(\Phi^A_o - \Phi^B_o) ,
\\[0.2cm]
d_3&=&\big[DF_o\big(\overline{U}_{o;0}+\Phi^B_{o}\big)
  -DF_o(\overline{U}_{o;0})\big] \partial_\xi \big(\Phi^A_{o}-\Phi^B_{o}\big)
\\[0.2cm]
& & \qquad
  +  D^2F_o\big(\overline{U}_{o;0}+\Phi^B_{o}\big)[\partial_\xi(\overline{U}_{o;0} + \Phi^B_o), \Phi^A_{o}-\Phi^B_{o} \big]
  - D^2F_o(\overline{U}_{o;0}) [\overline{U}_{o;0}' , \Phi^A_{o}-\Phi^B_{o} \big].
\end{array}
\end{equation}
Upon introducing the expressions
\begin{equation}
\begin{array}{lcl}
d_{I} & = & DF_o\big(\overline{U}_{o;0}+\Phi^B_{o}+(\Phi^A_{o}-\Phi^B_{o}) \big)\partial_\xi\big(\overline{U}_{o;0}+\Phi^B_{o}  \big)
  -DF_o\big(\overline{U}_{o;0}+\Phi^B_{o}\big)\partial_\xi\big(\overline{U}_{o;0}+\Phi^B_{o}\big)
\\[0.2cm]
& & \qquad
   -D^2F_o\big(\overline{U}_{o;0}+\Phi^B_{o}\big)\big[\Phi^A_{o}-\Phi^B_{o},
     \partial_\xi(\overline{U}_{o;0}+\Phi^B_{o})\big] ,
\\[0.2cm]
d_{II} & = &
\Big[DF_o\big(\overline{U}_{o;0}+\Phi^B_{o}+(\Phi^A_{o}-\Phi^B_{o}) \big)
 -  DF_o\big(\overline{U}_{o;0}+\Phi^B_{o}\big) \Big]  \partial_\xi(\Phi^A_o - \Phi^B_o) ,
\end{array}
\end{equation}
we see that
\begin{equation}\begin{array}{lcl}
d_1 + d_2 &= &d_{I} + d_{II}.\end{array}
\end{equation}
Applying  \cite[Thm. 2.8.3]{DUI2004} we obtain the bounds
\begin{equation}
\begin{array}{lcl}
\abs{d_I} & \le &
\frac{1}{2} K_F \abs{\Phi^A_{o}-\Phi^B_{o}}^2
\big[ \abs{\overline{U}_{o;0}'}+\abs{\partial_\xi\Phi^B_{o}} \big],
\\[0.2cm]
\abs{d_{II}} & \le &  K_F  \abs{\Phi^A_{o}-\Phi^B_{o}}
  \abs{  \partial_\xi(\Phi^A_o - \Phi^B_o) }.
\end{array}
\end{equation}
In addition, the expressions
\begin{equation}
\begin{array}{lcl}
d_{III} & = &
\big[DF_o\big(\overline{U}_{o;0}+\Phi^B_{o}\big)-DF_o(\overline{U}_{o;0})\big]
  \partial_\xi \big(\Phi^A_{o}-\Phi^B_{o}\big),
\\[0.2cm]
d_{IV} & = &
D^2F_o\big(\overline{U}_{o;0}+\Phi^B_{o}\big)[\overline{U}_{o;0}' , \Phi^A_{o}-\Phi^B_{o} \big]
- D^2F_o(\overline{U}_{o;0}) [\overline{U}_{o;0}' , \Phi^A_{o}-\Phi^B_{o} \big]  ,
\\[0.2cm]
d_{V} & = &
 D^2F_o\big(\overline{U}_{o;0}+\Phi^B_{o}\big)[\partial_\xi\Phi^B_o, \Phi^A_{o}-\Phi^B_{o} \big]
\end{array}
\end{equation}
allow us to write
\begin{equation}\begin{array}{lcl}
d_3 &=& d_{III} + d_{IV} + d_{V} .\end{array}
\end{equation}
Applying  \cite[Thm. 2.8.3]{DUI2004} we may estimate
\begin{equation}
\begin{array}{lcl}
\abs{d_{III}} & \le &
 K_F \abs{\Phi^B_o} \abs{  \partial_\xi(\Phi^A_o - \Phi^B_o) } ,
\\[0.2cm]
\abs{d_{IV}} & \le &
   K_F \abs{\Phi^B_o} \abs{  \Phi^A_{o}-\Phi^B_{o} } ,
\\[0.2cm]
\abs{d_{V}} & \le &
  K_F \abs{\partial_\xi\Phi^B_o} \abs{  \Phi^A_{o}-\Phi^B_{o} } .
\end{array}
\end{equation}
These bounds can all be absorbed into \sref{eq:ex:est:delta:n:deriv}.
The estimate for $\mathcal{N}_e$ follows similarly.\qed\\

With the above pointwise bounds in hand, we are ready to estimate the nonlinearities
in the appropriate scaled function spaces. To this end,
we introduce the notation
\begin{equation}\begin{array}{lcl}
\mathcal{N}(\Phi) &=& \big( \mathcal{N}_o(\Phi_o), \mathcal{N}_e(\Phi_e) \big)\end{array}
\end{equation}
for any $\Phi = (\Phi_o, \Phi_e) \in \mathbf{H}^1$.

\begin{lemma}\label{theorem1bewijs4.5}
Assume that (\asref{aannamesconstanten}{\text{H}}),
(\asref{aannamesconstanten3}{\text{H}}), (\asref{aannamespuls:even}{\text{H}})
and
(\asref{aannamespuls:odd}{\text{H}}) are satisfied.
There exists a constant $K_{\mathcal{N}}>0$ so that for each $0<\eta\leq \eta_*$, each $\epsilon > 0$
and each triplet $(\Phi,\Phi^A,\Phi^B)\in\mathbf{X}_{\eta;\epsilon}^3$ we have the bounds
\begin{equation}
    \begin{array}{lcl}
         \nrm{\M_{\epsilon}^{1}\mathcal{N}(\Phi)}_{\mathbf{L}^2}&\leq & K_{\mathcal{N}}\eta^2,\\[0.2cm]
         \nrm{\M_{\epsilon}^{1,2}\partial_\xi\mathcal{N}(\Phi)}_{\mathbf{L}^2}&\leq & K_{\mathcal{N}}\eta^2,\\[0.2cm]
         \nrm{\M_{\epsilon}^{1}\big(\mathcal{N}(\Phi^A)-\mathcal{N}(\Phi^B)\big)}_{\mathbf{L}^2}&\leq &
           K_{\mathcal{N}}\eta\nrm{\Phi^A-\Phi^B}_{\mathbf{L}^2},\\[0.2cm]
         \nrm{\M_{\epsilon}^{1,2}\partial_\xi\big(\mathcal{N}(\Phi^A)-\mathcal{N}(\Phi^B)\big)}_{\mathbf{L}^2}&\leq &
           K_{\mathcal{N}}\eta\Big(\nrm{\Phi^A-\Phi^B}_{\mathbf{L}^2}
           +\nrm{\partial_\xi(\Phi^A-\Phi^B)}_{\mathbf{L}^2}\Big).
    \end{array}
\end{equation}
\end{lemma}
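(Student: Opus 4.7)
The plan is to combine the pointwise estimates from Lemmas \ref{theorem1bewijs1}--\ref{theorem1bewijs4} with H\"older's inequality and the one-dimensional Sobolev embedding $H^1(\R) \hookrightarrow L^\infty(\R)$. For any $\Phi \in \mathbf{X}_{\eta;\epsilon}$ we have $\nrm{\Phi}_{\mathbf{H}^1} \le \eta$, so by Sobolev $\nrm{\Phi}_{\infty} \le C\eta$, which also ensures that the hypothesis $\nrm{\Phi}_{\mathbf{H}^1} \le \eta_*$ of Lemmas \ref{theorem1bewijs1}--\ref{theorem1bewijs4} is available after possibly shrinking $\eta_*$. The first and third estimates then follow immediately: using Lemmas \ref{theorem1bewijs1} and \ref{theorem1bewijs3} and factoring one copy of $\Phi$ (respectively $\Phi^A + \Phi^B$) in $L^\infty$, the remaining factor is placed in $L^2$. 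The weighting $\M_\epsilon^{1}$ only multiplies the first component by $\epsilon < 1$, so it can only improve these estimates and can be ignored.

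The second and fourth estimates are more delicate because Lemmas \ref{theorem1bewijs2} and \ref{theorem1bewijs4} contain factors of $\partial_\xi \Phi_\#$, which requires an additional derivative of $\Phi$ to be controlled in $L^\infty$. For the even block, the $\mathbf{X}_\epsilon$-norm provides $\nrm{\partial_\xi^2 \Phi_e}_{\mathbf{L}^2_e} \le \eta$ together with $\nrm{\Phi_e}_{\mathbf{H}^1_e} \le \eta$, hence $\partial_\xi \Phi_e \in H^1$ and $\nrm{\partial_\xi \Phi_e}_\infty \le C\eta$ by Sobolev. For the odd block no such direct $L^\infty$ bound on $\partial_\xi \Phi_o$ is available, but the scaled function satisfies $\nrm{\epsilon \partial_\xi \Phi_o}_{L^2} \le \eta$ and $\nrm{\epsilon \partial_\xi^2 \Phi_o}_{L^2} \le \eta$, which yields $\nrm{\epsilon \partial_\xi \Phi_o}_\infty \le C\eta$. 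This is exactly the weighting encoded in $\M_\epsilon^{1,2}$, and is the reason this weight appears in the statement.

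With these ingredients in place, each of the four inequalities reduces to a term-by-term application of Cauchy--Schwarz. To illustrate on the fourth and hardest bound, every product of the form $(\epsilon \partial_\xi \Phi_o^{A/B})\,(\Phi_o^A - \Phi_o^B)$ arising from the odd block of Lemma \ref{theorem1bewijs4} is handled by placing the first factor in $L^\infty$ (of order $\eta$) and the second in $L^2$, while a product of the form $(\epsilon \Phi_o^{A/B})\,\partial_\xi(\Phi_o^A - \Phi_o^B)$ is handled by placing the first factor in $L^\infty$ (of order $\epsilon\eta$) and the second in $L^2$. The analogous splittings work in the even block without $\epsilon$-factors. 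The only real obstacle --- and the main place where the setup pays off --- is the observation just made that $\epsilon \partial_\xi \Phi_o$ is in $L^\infty$ with the correct $\eta$-dependence; once this is noted, the required bookkeeping is just that the diagonal scaling $\M_\epsilon^{1,2}$ is compatible with the block structure of $\mathcal{N}$, which is automatic because $\mathcal{N}$ acts separately on $\Phi_o$ and $\Phi_e$.
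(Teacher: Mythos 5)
Your proof is correct and follows essentially the same route the paper takes: both apply the pointwise bounds of Lemmas \ref{theorem1bewijs1}--\ref{theorem1bewijs4}, split products between $L^\infty$ and $L^2$ via H\"older, and invoke the Sobolev embedding to control $\nrm{\Phi_o}_\infty,\nrm{\Phi_e}_\infty,\nrm{\partial_\xi\Phi_e}_\infty$ by $C\eta$ and $\nrm{\epsilon\partial_\xi\Phi_o}_\infty$ by $C\eta$ (equivalently $\nrm{\partial_\xi\Phi_o}_\infty\le C\eta/\epsilon$, which is exactly the key $L^\infty$ estimate the paper records). The only cosmetic slip is the remark about ``possibly shrinking $\eta_*$'' --- since $\eta\le\eta_*$ and $\nrm{\Phi}_{\mathbf{H}^1}\le\eta$ are given, the hypotheses of the preceding lemmas hold automatically with no adjustment.
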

\textit{Proof.} All bounds follow immediately from Lemma's
\ref{theorem1bewijs1}-\ref{theorem1bewijs4} upon
using the Sobolev estimate  $\nrm{\phi}_\infty\leq C_1' \nrm{\phi}_{H^1}$
to write
\begin{equation}
    \begin{array}{lclclcl}
       \nrm{\Phi_{o}}_\infty&\leq &C_1' \eta, & \qquad &
           \nrm{\partial_\xi\Phi_{o}}_\infty&\leq &C_1' \frac{\eta}{\epsilon},\\[0.2cm]
       \nrm{\Phi_{e}}_\infty&\leq &C_1' \eta, & \qquad &   \nrm{\partial_\xi\Phi_{e}}_\infty&\leq &C_1'\eta ,
    \end{array}
\end{equation}
with identical bounds for $\Phi^A$ and $\Phi^B$.
\qed\\

\begin{lemma}\label{lemma:afschE0}
Assume that (\asref{aannamesconstanten}{\text{H}}),
(\asref{aannamesconstanten3}{\text{H}}),
(\asref{aannamespuls:even}{\text{H}}) and
(\asref{aannamespuls:odd}{\text{H}}) are satisfied. Then there exists a constant $K_{\mathcal{E}}>0$
so that for each $\epsilon > 0$ we have the bound
\begin{equation}
    \begin{array}{lcl}
    \nrm{\M_{\epsilon}^{1}\mathcal{E}_0}_{\mathbf{L}^2}+\nrm{\M_{\epsilon}^{1,2}\partial_\xi \mathcal{E}_0}_{\mathbf{L}^2}&\leq & \epsilon K_{\mathcal{E}}.
    \end{array}
\end{equation}
\end{lemma}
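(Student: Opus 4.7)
The plan is to exploit the sparse structure of $\mathcal{E}_0=(-Jc_0\overline{U}_{o;0}'+JF_o(\overline{U}_{o;0}),0)$. Since $J$ projects onto the $u$-component and the even block is zero by definition, all the mass of $\mathcal{E}_0$ lives in the first $n$ slots of $\mathbf{L}^2$, namely in $\Psi:=-c_0\overline{u}_{o;0}'+f_o(\overline{U}_{o;0})$. Because both $\mathcal{M}_\epsilon^{1}=\mathrm{diag}(\epsilon I_n,I_k,I_n,I_k)$ and $\mathcal{M}_\epsilon^{1,2}=\mathrm{diag}(\epsilon I_n,\epsilon I_k,I_n,I_k)$ carry a factor $\epsilon$ in precisely those first $n$ slots, one immediately obtains
\begin{equation*}
\nrm{\mathcal{M}_\epsilon^{1}\mathcal{E}_0}_{\mathbf{L}^2}+\nrm{\mathcal{M}_\epsilon^{1,2}\partial_\xi\mathcal{E}_0}_{\mathbf{L}^2}=\epsilon\,\bigl(\nrm{\Psi}_{L^2(\R;\R^n)}+\nrm{\Psi'}_{L^2(\R;\R^n)}\bigr),
\end{equation*}
so the lemma reduces to producing an $\epsilon$-independent upper bound on $\nrm{\Psi}_{H^1(\R;\R^n)}$ and taking $K_{\mathcal{E}}$ equal to that constant.

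For the derivative piece, (\ref{eq:mr:id:for:ovl:u:zero}) gives $\overline{u}_{o;0}'=\tfrac{1}{2}S_1\overline{u}_{e;0}'$, and (HS1) forces $\overline{U}_{e;0}'\in\mathbf{H}^1_e$; hence $\overline{u}_{o;0}'$ lies in $H^1(\R;\R^n)$ with norm controlled by the wave profile alone. For the nonlinear piece, the vanishing $f_o(U_o^\pm)=0$ from (HN1) combined with Taylor's theorem gives pointwise bounds
\begin{equation*}
|f_o(\overline{U}_{o;0}(\xi))|\leq K_F|\overline{U}_{o;0}(\xi)-U_o^\pm|,\qquad |\partial_\xi f_o(\overline{U}_{o;0}(\xi))|\leq K_F|\overline{U}_{o;0}'(\xi)|
\end{equation*}
on the two half-lines. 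The derivative bound is automatic since $\overline{U}_{o;0}'$ is already $L^2$, so the only ingredient left to justify is the $L^2$-decay of $\overline{U}_{o;0}-U_o^\pm$ at $\pm\infty$.

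This decay is the main technical obstacle and is not stated explicitly among (HN1)--(HS2); it is however a standard consequence of the spectral data already invoked. For the $u$-component one uses (\ref{eq:mr:id:for:ovl:u:zero}) to write $\overline{u}_{o;0}-u_o^\pm=\tfrac{1}{2}S_1(\overline{u}_{e;0}-u_e^\pm)$, and the exponential decay of $\overline{u}_{e;0}-u_e^\pm$ follows from the MFDE Fredholm theory underlying (HS1), as worked out in \cite{HJHFZHNGM} for this exact setting. For the $w$-component one integrates the linear inhomogeneous ODE (\ref{eq:mr:eq:fro:ovl:w:o:zero}) in the unknown $\overline{w}_{o;0}$: its constant-coefficient asymptotic generator $c_0\partial_\xi-D_2g_o(U_o^\pm)$ is invertible by the negative-definiteness of $D_2g_o(U_o^\pm)$ established in Lemma~\ref{limitnegdef}, so a standard roughness/variation-of-constants argument transfers the exponential decay from $\overline{u}_{o;0}$ to $\overline{w}_{o;0}$. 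These two ingredients reduce the pointwise Taylor bounds to $L^2$-integrable ones, yielding $\nrm{\Psi}_{H^1}<\infty$ with a value independent of $\epsilon$ and completing the proof.
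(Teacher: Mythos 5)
Your proof is correct and essentially matches the paper's: pull $\epsilon$ out through the structure of $J$ and the scaling matrices $\M_\epsilon^1,\M_\epsilon^{1,2}$, then bound the residual $H^1$-norm via the regularity of the waveprofile together with exponential convergence to rest states that are zeroes of $F_o$. The only differences are cosmetic — you appeal to (HS1) directly to get $\overline{U}_{e;0}'\in\mathbf{H}^1_e$ rather than differentiating the wave equations as the paper does, and your middle paragraph asserts $\overline{U}_{o;0}'\in L^2$ one step before the $w$-component of that claim is actually supported, though your final paragraph closes that loop.
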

\textit{Proof.}
The structure of the matrix $J$ allows us to bound
\begin{equation}\begin{array}{lclcl}
\norm{ \M_{\epsilon}^{1} \mathcal{E}_0 }_{\mathbf{L}^2}
&\le &\epsilon \norm{ \mathcal{E}_0 }_{\mathbf{L}^2},
\qquad \qquad
\norm{ \M_{\epsilon}^{1,2} \partial_\xi \mathcal{E}_0 }_{\mathbf{L}^2}
&\le &\epsilon \norm{ \partial_\xi \mathcal{E}_0 }_{\mathbf{L}^2}.\end{array}
\end{equation}
The result hence follows from the inclusions
\begin{equation}
\overline{U}_{o;0}' \in \mathbf{H}^1_o,
\qquad \qquad
F_o(\overline{U}_{o;0})\in \mathbf{H}^1_o .
\end{equation}
The first of these can be obtained by differentiating
\sref{nagumolde:even}
and \sref{eq:mr:eq:fro:ovl:w:o:zero}. The second inclusion
follows from the fact that
$\overline{U}_{o;0}$ converges exponentially fast to its
limiting values, which are zeroes of $F_o$.
\qed\\

\begin{lemma}\label{theorem1bewijs5} Assume that (\asref{aannamesconstanten}{\text{H}}),
(\asref{aannamesconstanten3}{\text{H}}), (\asref{aannamespuls:even}{\text{H}}),
(\asref{aannamespuls:odd}{\text{H}}) and (\asref{extraaannamespuls}{\text{H}}) are satisfied.
Then there exists a constant $K_{c}>0$ in such a way that for each $0<\eta\leq \eta_*$,
each $\epsilon > 0$, each $\delta > 0$  and each triplet $(\Phi,\Phi^A,\Phi^B)\in\mathbf{X}_{\eta;\epsilon}^3$
we have the bounds
\begin{equation}
\begin{array}{lcl}
|c_\delta(\Phi_e)-c_0|&\leq &K_{c}\big[\delta \eta+\eta^2\big],\\[0.2cm]
|c_\delta(\Phi^A_e)-c_\delta(\Phi^B_e)|&\leq &K_{c}\big(\delta+\eta\big)\nrm{\Phi^A-\Phi^B}_{\mathbf{L}^2} .
\end{array}
\end{equation}
\end{lemma}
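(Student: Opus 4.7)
The plan is to estimate the defining formula \sref{defceps} term by term, leveraging Lemma \ref{theorem1bewijs4.5} for the nonlinear contributions together with the inequality $\bigl|1 + \langle \partial_\xi \Phi_e, \overline{\Phi}^{\mathrm{adj}}_{e;0}\rangle_{\mathbf{L}^2_e}\bigr|^{-1} \le 2$ that already appeared in the well-definedness lemma preceding Lemma \ref{theorem1bewijs1}. This reciprocal bound is valid on $\mathbf{X}_{\eta;\epsilon}$ since Cauchy--Schwarz together with the embedding $\|\partial_\xi \Phi_e\|_{\mathbf{L}^2_e} \le \|\Phi\|_{\mathbf{H}^1} \le \eta \le \eta_*$ forces $|\langle \partial_\xi \Phi_e, \overline{\Phi}^{\mathrm{adj}}_{e;0}\rangle_{\mathbf{L}^2_e}| \le 1/2$.

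For the first bound, I would apply the triangle inequality directly to \sref{defceps}. The $\delta$-term in the numerator is controlled by $\delta \eta \|\overline{\Phi}^{\mathrm{adj}}_{e;0}\|_{\mathbf{L}^2_e}$ via Cauchy--Schwarz, while Lemma \ref{theorem1bewijs4.5} bounds the nonlinear term by $K_{\mathcal{N}}\eta^2 \|\overline{\Phi}^{\mathrm{adj}}_{e;0}\|_{\mathbf{L}^2_e}$, noting that the even block of $\mathcal{M}_\epsilon^1$ is the identity so that the scaled bound reduces to an ordinary $\mathbf{L}^2_e$-estimate on the even component. Combining these yields $|c_\delta(\Phi_e) - c_0| \le 2\|\overline{\Phi}^{\mathrm{adj}}_{e;0}\|_{\mathbf{L}^2_e}\bigl(\delta\eta + K_{\mathcal{N}}\eta^2\bigr)$, which is of the required form.

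For the Lipschitz bound I would introduce the shorthand $A^{\#} := 1 + \langle \partial_\xi \Phi^{\#}_e, \overline{\Phi}^{\mathrm{adj}}_{e;0}\rangle_{\mathbf{L}^2_e}$, $D^{\#} := \delta \langle \Phi^{\#}_e, \overline{\Phi}^{\mathrm{adj}}_{e;0}\rangle_{\mathbf{L}^2_e}$, and $N^{\#} := \langle \mathcal{N}_e(\Phi^{\#}_e), \overline{\Phi}^{\mathrm{adj}}_{e;0}\rangle_{\mathbf{L}^2_e}$ for $\# \in \{A,B\}$, and exploit the algebraic identity
\begin{equation*}
c_\delta(\Phi^A_e) - c_\delta(\Phi^B_e) \;=\; \frac{(D^A - D^B) + (N^A - N^B)}{A^A} \;+\; \frac{A^B - A^A}{A^A A^B}\bigl(D^B + N^B\bigr).
\end{equation*}
The first piece is dominated by $C(\delta + \eta)\|\Phi^A - \Phi^B\|_{\mathbf{L}^2}$ via Cauchy--Schwarz for $D^A - D^B$ and the Lipschitz estimate in Lemma \ref{theorem1bewijs4.5} for $N^A - N^B$, both combined with $|A^A|^{-1} \le 2$. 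The factor $D^B + N^B$ in the second piece is of order $\eta(\delta + \eta)$ by the bound just established.

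The delicate step is the estimate on $|A^B - A^A| = \bigl|\langle \partial_\xi(\Phi^A_e - \Phi^B_e), \overline{\Phi}^{\mathrm{adj}}_{e;0}\rangle_{\mathbf{L}^2_e}\bigr|$: a naive application of Cauchy--Schwarz would produce $\|\partial_\xi(\Phi^A_e - \Phi^B_e)\|_{\mathbf{L}^2_e}$ on the right-hand side, which does not appear in the target inequality. I would resolve this by integrating by parts against $\overline{\Phi}^{\mathrm{adj}}_{e;0} \in \mathbf{H}^1_e$, obtaining $|A^B - A^A| \le \|\Phi^A_e - \Phi^B_e\|_{\mathbf{L}^2_e}\,\|\partial_\xi \overline{\Phi}^{\mathrm{adj}}_{e;0}\|_{\mathbf{L}^2_e}$. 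Since $|A^A A^B|^{-1} \le 4$ and $\eta \le \eta_*$, the second piece contributes at most $C(\delta + \eta)\|\Phi^A - \Phi^B\|_{\mathbf{L}^2}$ as well, completing the argument.
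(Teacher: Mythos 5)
Your argument is correct. The paper itself gives no explicit proof of this lemma, deferring instead to an external reference (Lemma 4.4 of the cited \texttt{HJHFHNINFRANGEFULL}), so there is no in-text proof to compare against; your proposal supplies the missing details in a self-contained way. The algebraic identity for $c_\delta(\Phi^A_e)-c_\delta(\Phi^B_e)$ is exact, the appeal to Lemma \ref{theorem1bewijs4.5} (equivalently, to Lemmas \ref{theorem1bewijs1} and \ref{theorem1bewijs3} directly, since the even block of $\M_\epsilon^1$ is the identity) is appropriate, and the reciprocal bound $|A^\#|^{-1}\le 2$ follows correctly from $\eta\le\eta_* = \big[2\nrm{\overline{\Phi}_{e;0}^{\mathrm{adj}}}_{\mathbf{L}^2_e}\big]^{-1}$. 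Most importantly, you have identified the one genuinely delicate point: a naive Cauchy--Schwarz on $\langle \partial_\xi(\Phi^A_e-\Phi^B_e),\overline{\Phi}_{e;0}^{\mathrm{adj}}\rangle_{\mathbf{L}^2_e}$ produces $\nrm{\partial_\xi(\Phi^A_e-\Phi^B_e)}_{\mathbf{L}^2_e}$, which the target estimate does not contain, and you correctly resolve it by integrating by parts against $\overline{\Phi}_{e;0}^{\mathrm{adj}}\in\mathbf{H}^1_e$ so that only $\nrm{\Phi^A-\Phi^B}_{\mathbf{L}^2}$ survives. The residual factor $\eta$ in the cross term $\eta(\delta+\eta)$ is then absorbed using $\eta\le\eta_*$, exactly as you state, and the final constant $K_c$ depends only on $\nrm{\overline{\Phi}_{e;0}^{\mathrm{adj}}}_{\mathbf{H}^1_e}$, $K_{\mathcal{N}}$ and $\eta_*$, uniformly in $\epsilon$ and $\delta$.
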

\textit{Proof.}
Since we only need to use regular $L^2$-norms for these estimates,
the proof of \cite[Lemma 4.4]{HJHFHNINFRANGEFULL} also applies here.
\qed\\

\begin{lemma}\label{lemma:afscR}
Assume that (\asref{aannamesconstanten}{\text{H}}),
(\asref{aannamesconstanten3}{\text{H}}),
(\asref{aannamespuls:even}{\text{H}}), (\asref{aannamespuls:odd}{\text{H}}) and
(\asref{extraaannamespuls}{\text{H}}) are satisfied. Then there exists a constant $K_{\mathcal{R}}>0$ in such a way that
for each $0<\eta\leq \eta_*$, each $0 < \epsilon <1$,
each $\delta > 0$  and each triplet $(\Phi,\Phi^A,\Phi^B)\in\mathbf{X}_{\eta;\epsilon}^3$
we have the bound
\begin{equation}
    \begin{array}{lcl}
    \nrm{\M_{\epsilon}^{1}\mathcal{R}(c_\delta(\Phi_e),\Phi)}_{\mathbf{L}^2}
      +\nrm{\M_{\epsilon}^{1,2}\partial_\xi \mathcal{R}(c_\delta(\Phi_e),\Phi)}_{\mathbf{L}^2}
      &\leq & K_{\mathcal{R}}[\delta\eta+\eta^2].
    \end{array}
\end{equation}
Writing
\begin{equation}
    \begin{array}{lcl}
        \Delta_{AB} \mathcal{R}&:=& \mathcal{R}(c_\delta(\Phi^A_e),\Phi^A)-\mathcal{R}(c_\delta(\Phi^B_e),\Phi^B),
    \end{array}
\end{equation}
we also have the bound
\begin{equation}
\label{eq:ex:bnds:on:delta:r}
    \begin{array}{lcl}
    \nrm{\M_{\epsilon}^1 \Delta_{AB} \mathcal{R}}_{\mathbf{L}^2}
      +\nrm{\M_{\epsilon}^{1,2}\partial_\xi \Delta_{AB}\mathcal{R}}_{\mathbf{L}^2}&\leq &
       K_{\mathcal{R}}\big(\delta+\eta)\nrm{\Phi^A-\Phi^B }_{\mathbf{L}^2}
     \\[0.2cm]
       & & \qquad
         + \eta K_{\mathcal{R}}( \eta + \delta) \nrm{\partial_\xi(\Phi^A - \Phi^B)}_{\mathbf{L^2}}
         \\[0.2cm]
       & & \qquad
         +  \eta K_{\mathcal{R}}( \eta + \delta) \nrm{\M_{\epsilon}^{1,2}\partial_\xi^2(\Phi^A - \Phi^B)}_{\mathbf{L^2}}
    \end{array}
\end{equation}
\end{lemma}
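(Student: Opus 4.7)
\textit{Proof strategy.}
The whole argument rests on the observation that $\mathcal{R}(c,\Phi) = (c_0 - c)\,\partial_\xi(\overline{U}_0 + \Phi)$ is a scalar multiple of a first derivative, so the stated bounds will reduce to combining the size/Lipschitz estimates for $c_\delta(\Phi_e) - c_0$ and $c_\delta(\Phi^A_e) - c_\delta(\Phi^B_e)$ furnished by Lemma \ref{theorem1bewijs5} with weighted Sobolev inclusions of $\overline{U}_0$ and $\Phi$. The only preliminary input required is $\overline{U}_0 \in \mathbf{H}^2$: for the even component this follows by bootstrapping from \sref{nagumolde:even} together with its exponential decay to zeros of $F_e$, while the odd components are then in $H^2$ via the explicit formula \sref{eq:mr:id:for:ovl:u:zero} and the linear ODE \sref{eq:mr:eq:fro:ovl:w:o:zero} (this is essentially the content used already in Lemma \ref{lemma:afschE0}). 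Since $\epsilon<1$ the weight matrices $\M_\epsilon^1,\M_\epsilon^{1,2}$ have operator norm at most one, so they may be dropped harmlessly on all $\overline{U}_0$-factors.

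For the first bound, I write $\partial_\xi\mathcal{R}(c_\delta(\Phi_e),\Phi) = (c_0 - c_\delta(\Phi_e))\,\partial_\xi^2(\overline{U}_0 + \Phi)$, pull out the scalar $|c_0 - c_\delta(\Phi_e)| \leq K_c(\delta\eta + \eta^2)$ from Lemma \ref{theorem1bewijs5}, and use
\begin{equation*}
\nrm{\M_\epsilon^1\partial_\xi(\overline{U}_0 + \Phi)}_{\mathbf{L}^2} + \nrm{\M_\epsilon^{1,2}\partial_\xi^2(\overline{U}_0 + \Phi)}_{\mathbf{L}^2} \;\leq\; \nrm{\overline{U}_0}_{\mathbf{H}^2} + \nrm{\Phi}_{\mathbf{H}^1} + \nrm{\M_\epsilon^{1,2}\partial_\xi^2 \Phi}_{\mathbf{L}^2} \;\leq\; C + 2\eta
\end{equation*}
to absorb the remaining factor into a single constant $K_{\mathcal{R}}$, using $\eta\le\eta_\ast$.

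For the difference bound, I use the telescoping identity
\begin{equation*}
\Delta_{AB}\mathcal{R} \;=\; \bigl(c_\delta(\Phi^B_e) - c_\delta(\Phi^A_e)\bigr)\,\partial_\xi(\overline{U}_0 + \Phi^A) \;+\; \bigl(c_0 - c_\delta(\Phi^B_e)\bigr)\,\partial_\xi(\Phi^A - \Phi^B),
\end{equation*}
and its analogue for $\partial_\xi \Delta_{AB}\mathcal{R}$ with each $\partial_\xi$ upgraded to $\partial_\xi^2$. Applied to the first summand, the Lipschitz estimate $|c_\delta(\Phi^A_e) - c_\delta(\Phi^B_e)| \leq K_c(\delta + \eta)\nrm{\Phi^A - \Phi^B}_{\mathbf{L}^2}$ multiplied by the bounded factor $C+2\eta$ displayed above contributes precisely the first term $(\delta + \eta)\nrm{\Phi^A - \Phi^B}_{\mathbf{L}^2}$ on the right-hand side of \sref{eq:ex:bnds:on:delta:r}. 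Applied to the second summand, the size bound $|c_0 - c_\delta(\Phi^B_e)| \leq K_c\eta(\delta + \eta)$ combined with $\nrm{\M_\epsilon^1 \partial_\xi(\Phi^A - \Phi^B)}_{\mathbf{L}^2} \leq \nrm{\partial_\xi(\Phi^A - \Phi^B)}_{\mathbf{L}^2}$ produces the second term, while its $\partial_\xi^2$-analogue paired with the $\M_\epsilon^{1,2}$-weight produces the third term $\eta(\eta+\delta)\nrm{\M_\epsilon^{1,2}\partial_\xi^2(\Phi^A - \Phi^B)}_{\mathbf{L}^2}$.

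There is no genuine analytic obstacle here; the proof is a bookkeeping exercise. The only delicate point is to keep the weight $\M_\epsilon^1$ (for $\mathcal{R}$) versus $\M_\epsilon^{1,2}$ (for $\partial_\xi\mathcal{R}$) properly aligned with the corresponding pieces of the $\mathbf{X}_\epsilon$-norm on $\Phi$ and $\Phi^A-\Phi^B$; all other factors are bounded using $\epsilon<1$ and $\eta\le\eta_\ast$.
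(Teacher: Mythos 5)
Your proposal is correct and follows essentially the same route as the paper: you use the same telescoping identity for $\Delta_{AB}\mathcal{R}$, invoke Lemma~\ref{theorem1bewijs5} for the size and Lipschitz bounds on $c_\delta$, and absorb the remaining factors using $\overline{U}_0\in\mathbf{H}^2$ together with the fact that the weight matrices $\M_\epsilon^1,\M_\epsilon^{1,2}$ are contractions for $\epsilon<1$. The only difference is cosmetic: you spell out the $\overline{U}_0\in\mathbf{H}^2$ regularity explicitly, where the paper takes $\nrm{\overline{U}_0'}_{\mathbf{L}^2}$ and $\nrm{\overline{U}_0''}_{\mathbf{L}^2}$ as finite without comment.
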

\textit{Proof.}
Using Lemma \ref{theorem1bewijs5} we immediately obtain the bound
\begin{equation}
    \begin{array}{lcl}
    \nrm{\M_{\epsilon}^{1}\mathcal{R}(c_\delta(\Phi_e),\Phi)}_{\mathbf{L}^2}&\leq & K_{c}\big[\delta \eta+\eta^2\big]
      \Big(\nrm{\M_{\epsilon}^{1}\partial_\xi\Phi}_{\mathbf{L}^2}+\nrm{\M_{\epsilon}^{1}\overline{U}_0'}_{\mathbf{L}^2}\Big)\\[0.2cm]
    &\leq & K_{c} \big[\delta \eta+\eta^2\big]
      \Big(  \eta + \nrm{\overline{U}_0'}_{\mathbf{L}^2}\Big) ,
    \end{array}
\end{equation}
together with
\begin{equation}
    \begin{array}{lcl}
    \nrm{\M_{\epsilon}^{1,2}\partial_\xi\mathcal{R}(c_\delta(\Phi_e),\Phi)}_{\mathbf{L}^2}&\leq &
      K_{c}\big[\delta \eta+\eta^2\big]\Big(\nrm{\M_{\epsilon}^{1,2}\partial_\xi^2\Phi}_{\mathbf{L}^2}+\nrm{\M_{\epsilon}^{1,2}\overline{U}_0''}_{\mathbf{L}^2}\Big)\\[0.2cm]
    &\leq &
      K_{c}\big[\delta \eta+\eta^2\big]\Big(
        \eta +\nrm{\overline{U}_0''}_{\mathbf{L}^2}\Big) .
        \\[0.2cm]
    \end{array}
\end{equation}
In addition, we may compute
\begin{equation}
    \begin{array}{lcl}
        \Delta_{AB} \mathcal{R}
        &=&\big(c_\delta(\Phi^B_e)-c_\delta(\Phi^A_e)\big)\partial_\xi\big( \overline{U}_0 + \Phi^A \big)\\[0.2cm]
         && \qquad
            +\big(c_0 - c_\delta(\Phi^B_e)\big)\partial_\xi(\Phi^A-\Phi^B),
    \end{array}
\end{equation}
which allows us to estimate
\begin{equation}
    \begin{array}{lcl}
          \nrm{\M_{\epsilon}^1\Delta_{AB}\mathcal{R}}_{\mathbf{L}^2}&\leq &
          K_{c}\big(\delta+\eta\big)\nrm{\Phi^A-\Phi^B}_{\mathbf{L}^2}
            \big(\nrm{\M_{\epsilon}^1\overline{U}_0'}_{\mathbf{L}^2}+\nrm{\M_{\epsilon}^1\partial_\xi\Phi^A}_{\mathbf{L}^2}\big)\\[0.2cm]
          &&\qquad +K_{c}\big[\delta \eta+\eta^2\big]\nrm{\M_{\epsilon}^1\partial_\xi(\Phi^A-\Phi^B)}_{\mathbf{L}^2}\\[0.2cm]
          &\leq &
            K_{c}\big(\delta+\eta\big)\nrm{\Phi^A-\Phi^B}_{\mathbf{L}^2}
              \big( \nrm{\overline{U}_0'}_{\mathbf{L}^2} + \eta \big)
          \\[0.2cm]
           & & \qquad
             + K_{c} \big[\delta \eta+\eta^2\big]\nrm{\partial_\xi(\Phi^A-\Phi^B)}_{\mathbf{L}^2} ,
    \end{array}
\end{equation}
together with
\begin{equation}
    \begin{array}{lcl}
          \nrm{\M_{\epsilon}^{1,2}\partial_\xi \Delta_{AB}\mathcal{R}}_{\mathbf{L}^2}
          &\leq &
             K_{c}\big(\delta+\eta\big)\nrm{\Phi^A-\Phi^B}_{\mathbf{L}^2}
            \big(\nrm{\M_{\epsilon}^{1,2}\overline{U}_0''}_{\mathbf{L}^2}+\nrm{\M_{\epsilon}^{1,2}\partial_\xi\Phi^A}_{\mathbf{L}^2}\big)\\[0.2cm]
          &&\qquad +K_{c}\big[\delta \eta+\eta^2\big]\nrm{\M_{\epsilon}^{1,2}\partial_\xi^2(\Phi^A-\Phi^B)}_{\mathbf{L}^2}
\\[0.2cm]
 &\leq &
            K_{c}\big(\delta+\eta\big)\nrm{\Phi^A-\Phi^B}_{\mathbf{L}^2}
              \big( \nrm{\overline{U}_0''}_{\mathbf{L}^2} + \eta \big)
          \\[0.2cm]
           & & \qquad
             + K_{c} \big[\delta \eta+\eta^2\big]\nrm{\M_{\epsilon}^{1,2}\partial_\xi(\Phi^A-\Phi^B)}_{\mathbf{L}^2}.
\\[0.2cm]
    \end{array}
\end{equation}
These terms can all be absorbed into \sref{eq:ex:bnds:on:delta:r}.
\qed\\

\textit{Proof of Theorem \ref{maintheorem}.}
Using Lemma's \ref{theorem1bewijs4.5}, \ref{lemma:afschE0} and \ref{lemma:afscR},
together with the decomposition (\ref{opsplitsennonlinearpart}) and the estimates
(\ref{Tbound1})-(\ref{Tbound2}), we find that there exists a constant $K_{T}>0$
for which the bounds
\begin{equation}
\begin{array}{lcl}
\norm{T_{\epsilon,\delta}(\Phi)}_{\mathbf{X}_{\epsilon} }&\leq & K_{T}\Big[\delta\eta+\eta^2+\epsilon\Big],\\[0.2cm]
\norm{T_{\epsilon,\delta}(\Phi^A) - T_{\epsilon,\delta}(\Phi^B)}_{\mathbf{X}_{\epsilon} }&\leq &K_{T}\Big[\delta+\eta\Big]\nrm{\Phi^A-\Phi^B}_{\mathbf{X}_{\epsilon}}
\end{array}
\end{equation}
hold for any $\eta\leq\eta_*$, any $0<\epsilon<\epsilon_0(\delta)$ and any triplet
$(\Phi, \Phi^A, \Phi^B) \in \mathbf{X}_{\eta;\epsilon}^3$. As such, we fix
\begin{equation}
\begin{array}{lcl}
\delta&=&\frac{1}{3K_{T}},\enskip \eta=\min\{\eta_*,\frac{1}{3K_{T}}\}.
\end{array}
\end{equation}
Finally, we select a small positive $\epsilon_*$ such that $\epsilon_*\leq \epsilon_0(\delta)$ and $\epsilon_*\leq\frac{1}{3K_{T}}\eta$.
We conclude that for each $0<\epsilon\leq\epsilon_*$, $T$ maps $\mathbf{X}_{\eta;\epsilon}$ into itself and is a contraction.
This completes the proof.\qed

\section{Stability of travelling waves}\label{sectionstability}
Introducing the family
\begin{equation}\begin{array}{lcl}
\big(\tilde{U}_{\epsilon}, \tilde{c}_{\epsilon} \big)
 &=& \big( \overline{U}_{\epsilon}, c_{\epsilon} \big),\end{array}
\end{equation}
which satisfies (\asref{familyassumption}{\text{h}})
on account of Theorem \ref{maintheorem},
we see that the theory developed in
{\S}\ref{singularoperator} applies
to the operators
\begin{equation}
\overline{\L}_{\epsilon, \lambda}: \mathbf{H}^1 \to \mathbf{L}^2
\end{equation}
that act as
\begin{equation}\begin{array}{lcl}
\overline{\L}_{\epsilon,\lambda}
&=& c_\epsilon\frac{d}{d \xi}
- \mathcal{M}^1_{1/\epsilon^2} J_{\mathrm{mix}}
- DF (\overline{U}_{\epsilon} ) + \lambda .\end{array}
\end{equation}
We emphasize that these operators
are associated to the linearization of the travelling
wave system \sref{nieuwetravellingwaveeq}
around the wave solutions $(\overline{U}_{\epsilon}, c_{\epsilon})$.
For convenience, we also introduce the shorthand
\begin{equation}
\label{eq:ss:def:ovl:L}\begin{array}{lclcl}
\overline{\L}_{\epsilon}
&= &\overline{\L}_{\epsilon,0}
&=& c_\epsilon\frac{d}{d \xi}
- \mathcal{M}^1_{1/\epsilon^2} J_{\mathrm{mix}}
- DF (\overline{U}_{\epsilon} ) .\end{array}
\end{equation}
We remark that the spectrum of $\overline{\L}_{\epsilon}$
is $2 \pi i c_{\epsilon}$-periodic on account of the identity
\begin{equation}\begin{array}{lcl}
\big(\overline{\L}_{\epsilon} + \lambda \big) e^{2 \pi i \cdot }
&= &e^{2 \pi i \cdot} \big(\overline{\L}_{\epsilon} + \lambda + 2 \pi i c_{\epsilon} \big).\end{array}
\end{equation}
As a final preparation, we note that there exists a constant $\overline{K}_F > 0$ for which the bound
\begin{equation}
\label{eq:ss:kf:def}\begin{array}{lcl}
\nrm{DF_o(\overline{U}_{o;\epsilon})}_{\infty} + \nrm{D^2F_o(\overline{U}_{o;\epsilon})}_{\infty}
+ \nrm{DF_e(\overline{U}_{e;\epsilon})}_{\infty} + \nrm{D^2F_e(\overline{U}_{e;\epsilon})}_{\infty} &\leq &
\overline{K}_F\end{array}
\end{equation}
holds for all $0 < \epsilon < \epsilon_*$.\\

Our main task here is to reverse the parameter dependency
used in {\S}\ref{singularoperator}. In particular,
for a fixed small value of $\epsilon > 0$ we study
the behaviour of the map
$\lambda \mapsto \overline{\L}_{\epsilon, \lambda}$.
This allows us to obtain the main result of this section,
which lifts the spectral stability assumptions (HS1) and (HS2)
to the full system \sref{nieuwetravellingwaveeq}. This
can subsequently be turned into a nonlinear stability result
by applying the theory developed in \cite{HJHSTBFHN}.

\begin{proposition}\label{maintheorem2}
Assume that (\asref{aannamesconstanten}{\text{H}}), (\asref{aannamesconstanten3}{\text{H}}),
(\asref{aannamespuls:even}{\text{H}}), (\asref{aannamespuls:odd}{\text{H}}),
(\asref{extraaannamespuls}{\text{H}}) and
(\asref{extraextraaannamespuls}{\text{H}}) are satisfied.
Then there exists a constant $\epsilon_{**}>0$ so that for each $0<\epsilon<\epsilon_{**}$ and each
$\lambda\in\C \setminus  2 \pi i c_{\epsilon} \mathbb{Z} $
with $\Re\lambda\geq-\lambda_*$, the operator $\OL_{\epsilon,\lambda}$ is invertible.
In addition, we have
\begin{equation}\begin{array}{lcl}
\mathrm{Ker} \big( \overline{\L}_{\epsilon, 0} \big) &=& \mathrm{span}\big( \overline{U}_{\epsilon}' \big)\end{array}
\end{equation}
together with $\overline{U}_{\epsilon}' \notin \mathrm{Range} \big( \overline{\L}_{\epsilon,0} \big)$.
\end{proposition}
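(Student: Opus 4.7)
The plan is to apply the spectral convergence framework of Section \ref{singularoperator} to the family $(\tilde U_\epsilon, \tilde c_\epsilon) = (\overline U_\epsilon, c_\epsilon)$, which satisfies (h\ref{familyassumption}) by virtue of Theorem \ref{maintheorem}, and to exploit the $2\pi i c_\epsilon$-periodicity of the spectrum of $\overline{\L}_\epsilon$ to reduce the unbounded half-strip $\{\Re \lambda \geq -\lambda_*\}$ to a compact region to which Proposition \ref{theorem4equivalentcompact} applies directly.

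For the invertibility claim, I would fix a small radius $r > 0$ and consider the compact set
\begin{equation}
M \;=\; \{\lambda\in\C:\ |\Im\lambda|\le 4\pi|c_0|,\ \Re\lambda\ge-\lambda_*,\ \mathrm{dist}(\lambda,2\pi i c_0\Z)\ge r\},
\end{equation}
choosing $r$ and $\lambda_*$ small enough that $M$ satisfies (h$M_{\lambda_*}$). Proposition \ref{theorem4equivalentcompact} then yields an $\epsilon_M>0$ such that $\overline{\L}_{\epsilon,\lambda}$ is invertible for all $\lambda\in M$ and $0<\epsilon<\epsilon_M$. Since $c_\epsilon\to c_0$, for $\epsilon$ small enough the periodicity $e^{2\pi i\xi}(\overline{\L}_{\epsilon,\lambda}+2\pi i c_\epsilon)=(\overline{\L}_{\epsilon,\lambda+2\pi i c_\epsilon})e^{2\pi i\xi}$ transfers this invertibility to the entire half-strip outside of small balls around the points of $2\pi i c_\epsilon\Z$.

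To analyze the structure near $\lambda = 0$ (and hence near each of its periodic translates), I would first differentiate the travelling wave system \sref{nieuwetravellingwaveeq} in $\xi$ to obtain $\overline{\L}_\epsilon \overline U_\epsilon' = 0$, placing $\overline U_\epsilon' \in \ker(\overline{\L}_{\epsilon,0})$. To show this exhausts the kernel, I would replay the spectral convergence argument used in Lemmas \ref{lemma6bewijs1}--\ref{lemma6bewijs4} on a putative sequence of kernel elements $\Phi_{\epsilon_j}$ normalized by $\|\mathcal M_{\epsilon_j}^{1,2}\Phi_{\epsilon_j}\|_{\mathbf H^1}=1$; the odd component equation $2\mathcal D\phi_{o,j}-\mathcal D S_1\phi_{e,j}=O(\epsilon_j^2)$ forces the weak limit to satisfy $\L_{\diamond,0}\Phi_\diamond = 0$, whose even component by (HS1) lies in $\mathrm{span}(\overline U_{e;0}')$ and whose odd component is then determined by Lemma \ref{eigenschappen2ecompL0}. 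Combined with the index-zero conclusion of Lemma \ref{Lepsfredholm}, this forces $\dim\ker(\overline{\L}_{\epsilon,0})=1$.

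For algebraic simplicity $\overline U_\epsilon' \notin \mathrm{Range}(\overline{\L}_{\epsilon,0})$, I would construct an adjoint null vector $\overline\Phi_\epsilon^{\mathrm{adj}}\in\ker(\overline{\L}_\epsilon^{\mathrm{adj}})$ via the Fredholm index-zero conclusion and repeat the spectral convergence argument for the adjoint; this yields a weak limit that is a nontrivial multiple of $(0,\overline\Phi_{e;0}^{\mathrm{adj}})$, so continuity of the pairing forces $\ip{\overline U_\epsilon',\overline\Phi_\epsilon^{\mathrm{adj}}}_{\mathbf L^2}\to \ip{\overline U_{e;0}',\overline\Phi_{e;0}^{\mathrm{adj}}}_{\mathbf L^2_e}=1\neq 0$, which is the required non-vanishing. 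To rule out further spectrum in a small punctured ball around each point of $2\pi i c_\epsilon\Z$, I would run an analytic Fredholm / Lyapunov--Schmidt reduction: the $1\times 1$ bifurcation equation has leading-order term $\lambda\ip{\overline U_\epsilon',\overline\Phi_\epsilon^{\mathrm{adj}}}_{\mathbf L^2}+O(\lambda^2)$ by the simplicity just established, which has $\lambda=0$ as its only root nearby. The main obstacle will be the adjoint step: because the odd components are only controlled in the weighted norms $\|\mathcal M_\epsilon^{1,2}\cdot\|$, the adjoint convergence must be formulated in carefully rescaled dual spaces rather than by naive duality, and one must check that the dominant contribution to the limiting pairing comes from the even component where the weights act trivially.
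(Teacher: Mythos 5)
Your overall plan — apply the spectral convergence framework to $(\overline U_\epsilon, c_\epsilon)$, use periodicity to reduce to a strip, handle the region away from $2\pi i c_\epsilon\Z$ via Proposition \ref{theorem4equivalentcompact}, and analyze $\lambda=0$ via a simplicity argument — is broadly aligned with the paper's strategy. However there are two genuine gaps.

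First, your set
\begin{equation}
M = \{\lambda\in\C:\ |\Im\lambda|\le 4\pi|c_0|,\ \Re\lambda\ge-\lambda_*,\ \mathrm{dist}(\lambda,2\pi i c_0\Z)\ge r\}
\end{equation}
is unbounded in the direction $\Re\lambda\to+\infty$, hence not compact, so the hypothesis (h$M_{\lambda_0}$) is violated and Proposition \ref{theorem4equivalentcompact} does not apply to it. The paper's proof partitions the half-strip into three pieces: a punctured neighborhood of the origin handled by a quasi-inverse perturbation argument (Lemma \ref{lemmaspectrumklein}); a region $\Re\lambda\ge\lambda_{II}$ excluded by a direct energy estimate using Lemma \ref{lemma6bewijsbound1} and the crude bound $\|DF(\overline U_\epsilon)\|_\infty\le K_F$ (Lemma \ref{spectrumgroot}); and the genuinely compact remainder to which Proposition \ref{theorem4equivalentcompact} applies. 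You need the analogue of the large-$\Re\lambda$ estimate, which is elementary but cannot be omitted.

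Second, and more substantively, your treatment of $\overline U_\epsilon'\notin\Range(\overline{\L}_{\epsilon,0})$ and of the punctured ball around $0$ relies on constructing an adjoint null vector $\overline\Phi_\epsilon^{\mathrm{adj}}$ and running the spectral convergence argument on $\overline{\L}_\epsilon^{\mathrm{adj}}$. This is precisely what the paper goes out of its way to avoid, and the obstacle you flag is real and unresolved: transposing $\tilde{\mathcal L}_{\epsilon,\lambda}$ moves the $\epsilon^{-2} J_\D S_1$ block from the upper-right to the lower-left position, destroying the structure that makes the first row of the original operator solvable for $\phi_o$ after multiplying by $\epsilon^2$. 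A "rescaled dual space" is not a fix one can assert exists without constructing it. The paper's route (Lemmas \ref{prop3.2bdf}--\ref{0issimpleeigenvalue}) instead builds a uniformly bounded quasi-inverse $\overline{\mathcal L}_\epsilon^{\mathrm{qinv}}$ and a scalar functional $\overline\gamma_\epsilon$ for the primal operator alone, proves the uniform lower bound $|\overline\gamma_\epsilon\overline U_\epsilon'|\ge\gamma_*>0$ (Lemma \ref{unifboundgamma}), and deduces both $\overline U_\epsilon'\notin\Range(\overline{\L}_\epsilon)$ and the injectivity of $\overline{\L}_{\epsilon,\lambda}$ for small $|\lambda|\ne 0$ from this single estimate, with no reference to the adjoint or its convergence. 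Your kernel argument via weak limits of normalized kernel elements and index-zero Fredholm theory is in the right spirit, but you should replace the adjoint steps with a quasi-inverse construction to close the argument.
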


\noindent\textit{Proof of Theorem \ref{nonlinearstability}.}
For $j\in\Z$ we introduce the new variables
\begin{equation}\begin{array}{lcl}
\big( u_{j;o}, w_{j;o},   u_{j;e}, w_{j;e} \big)
 &=& \big(u_{2j + 1} , w_{2j + 1}, u_{2j} , w_{2j} \big),\end{array}
\end{equation}
which allows us to reformulate the
2-periodic system (\ref{ditishetprobleem})
as the equivalent $2(n+k)$-component  system
\begin{equation}
 \label{eq:ss:alternatesystem}
    \begin{array}{lcl}
    \dot{u}_{j;o}(t)&=&
  \frac{1}{\epsilon^2}\D\big[u_{j+1;e}(t)+u_{j;e}(t)-2u_{j;o}(t)\big]
 +f_o\big(u_{j;o}(t),w_{j;o}(t)\big),
 \\[0.2cm]
\dot{u}_{j;o}(t)&=&g_o\big(u_{j;o}(t),w_{j;o}(t)\big),
\\[0.2cm]
    \dot{u}_{j;e}(t)&=&
  \D\big[u_{j;o}(t)+u_{j-1;o}(t)-2u_{j;e}(t)\big]
 +f_e\big(u_{j;e}(t),w_{j;e}(t)\big),
 \\[0.2cm]
\dot{w}_{j;e}(t)&=&g_e\big(u_{j;e}(t),w_{j;e}(t)\big) ,
    \end{array}
\end{equation}
which is spatially homogeneous.\\

On account of Theorem \ref{maintheorem} and Proposition \ref{maintheorem2}, it is clear that
\sref{eq:ss:alternatesystem} satisfies the conditions (HV), (HS1)-(HS3) from \cite{HJHSTBFHN}.
An application of \cite[Proposition 2.1]{HJHSTBFHN} immediately yields the desired result.
\qed\\

\subsection{The operator $\overline{\L}_{\epsilon}$}\label{sectionfredholm}
\label{sec:stb:zero}

Observe first that $\overline{\L}_{\epsilon}$ is a Fredholm operator with index zero on account of
Lemma \ref{Lepsfredholm}.
Our goal in this subsection is to establish the following characterization
of the kernel and range of
this operator. 
We note that item (ii)
implies that the zero eigenvalue of $\OL_{\epsilon}$ is simple.
\begin{proposition}\label{samenvattingh5} Assume that (\asref{aannamesconstanten}{\text{H}}),
(\asref{aannamesconstanten3}{\text{H}}), (\asref{aannamespuls:even}{\text{H}}), (\asref{aannamespuls:odd}{\text{H}}),
(\asref{extraaannamespuls}{\text{H}}) and (\asref{extraextraaannamespuls}{\text{H}}) are satisfied.
Then there exists 
a constant $\epsilon_{**}>0$,
so that the following properties hold
for all $0<\epsilon<\epsilon_{**}$.
\begin{enumerate}[label=(\roman*)]
\item We have the identity
\begin{equation}\label{kernvanlh}\begin{array}{lcl}\ker (\OL_\epsilon)&=&\span\{\overline{U}_{\epsilon}' \}. \\[0.2cm]
\end{array}\end{equation}
\item 
We have $ \overline{U}_{\epsilon}'\notin\Range(\OL_\epsilon)$.
\item The function $\xi \mapsto \overline{U}_{\epsilon}'(\xi)$
together with its derivative
decays exponentially fast as $\abs{\xi} \to \pm \infty$.

\end{enumerate}
\end{proposition}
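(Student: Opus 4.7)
The plan is to assemble three ingredients drawing on the spectral convergence machinery of {\S}\ref{singularoperator}. Differentiating the travelling wave system \sref{nieuwetravellingwaveeq} with respect to $\xi$ — legitimate once the $\mathbf{H}^2$-regularity of $\overline{U}_\epsilon$ is recovered from the equation by a bootstrap — immediately gives $\overline{\L}_\epsilon \overline{U}_\epsilon' = 0$, so $\mathrm{span}\{\overline{U}_\epsilon'\} \subseteq \ker(\overline{\L}_\epsilon)$ once the decay needed to place $\overline{U}_\epsilon'$ in $\mathbf{H}^1$ is in hand. The exponential decay in (iii) is a standard consequence of the argument behind Lemma \ref{Lepsfredholm}: the characteristic function $\Delta_{L_{\rho;\epsilon,0}}(iy)$ has no imaginary-axis roots for $\rho \in \{0,1\}$, so the asymptotic constant-coefficient MFDEs at $\xi = \pm\infty$ admit exponential dichotomies. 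Roughness of these dichotomies forces any $\mathbf{H}^1$-solution of $\overline{\L}_\epsilon \Phi = 0$ to decay at exponential rate, and differentiating the identity $\overline{\L}_\epsilon \overline{U}_\epsilon' = 0$ once more transfers this decay to $\overline{U}_\epsilon''$.

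For (i) and (ii) I would reverse the roles of $\epsilon$ and $\lambda$ used in {\S}\ref{singularoperator}. Fix $r > 0$ small enough that $r < 2\pi|c_0|$ and $r \leq \lambda_0$ with $\lambda_0$ as in Proposition \ref{theorem4equivalentcompact}, and set $M = \partial B_r(0)$. This $M$ satisfies (h$M_{\lambda_0}$), so Proposition \ref{theorem4equivalentcompact} supplies a uniform constant $\epsilon_M > 0$ such that $\overline{\L}_{\epsilon, \lambda}$ is invertible for every $\lambda \in M$ and $0 < \epsilon < \epsilon_M$. The Riesz projection
\begin{equation}
P_\epsilon \;=\; \frac{1}{2\pi i}\oint_M \overline{\L}_{\epsilon, \lambda}^{-1}\, d\lambda
\end{equation}
is then well-defined and has finite rank equal to the algebraic multiplicity of $0$ as an eigenvalue of $-\overline{\L}_\epsilon$. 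The aim is to show $\mathrm{rank}(P_\epsilon) = 1$ for small $\epsilon$. In the limit $\epsilon \downarrow 0$, Proposition \ref{eigenschappenL0:b} together with (HS1) pins down the corresponding generalised kernel: $\overline{L}_o$ is invertible by Lemma \ref{eigenschappen2ecompL0}, while $\overline{L}_e$ contributes exactly $\mathrm{span}\{\overline{U}_{e;0}'\}$ with no Jordan chain thanks to (HS1) and the normalisation $\ip{\overline{U}_{e;0}',\overline{\Phi}_{e;0}^{\mathrm{adj}}}_{\mathbf{L}^2_e} = 1$. Combined with the inclusion $\overline{U}_\epsilon' \in \ker(\overline{\L}_\epsilon)$, $\mathrm{rank}(P_\epsilon) = 1$ delivers both (i) and (ii), the latter because a Jordan chain at $0$ would force $\mathrm{rank}(P_\epsilon) \geq 2$.

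The main obstacle is the passage from the scaled resolvent bound of Proposition \ref{theorem4equivalentcompact} — which carries the $\mathcal{M}_\epsilon^{1,2}$ weight and so does not directly supply a norm-convergent family of projections — to a quantitative comparison of $\mathrm{rank}(P_\epsilon)$ with the limiting rank. I would argue by contradiction, following the template of Proposition \ref{lemma6equivalent}: suppose $\mathrm{rank}(P_{\epsilon_j}) \geq 2$ along some $\epsilon_j \downarrow 0$, take weak $\mathbf{H}^1$-limits of suitably normalised pairs $\mathcal{M}_{\epsilon_j}^{1,2} \Phi^{(1)}_j$, $\mathcal{M}_{\epsilon_j}^{1,2} \Phi^{(2)}_j$ extracted from the generalised kernel, and reuse the coercivity bookkeeping of {\S}\ref{singularoperator} to guarantee that the limits remain linearly independent — contradicting the one-dimensional limiting generalised kernel. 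The core compactness mechanism is thus already in place; the remaining work is to thread it carefully through the Riesz-projection formula.
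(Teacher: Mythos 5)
Your proposal is a genuinely different route from the paper, and you have correctly identified where it gets stuck. The paper avoids Riesz projections entirely: it constructs a quasi-inverse pair $(\overline{\gamma}_\epsilon,\overline{\mathcal{L}}_\epsilon^{\mathrm{qinv}})$ (Lemma \ref{prop3.2bdf}) with $\epsilon$-uniform bounds, characterizes the kernel by a Liapunov--Schmidt reduction (Lemma \ref{FredholmeigenschappenLh}), and rules out a Jordan block at $0$ by proving the uniform lower bound $\abs{\overline{\gamma}_\epsilon\overline{U}_\epsilon'}\ge\gamma_*>0$ (Lemma \ref{unifboundgamma}). The key trick in that last step is the exact identity $(\OL_\epsilon+\delta)^{-1}\overline{U}_\epsilon'=\delta^{-1}\overline{U}_\epsilon'$, which produces a $\delta^{-1}$ blow-up in the numerator of the explicit formula for $\overline{\gamma}_\epsilon$ that is manifestly uniform in $\epsilon$, while Corollary \ref{theorem4equivalentspecific2} bounds the denominator by $C(1+\delta^{-1})$. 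This is what lets the paper fix a single small $\delta$ independent of $\epsilon$ and conclude. Item (iii) is the same in both approaches --- a citation to \cite{MPA} via the hyperbolicity established in Lemma \ref{Lepsfredholm}.

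The gap in your argument is exactly the one you flag, and it is not easily patched. The rank of a Riesz projection is stable under perturbation only when one controls $\nrm{P_\epsilon - P_0}$ in operator norm, but the bound in Proposition \ref{theorem4equivalentcompact} carries the weight $\mathcal{M}_\epsilon^{1,2}$ on both sides and therefore does \emph{not} yield a uniform bound on $\nrm{\OL_{\epsilon,\lambda}^{-1}}_{\mathcal{L}(\mathbf{L}^2,\mathbf{H}^1)}$, let alone convergence of $P_\epsilon$ to a limiting projection. Your fallback --- extracting weak limits of a normalized pair $\mathcal{M}_{\epsilon_j}^{1,2}\Phi_j^{(1)},\mathcal{M}_{\epsilon_j}^{1,2}\Phi_j^{(2)}$ from a putative two-dimensional generalized kernel --- runs into the usual problem that weak convergence does not preserve linear independence: one of the two weighted sequences can collapse to $0$ while remaining normalized (concentration at infinity), or both can converge to scalar multiples of the same limit. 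Moreover Lemma \ref{lemma6bewijs1}(iii) forces the $\phi_o$-component of any such weak limit to vanish, so both limits live in the same proper subspace, which makes the independence even harder to protect. To close the gap you would need a quantitative lower bound separating the two normalized vectors that survives the weak limit --- which is, in effect, what the paper's $\abs{\overline{\gamma}_\epsilon\overline{U}_\epsilon'}\ge\gamma_*$ estimate accomplishes, by a more hands-on route.
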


At times, our discussion closely follows the lines of \cite[\S 4-5]{HJHFHNINFRANGE}.
The novel ingredient here however is that we do not need to modify the spectral convergence
argument from \S\ref{singularoperator} to ensure that it also applies to the adjoint operator.
Indeed, we show that all the essential properties can be obtained
from the following quasi-inverse for $\OL_\epsilon$,
which can be constructed by mimicking the approach of \cite[Prop. 3.2]{HJHBDF}.

\begin{lemma}\label{prop3.2bdf} Assume that (\asref{aannamesconstanten}{\text{H}}), (\asref{aannamesconstanten3}{\text{H}}),
(\asref{aannamespuls:even}{\text{H}}), (\asref{aannamespuls:odd}{\text{H}}), (\asref{extraaannamespuls}{\text{H}}) and
(\asref{extraextraaannamespuls}{\text{H}}) are satisfied
and pick a sufficiently small constant $\epsilon_{**} > 0$.
Then for every $0 < \epsilon < \epsilon_{**}$
there exist linear maps
\begin{equation}\begin{array}{lcl}
\overline{\gamma}_\epsilon& : & \mathbf{L}^2 \rightarrow \R\\[0.2cm]
\overline{\mathcal{L}}_\epsilon^{\mathrm{qinv}}& : &\mathbf{L}^2 \rightarrow \mathbf{H}^1,
\end{array}\end{equation}
so that for all $\Theta\in \mathbf{L}^2$ the pair
\begin{equation}\begin{array}{lcl}(\gamma,\Psi)&=&
(\overline{\gamma}_\epsilon\Theta,\overline{\mathcal{L}}_\epsilon^{\mathrm{qinv}}\Theta)\end{array}\end{equation}
is the unique solution to the problem
\begin{equation}\label{bdf1}\begin{array}{lcl}\OL_\epsilon\Psi&=&\Theta+\gamma \overline{U}_0' \end{array}\end{equation}
that satisfies the normalisation condition
\begin{equation}\label{bdf2}\begin{array}{lcl}\ip{(0,\Phi_{e;0}^{\mathrm{adj}}),\Psi}_{\mathbf{L}^2}&=&0.
\end{array}\end{equation}
In addition, there exists $C>0$ such that for all $0<\epsilon<\epsilon_{**}$ and all $\Theta\in \mathbf{L}^2$ we have the bound
\begin{equation}
\label{eq:st:qinv:unif:bnd}
\begin{array}{lcl}
|\overline{\gamma}_\epsilon\Theta|
+\nrm{\M_{\epsilon}^{1}(\overline{\mathcal{L}}_\epsilon^{\mathrm{qinv}}\Theta)' }_{\mathbf{L}^2}
+ \nrm{\overline{\mathcal{L}}_\epsilon^{\mathrm{qinv}}\Theta}_{\mathbf{L}^2}
&\leq &C\nrm{\M_{\epsilon}^{1}\Theta}_{\mathbf{L}^2}.\end{array}\end{equation}
\end{lemma}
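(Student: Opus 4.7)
The plan is to treat Lemma \ref{prop3.2bdf} as a Fredholm alternative statement for the augmented operator
\[
\mathcal{A}_\epsilon:\mathbb{R}\times\mathbf{H}^1\to\mathbf{L}^2\times\mathbb{R},\qquad
\mathcal{A}_\epsilon(\gamma,\Psi)=\bigl(\overline{\mathcal{L}}_\epsilon\Psi-\gamma\overline{U}_{0}',\,\langle(0,\overline{\Phi}_{e;0}^{\mathrm{adj}}),\Psi\rangle_{\mathbf{L}^2}\bigr),
\]
so that solving \eqref{bdf1}--\eqref{bdf2} reduces to inverting $\mathcal{A}_\epsilon$ at $(\Theta,0)$. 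I would follow the template of \cite[Prop.~3.2]{HJHBDF}, with main input Corollary \ref{theorem4equivalentspecific2} applied to the family $(\tilde U_\epsilon,\tilde c_\epsilon)=(\overline U_\epsilon,c_\epsilon)$, which satisfies (\asref{familyassumption}{\text{h}}) on account of Theorem \ref{maintheorem}.

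For existence, given $\Theta\in\mathbf{L}^2$ I would set $\gamma:=-\langle\Theta,(0,\overline{\Phi}_{e;0}^{\mathrm{adj}})\rangle_{\mathbf{L}^2}$. The normalization $\langle\overline{U}_{e;0}',\overline{\Phi}_{e;0}^{\mathrm{adj}}\rangle_{\mathbf{L}^2_e}=1$ from \S\ref{sec:lim:op} then forces the source $\Xi:=\Theta+\gamma\overline U_{0}'$ to pair trivially with $(0,\overline{\Phi}_{e;0}^{\mathrm{adj}})$, so the singular $1/\delta$ contribution in Corollary \ref{theorem4equivalentspecific2} vanishes and
\[
\|\mathcal{M}_\epsilon^1\Psi_\delta'\|_{\mathbf{L}^2}+\|\Psi_\delta\|_{\mathbf{L}^2}\le C\|\mathcal{M}_\epsilon^1\Theta\|_{\mathbf{L}^2},\qquad\Psi_\delta:=(\overline{\mathcal{L}}_\epsilon+\delta)^{-1}\Xi,
\]
uniformly in $\delta$. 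Extracting a weakly convergent subsequence $\Psi_{\delta_k}\rightharpoonup\Psi^\star$ in $\mathbf{H}^1$ and using $\overline{\mathcal{L}}_\epsilon\Psi_\delta=\Xi-\delta\Psi_\delta$ produces a $\Psi^\star$ with $\overline{\mathcal{L}}_\epsilon\Psi^\star=\Xi$. Since $\overline{\mathcal{L}}_\epsilon\overline U_\epsilon'=0$ (by differentiating \eqref{nieuwetravellingwaveeq}), subtracting the multiple $\alpha\overline U_\epsilon'$ with $\alpha=\langle(0,\overline{\Phi}_{e;0}^{\mathrm{adj}}),\Psi^\star\rangle/\langle(0,\overline{\Phi}_{e;0}^{\mathrm{adj}}),\overline U_\epsilon'\rangle$ yields $\Psi$ also satisfying \eqref{bdf2}; the denominator tends to $1$ by the $\mathbf{H}^1$-convergence of $\overline U_\epsilon\to\overline U_{0}$ from Theorem \ref{maintheorem}, hence is nonzero for small $\epsilon$. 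The bound \eqref{eq:st:qinv:unif:bnd} follows by combining the uniform estimate on $\Psi^\star$, the elementary inequality $|\gamma|\le\|\overline{\Phi}_{e;0}^{\mathrm{adj}}\|_{\mathbf{L}^2_e}\|\mathcal{M}_\epsilon^1\Theta\|_{\mathbf{L}^2}$, and the control of the rank-one correction.

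For uniqueness and linearity, $\mathcal{A}_\epsilon$ is a finite-rank modification of $\overline{\mathcal{L}}_\epsilon$ and therefore inherits its Fredholm index-$0$ property from Lemma \ref{Lepsfredholm}. The existence step shows that every $(\Theta,0)$ lies in its range, while $(0,r)$ is attained by the pair $(\gamma,\Psi)=(0,r\overline U_\epsilon'/\langle(0,\overline{\Phi}_{e;0}^{\mathrm{adj}}),\overline U_\epsilon'\rangle)$; thus $\mathcal{A}_\epsilon$ is surjective. The Fredholm alternative upgrades this to bijectivity, which delivers uniqueness of $(\gamma,\Psi)$ and the linearity of the maps $\overline\gamma_\epsilon$ and $\overline{\mathcal{L}}_\epsilon^{\mathrm{qinv}}$.

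The main obstacle is controlling the passage $\delta\downarrow 0$ at fixed $\epsilon$, since Corollary \ref{theorem4equivalentspecific2} only guarantees invertibility and uniform bounds on the region $0<\epsilon<\epsilon_0(\delta)$ and does not a priori exclude $\epsilon_0(\delta)\to 0$ as $\delta\to 0$. The cleanest resolution is to invoke Proposition \ref{theorem4equivalentcompact} with $M$ taken as a small punctured disk around the origin, which implies that $0$ is an isolated point of the spectrum of $\overline{\mathcal{L}}_\epsilon$ for $\epsilon<\epsilon_{**}$; the quasi-inverse can then equivalently be realized through a Riesz-projector contour integral of the resolvent, bypassing the $\delta\to 0$ limit altogether.
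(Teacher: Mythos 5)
The paper proves this lemma purely by citation to \cite[Lem.~4.9]{HJHFHNINFRANGE}, itself modeled on \cite[Prop.~3.2]{HJHBDF}, so there is no in-text argument to compare line by line; your bordered-operator framework, your choice $\gamma=-\ip{\Theta,(0,\overline{\Phi}_{e;0}^{\mathrm{adj}})}_{\mathbf{L}^2}$ to kill the $1/\delta$ singularity, and your use of Corollary \ref{theorem4equivalentspecific2} and the Fredholm-index-zero alternative are all in the same spirit as those references.

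However, you correctly flag the real gap in your own argument, and your proposed fix does not close it. Corollary \ref{theorem4equivalentspecific2} provides the estimate only on the joint region $\{0<\delta<\delta_0,\ 0<\epsilon<\epsilon_0(\delta)\}$, so a weak limit $\delta\downarrow 0$ at fixed $\epsilon$ is not licensed by that statement. Your alternative route through Proposition \ref{theorem4equivalentcompact} has several problems as written: (i) a punctured disk is not compact, so (h$M_{\lambda_0}$) does not apply — you would need a circle or closed annulus around the origin; (ii) that proposition only yields the bound $\nrm{\mathcal{M}_\epsilon^{1,2}\Phi}_{\mathbf{H}^1_{\C}}\le C_M\nrm{\mathcal{M}_\epsilon^{1,2}\Theta}_{\mathbf{L}^2_{\C}}$, which places an $\epsilon$-factor on the odd components of $\Phi$ itself and therefore does not recover the unscaled $\nrm{\overline{\mathcal{L}}_\epsilon^{\mathrm{qinv}}\Theta}_{\mathbf{L}^2}$ term in \eqref{eq:st:qinv:unif:bnd}; one would have to rerun the bootstrapping of Corollaries \ref{theorem4equivalentspecific1}–\ref{theorem4equivalentspecific2} for complex $\lambda$ on the contour, which you do not do; (iii) even granted a Riesz projector, the quasi-inverse in the lemma is anchored to the fixed vector $\overline{U}_0'$ (derivative of the \emph{singular} profile), not to the spectral subspace of $\OL_\epsilon$ associated with $\overline{U}_\epsilon'$, so additional algebra is needed to pass between the two. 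The cited proofs sidestep the $\delta\downarrow 0$ limit entirely: they fix a single reference value $\delta_{\mathrm{ref}}$ with $\epsilon<\epsilon_0(\delta_{\mathrm{ref}})$ and construct $(\overline{\gamma}_\epsilon,\overline{\mathcal{L}}_\epsilon^{\mathrm{qinv}})$ from a finite-rank bordered perturbation of $(\OL_\epsilon+\delta_{\mathrm{ref}})^{-1}$ (see the explicit formula from \cite[Eq.~(4.47)]{HJHFHNINFRANGE} invoked in the proof of Lemma \ref{unifboundgamma}), with the $1/\delta_{\mathrm{ref}}$ singularity suppressed by the normalisation exactly as you observe. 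Rewriting your existence step in that fixed-$\delta_{\mathrm{ref}}$ form would remove the gap; as written, the passage $\delta\downarrow 0$ is unjustified.
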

\textit{Proof.}
The proof of \cite[Lem. 4.9]{HJHFHNINFRANGE} remains valid in this setting.\\
\qed\\

We can now concentrate on the kernel of
$\OL_\epsilon$. The quasi-inverse constructed above allows us to develop a
Liapunov-Schmidt argument to exclude kernel elements other
than $\overline{U}_{\epsilon}' $.

\begin{lemma}\label{FredholmeigenschappenLh}  Assume that (\asref{aannamesconstanten}{\text{H}}),
(\asref{aannamesconstanten3}{\text{H}}), (\asref{aannamespuls:even}{\text{H}}),
(\asref{aannamespuls:odd}{\text{H}}), (\asref{extraaannamespuls}{\text{H}}) and
(\asref{extraextraaannamespuls}{\text{H}}) are satisfied.
Then for all sufficiently small $\epsilon > 0$ we have
\begin{equation}\begin{array}{lcl}\ker (\OL_\epsilon)&=&\span\{\overline{U}_{\epsilon}' \}. \\[0.2cm]
\end{array}\end{equation}
\end{lemma}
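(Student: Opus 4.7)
The plan is to exploit the quasi-inverse $(\overline{\gamma}_\epsilon, \overline{\mathcal{L}}_\epsilon^{\mathrm{qinv}})$ from Lemma \ref{prop3.2bdf}, which acts as a Lyapunov-Schmidt reduction with respect to the one-dimensional complement spanned by $\overline{U}_0'$ in the range direction and by $(0,\overline{\Phi}_{e;0}^{\mathrm{adj}})$ in the domain direction. In particular, applying that lemma with $\Theta = 0$ shows that the only $\Psi \in \mathbf{H}^1$ satisfying both $\OL_\epsilon \Psi \in \mathrm{span}\{\overline{U}_0'\}$ and the normalisation $\langle (0,\overline{\Phi}_{e;0}^{\mathrm{adj}}),\Psi\rangle_{\mathbf{L}^2} = 0$ is $\Psi = 0$ (with the corresponding scalar $\gamma$ equal to zero).

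First I would verify that $\overline{U}_\epsilon' \in \ker(\OL_\epsilon)$ by differentiating the travelling wave system (\ref{nieuwetravellingwaveeq}) with respect to $\xi$. Next, given an arbitrary $\Phi \in \ker(\OL_\epsilon)$, I would form the candidate
\begin{equation}
\Psi_\epsilon \;=\; \Phi \;-\; \kappa_\epsilon(\Phi)\, \overline{U}_\epsilon', \qquad
\kappa_\epsilon(\Phi) \;=\; \frac{\langle (0,\overline{\Phi}_{e;0}^{\mathrm{adj}}), \Phi \rangle_{\mathbf{L}^2}}{\langle (0,\overline{\Phi}_{e;0}^{\mathrm{adj}}), \overline{U}_\epsilon' \rangle_{\mathbf{L}^2}},
\end{equation}
which by construction lies in $\ker(\OL_\epsilon)$ and satisfies the normalisation $\langle (0,\overline{\Phi}_{e;0}^{\mathrm{adj}}), \Psi_\epsilon\rangle_{\mathbf{L}^2} = 0$. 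Since $\OL_\epsilon \Psi_\epsilon = 0 = 0 \cdot \overline{U}_0'$, the uniqueness clause of Lemma \ref{prop3.2bdf} forces $\Psi_\epsilon = 0$, hence $\Phi = \kappa_\epsilon(\Phi)\,\overline{U}_\epsilon' \in \mathrm{span}\{\overline{U}_\epsilon'\}$, as required.

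The one non-trivial input is that the denominator $\langle (0,\overline{\Phi}_{e;0}^{\mathrm{adj}}), \overline{U}_\epsilon' \rangle_{\mathbf{L}^2}$ is nonzero for all sufficiently small $\epsilon$. I would handle this via a continuity argument at $\epsilon = 0$: writing $\overline{U}_\epsilon = \overline{U}_0 + \Phi_\epsilon$, the convergence statement in Theorem \ref{maintheorem} yields $\nrm{\Phi_\epsilon'}_{\mathbf{L}^2} \to 0$, so in particular $\overline{U}_{e;\epsilon}' \to \overline{U}_{e;0}'$ in $\mathbf{L}^2_e$. Consequently
\begin{equation}
\langle (0,\overline{\Phi}_{e;0}^{\mathrm{adj}}), \overline{U}_\epsilon'\rangle_{\mathbf{L}^2}
\;=\; \langle \overline{\Phi}_{e;0}^{\mathrm{adj}}, \overline{U}_{e;\epsilon}'\rangle_{\mathbf{L}^2_e}
\;\longrightarrow\; \langle \overline{\Phi}_{e;0}^{\mathrm{adj}}, \overline{U}_{e;0}'\rangle_{\mathbf{L}^2_e} \;=\; 1
\end{equation}
as $\epsilon \downarrow 0$, the last equality being the normalisation chosen below (\ref{def:adj:Le}). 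Shrinking $\epsilon_{**}$ if necessary we therefore ensure $|\langle (0,\overline{\Phi}_{e;0}^{\mathrm{adj}}), \overline{U}_\epsilon'\rangle| \ge \tfrac{1}{2}$, and the whole scheme goes through.

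I do not expect any serious obstacle beyond keeping track of the correct norms: the conceptual work has already been done in establishing the quasi-inverse, and the remainder is a clean Lyapunov-Schmidt cleanup combined with the convergence estimate from Theorem \ref{maintheorem}.
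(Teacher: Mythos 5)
Your argument is correct and, as far as can be judged from the paper's own (citation-only) proof, follows exactly the intended Lyapunov--Schmidt strategy: decompose any kernel element against $(0,\overline{\Phi}_{e;0}^{\mathrm{adj}})$, absorb the projection into a multiple of $\overline{U}_\epsilon'$, and then invoke the uniqueness clause of Lemma \ref{prop3.2bdf} with $\Theta=0$ to force the normalised remainder to vanish. The continuity argument guaranteeing the denominator $\langle(0,\overline{\Phi}_{e;0}^{\mathrm{adj}}),\overline{U}_\epsilon'\rangle_{\mathbf{L}^2}\to 1$ is exactly the estimate \eqref{eq:st:unif:bnd:ip2} that the paper itself uses in Lemma \ref{unifboundgamma}, so the proposal meshes cleanly with the surrounding machinery; the only point worth making explicit is that $\overline{U}_\epsilon'\in\mathbf{H}^1$ (needed so that it is a legitimate kernel candidate), which follows from Theorem \ref{maintheorem} since $\overline{U}_0'\in\mathbf{H}^1$ and $\Phi_\epsilon\in\mathbf{H}^2$.
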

\textit{Proof.}
This result can be obtained by following the procedure used in the proof of
\cite[Lem. 4.10-4.11]{HJHFHNINFRANGE}.
\qed\\

We now set out to show that the eigenfunction $\overline{U}_{\epsilon}'$ is in fact simple.
As a technical preparation, we obtain
a lower bound on $\overline{\gamma}_{\epsilon}(\overline{U}_{\epsilon}')$,
which will help us to exploit the quasi-inverse
constructed in Lemma \ref{prop3.2bdf}.

\begin{lemma}\label{unifboundgamma}  Assume that (\asref{aannamesconstanten}{\text{H}}),
(\asref{aannamesconstanten3}{\text{H}}), (\asref{aannamespuls:even}{\text{H}}), (\asref{aannamespuls:odd}{\text{H}}),
(\asref{extraaannamespuls}{\text{H}}) and (\asref{extraextraaannamespuls}{\text{H}}) are satisfied.
Then there exists a constant $\gamma_*>0$ so that the inequality
\begin{equation}
\begin{array}{lcl}
|\overline{\gamma}_\epsilon\overline{U}_\epsilon'|&\geq &\gamma_*
\end{array}
\end{equation}
holds for all sufficiently small $\epsilon > 0$.
\end{lemma}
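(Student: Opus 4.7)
The plan is to argue by contradiction. Suppose that, along some subsequence $\epsilon_j \downarrow 0$, we have $\gamma_j := \overline{\gamma}_{\epsilon_j}(\overline{U}_{\epsilon_j}') \to 0$, and write $\Psi_j := \overline{\mathcal{L}}_{\epsilon_j}^{\mathrm{qinv}}(\overline{U}_{\epsilon_j}')$ with components $\Psi_j = (\phi_{o,j}, \mu_{o,j}, \phi_{e,j}, \mu_{e,j})$, so that
\begin{equation*}
\overline{\mathcal{L}}_{\epsilon_j} \Psi_j \;=\; \overline{U}_{\epsilon_j}' + \gamma_j \overline{U}_0' ,
\qquad
\langle (0, \overline{\Phi}_{e;0}^{\mathrm{adj}}), \Psi_j \rangle_{\mathbf{L}^2} = 0 .
\end{equation*}
Since $\|\mathcal{M}_{\epsilon_j}^1 \overline{U}_{\epsilon_j}'\|_{\mathbf{L}^2} \le \|\overline{U}_{\epsilon_j}'\|_{\mathbf{L}^2}$ stays uniformly bounded by Theorem \ref{maintheorem}, the quasi-inverse estimate \sref{eq:st:qinv:unif:bnd} controls $\|\Psi_j\|_{\mathbf{L}^2}$ together with $\|\epsilon_j \phi_{o,j}'\|_{L^2}$ and the $L^2$ norms of the remaining three derivatives. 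Passing to a subsequence I obtain a weak limit $\Psi_j \rightharpoonup \Psi_* = (\phi_{o,*},\mu_{o,*},\phi_{e,*},\mu_{e,*})$ in $\mathbf{L}^2$.

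The first key step is to extract an algebraic relation between the $u$-components of $\Psi_*$, exactly as in Lemma \ref{lemma6bewijs1}(iii). Multiplying the first block of the defining equation through by $\epsilon_j^2$, every term besides $2\mathcal{D}\phi_{o,j} - \mathcal{D} S_1 \phi_{e,j}$ either carries an extra factor $\epsilon_j^2$ or reduces to $\epsilon_j \cdot (\epsilon_j \phi_{o,j}')$. Taking $j \to \infty$ therefore forces
\begin{equation*}
\phi_{o,*} \;=\; \tfrac{1}{2} S_1 \phi_{e,*} .
\end{equation*}

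The contradiction will then be produced by pairing the equation against the test function $X := (0, \overline{\Phi}_{e;0}^{\mathrm{adj}}) \in \mathbf{H}^1$, which is the natural kernel element of $\mathcal{L}_{\diamond,0}^{\mathrm{adj}}$. Duality gives
\begin{equation*}
\langle \Psi_j, \overline{\mathcal{L}}_{\epsilon_j}^{\mathrm{adj}} X \rangle_{\mathbf{L}^2}
 \;=\; \langle \overline{U}_{\epsilon_j}', X \rangle_{\mathbf{L}^2} + \gamma_j \langle \overline{U}_0', X \rangle_{\mathbf{L}^2} .
\end{equation*}
The right-hand side tends to $\langle \overline{U}_{e;0}', \overline{\Phi}_{e;0}^{\mathrm{adj}} \rangle_{\mathbf{L}^2_e} = 1$ thanks to $\gamma_j \to 0$, the normalisation of $\overline{\Phi}_{e;0}^{\mathrm{adj}}$, and the convergence $\overline{U}_{e;\epsilon_j}' \to \overline{U}_{e;0}'$ in $\mathbf{L}^2_e$ from Theorem \ref{maintheorem}. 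For the left-hand side I would expand $\overline{\mathcal{L}}_{\epsilon_j}^{\mathrm{adj}}$ in block form and observe that because the first slot of $X$ vanishes, no singular $1/\epsilon_j^2$ contribution survives. Using the defining identity $\overline{L}_e^{\mathrm{adj}} \overline{\Phi}_{e;0}^{\mathrm{adj}} = 0$ together with the shift identity $S_1^2 = S_2 + 2$, a short calculation shows
\begin{equation*}
\overline{\mathcal{L}}_{\epsilon_j}^{\mathrm{adj}} X \;\longrightarrow\;
  \bigl(-J_{\mathcal{D}} S_1 \overline{\Phi}_{e;0}^{\mathrm{adj}},\; \tfrac{1}{2} J_{\mathcal{D}} S_1^2 \overline{\Phi}_{e;0}^{\mathrm{adj}}\bigr)
  \quad \text{strongly in } \mathbf{L}^2 ,
\end{equation*}
where the strong convergence uses the $\mathbf{H}^1$-convergence of the waveprofile from Theorem \ref{maintheorem} and Sobolev embedding in one dimension. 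Pairing this strong limit with $\Psi_*$, the $J_{\mathcal{D}}$ factors annihilate the $w$-components, and shifting $S_1$ onto the other slot by self-adjointness collapses the inner product to a scalar multiple of $\langle S_1(\tfrac{1}{2} S_1 \phi_{e,*} - \phi_{o,*}),\,\cdot\,\rangle$, which is zero by the algebraic relation. Thus the left-hand side tends to $0$, contradicting the limit $1$ on the right.

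The main obstacle will be coordinating the two very different limits: the \emph{singular} limit of the first block (giving the pointwise constraint $\phi_{o,*} = \tfrac{1}{2}S_1\phi_{e,*}$) and the \emph{regular} limit of $\overline{\mathcal{L}}_{\epsilon_j}^{\mathrm{adj}} X$, which must land in exactly the shape $J_{\mathcal{D}} S_1(\tfrac{1}{2}S_1\phi_{e,*} - \phi_{o,*})$ needed for cancellation. Ensuring that the block-matrix adjoint computation, the kernel relation $\overline{L}_e^{\mathrm{adj}}\overline{\Phi}_{e;0}^{\mathrm{adj}} = 0$ and the shift identity $S_1^2 = S_2 + 2$ combine so that the left-hand side vanishes while the right-hand side stubbornly converges to $1$ is the crux of the argument.
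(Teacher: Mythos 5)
Your argument is correct, but it takes a genuinely different route from the paper. The paper's proof is a \emph{direct} computation: it invokes the explicit representation of $\overline{\gamma}_\epsilon$ in terms of the resolvent $(\OL_\epsilon + \delta)^{-1}$ (borrowed from \cite[Eq.\,(4.47)]{HJHFHNINFRANGE}), exploits $\OL_\epsilon\overline{U}_\epsilon' = 0$ to pull out a $\delta^{-1}$ factor in the numerator, bounds the denominator by $C(1+\delta^{-1})$ via Corollary \ref{theorem4equivalentspecific2}, and lets $\delta\downarrow 0$. Your proof instead argues by \emph{contradiction} with weak compactness: assume $\overline{\gamma}_{\epsilon_j}\overline{U}_{\epsilon_j}' \to 0$, extract a weak limit $\Psi_*$ of $\Psi_j = \overline{\mathcal{L}}_{\epsilon_j}^{\mathrm{qinv}}\overline{U}_{\epsilon_j}'$, derive the algebraic constraint $\phi_{o,*} = \tfrac12 S_1\phi_{e,*}$ from the singular first block (as in Lemma \ref{lemma6bewijs1}(iii)), and then pair the defining identity against $X = (0, \overline{\Phi}_{e;0}^{\mathrm{adj}})$. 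I checked the key computation: with $\overline{L}_e^{\mathrm{adj}}\overline{\Phi}_{e;0}^{\mathrm{adj}} = 0$ and $S_1^2 = S_2 + 2$, the strong limit $\overline{\mathcal{L}}_{\epsilon_j}^{\mathrm{adj}} X \to \bigl(-J_{\mathcal{D}} S_1\overline{\Phi}_{e;0}^{\mathrm{adj}},\ \tfrac12 J_{\mathcal{D}} S_1^2\overline{\Phi}_{e;0}^{\mathrm{adj}}\bigr)$ is indeed correct, the $J_{\mathcal{D}}$ structure kills the $w$-slots, and self-adjointness of $\mathcal{D}S_1$ collapses the inner product to $\langle \mathcal{D}S_1(\tfrac12 S_1\phi_{e,*}-\phi_{o,*}),\ \overline{\phi}_{e;0}^{\mathrm{adj},u}\rangle = 0$, while the right-hand side converges to $1$. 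The trade-off: your argument is self-contained modulo Lemma \ref{prop3.2bdf} and avoids reaching into the external formula for $\overline{\gamma}_\epsilon$, but it requires the extra machinery of the weak/singular limit; the paper's version is shorter but leans on the quoted resolvent identity. Both are sound.
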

\textit{Proof.}
We note first that
the limit $\overline{U}_{\epsilon}'  \rightarrow \overline{U}_0'$ in $\mathbf{L}^2$
and the inequality $\ip{ \overline{U}_{e;0}' , \Phi_{e;0}^{\mathrm{adj}}  }_{\mathbf{L}^2_e} \neq 0$
imply that there exists a constant $\nu_* > 0$ so that
\begin{equation}
\label{eq:st:unif:bnd:ip2}\begin{array}{lcl}
\abs{ \langle \overline{U}_{\epsilon}'  , (0,\Phi_{e;0}^{\mathrm{adj}} ) \rangle_{\mathbf{L}^2} } &\ge &\nu_*\end{array}
\end{equation}
for all small $\epsilon > 0$.\\

We now introduce the function
\begin{equation}
    \begin{array}{lcl}
          \Psi_\epsilon & = & \overline{\mathcal{L}}_\epsilon^{\mathrm{qinv}}
             \overline{U}_{\epsilon}'.
    \end{array}
\end{equation}
The uniform bound \sref{eq:st:qinv:unif:bnd} shows that
we may assume an a-priori bound of the form
\begin{equation}\begin{array}{lcl}
\label{eq:st:unif:bnd:psi:eps2}
\norm{\Psi_{\epsilon}}_{\mathbf{L}^2} &\le &C_1'\end{array}
\end{equation}
for some $C_1' > 0$.\\

For any sufficiently small $\delta>0$ and $0<\epsilon<\epsilon_0(\delta)$,
the explicit form of $\overline{\gamma}_\epsilon$ given in \cite[Eq. (4.47)]{HJHFHNINFRANGE}
implies that
\begin{equation}
    \begin{array}{lcl}
         \overline{\gamma}_\epsilon\overline{U}_\epsilon' &=&\frac{\big\langle(0, \Phi_{e;0}^{\mathrm{adj}}),\big(\OL_\epsilon+\delta\big)^{-1}
           \big(\overline{U}_{\epsilon}'  +\delta\Psi_\epsilon\big)\big\rangle_{\mathbf{L}^2}}{\big\langle(0, \Phi_{e;0}^{\mathrm{adj}}),\big(\OL_\epsilon+\delta\big)^{-1}
           \overline{U}_0'\big\rangle_{\mathbf{L}^2}}
\\[0.4cm]
         &=&\frac{\big\langle (0, \Phi_{e;0}^{\mathrm{adj}}),\delta^{-1} \overline{U}_{\epsilon}'
                 +\big(\OL_\epsilon+\delta\big)^{-1}\delta\Psi_\epsilon\big\rangle_{\mathbf{L}^2} }{\big\langle(0, \Phi_{e;0}^{\mathrm{adj}}),\big(\OL_\epsilon+\delta\big)^{-1}
           \overline{U}_0'\big\rangle_{\mathbf{L}^2}}.
    \end{array}
\end{equation}
Since $\big(\OL_\epsilon+\delta\big)^{-1}\delta\Psi_\epsilon$ is uniformly bounded in $\mathbf{L}^2$
for all sufficiently small $\delta > 0$ and $0<\epsilon<\epsilon_0(\delta) $ on account of Corollary \ref{theorem4equivalentspecific2}
and \sref{eq:st:unif:bnd:psi:eps2}, we can use the lower bound \sref{eq:st:unif:bnd:ip2} to assume that
$\delta>0$ is small enough to have
\begin{equation}
    \begin{array}{lcl}
    \big|\big\langle (0, \Phi_{e;0}^{\mathrm{adj}}),\delta^{-1} \overline{U}_{\epsilon}'
                 +\big(\OL_\epsilon+\delta\big)^{-1}\delta\Psi_\epsilon\big\rangle_{\mathbf{L}^2}\big|&\geq &C_2'\delta^{-1}
    \end{array}
\end{equation}
for all such $(\epsilon, \delta)$.
Moreover, the uniform bound in Corollary \ref{theorem4equivalentspecific2} also yields the
upper bound
\begin{equation}
    \begin{array}{lcl}
    \big|\big\langle(0, \Phi_{e;0}^{\mathrm{adj}}),\big(\OL_\epsilon+\delta\big)^{-1}
           \overline{U}_0'\big\rangle_{\mathbf{L}^2}\big|&\leq &C_3'(1+\delta^{-1})
    \end{array}
\end{equation}
for all such $(\epsilon, \delta)$.
This gives us the lower bound
\begin{equation}
    \begin{array}{lclcl}
    |\overline{\gamma}_\epsilon\overline{U}_\epsilon'|
    &\geq & \frac{C_2'}{C_3'}\frac{\delta^{-1}}{1+\delta^{-1}}
     &\ge &\gamma_*
    \end{array}
\end{equation}
for some $\gamma_*>0$ that can be chosen independently of $\delta>0$.\qed\\


\begin{lemma}\label{0issimpleeigenvalue}  Assume that (\asref{aannamesconstanten}{\text{H}}),
(\asref{aannamesconstanten3}{\text{H}}), (\asref{aannamespuls:even}{\text{H}}), (\asref{aannamespuls:odd}{\text{H}}),
(\asref{extraaannamespuls}{\text{H}}) and (\asref{extraextraaannamespuls}{\text{H}}) are satisfied.
Then for all sufficiently small $\epsilon > 0$ we have $\overline{U}_\epsilon'\notin\Range(\OL_\epsilon)$.
\end{lemma}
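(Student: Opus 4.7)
The plan is to argue by contradiction, leveraging the quasi-inverse $\overline{\mathcal{L}}_\epsilon^{\mathrm{qinv}}$ from Lemma \ref{prop3.2bdf} together with the lower bound on $\overline{\gamma}_\epsilon \overline{U}_\epsilon'$ supplied by Lemma \ref{unifboundgamma}. The mechanism is that the solvability relation $\overline{\gamma}_\epsilon \Theta = 0$ is a necessary condition for $\Theta$ to lie in $\mathrm{Range}(\overline{\mathcal{L}}_\epsilon)$, and this is violated for $\Theta = \overline{U}_\epsilon'$.

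Concretely, suppose for contradiction that $\overline{U}_\epsilon' = \overline{\mathcal{L}}_\epsilon \tilde{\Psi}$ for some $\tilde{\Psi} \in \mathbf{H}^1$. Since $\overline{U}_\epsilon' \to \overline{U}_0'$ in $\mathbf{L}^2$ and $\langle \overline{U}_{e;0}', \overline{\Phi}_{e;0}^{\mathrm{adj}} \rangle_{\mathbf{L}^2_e} = 1 \neq 0$, we have $\langle (0,\overline{\Phi}_{e;0}^{\mathrm{adj}}), \overline{U}_\epsilon' \rangle_{\mathbf{L}^2} \neq 0$ for all sufficiently small $\epsilon > 0$ (cf. \eqref{eq:st:unif:bnd:ip2}). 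Using the one-dimensional kernel $\mathrm{span}\{\overline{U}_\epsilon'\}$ established in Lemma \ref{FredholmeigenschappenLh}, I would set
\begin{equation}
\Psi_0 \;=\; \tilde{\Psi} - \frac{\langle (0,\overline{\Phi}_{e;0}^{\mathrm{adj}}), \tilde{\Psi} \rangle_{\mathbf{L}^2}}{\langle (0,\overline{\Phi}_{e;0}^{\mathrm{adj}}), \overline{U}_\epsilon' \rangle_{\mathbf{L}^2}} \, \overline{U}_\epsilon',
\end{equation}
so that $\overline{\mathcal{L}}_\epsilon \Psi_0 = \overline{U}_\epsilon' + 0 \cdot \overline{U}_0'$ together with the normalization $\langle (0,\overline{\Phi}_{e;0}^{\mathrm{adj}}), \Psi_0 \rangle_{\mathbf{L}^2} = 0$.

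The uniqueness clause in Lemma \ref{prop3.2bdf} applied with $\Theta = \overline{U}_\epsilon'$ then forces the identification $(0, \Psi_0) = (\overline{\gamma}_\epsilon \overline{U}_\epsilon', \overline{\mathcal{L}}_\epsilon^{\mathrm{qinv}} \overline{U}_\epsilon')$. In particular, this yields $\overline{\gamma}_\epsilon \overline{U}_\epsilon' = 0$, which directly contradicts the uniform lower bound $|\overline{\gamma}_\epsilon \overline{U}_\epsilon'| \geq \gamma_* > 0$ from Lemma \ref{unifboundgamma}. Hence no such $\tilde{\Psi}$ can exist and $\overline{U}_\epsilon' \notin \mathrm{Range}(\overline{\mathcal{L}}_\epsilon)$.

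There is no real obstacle here: the hard analytic work has been absorbed into the construction of the quasi-inverse in Lemma \ref{prop3.2bdf} and the non-degeneracy estimate in Lemma \ref{unifboundgamma}. The only point requiring some care is verifying that $\langle (0,\overline{\Phi}_{e;0}^{\mathrm{adj}}), \overline{U}_\epsilon' \rangle_{\mathbf{L}^2}$ is bounded away from zero uniformly in small $\epsilon$, so that the normalization step is legitimate; this follows from the $\mathbf{L}^2$-convergence $\overline{U}_\epsilon' \to \overline{U}_0'$ guaranteed by Theorem \ref{maintheorem} together with the simplicity of the zero eigenvalue of $\overline{L}_e$ encoded in (HS1).
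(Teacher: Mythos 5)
Your proposal is correct and follows the same route as the paper's own proof: both argue by contradiction, renormalize the putative preimage by subtracting a multiple of $\overline{U}_\epsilon'$ (using that $\langle(0,\overline{\Phi}_{e;0}^{\mathrm{adj}}),\overline{U}_\epsilon'\rangle_{\mathbf{L}^2}$ is bounded away from zero) so that the normalization condition of Lemma \ref{prop3.2bdf} holds, invoke the uniqueness clause to conclude $\overline{\gamma}_\epsilon\overline{U}_\epsilon'=0$, and then contradict the lower bound from Lemma \ref{unifboundgamma}. The only cosmetic difference is that you write out the shift constant explicitly where the paper merely says ``add an appropriate multiple.''
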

\textit{Proof.}
Arguing by contradiction, let us suppose that there exists
$\Psi_\epsilon\in\mathbf{H}^1$ for which the identity
\begin{equation}
    \begin{array}{lcl}
    \OL_\epsilon\Psi_\epsilon&=&  \overline{U}_{\epsilon}'
    \end{array}
\end{equation}
holds. The observation above allows us to add an appropriate multiple of $\overline{U}_{\epsilon}'$ to $\Psi_{\epsilon}$
to ensure that $\langle \Psi_\epsilon,   (0,\Phi_{e;0}^{\mathrm{adj}} ) \rangle_{\mathbf{L}^2} = 0$.
In particular, Lemma \ref{prop3.2bdf} implies that
\begin{equation}
    \begin{array}{lclcl}
         \overline{\gamma}_\epsilon   \overline{U}_{\epsilon}'&=&0,
         \qquad \qquad
          \overline{\mathcal{L}}_\epsilon^{\mathrm{qinv}}
             \overline{U}_{\epsilon}' &=&\Psi_\epsilon,
    \end{array}
\end{equation}
which immediately contradicts Lemma \ref{unifboundgamma}.
\qed\\

\noindent\textit{Proof of Proposition \ref{samenvattingh5}.} 
Property (iii) follows directly from the results in \cite{MPA}. The rest of the result follows directly from Lemmas \ref{FredholmeigenschappenLh} and \ref{0issimpleeigenvalue}.\qed.\\

\subsection{Spectral stability}

Here we set out to establish the statements in Proposition \ref{maintheorem2}
for $\lambda \notin 2 \pi i c_{\epsilon} \mathbb{Z}$.
In contrast to the setting in \cite{HJHFHNINFRANGE},
the period $2\pi i c_\epsilon$ can be uniformly bounded for  $\epsilon\downarrow 0$.
In particular, we will only consider values of $\epsilon> 0$ that are sufficiently small to ensure that
\begin{equation}\begin{array}{lclcl}
\frac{3}{4}c_0&<&c_\epsilon&<&\frac{3}{2}c_0\end{array}
\end{equation}
holds. Recalling the constant $\lambda_0$ introduced
in Proposition \ref{theorem4equivalentcompact},
this allows us to restrict our spectral analysis to the set
\begin{equation}\begin{array}{lcl}
 \mathcal{R}&:=&\{\lambda\in\C:\Re\lambda\geq-\lambda_0,
   |\Im\lambda|\leq \frac{3}{2}\pi c_0\} \setminus \{ 0 \} .\end{array}
\end{equation}
On account of \ref{Lepsfredholm}, the operators
$\OL_{\epsilon,\lambda}$ are all Fredholm with index $0$ on this set.
We hence only need to establish their injectivity.
\\

In turns out to be
convenient to partition this strip into three $\epsilon$-independent parts.
The first part contains
values of $\lambda$ that are close to $0$, which can be
analyzed using the theory developed in \S\ref{sec:stb:zero}.
The second part contains all values of
$\lambda$ for which $\Re\lambda$ is sufficiently large.
Such values can be excluded from the spectrum by straightforward norm estimates.
The remaining part is compact, which allows us to appeal to
Proposition \ref{theorem4equivalentcompact}.

\begin{lemma}\label{lemmaspectrumklein}
Assume that (\asref{aannamesconstanten}{\text{H}}), (\asref{aannamesconstanten3}{\text{H}}),
(\asref{aannamespuls:even}{\text{H}}), (\asref{aannamespuls:odd}{\text{H}}), (\asref{extraaannamespuls}{\text{H}}) and
(\asref{extraextraaannamespuls}{\text{H}}) are satisfied. There exists constants
$\lambda_{I}> 0$ and $\epsilon_{I} > 0$ so that the operator
$\OL_{\epsilon, \lambda} : \mathbf{H}^1 \to \mathbf{L}^2$ is injective for all $\lambda \in \C$
with $0 < \abs{\lambda} < \lambda_{I}$ and $0 < \epsilon < \epsilon_{I}$.
\end{lemma}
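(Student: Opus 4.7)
\textit{Proof plan.} The strategy is a Lyapunov--Schmidt reduction built on the quasi-inverse from Lemma \ref{prop3.2bdf}, exploiting the fact that, by Proposition \ref{samenvattingh5}, the zero eigenvalue of $\overline{\mathcal{L}}_\epsilon$ is simple with eigenfunction $\overline{U}_\epsilon'$. Concretely, I would suppose $\Psi\in\mathbf{H}^1$ satisfies $\overline{\mathcal{L}}_{\epsilon,\lambda}\Psi=0$, i.e.\ $\overline{\mathcal{L}}_\epsilon\Psi=-\lambda\Psi$, and decompose
\begin{equation}
\Psi \;=\; \alpha\,\overline{U}_\epsilon' + \tilde{\Psi},
\qquad
\langle (0,\overline{\Phi}_{e;0}^{\mathrm{adj}}),\tilde{\Psi}\rangle_{\mathbf{L}^2} \;=\; 0,
\end{equation}
using that $\langle(0,\overline{\Phi}_{e;0}^{\mathrm{adj}}),\overline{U}_\epsilon'\rangle_{\mathbf{L}^2}$ is bounded away from zero for small $\epsilon$ (this is the inequality \sref{eq:st:unif:bnd:ip2} that was already used in the proof of Lemma \ref{unifboundgamma}).

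Next I would apply Lemma \ref{prop3.2bdf} with $\Theta = -\lambda\alpha\,\overline{U}_\epsilon' - \lambda\tilde{\Psi}$. The equation $\overline{\mathcal{L}}_\epsilon\tilde{\Psi} = \Theta$ combined with the normalisation on $\tilde{\Psi}$ and the uniqueness statement of Lemma \ref{prop3.2bdf} forces the compatibility identity
\begin{equation}
\overline{\gamma}_\epsilon\bigl(-\lambda\alpha\,\overline{U}_\epsilon' - \lambda\tilde{\Psi}\bigr) \;=\; 0,
\end{equation}
which, by linearity, reads $\alpha\,\overline{\gamma}_\epsilon(\overline{U}_\epsilon') = -\overline{\gamma}_\epsilon(\tilde{\Psi})$. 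Invoking the uniform lower bound $|\overline{\gamma}_\epsilon(\overline{U}_\epsilon')|\ge \gamma_*$ from Lemma \ref{unifboundgamma} together with the upper bound $|\overline{\gamma}_\epsilon(\tilde\Psi)|\le C\|\mathcal{M}_\epsilon^1\tilde\Psi\|_{\mathbf{L}^2}\le C\|\tilde{\Psi}\|_{\mathbf{L}^2}$ from \sref{eq:st:qinv:unif:bnd}, this yields an $\epsilon$-uniform estimate
\begin{equation}
|\alpha| \;\le\; C_1\|\tilde{\Psi}\|_{\mathbf{L}^2}.
\end{equation}

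I would then exploit the quasi-inverse identity $\tilde{\Psi} = \overline{\mathcal{L}}_\epsilon^{\mathrm{qinv}}\Theta$ and the bound \sref{eq:st:qinv:unif:bnd} to estimate
\begin{equation}
\|\tilde{\Psi}\|_{\mathbf{L}^2} \;\le\; C\,\|\mathcal{M}_\epsilon^1\Theta\|_{\mathbf{L}^2}
\;\le\; C|\lambda|\Bigl(|\alpha|\,\|\overline{U}_\epsilon'\|_{\mathbf{L}^2} + \|\tilde{\Psi}\|_{\mathbf{L}^2}\Bigr)
\;\le\; C_2|\lambda|\,\|\tilde{\Psi}\|_{\mathbf{L}^2},
\end{equation}
where the final inequality uses the previous step together with the uniform bound $\|\overline{U}_\epsilon'\|_{\mathbf{L}^2}\le K$ coming from Theorem \ref{maintheorem}. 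Choosing $\lambda_I>0$ so that $C_2\lambda_I<1$ — and crucially, independent of $\epsilon$ — forces $\tilde{\Psi}=0$, whence $\alpha=0$ as well by feeding $\tilde\Psi=0$ back into $\overline{\mathcal{L}}_\epsilon\tilde{\Psi}=-\lambda\alpha\,\overline{U}_\epsilon'-\lambda\tilde{\Psi}$ and using $\overline{U}_\epsilon'\neq 0$, $\lambda\neq 0$.

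The main obstacle is verifying that all constants in the above chain ($C_1, C_2$, $\gamma_*$, and $\|\overline{U}_\epsilon'\|_{\mathbf{L}^2}$) are genuinely uniform in $\epsilon$; this is precisely what Lemmas \ref{prop3.2bdf} and \ref{unifboundgamma} and Theorem \ref{maintheorem} are designed to provide, so the argument should go through without further complications.
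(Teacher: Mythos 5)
Your proof is correct and uses the same essential ingredients as the paper's: the quasi-inverse of Lemma \ref{prop3.2bdf}, the uniform lower bound on $\overline{\gamma}_\epsilon\overline{U}_\epsilon'$ from Lemma \ref{unifboundgamma}, and the decomposition $\Psi = \alpha\overline{U}_\epsilon'+\tilde\Psi$. The algebra is arranged slightly more directly than in the paper: where the paper inverts $I-\lambda\overline{\mathcal{L}}_\epsilon^{\mathrm{qinv}}$ via a Neumann series, substitutes into the $\gamma$-compatibility condition, and has to exclude the case $\kappa=0$ separately before dividing, you instead derive $|\alpha|\le C_1\|\tilde\Psi\|$ from the $\gamma$-condition, bootstrap this into $\|\tilde\Psi\|\le C_2|\lambda|\|\tilde\Psi\|$, and conclude $\tilde\Psi=0$ and then $\alpha=0$ without any case split. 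This is a modest simplification but not a different strategy; both proofs rest on the same two lemmas and give the same $\epsilon$-uniform constants.
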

\textit{Proof.}
We argue by contradiction. Pick a small $\lambda_{I}>0$ and $0<\epsilon<\epsilon_{**}$ and
assume that there exists $\Psi\in\mathbf{H}^1$ and $0<|\lambda|<\lambda_{I}$ with
$\Psi\neq 0$ and
\begin{equation}\label{eq:ew:tegenspraak}
    \begin{array}{lcl}
    \OL_{\epsilon}\Psi&=&\lambda\Psi.
    \end{array}
\end{equation}
Aiming to exploit the quasi-inverse in Lemma \ref{prop3.2bdf}, we
use \sref{eq:st:unif:bnd:ip2}
to decompose $\Psi$ as
\begin{equation}
    \begin{array}{lcl}
    \Psi&=&\kappa\overline{U}_{\epsilon}'+\Psi^{\perp}
    \end{array}
\end{equation}
for some $\kappa\in\Real$ and
$\Psi^{\perp}\in\mathbf{H}^1$ that satisfies the normalisation condition
\begin{equation}
    \begin{array}{lcl}
    \ip{(0,\Phi_{e;0}^{\mathrm{adj}}),\Psi^{\perp}}_{\mathbf{L}^2}&=&0 .
    \end{array}
\end{equation}
In view of Lemma \ref{prop3.2bdf},
the identity (\ref{eq:ew:tegenspraak})
implies that
\begin{equation}\label{eq:ew:qinv}
    \begin{array}{lclcl}
    \overline{\gamma}_\epsilon  \big[\kappa\lambda\overline{U}_\epsilon'+\lambda\Psi^{\perp}\big]&=&0,
         \qquad \qquad
          \overline{\mathcal{L}}_\epsilon^{\mathrm{qinv}}
             \big[\kappa\lambda\overline{U}_\epsilon'+\lambda\Psi^{\perp}\big]&=&\Psi^{\perp}.
    \end{array}
\end{equation}
On account of the uniform bound (\ref{eq:st:qinv:unif:bnd}), we can assume that $\lambda_{I}$ is small enough to have
\begin{equation}
\label{eq:restr:size:on:lambda:i}
    \begin{array}{lcl}
    \lambda_{I}\nrm{\overline{\mathcal{L}}_\epsilon^{\mathrm{qinv}}}_{\mathcal{B}(\mathbf{L}^2;\mathbf{L}^2)}&<&\frac{1}{2}.
    \end{array}
\end{equation}
Since $|\lambda|<\lambda_{I}$, this means that we can rewrite (\ref{eq:ew:qinv}) to obtain
\begin{equation}
    \begin{array}{lcl}
    \Psi^{\perp}&=&\big[I-\lambda \overline{\mathcal{L}}_\epsilon^{\mathrm{qinv}}\big]^{-1}
      \overline{\mathcal{L}}_\epsilon^{\mathrm{qinv}}\big[\kappa\lambda\overline{U}_\epsilon'\big].
    \end{array}
\end{equation}
In particular, the first identity in
(\ref{eq:ew:qinv})
allows us to write
\begin{equation}
\label{q:stb:region:i:bnd:with:gamma:tilde}
    \begin{array}{lcl}
  0 &=& \overline{\gamma}_\epsilon  \Big[\kappa\lambda\overline{U}_\epsilon'
    +\lambda\big[I-\lambda \overline{\mathcal{L}}_\epsilon^{\mathrm{qinv}}\big]^{-1}
       \overline{\mathcal{L}}_\epsilon^{\mathrm{qinv}}\big[\kappa\lambda\overline{U}_\epsilon'\big]\Big]\\[0.2cm]
  &=&\kappa\lambda\overline{\gamma}_\epsilon
    \Big[\overline{U}_\epsilon'
       +\lambda\big[I-\lambda \overline{\mathcal{L}}_\epsilon^{\mathrm{qinv}}\big]^{-1}
         \overline{\mathcal{L}}_\epsilon^{\mathrm{qinv}}\big[\overline{U}_\epsilon'\big]\Big].
    \end{array}
\end{equation}
We note that the restriction
\sref{eq:restr:size:on:lambda:i}
ensures that
the second identity
in \sref{eq:ew:qinv}
has no non-zero solutions $\Psi^\perp$
for $\kappa = 0$.
In particular,
\sref{q:stb:region:i:bnd:with:gamma:tilde}
implies that we must have
\begin{equation}
    \begin{array}{lcl}
  \overline{\gamma}_\epsilon  \overline{U}_\epsilon'&=&
    -\lambda\overline{\gamma}_\epsilon
    \Big[\big[I-\lambda \overline{\mathcal{L}}_\epsilon^{\mathrm{qinv}}\big]^{-1}
      \overline{\mathcal{L}}_\epsilon^{\mathrm{qinv}}
       \big[\overline{U}_\epsilon'\big]\Big].
    \end{array}
\end{equation}
On account of (\ref{eq:st:qinv:unif:bnd})
we hence obtain the estimate
\begin{equation}\begin{array}{lclcl}
\abs{ \overline{\gamma}_\epsilon  \overline{U}_\epsilon' }
& \le & C_1' \abs{\lambda} & \le &C_1' \lambda_I
\end{array}
\end{equation}
for some $C_1' > 0$.
However, Lemma \ref{unifboundgamma} shows that  the left-hand side
remains bounded away from zero, which yields
the desired contradiction after restricting the size of $\lambda_I$.
\qed\\


\begin{lemma}\label{spectrumgroot} Assume that (\asref{aannamesconstanten}{\text{H}}),
(\asref{aannamesconstanten3}{\text{H}}), (\asref{aannamespuls:even}{\text{H}}),
(\asref{aannamespuls:odd}{\text{H}}), (\asref{extraaannamespuls}{\text{H}}) and
(\asref{extraextraaannamespuls}{\text{H}}) are satisfied.
There exist constants $\lambda_{II}>0$ and $\epsilon_{II} > 0$
so that the operator
$\OL_{\epsilon, \lambda} : \mathbf{H}^1 \to \mathbf{L}^2$ is injective for all
$\lambda \in \mathcal{R}$ with $\Re \lambda \ge \lambda_{II}$
and $0 < \epsilon < \epsilon_{II}$.
\end{lemma}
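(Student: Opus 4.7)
The plan is a direct $\mathbf{L}^2$-energy estimate: I would pair the hypothetical identity $\OL_{\epsilon,\lambda}\Psi = 0$ against the weighted function $\M_{\epsilon^2}^{1,2}\Psi$ in $\mathbf{L}^2_{\mathbb{C}}$ and take the real part. Three of the four resulting terms are routine. The derivative term $\Re\ip{c_\epsilon\Psi',\M_{\epsilon^2}^{1,2}\Psi}_{\mathbf{L}^2}$ vanishes by integration by parts, since $c_\epsilon\in\R$ and $\M_\epsilon^{1,2}$ commutes with $\partial_\xi$. The spectral term contributes exactly $\Re\lambda\,\nrm{\M_\epsilon^{1,2}\Psi}_{\mathbf{L}^2}^2$. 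The nonlinear term is controlled by
\begin{equation*}
\big|\Re\ip{DF(\overline{U}_\epsilon)\Psi,\M_{\epsilon^2}^{1,2}\Psi}_{\mathbf{L}^2}\big|
 \;\le\; \overline{K}_F\,\nrm{\M_\epsilon^{1,2}\Psi}_{\mathbf{L}^2}^2,
\end{equation*}
which follows from \sref{eq:ss:kf:def} together with the fact that $DF(\overline{U}_\epsilon)$ is block-diagonal with respect to the odd/even split and hence commutes with $\M_\epsilon^{1,2}$.

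The nontrivial step will be handling the mixed diffusion term $\M_{1/\epsilon^2}^1 J_{\mathrm{mix}}\Psi$, whose $\epsilon^{-2}$ scaling seemingly breaks the positivity supplied by Lemma \ref{lemma6bewijsbound1}. The key observation is that each row of $J_{\mathrm{mix}}$ contains only $J_\D$-blocks, so its $w$-rows vanish identically. Consequently the $\epsilon^{-2}$ scaling on the odd $u$-row of $\M_{1/\epsilon^2}^1 J_{\mathrm{mix}}\Psi$ is exactly cancelled by the $\epsilon^{2}$ scaling on the corresponding row of $\M_{\epsilon^2}^{1,2}\Psi$, while the scalings on the $w$-rows are irrelevant because the $J_{\mathrm{mix}}$-components vanish there. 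A direct component-wise computation then yields the clean identity
\begin{equation*}
\ip{\M_{1/\epsilon^2}^1 J_{\mathrm{mix}}\Psi,\M_{\epsilon^2}^{1,2}\Psi}_{\mathbf{L}^2}
 \;=\; \ip{J_{\mathrm{mix}}\Psi,\Psi}_{\mathbf{L}^2},
\end{equation*}
whose real part is nonpositive by Lemma \ref{lemma6bewijsbound1}.

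Combining these four estimates produces $\Re\lambda\,\nrm{\M_\epsilon^{1,2}\Psi}_{\mathbf{L}^2}^2 \le \overline{K}_F\,\nrm{\M_\epsilon^{1,2}\Psi}_{\mathbf{L}^2}^2$, so choosing $\lambda_{II} := \overline{K}_F + 1$ together with $\epsilon_{II} := \epsilon_*$ from Theorem \ref{maintheorem} immediately forces $\M_\epsilon^{1,2}\Psi = 0$ and hence $\Psi = 0$, yielding the desired contradiction. Note that $\Im\lambda$ plays no role --- it contributes only through the purely imaginary term $i\Im\lambda\,\nrm{\M_\epsilon^{1,2}\Psi}_{\mathbf{L}^2}^2$ --- and the resulting threshold $\lambda_{II}$ is independent of $\epsilon$, which is the payoff of pairing against the weighted function $\M_{\epsilon^2}^{1,2}\Psi$ rather than $\Psi$ itself.
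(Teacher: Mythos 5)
Your proof is correct and follows essentially the same route as the paper: pair $\OL_{\epsilon,\lambda}\Psi = 0$ against $\M_{\epsilon^2}^{1,2}\Psi$, exploit the exact cancellation of the $\epsilon$-scalings on the $J_{\mathrm{mix}}$ term (since $J_{\mathrm{mix}}$ vanishes on the $w$-rows), invoke Lemma \ref{lemma6bewijsbound1} for nonnegativity, and conclude from $\Re\lambda > \overline{K}_F$. If anything, your writeup is slightly cleaner than the paper's: you correctly carry the squared norms and use the constant $\overline{K}_F$ from \sref{eq:ss:kf:def}, whereas the paper's displayed chain omits the exponents and references $K_F$ from the existence section.
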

\textit{Proof.}
The identity $\overline{\mathcal{L}}_{\epsilon, \lambda} \Phi = 0$
implies that
\begin{equation}\begin{array}{lcl}
c_{\epsilon} \Phi'
&= &\M^{1}_{1/\epsilon^2} J_{\mathrm{mix}} \Phi
 + DF(\overline{U}_{\epsilon} ) \Phi
 - \lambda \Phi.\end{array}
\end{equation}
Taking the inner product with
$\mathcal{M}^{1,2}_{\epsilon^2} \Phi$,
we may use Lemma \ref{lemma6bewijsbound1}
to obtain
\begin{equation}
\begin{array}{lcl}
0 & \le &
- \Re \langle J_{\mathrm{mix}} \Phi , \Phi \rangle_{\mathbf{L}^2}
\\[0.2cm]
& = &
 \Re \langle DF(\overline{U}_{\epsilon} ) \Phi, \M^{1,2}_{\epsilon^2} \Phi \rangle_{\mathbf{L}^2}
 - \Re \lambda \norm{ \M^{1,2}_{\epsilon} \Phi }_{\mathbf{L}^2}
\\[0.2cm]
& \le &
  (K_F - \Re \lambda) \norm{ \M^{1,2}_{\epsilon} \Phi}_{\mathbf{L}^2} .
\end{array}
\end{equation}
For $\Re \lambda \ge K_F$ this hence implies $\Phi = 0$, as desired.
\qed\\

\noindent\textit{Proof of Proposition \ref{maintheorem2}.}
On account of Proposition \ref{samenvattingh5} and Lemma's \ref{lemmaspectrumklein}-\ref{spectrumgroot},
it remains to consider the set
\begin{equation}
\begin{array}{lcl}
M&=&\{\lambda\in\mathcal{R}:|\lambda|\geq\lambda_I,\Re\lambda\leq\lambda_{II}\}.
\end{array}
\end{equation}
Since this set satisfies 
(h$M_{\lambda_{0}}$),
we can apply Proposition \ref{theorem4equivalentcompact}
to show that for each sufficiently small $\epsilon > 0$,
the operators $\overline{\mathcal{L}}_{\epsilon , \lambda}$ are invertible
for all $\lambda \in M$.
\qed\\


\bibliographystyle{plain}
\bibliography{ref}

\end{document}